\documentclass[11pt]{aart}

\usepackage{titlesec}
\titleformat{\subsection}[runin]{\normalfont\bfseries}{\thesubsection.}{.5em}{}[.]\titlespacing{\subsection}{0pt}{2ex plus .1ex minus .2ex}{.8em}
\titleformat{\subsubsection}[runin]{\normalfont\itshape}{\thesubsubsection.}{.3em}{}[.]\titlespacing{\subsubsection}{0pt}{1ex plus .1ex minus .2ex}{.5em}
\titleformat{\paragraph}[runin]{\normalfont\itshape}{\theparagraph.}{.3em}{}[.]\titlespacing{\paragraph}{0pt}{1ex plus .1ex minus .2ex}{.5em}

\usepackage[labelfont=sc,font=small,labelsep=period]{caption}
\setlength{\intextsep}{3em}
\setlength{\textfloatsep}{3em}

\makeatletter\def\SetFigFont#1#2#3#4#5{\small}

\usepackage[letterpaper, hmargin=1.1in, top=1in, bottom=1.2in, footskip=0.6in]{geometry}


\usepackage{amsmath} 
\usepackage{amssymb}
\usepackage{amsfonts}
\usepackage{latexsym}
\usepackage{amsthm}
\usepackage{amsxtra}
\usepackage{amscd}
\usepackage{mathrsfs}
\usepackage{bm}

\usepackage{graphicx, color}
\usepackage[nottoc,notlof,notlot]{tocbibind} 
\usepackage{cite} 



\flushbottom
\numberwithin{equation}{section}
\numberwithin{figure}{section}



\theoremstyle{plain} 
\newtheorem{theorem}{Theorem}[section]
\newtheorem*{theorem*}{Theorem}
\newtheorem{lemma}[theorem]{Lemma}
\newtheorem*{lemma*}{Lemma}
\newtheorem{corollary}[theorem]{Corollary}
\newtheorem*{corollary*}{Corollary}
\newtheorem{proposition}[theorem]{Proposition}
\newtheorem*{proposition*}{Proposition}
\newtheorem{definition}[theorem]{Definition}
\newtheorem*{definition*}{Definition}

\newtheorem*{conjecture*}{Conjecture}

\theoremstyle{definition} 

\newtheorem*{assumption*}{Assumption}

\newtheorem*{example*}{Example}
\newtheorem{remark}[theorem]{Remark}
\newtheorem*{remark*}{Remark}


\newcommand{\f}[1]{\boldsymbol{\mathrm{#1}}} 
\renewcommand{\cal}{\mathcal} 
 
\newcommand{\fra}{\mathfrak} 
\newcommand{\ul}[1]{\underline{#1} \!\,} 

\newcommand{\wt}{\widetilde}


\definecolor{darkred}{rgb}{0.9,0,0.3}
\definecolor{darkblue}{rgb}{0,0.3,0.9}
\definecolor{darkgreen}{rgb}{0.1,0.65,0.1}

\usepackage{ifthen}
\def\comment#1{\ifthenelse{\isodd{\value{page}}}{\marginpar{\raggedright\scriptsize{\textcolor{darkred}{#1}}}}{\marginpar{\raggedleft\scriptsize{\textcolor{darkred}{#1}}}}}


\renewcommand{\P}{\mathbb{P}}
\newcommand{\E}{\mathbb{E}}
\newcommand{\R}{\mathbb{R}}
\newcommand{\C}{\mathbb{C}}
\newcommand{\N}{\mathbb{N}}
\newcommand{\Z}{\mathbb{Z}}


\newcommand{\ee}{\mathrm{e}}
\newcommand{\ii}{\mathrm{i}}
\newcommand{\dd}{\mathrm{d}}
\newcommand{\col}{\mathrel{\vcenter{\baselineskip0.75ex \lineskiplimit0pt \hbox{.}\hbox{.}}}}
\newcommand*{\deq}{\mathrel{\vcenter{\baselineskip0.65ex \lineskiplimit0pt \hbox{.}\hbox{.}}}=}

\newcommand{\eqdist}{\overset{\text{d}}{=}}

\renewcommand{\leq}{\leqslant}
\renewcommand{\geq}{\geqslant}
\renewcommand{\epsilon}{\varepsilon}


\newcommand{\qq}[1]{[\![{#1}]\!]}

\newcommand{\ind}[1]{\f 1 (#1)}
\newcommand{\indb}[1]{\f 1 \pb{#1}}

\newcommand{\p}[1]{({#1})}
\newcommand{\pb}[1]{\bigl({#1}\bigr)}
\newcommand{\pB}[1]{\Bigl({#1}\Bigr)}
\newcommand{\pbb}[1]{\biggl({#1}\biggr)}
\newcommand{\pBB}[1]{\Biggl({#1}\Biggr)}
\newcommand{\pa}[1]{\left({#1}\right)}

\newcommand{\qb}[1]{\bigl[{#1}\bigr]}
\newcommand{\qB}[1]{\Bigl[{#1}\Bigr]}
\newcommand{\qbb}[1]{\biggl[{#1}\biggr]}
\newcommand{\qBB}[1]{\Biggl[{#1}\Biggr]}

\newcommand{\h}[1]{\{{#1}\}}
\newcommand{\hb}[1]{\bigl\{{#1}\bigr\}}
\newcommand{\hB}[1]{\Bigl\{{#1}\Bigr\}}
\newcommand{\hbb}[1]{\biggl\{{#1}\biggr\}}

\newcommand{\abs}[1]{\lvert #1 \rvert}
\newcommand{\absb}[1]{\bigl\lvert #1 \bigr\rvert}

\newcommand{\absbb}[1]{\biggl\lvert #1 \biggr\rvert}

\newcommand{\norm}[1]{\lVert #1 \rVert}

\newcommand{\normBB}[1]{\Biggl\lVert #1 \Biggr\rVert}

\newcommand{\scalar}[2]{\langle{#1} \mspace{2mu}, {#2}\rangle}


\DeclareMathOperator{\re}{Re}
\DeclareMathOperator{\im}{Im}

\DeclareMathOperator{\dist}{dist}

\DeclareMathOperator{\arcsinh}{arcsinh}


\newcommand{\ddp}[2]{\frac{\partial #1}{\partial #2}}

\newcommand{\sigG}{{\mathcal G}}
\newcommand{\sigF}{{\mathcal F}}

\newcommand{\Ei}{\E^{[i]}}
\newcommand{\Eu}{\E_{\sigF_0}}
\newcommand{\Pmu}{\P_{\sigG_\mu}}
\newcommand{\Emu}{\E_{\sigG_\mu}}
\newcommand{\Emup}{\E_{\sigG_{\mu+1}}}
\newcommand{\Emuc}{}
\newcommand{\Emumuc}{\Emu}
\newcommand{\EFmu}{\E_{\sigF_\mu}}
\newcommand{\EFmup}{\E_{\sigF_{\mu+1}}}

\renewcommand{\u}{u}
\newcommand{\umu}{u_\mu}
\newcommand{\amu}{a_\mu}
\newcommand{\bmu}{b_\mu}
\newcommand{\umuc}{\tilde u_\mu}
\newcommand{\amuc}{\tilde a_\mu}

\newcommand{\uc}{\tilde u}
\newcommand{\ac}{\tilde a}
\newcommand{\bc}{\tilde b}
\newcommand{\ua}{\underline{a}}
\newcommand{\uac}{\underline{\ac}}
\newcommand{\ub}{\underline{b}}
\newcommand{\ubc}{\underline{\bc}}
\newcommand{\ur}{\underline{r}}

\newcommand{\Gc}{\tilde G}
\newcommand{\Ac}{\tilde A}

\newcommand{\Acmu}{\tilde A^\mu}
\newcommand{\Gcmu}{\tilde G^\mu}
\newcommand{\Xcmu}{\tilde X^\mu}

\newcommand{\Amu}{A}
\newcommand{\Gmu}{G}
\newcommand{\Xmu}{X}

\newcommand{\Gammamu}{\Gamma_\mu}

\newcommand{\goodev}{\Xi}
\newcommand{\badev}{\Xi^c}
\newcommand{\indgoodmu}{\phi_\mu}

\newcommand{\indgood}{\phi}
\newcommand{\indbad}{\bar\phi}
\newcommand{\chimu}{\chi_\mu}


\begin{document}
\title{Local semicircle law for random regular graphs}
\author{Roland Bauerschmidt\footnote{Harvard University, Department of Mathematics. E-mail: {\tt brt@math.harvard.edu}.} \and
Antti Knowles\footnote{ETH Z\"urich, Departement Mathematik. E-mail: {\tt knowles@math.ethz.ch}.} \and
Horng-Tzer Yau\footnote{Harvard University, Department of Mathematics. E-mail: {\tt htyau@math.harvard.edu}.}}
\date{February 12, 2017}
\maketitle

\begin{abstract}
  We consider random $d$-regular graphs on $N$ vertices, with degree $d$ at least $(\log N)^4$.
  We prove that the Green's function of the adjacency matrix and the
  Stieltjes transform of its empirical spectral measure are well approximated by Wigner's semicircle
  law, down to the optimal scale given by the typical eigenvalue spacing (up to a logarithmic correction).
  Aside from well-known consequences for the local eigenvalue distribution,
  this result implies the complete (isotropic) delocalization of all eigenvectors and a
  probabilistic version of quantum unique ergodicity.
\end{abstract}


\section{Introduction and results}

\subsection{Introduction}
\label{sec:intro}

Let $A$ be the adjacency matrix of a random $d$-regular graph on $N$ vertices.
For fixed $d \geq 3$, it is well known that as $N \to \infty$ the empirical
spectral measure of $A$ converges weakly to the \emph{Kesten-McKay law} \cite{MR0109367,MR629617},
with density
\begin{equation} \label{e:KM-unrescaled}
\frac{d}{d^2 - x^2} \frac{1}{2 \pi} \sqrt{[4 (d - 1) - x^2]_+}\,.
\end{equation}
Thus, the rescaled adjacency matrix $(d - 1)^{-1/2} A$ has asymptotic spectral density
\begin{equation} \label{e:KM-rescaled}
\varrho_d(x) \;\deq\; \pbb{1 + \frac{1}{d - 1} - \frac{x^2}{d}}^{-1} \frac{\sqrt{[4 - x^2]_+}}{2 \pi}\,.
\end{equation}
Clearly, $\varrho_d(x) \to \varrho(x)$ as $d\to\infty$, where $\varrho(x) \deq \frac{1}{2\pi} \sqrt{[4-x^2]_+}$
is the density of Wigner's \emph{semicircle law}.
The semicircle law is the asymptotic eigenvalue distribution of a random Hermitian matrix with independent (upper-triangular) entries
(correctly normalized and subject to mild tail assumptions).
From \eqref{e:KM-rescaled} it is natural to expect that,
for sequences of random $d$-regular graphs such that $d \to \infty$ as $N\to\infty$ simultaneously,
the spectral density of $(d-1)^{-1/2} A$ converges to the semicircle law.
This was only proved recently \cite{MR2999215} (in \cite{MR3025715} it was also shown with the
restriction that $d$ is only permitted to grow logarithmically in $N$).

In the study of universality of random matrix statistics, local
versions of the semicircle law and its generalizations have played a
crucial role; see for instance the survey \cite{MR2917064}.  The local
semicircle law is a far-reaching generalization of the weak
convergence to the semicircle law mentioned above. First, the local
law admits test functions whose support decreases with $N$ so that far
fewer than $N$ eigenvalues are counted, ideally only slightly more
than order $1$.
(In contrast, weak convergence of probability measures applies only to
macroscopic test functions counting an order $N$ eigenvalues). Second, the
local law controls \emph{individual matrix entries} of the Green's
function. Both of these improvements have proved of fundamental
importance for applications.
In particular, the local law established in this paper is a crucial input in \cite{1505.06700-aop},
where, with J.\  Huang, we prove that the local eigenvalue statistics of $A$ coincide with 
those of the Gaussian Orthogonal Ensemble; see also Section~\ref{sec:intro-discuss} below.
For Wigner matrices, i.e.\ Hermitian random matrices with independent
identically distributed upper-triangular entries, the semicircle law
is known to hold down to the optimal spectral scale $1/N$,
corresponding to the typical eigenvalue spacing, up to a logarithmic
correction.  In \cite{MR2999215,MR3025715,1304.4343,1305.1039}, it was
shown that the semicircle law (for $d \to \infty$) or the
Kesten-McKay law (for fixed $d$) holds for random $d$-regular graphs
on spectral scales that are slightly smaller than the macroscopic scale $1$
(typically by a logarithmic factor;
see Section \ref{sec:intro-discuss} below for more details).

In this paper we show that
$d$-regular graphs with degree $d$ at least $(\log N)^4$ obey
the semicircle law down to spectral scales $(\log N)^4/N$.
This scale is optimal up to the power of the logarithm.

From the perspective of random matrix theory, the adjacency matrix of
a random $d$-regular graph is a symmetric random matrix with
nonnegative integer entries constrained so that all row and column
sums are equal to $d$. These constraints impose nontrivial
dependencies among the entries.  For example, if the sum of the first
$k$ entries of a given row is $d$, the remaining entries of that row
must be zero.
Previous approaches to bypass this difficulty include local approximation of the 
random regular graph by a regular tree (for small degrees) and coupling to an Erd\H{o}s-R\'enyi graph (for large degrees).
These approaches have been shown to be effective for the study of several combinatorial properties,
as well as global spectral properties of random regular graphs.
However, they encounter serious difficulties when applied to the eigenvalue distribution on small scales
(see Section~\ref{sec:intro-discuss} below for more details).
Our strategy instead relies on a multiscale iteration of a self-consistent equation, in part
inspired by the approach for random matrices with independent entries initiated in \cite{MR2481753}
and significantly improved in a sequence of subsequent papers
(again see Section~\ref{sec:intro-discuss} for details).
In previous works on local laws for random matrices, independence of
the matrix entries plays a crucial role in deriving the self-consistent equation
(see e.g.\ \cite{MR3068390} for a detailed account).
While the independence of the matrix entries can presumably be
replaced by weak or short-range dependence, the dependence structure
of the entries of random regular graphs is global.
Thus, instead of independence, our approach uses the well known invariance of the
random regular graph under a dynamics of local switchings, via a \emph{local resampling} of vertex neighbourhoods.
We believe that our strategy of local resampling, using invariance under a local dynamics
combined with a multiscale iteration,
is generally applicable to the study of the local eigenvalue distribution of
random matrix models with constraints.

\paragraph{Notation}

We use $a=O(b)$ to mean that there exists an absolute constant $C>0$ such that $\abs{a} \leq C b$,
and $a \gg b$ to mean that $a \geq C b$ for some sufficiently large absolute constant $C>0$.
Moreover, we abbreviate $\qq{a,b} \deq [a,b] \cap \Z$.
We use the standard notations $a \wedge b \deq \min \{a,b\}$ and $a \vee b \deq \max \{a,b\}$.
Every quantity that is not explicitly a constant may depend on $N$, which we almost always omit from our notation.
Throughout the paper, we tacitly assume $N\gg 1$.

\subsection{Random regular graphs}
\label{sec:intro-models}

We establish the local law for the following three standard models of random $d$-regular graphs.

\paragraph{Uniform model}

Let $N$ and $d$ be positive integers such that $Nd$ is even. The uniform model is the uniform probability measure on 
the set of all simple $d$-regular graphs on $\qq{1,N}$. (Here, \emph{simple} means that the graph has no loops or multiple edges.)
Equivalently, its adjacency matrix $A$ is uniformly distributed over the
symmetric matrices with entries in $\h{0,1}$ such that all rows have sum $d$ and the diagonal entries are zero.

\paragraph{Permutation model}

Let $N$ be a positive integer and $d$ an even positive integer.
Let $\sigma_1, \dots, \sigma_{d/2}$ be independent uniformly
distributed permutations on $S_N$, the symmetric group of order $N$.  The
permutation model is the random graph on $N$ vertices obtained by
adding an edge $\{i, \sigma_\mu(i)\}$ for each $i \in \qq{1,N}$ and
$\mu \in \qq{1,d/2}$.
Its adjacency matrix $A$ is given by
\begin{equation} \label{e:A-PM}
A_{ij} \;\deq\; \sum_{\mu = 1}^{d/2} \pb{\ind{j = \sigma_\mu(i)} + \ind{i = \sigma_\mu(j)}}
\;=\; \sum_{\mu=1}^d \ind{j = \sigma_\mu(i)}
\,,
\end{equation}
with the convention that $\sigma_{d-\mu} =\sigma_\mu^{-1}$ for $d/2+1 \leq \mu \leq d$ in the second equality.
All vertices have even degree, and in general the graph may have loops as well as multiple edges.
Each loop contributes two to the degree of its incident vertex.

\paragraph{Matching model}
Let $N$ be an even positive integer and $d$ a positive integer.
Let $\sigma_1, \dots, \sigma_d$ be independent
uniformly distributed perfect matchings on $\qq{1,N}$. A perfect matching can be identified with a
permutation of $S_N$ whose cycles all have length two.
As in the permutation model,
a graph on $\qq{1,N}$ is obtained by adding an edge $\{i,\sigma_\mu(i)\}$ for all $i\in\qq{1,N}$ and $\mu \in \qq{1,d}$.
Thus, the corresponding adjacency matrix is again
\begin{equation} \label{e:A-MM}
  A_{ij} \;\deq\; \sum_{\mu=1}^d \ind{j = \sigma_\mu(i)} \,.
\end{equation}
Graphs of this model can have multiple edges but no loops.
Their degree $d$ is arbitrary, but their number of vertices must be even.

The models introduced above include simple graphs (uniform model),
graphs with loops and multiple edges (permutation model),
and graphs with multiple edges but no loops (matching model).
Throughout this paper, all statements apply to any of the above three models,
unless explicitly stated otherwise.
As discussed in Section~\ref{sec:intro-discuss} below,
our approach is quite general,
and applies to other models of random regular graphs as well.
For brevity, however, we give the details for the three representative models introduced above.

We shall give error bounds depending on the parameter
\begin{alignat}{2}
  \label{e:D-UM}
  D &\;\deq\; d \wedge \frac{N^2}{d^3}
  \qquad&&\text{(uniform model)}
  \,,\\
  \label{e:D-PMMM}
  D &\;\deq\; d \wedge \frac{N^2}{d}
  \qquad&&\text{(permutation and matching models)}
  \,.
\end{alignat}
In particular, for the uniform model, $D=d$ if $d \leq \sqrt{N}$,
and for the permutation and matching models, $D=d$ if $d \leq N$.
Throughout the paper, we make the tacit assumption $D \geq 1$,
which leads to the conditions $d\leq N^{2/3}$ for the uniform model
and $d \leq N^2$ for the permutation and matching models.

\subsection{Main result}
\label{sec:intro-results}
To state our main result, we first observe that the adjacency matrix $A$ of any $d$-regular graph
on $N$ vertices has the eigenvector $\f e \deq \smash{N^{-1/2}}(1,\dots,1)^*$
with eigenvalue $d$, and that (by the Perron-Frobenius theorem) all other eigenvalues are at most $d$ in absolute value.
The largest eigenvalue $d$ of the eigenvector $\f e$ is typically far from the other eigenvalues,
and it is therefore convenient to set it to be zero.
In addition, we rescale the adjacency matrix so that its eigenvalues are typically of order one.
Hence, instead of $A$ we consider
\begin{equation} \label{e:H}
  H \;\deq\; (d-1)^{-1/2} \pb{A - d \, \f e \f e^* }\,.
\end{equation}
Clearly, $A$ and $H$ have the same eigenvectors, and the spectra of $(d-1)^{-1/2} A$ and $H$ coincide on the subspace orthogonal to $\f e$.

Our main result is stated in terms of the Green's function (or the resolvent) of $H$, defined by
\begin{equation} \label{e:Gdef}
G(z) \;\deq\; (H - z)^{-1}
\end{equation}
for $z\in\C_+$. Here $\C_+ \deq \{ E+\ii\eta\col E \in \R, \eta > 0\}$ denotes the upper half-plane.
We always use the notation $z=E+\ii\eta$ for the real and imaginary parts of $z \in \C_+$,
and regard $E \equiv E(z)$ and $\eta \equiv \eta(z)$ as functions of $z$.

For $z \in \C_+$, let
\begin{equation} \label{e:mdef}
m(z) \;\deq\; \int \frac{\varrho(x)}{x - z} \, \dd x
\;=\; \frac{-z + \sqrt{z^2 - 4}}{2}
\end{equation}
be the Stieltjes transform of the semicircle law. Here the square root is chosen
so that $m(z) \in \C_+$ for $z \in \C_+$, or, equivalently,
to have a branch cut $[-2,2]$ and to satisfy $\sqrt{z^2-4} \sim z$ as $|z|\to\infty$.
We shall control the errors using the parameter
\begin{equation} \label{e:Phidef}
  \Phi(z)
  \;\deq\; \frac{1}{\sqrt{N \eta}} + \frac{1}{\sqrt{D}}
  \,,
\end{equation}
and the function
\begin{equation} \label{e:Fdef}
  F_z(r)
  \;\deq\;
  \qbb{\pbb{1+\frac{1}{\sqrt{|z^2-4|}} }r} \wedge \sqrt{r} 
  \,,
\end{equation}
where $r \in [0,1]$.
Away from the two edges $z=\pm 2$ of the support of the semicircle law, i.e.\
for $|z\pm 2| \geq \varepsilon$ for some $\varepsilon>0$,
the function $F$ is linearly bounded: $F_z(r) = O_\varepsilon(r)$.
Near the edges, $F_z(r) \leq \sqrt{r}$ provides a weaker bound.

We now state our main result.

\begin{theorem}[Local semicircle law] \label{thm:semicircle}
Let $G(z)$ be the Green's function \eqref{e:Gdef} of any of the models of
random $d$-regular graphs introduced in Section~\ref{sec:intro-models}.
Let $\xi \log \xi \gg (\log N)^2$ and $D \gg \xi^2$.
Then, with probability at least $1-\ee^{-\xi \log \xi}$, 
\begin{equation} \label{e:semicircle}
  \max_i \abs{G_{ii}(z) - m(z)} \;=\; O(F_z(\xi\Phi(z))) \,,
  \qquad
  \max_{i\neq j} \abs{G_{ij}(z)} \;=\; O(\xi\Phi(z))
  \,,
\end{equation}
simultaneously for all $z\in \C_+$ such that $\eta \gg \xi^2/N$.
\end{theorem}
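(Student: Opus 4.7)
The approach combines three ingredients: (i) a Schur-complement derivation of an approximate self-consistent equation for the diagonal entries $G_{ii}$, (ii) an invariance-based \emph{local resampling} of the neighbourhood of each vertex that substitutes for the independence-of-entries assumption used for Wigner matrices, and (iii) a multiscale iteration that descends from the macroscopic scale $\eta \sim 1$ down to $\eta \sim (\log N)^4 / N$ in small multiplicative steps. The stability function $F_z$ in the statement is the standard object governing the inversion of the self-consistent equation near $m(z)$: linear stability in the bulk and the weaker square-root stability near the edges $z = \pm 2$.

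The Schur complement applied to $H - z$ yields $G_{ii}(z)^{-1} = -z - Z_i$, where $Z_i$ is a quadratic form in the $i$-th row of $H$ against the minor $G^{(i)}(z)$, plus a small rank-one correction coming from the $d \f e \f e^*$ subtraction in \eqref{e:H}. The semicircle equation $-1/m(z) = z + m(z)$ suggests that establishing $\abs{Z_i - m(z)} = O(\xi \Phi(z))$ with high probability will imply $\abs{G_{ii} - m(z)} = O(F_z(\xi \Phi(z)))$ via a standard fixed-point perturbation argument. For Wigner-type matrices, concentration of $Z_i$ follows from the independence of the $i$-th row from $G^{(i)}$ combined with a large-deviation bound for quadratic forms. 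Here, the row (equivalently, the neighbourhood $N_i$) is strongly correlated with $G^{(i)}$ through the global $d$-regularity constraint, so no direct concentration argument is available.

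To circumvent this, I would resample $N_i$ using switchings that preserve the distribution of the graph: one exchanges a random subset of edges incident to $i$ with a collection of edges elsewhere of the appropriate type (edges of the simple graph in the uniform model, images of the generating permutations in the permutation model, or matching edges in the matching model). By stationarity of the law under these switchings, the resampled graph is again distributed according to the original model, yet the resampled neighbourhood is drawn, conditionally on its complement, from a large pool and therefore behaves as if independent of $G^{(i)}$ up to small perturbations. Moment estimates for $Z_i$ on the resampled graph, combined with stability of $G^{(i)}$ under the local perturbation caused by each individual switching, should yield the desired concentration with fluctuations of order $\xi \Phi(z)$. The parameter $D$ in \eqref{e:D-UM}--\eqref{e:D-PMMM} arises naturally as the minimum of $d$ (the size of the local neighbourhood) and a global quantity enumerating the available switchings. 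A parallel argument, starting from a Schur-complement expression for $G_{ij}$ involving the doubly-reduced resolvent $G^{(ij)}$ and the two rows indexed by $i$ and $j$, delivers $\abs{G_{ij}} = O(\xi \Phi(z))$; the Ward identity $\sum_k \abs{G_{ik}}^2 = \eta^{-1} \im G_{ii}$ serves as the $\ell^2$-bookkeeping tool throughout.

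Both the resampling-based concentration and the fixed-point stability step require an a priori $\ell^\infty$-bound on $G$ at the current spectral scale. The multiscale iteration supplies this input: starting at $\eta \sim 1$, where $\norm{G(z)} \leq \eta^{-1}$ is trivial, one reduces $\eta$ by small factors, feeding the bound at scale $\eta_k$ into the argument to produce an improved bound at scale $\eta_{k+1}$; order $\log N$ iterations suffice to descend to $\eta \sim \xi^2/N$. The main obstacle will be step (ii): designing switching schemes for each of the three models so that (a) the law of the graph is exactly preserved, (b) the switched neighbourhood is sufficiently decorrelated from $G^{(i)}$ to yield genuine quadratic-form-like concentration, and (c) the resulting moment bounds are sharp enough to match $\xi \Phi(z)$ rather than any cruder scale. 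The combinatorial enumeration of valid switching moves, together with the quantitative control of how $G^{(i)}$ is perturbed under each individual switching, will be the most delicate parts of the argument.
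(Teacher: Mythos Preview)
Your high-level architecture --- invariance-based local resampling via switchings, combined with a multiscale descent in $\eta$ taking $O(\log N)$ multiplicative steps --- matches the paper exactly, and you have correctly identified that this is where the real work lies. However, the technical implementation you sketch differs from the paper's in two substantive ways.

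First, the paper does \emph{not} use the Schur complement or the minors $G^{(i)}$, $G^{(ij)}$ at all. Instead it works directly with the identity $1 + z G_{11} = (HG)_{11} = (d-1)^{-1/2}\sum_\mu (G_{\alpha_\mu 1} - \E^{[i]} G_{i1})$, and uses the resampling to compute the $\cal F_0$-conditional expectation of each summand $G_{\alpha_\mu 1}$: since under resampling $G_{\alpha_\mu 1}$ has the same law as $\tilde G_{\tilde a_\mu 1}$ with $\tilde a_\mu$ approximately uniform, a resolvent expansion of $\tilde G - G$ combined with the Ward identity yields $\E_{\cal F_0}(G_{\alpha_\mu 1} - \E^{[i]}G_{i1}) \approx -(d-1)^{-1/2} \E_{\cal F_0}(sG_{11})$, from which the self-consistent equation $1+(s+z)G_{11}=O(\xi\Phi)$ follows directly. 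The off-diagonal estimate uses the algebraic identity $G_{12} = G_{12}(HG)_{11} - G_{11}(HG)_{12}$ and the same mechanism, never introducing a doubly-reduced resolvent. Your Schur-complement route would force you to control $G^{(1)}$, which is the Green's function of a graph that is no longer $d$-regular (the former neighbours of $1$ have degree $d-1$); this is not fatal, but it is an extra layer the paper deliberately avoids.

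Second, your concentration step is phrased as ``moment estimates for $Z_i$,'' but moment bounds alone will not deliver the $e^{-\xi\log\xi}$ tail. The paper instead proves concentration of $G_{ij}$ around $\E_{\cal F_0}G_{ij}$ by first \emph{parametrizing the probability space} so that, conditioned on an underlying variable $\theta$, the $d$ neighbours of vertex $1$ are governed by independent variables $u_1,\dots,u_d$ (this is the precise content of the switching construction), then running a martingale argument along the filtration $\cal F_\mu = \sigma(\theta,u_1,\dots,u_\mu)$. The bounded-differences property $|G_{ij}-\E_{\cal G_\mu}G_{ij}|=O(d^{-1/2})$ comes from the resolvent identity and the fact that a single switching changes only $O(1)$ entries of $A$; the crucial improvement --- a conditional-variance bound $\E_{\cal G_\mu}|G_{ij}-\E_{\cal G_\mu}G_{ij}|^2 = O(d^{-1}\Phi^2)$, smaller by an extra factor $\Phi^2$ --- comes from the Ward identity applied to the approximately uniform switched index. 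These two inputs feed into Prokhorov's $\arcsinh$ martingale inequality (not Azuma), which is what produces the $e^{-\xi\log\xi}$ probability with fluctuation scale $\xi\Phi$ rather than $\xi\sqrt{d}\cdot d^{-1/2}=\xi$. Your proposal does not make this martingale structure or the role of the conditional-variance improvement explicit, and without it the sharp $\xi\Phi$ scale is not reachable.
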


The condition $D \gg \xi^2$ in the statement of Theorem~\ref{thm:semicircle}
implies the following restrictions on the degree of the graphs:
\begin{alignat}{2}
  \label{e:intro-dUM}
  \xi^2 &\;\ll\; d \;\ll\;
  \pbb{\frac{N}{\xi}}^{2/3} \qquad&&\text{(uniform model)}
  \,,\\
  \label{e:intro-dPMMM}
  \xi^2 &\;\ll\; d \;\ll\;
  \pbb{\frac{N}{\xi}}^2 \qquad&&\text{(permutation and matching models)}
  \,.
\end{alignat}
Thus, for the smallest possible degree $d$ and the smallest spectral scale $\eta$
for which Theorem~\ref{thm:semicircle} applies,
the parameter $\xi$ needs to be chosen as small as permitted, which is slightly smaller than $(\log N)^2$.
In particular, the local semicircle law holds  for all
$\eta \geq (\log N)^4/N$ and all $d \geq (\log N)^4$ satisfying $d \leq N^{2/3} (\log N)^{-4/3}$
for the uniform model and $d \leq N^2 (\log N)^{-4}$ for the permutation and matching models.

The estimates \eqref{e:semicircle} have
a number of well-known consequences for the eigenvalues and eigenvectors of $H$, and hence also for those of $A$.
Some of these are discussed below.
In fact, by the exchangeability of random regular graphs,
Theorem~\ref{thm:semicircle} actually implies an \emph{isotropic} version of the local semicircle law,
as well as corresponding isotropic versions of its consequences for the eigenvectors,
such as isotropic delocalization and a probabilistic version of local quantum unique ergodicity. 
We discuss the isotropic modifications in Section~\ref{sec:isotropic}, and restrict ourselves here to the standard basis of $\R^N$.

For instance, Theorem~\ref{thm:semicircle} implies that all eigenvectors are completely delocalized.

\begin{corollary}[Eigenvector delocalization] \label{cor:eigenvectors}
  Under the assumptions of Theorem~\ref{thm:semicircle}, with probability at least  $1-\ee^{-\xi\log\xi}$,
  all $\ell^2$-norm\-al\-ized eigenvectors of $A$ or $H$ have $\ell^\infty$-norm of size $O(\xi/\sqrt{N})$.
\end{corollary}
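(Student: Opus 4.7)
The plan is to deduce the delocalization bound from the diagonal part of the local semicircle law by the standard spectral-decomposition argument. Let $(u_\alpha)_{\alpha=1}^N$ denote an orthonormal eigenbasis of $H$ with eigenvalues $(\lambda_\alpha)$. For any $z = E + \ii \eta \in \C_+$ and any vertex $i \in \qq{1,N}$, the spectral theorem gives
\begin{equation*}
  \im G_{ii}(z) \;=\; \sum_{\alpha=1}^N \frac{\eta\, |u_\alpha(i)|^2}{(\lambda_\alpha - E)^2 + \eta^2}\,.
\end{equation*}
Evaluating at $E = \lambda_\alpha$ and dropping all terms but the $\alpha$-th in the sum yields the pointwise lower bound $\im G_{ii}(\lambda_\alpha + \ii\eta) \geq |u_\alpha(i)|^2 / \eta$.

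Next I would choose $\eta$ just above the threshold of validity of Theorem~\ref{thm:semicircle}, namely $\eta \deq C \xi^2/N$ for a sufficiently large constant $C$. Since $|m(z)| = O(1)$ uniformly on $\C_+$, and since $F_z(\xi \Phi(z)) \leq \sqrt{\xi \Phi(z)} = O(1)$ under the hypotheses (the second term in $\Phi$ is controlled by $D \gg \xi^2$, and the first by the choice of $\eta$), the first estimate of \eqref{e:semicircle} gives $|G_{ii}(z) - m(z)| = O(1)$ and hence $\im G_{ii}(z) = O(1)$, uniformly in $i$ and simultaneously for all such $z$, on the event of probability at least $1 - \ee^{-\xi \log \xi}$ provided by Theorem~\ref{thm:semicircle}. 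Combining with the lower bound above yields $|u_\alpha(i)|^2 = O(\eta) = O(\xi^2/N)$ for every $\alpha$ and $i$, which is the claim for eigenvectors of $H$.

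To transfer the bound from $H$ to $A$, recall the defining relation \eqref{e:H}: the non-$\f e$ eigenvectors of $A$ coincide exactly with those of $H$ (only the eigenvalues are rescaled and shifted), while the remaining eigenvector $\f e = N^{-1/2}(1,\dots,1)^*$ already has $\ell^\infty$-norm $N^{-1/2} \leq \xi/\sqrt{N}$. So the same bound holds for all eigenvectors of $A$.

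The only mildly subtle point is to check that the fluctuation bound $F_z(\xi \Phi(z)) = O(1)$ holds uniformly in $z$ on the spectrum; this is automatic from the trivial upper bound $F_z(r) \leq \sqrt{r}$ together with $\xi \Phi(z) = O(1)$ at our chosen scale $\eta \asymp \xi^2/N$ and $D \gg \xi^2$. Beyond this, the argument is a routine application of Theorem~\ref{thm:semicircle} and requires no further input; there is no real obstacle.
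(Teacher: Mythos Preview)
Your argument is correct and follows essentially the same route as the paper: both use the spectral-decomposition lower bound $|u_\alpha(i)|^2 \leq \eta\, \im G_{ii}(\lambda_\alpha + \ii\eta)$, set $\eta = C\xi^2/N$, and invoke the diagonal bound from Theorem~\ref{thm:semicircle} together with $|m| \leq 1$ and $F_z(r) \leq \sqrt{r}$ to conclude $\im G_{ii} = O(1)$. The paper simply notes at the outset that $A$ and $H$ have the same eigenvectors, so your separate treatment of $\f e$ is unnecessary but harmless.
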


\begin{proof}
Since $A$ and $H$ have the same eigenvectors, it suffices to consider $H$.
Let $\f v_\alpha = (v_{\alpha,i})_{i = 1}^N$, $\alpha = 1, \dots, N$
denote an orthonormal basis of eigenvectors with $H \f v_\alpha = \lambda_\alpha \f v_\alpha$.
Let $\xi$ be as in Theorem \ref{thm:semicircle}, and set $\eta \deq C \xi^2 / N$ for some large enough constant $C$. 
Note that
\begin{equation*}
  v_{\alpha,i}^2 
  \;\leq\;
  \sum_\beta \frac{\eta^2v_{\beta,i}^2}{(\lambda_\beta-\lambda_\alpha)^2+\eta^2}
  \;=\;
  \eta \im G_{ii}(\lambda_\alpha + \ii\eta)\,.
\end{equation*}
By Theorem~\ref{thm:semicircle}, there exists an event of probability at least $1-\ee^{-\xi \log \xi}$
such that for all $i$ and $\alpha$ the right-hand side above is bounded by
\begin{equation*}
\eta \im G_{ii}(\lambda_\alpha + \ii \eta) \;\leq\; \eta |m(\lambda_\alpha + \ii \eta)| + O(\eta\sqrt{\xi\Phi (\lambda_\alpha + \ii \eta)})
  \;\leq\; 2\eta\,,
\end{equation*}
where we used the bound
\begin{equation} \label{e:mbd}
  |m(z)| \;\leq\; 1\,,
\end{equation}
which follows easily from \eqref{e:mdef}.
Thus $v_{\alpha,i}^2 \leq 2\eta = O(\xi^2/N)$ as claimed, concluding the proof.
\end{proof}

Next, Theorem \ref{thm:semicircle} yields a semicircle law on small scales for the empirical spectral measure of $H$.
The Stieltjes transform of the empirical spectral measure of $H$ is defined by
\begin{equation} \label{e:sdef}
  s(z) \;\deq\; \frac{1}{N} \sum_{\alpha=1}^N \frac{1}{\lambda_\alpha-z}
  \;=\; \frac{1}{N} \sum_{i=1}^N G_{ii}(z)\,,
\end{equation}
where $\lambda_1, \dots, \lambda_N$ are the eigenvalues of $H$.
Theorem~\ref{thm:semicircle} implies that
\begin{equation} \label{e:s-m}
s(z) \;=\; m(z) + O(F_z(\xi\Phi(z)))
\end{equation}
with probability at least $1-\ee^{-\xi \log \xi}$.
Following a standard application of the Helffer-Sj\"ostrand functional calculus along the lines of \cite[Section 8.1]{MR3098073},
the following result may be deduced from \eqref{e:s-m}.

\begin{corollary}[Semicircle law on small scales] \label{cor:sc}
Let
\begin{equation*}
\varrho(I) \;\deq\; \int_I \varrho(x) \, \dd x \,, \qquad \nu(I) \;\deq\; \frac{1}{N} \sum_{\alpha = 1}^N \ind{\lambda_\alpha \in I}
\end{equation*}
denote the semicircle and empirical spectral measures, respectively, applied to an interval $I$.
Fix a constant $K > 0$. Then, under the assumptions of Theorem \ref{thm:semicircle}, for any interval $I \subset [-K,K]$ we have
\begin{equation} \label{sc_small_scales}
\nu(I) - \varrho(I) \;=\; O\qBB{\xi \, \frac{\abs{I}}{\sqrt{\kappa(I) + \abs{I}}} \pbb{\frac{1}{\sqrt{D}} + \frac{1}{\sqrt{N \abs{I}}}} + \frac{\xi^2}{N}}
\end{equation}
with probability at least  $1-\ee^{-\xi \log \xi}$,
where $\abs{I}$ denotes the length of $I$ and $\kappa(I) \deq \dist(I, \{-2,2\})$ the distance from $I$ to the spectral edges $\pm 2$.
\end{corollary}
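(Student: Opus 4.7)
The plan is to deduce \eqref{sc_small_scales} from the local law \eqref{e:s-m} via the Helffer--Sj\"ostrand functional calculus, along the lines of \cite[Section~8.1]{MR3098073}. The strategy is to smooth the indicator $\indb{I}$ on a scale close to the typical eigenvalue spacing, represent the smoothed eigenvalue count as an integral of $s(z)-m(z)$, and insert the pointwise bound $\absb{s(z)-m(z)} = O(F_z(\xi\Phi(z)))$.

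Concretely, set $\eta_\ast \deq \xi^2/N$ and pick $f_I \in C^2_c(\R)$ with $f_I\equiv 1$ on $I$, $\supp f_I$ contained in the $\eta_\ast$-neighbourhood of $I$, and $\normb{f_I^{(k)}}_\infty = O(\eta_\ast^{-k})$ for $k=1,2$, with derivatives supported in two windows $J_\pm$ of length $O(\eta_\ast)$ around the endpoints of $I$. Writing $\nu(f_I) \deq N^{-1}\sum_\alpha f_I(\lambda_\alpha)$, the difference $\nu(I) - \nu(f_I)$ is bounded by the fraction of eigenvalues in $J_+\cup J_-$; the standard inequality $\nu(J_\pm) \leq C \eta_\ast \im s(E_\pm + \ii \eta_\ast)$ combined with Theorem~\ref{thm:semicircle} and $\im m = O(1)$ gives $O(\xi^2/N)$. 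Together with the analogous bound for $\varrho$ this accounts for the last summand of \eqref{sc_small_scales}. The remaining main term is expressed via Helffer--Sj\"ostrand as
\begin{equation*}
  \nu(f_I) - \varrho(f_I) \;=\; \frac{1}{\pi}\int_\C \partial_{\bar z}\tilde f_I(z)\,\pb{s(z)-m(z)}\,\dd^2 z\,,
\end{equation*}
where $\tilde f_I$ is an almost-analytic extension with $\absb{\partial_{\bar z}\tilde f_I(z)} = O(\absb{f_I''(E)}\,\abs{\eta})$ on $\abs{\eta}\leq 1$.

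Standard integration-by-parts manipulations following \cite[Section~8.1]{MR3098073} reduce this double integral, up to an $O(\eta_\ast)$ remainder controlled by \eqref{e:s-m} at the scale $\eta_\ast$, to an integral of $(s-m)$ evaluated at the single scale $\eta\sim\abs{I}$ over the support of $f_I$. At this scale $\xi\Phi(E+\ii\abs{I}) = \xi(D^{-1/2} + (N\abs{I})^{-1/2})$, so the main obstacle becomes integrating $F_{E+\ii\abs{I}}(\xi\Phi)$ against a density of total mass $O(\abs{I})$ supported in $I$, uniformly up to the spectral edges. Away from $\pm 2$ the linear bound $F_z(r)\leq (1+\abs{z^2-4}^{-1/2})r$ reduces the estimate to evaluating $\int_I (1+\abs{E^2-4}^{-1/2})\,\dd E$, which by a direct computation is of order $\abs{I}/\sqrt{\kappa(I)+\abs{I}}$; this yields the first summand of \eqref{sc_small_scales}. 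In the edge regime the weaker bound $F_z(r)\leq\sqrt{r}$ is used, and the corresponding contribution $\sqrt{\xi\Phi}\cdot \abs{I}$ is easily checked to be subsumed in the same expression using $\kappa(I)+\abs{I}\geq \abs{I}$. Collecting all contributions yields \eqref{sc_small_scales}.
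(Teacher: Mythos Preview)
Your approach is essentially the same as the paper's: the paper gives no explicit proof of this corollary at all, simply stating that it follows from \eqref{e:s-m} by a standard application of the Helffer--Sj\"ostrand functional calculus along the lines of \cite[Section~8.1]{MR3098073}, which is precisely the route you sketch. Your write-up in fact supplies considerably more detail than the paper does; the only places where a full proof would need a bit more care are the reduction ``to a single scale $\eta\sim|I|$'' (the actual Helffer--Sj\"ostrand computation involves integrating over all scales $\eta\in[\eta_\ast,1]$, though the dominant contribution indeed behaves as you describe) and the edge-regime paragraph, where it is cleaner to simply use the integrable linear bound $(1+|E^2-4|^{-1/2})$ throughout rather than switch to $\sqrt{r}$.
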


Corollary \ref{cor:sc} says in particular that, in the bulk
spectrum, the empirical spectral density of $H$ is well approximated
by the semicircle law down to spectral scales $\xi^2 / N$. Indeed, fix
$\epsilon > 0$ and suppose that $I \subset [-2+\epsilon, 2 -
\epsilon]$, so that $\kappa(I) \geq \epsilon$. Then the right-hand
side of \eqref{sc_small_scales} is much smaller than $\varrho(I)$
provided that $\abs{I} \gg \xi^2 / N$. We deduce that the distribution
of the eigenvalues of $H$ is very regular all the way down to the
microscopic scale. Moreover, clumps of eigenvalues containing more
than $(\log N)^4$ eigenvalues are ruled out with high probability: any
interval of length at most $(\log N)^4 / N$ contains with high
probability at most $O((\log N)^4)$ eigenvalues.

\begin{remark}
The estimate \eqref{sc_small_scales} deteriorates near the edges, when
$\kappa(I)$ is small. Here we do not aim for an optimal edge
behaviour, and \eqref{sc_small_scales} can in fact be
improved near the edges by a more refined application of \eqref{e:s-m}.
For example, from \eqref{e:s-m} we also obtain the estimate
\begin{equation} \label{sc_small_scales_edge}
\nu(I) - \varrho(I) \;=\; O\qBB{\sqrt{\xi} \abs{I} \pbb{\frac{1}{D^{1/4}} + \frac{1}{(N \abs{I})^{1/4}}} + \frac{\xi^2}{N}}
\end{equation}
with probability at least  $1-\ee^{-\xi \log \xi}$, which is stronger than \eqref{sc_small_scales} when $\abs{I}$ and $\kappa(I)$ are small.
Moreover, as explained in Remark \ref{rem:edge} below,
\eqref{sc_small_scales} itself, and hence estimates of the form \eqref{sc_small_scales_edge}, can be improved near the edges.
We do not pursue these improvements here.
\end{remark}

\begin{remark}
Theorem \ref{thm:semicircle} has a simple extension in which the condition $\eta \gg \xi^2 / N$ is dropped.
Indeed, using Lemma \ref{lem:Gbd} below, it is easy to conclude that,
under the assumptions of Theorem \ref{thm:semicircle},
for any $z \in \C_+$ with $\eta = O(\xi^2/N)$ we have the estimate
$\abs{G_{ij}(z) - \delta_{ij} m(z)} = O\pb{\frac{\xi^2}{N \eta}}$ with probability at least $1-\ee^{-\xi \log \xi}$.
\end{remark}

\begin{remark} \label{rem:edge}
Up to the logarithmic correction $\xi$, we expect that the estimates \eqref{e:semicircle}
cannot be improved in the bulk of the support of the semicircle law, i.e.\ for $|E| \leq 2-\varepsilon$.
On the other hand, \eqref{e:semicircle} is not optimal for $|E| \geq 2-\varepsilon$.
For example, a simple extension of our proof allows one to show that the term $\xi\Phi(z)$ on the right-hand sides of \eqref{e:semicircle} can be
replaced by the smaller bound
\begin{equation} \label{e:Psibd}
\xi \sqrt{\frac{\im m(z)}{N\eta}} + \frac{\xi}{\sqrt{D}} + \pbb{\frac{\xi^2}{N\eta}}^{2/3} \,.
\end{equation}
In order to focus on the main ideas of this paper, we give the proof of the simpler estimate \eqref{e:semicircle}.
In Appendix \ref{app:Psi}, we sketch the required changes to obtain the improved error bound \eqref{e:Psibd}.
The bound \eqref{e:semicircle} is sufficient for most applications, including Corollaries~\ref{cor:eigenvectors}--\ref{cor:sc}.
Finally, we remark that all of our error bounds are designed with the regime of bounded $z$ in mind;
as $z \to \infty$, much better bounds can be easily obtained. We do not pursue this direction here.
\end{remark}

\subsection{Related results}
\label{sec:intro-discuss}

We conclude this section with a discussion of some related results. The convergence of the empirical
spectral measure of a random $d$-regular graph has been previously established on spectral scales
slightly smaller than the macroscopic scale $1$. More precisely, in \cite[Theorem 1.6]{MR2999215},
the semicircle law is established down to the spectral scale $d^{-1/10}$ for $d \to \infty$.
In \cite[Theorem 2 and Remark 1]{MR3025715}, the semicircle law is established down to the
spectral scale $(\log N)^{-1}$ for $d =(\log N)^\gamma$ with $\gamma > 1$, and the spectral scale
$1/d$ for $d = (\log N)^\gamma$ with $\gamma < 1$.
In \cite[Theorem 5.1]{1304.4343}, it is shown that for fixed $d$ the Kesten-McKay law holds down
to the spectral scale $(\log N)^{-c}$ for some $c>0$.
Finally, in \cite[Theorem 2.1]{1305.1039}, it is shown that for fixed $d$ the Kesten-McKay law
holds down to the spectral scale $(\log N)^{-1}$.

The results of \cite{MR2999215} were proved by coupling to an Erd\H{o}s-R\'enyi graph.
The probability that an Erd\H{o}s-R\'enyi graph in which each edge is chosen independently with
probability $p$ is $d$-regular, with $d=pN$, is at least $\exp(-c N \log d)$.
Hence, any statement that holds for the Erd\H{o}s-R\'enyi graph with probability greater
than $1 - \exp(-c N \log d)$ also holds for the random $d$-regular graph.
While global spectral properties can be established with such high probabilities,
super-exponential error probabilities are not expected to hold for local spectral properties.

In a related direction,
contiguity results imply that almost sure asymptotic properties of various models
of random regular graphs can be related to each other (see e.g.\ \cite{MR1725006} for details).
Such results are difficult to extend to the case where $d$ grows with $N$,
for example because the probability that a graph of the permutation model is simple
tends to zero roughly like $\exp(-c d^2)$. This probability is smaller than the error probabilities
that we establish in this paper.
Our proof does not rely on a comparison between different models, but works directly with each
model. It is rather general, and may in particular be adapted to
other models of random regular graphs.
For instance, by an argument similar to (but somewhat simpler than) the one given in Section \ref{sec:UM},
we may prove Theorem~\ref{thm:semicircle} for the configuration model of random regular graphs.
Moreover, by a straightforward extension of our method, our results remain valid for arbitrary
superpositions of the models from Section~\ref{sec:intro-models}. For example, we can consider
a regular graph defined as the union of several independent uniform regular graphs of lower degree.
(In fact, the matching model is the union of $d$ independent copies of a uniform $1$-regular graph).

The results of \cite{MR3025715,1304.4343,1305.1039} were obtained by local approximation by a tree.
It is well known that, locally around almost all vertices, a random $d$-regular graph is 
well approximated by the $d$-regular tree, at least for fixed $d \geq 3$.
The Kesten-McKay law is the spectral measure of the infinite $d$-regular tree,
and many previous results on the spectral properties of $d$-regular graphs use some form of
local approximation by the $d$-regular tree.
In particular, it is known that the spectral measure of any sequence of graphs converging
locally to the $d$-regular tree converges to the Kesten-McKay law; see for instance \cite{MR2724665}.
Moreover, in \cite{MR3038543}, under an assumption on the number of small cycles (corresponding approximately to a
locally tree-like structure and satisfied with high probability by random regular graphs), it is proved
that eigenvectors cannot be localized in the following sense: if for some $\ell^2$-normalized eigenvector $\f v = (v_i)_{i = 1}^N$
a set $B \subset \qq{1,N}$ satisfies $\sum_{i\in B} |v_i|^2 \geq \varepsilon >0$,
then $|B| \geq N^{\delta}$ with high probability for some small $\delta \propto \varepsilon^2$.
In comparision, for a random $d$-regular graph with $d \geq (\log N)^4$, Corollary~\ref{cor:eigenvectors} implies
that if a set $B$ has $\ell^2$-mass $\varepsilon>0$ then
$|B| \geq \varepsilon N(\log N)^{-4}$ with high probability, which is optimal up to the power of the logarithmic correction.
Furthermore, in \cite{1304.4343}, for $d$-regular expander graphs with local tree structure, for fixed $d \geq 3$,
a graph version of the quantum ergodicity theorem is proved:
it is shown that averages over eigenvectors whose eigenvalues lie in an interval containing at least $N (\log N)^{-c}$
eigenvalues converge to the uniform distribution, along with a version of the Kesten-McKay
law at spectral scales slightly smaller (by a related logarithmic factor) than the macroscopic scale $1$.
For random regular graphs, also using the local tree approximation,
similar estimates for eigenvalues on scales of order roughly $(\log N)^{-c}$ were also established in \cite{MR3025715,1305.1039}.
In all of these works, the logarithmic factor arises as the radius of the largest neighbourhood where the tree approximation holds,
which is of order $\log_dN$.

Our proof does not use the tree approximation.
Instead, we use that a local resampling using appropriately chosen \emph{switchings} leaves the 
random regular graphs from Section~\ref{sec:intro-models} invariant.
Switchings of random regular graphs were introduced to prove enumeration
results in \cite{MR790916}; see also \cite{MR1725006} for a survey of subsequent developments.
Switchings are also commonly used for simulating random regular graphs using Monte Carlo methods;
see e.g.\ \cite{MR2334585} and references therein.
Recently, switchings were employed to
bound the singularity probability of directed random regular graphs \cite{1411.0243}.%

For $d$-regular graphs, the value of second largest eigenvalue $\lambda_2$ is of particular interest.
At least for fixed $d \geq 3$, it was conjectured that for almost all random $d$-regular graphs
we have $\lambda_2 = 2\sqrt{d-1} + o(1)$ with high probability \cite{MR875835}.
For fixed $d$, this conjecture was proved in \cite{MR2437174}, following several larger bounds (for which references are given in \cite{MR2437174}).
Very recently, the results of\cite{MR2437174} were generalized and their proofs simplified in \cite{MR3385636,Bord15}.
For the permutation model with $d\to\infty$ as $N \to\infty$,
the best known bound is $\lambda_2 = O(\sqrt{d})$ \cite{FriedmanKahnSzemeredi}
(see \cite[Theorem~2.4]{MR3078290} for a more detailed proof).

Finally, it is believed that the eigenvalues of random $d$-regular graphs obey random matrix statistics as soon as $d \geq 3$.
There is numerical evidence that
the local spectral statistics in the bulk of the spectrum are governed by those of the Gaussian Orthogonal Ensemble (GOE) \cite{MR2647344,MR1691538},
and further that the distribution of the appropriately rescaled second largest eigenvalue $\lambda_2$
converges to the Tracy-Widom distribution of the GOE \cite{MR2433888}.

In \cite{1505.06700-aop}, with J.\ Huang, we prove that GOE eigenvalue statistics hold in the bulk
for the uniform random $d$-regular graph with degree
$d \in [N^\alpha, N^{2/3-\alpha}]$ for arbitrary $\alpha>0$.
Here, the lower bound on the degree is of purely technical
nature, and we believe that the results of \cite{1505.06700-aop} can be
established with the same method under the weaker assumption $d \geq (\log N)^{O(1)}$.
The local law proved in this paper, in addition to the results of \cite{1504.03605,1504.05170},
is an essential input for the proof in \cite{1505.06700-aop}.

For Erd\H{o}s-R\'enyi graphs, in which each edge is chosen independently with
probability $p$, the local semicircle was established under the condition $pN \geq (\log N)^{O(1)}$ in \cite{MR3098073}.
Moreover, random matrix statistics for both the bulk eigenvalues
and the second largest eigenvalue were established in \cite{MR2964770} under the condition
$pN \geq N^{2/3+\alpha}$ for arbitrary $\alpha>0$.
For random matrix statistics of the bulk eigenvalues,
the lower bound on $pN$ was recently extended to $pN \geq N^{\alpha}$ for any $\alpha>0$
in \cite{1504.05170},
and GOE statistics for the eigenvalue gaps was also established.
Previous results on the spectral statistics of Erd\H{o}s-R\'enyi graphs are discussed in \cite{MR2964770,MR3098073,MR2999215}.

\section{Preliminaries and the self-improving estimate}
\label{sec:outline}

In this section we introduce some basic tools and definitions on which our proof relies, and state a self-improving estimate, Proposition \ref{prop:Lambda}, from which Theorem \ref{thm:semicircle} will easily follow. The rest of this paper will be devoted to the proof of Proposition \ref{prop:Lambda}.

From now on we frequently omit the spectral parameter $z$ from our notation, and write $G \equiv G(z)$ and so on.
The spectral representation of $G$ implies the trivial bound
\begin{equation} \label{e:Getabd}
  |G_{ij}| \;\leq\; \frac{1}{\eta}\,.
\end{equation}
We shall also use the \emph{resolvent identity}: for invertible matrices $A,B$,
\begin{equation} \label{e:resolv}
  A^{-1} - B^{-1} \;=\;  A^{-1} (B-A) B^{-1} \,.
\end{equation}
In particular, applying \eqref{e:resolv} to $G - G^{*}$, we obtain
the \emph{Ward identity}
\begin{equation} \label{e:Ward}
  \sum_{k = 1}^{N} |G_{ik}|^2 \;=\; \frac{\im G_{ii}}{\eta} \,.
\end{equation}
Assuming $\eta \gg \frac{1}{N}$, \eqref{e:Ward} shows that the squared $\ell^2$-norm
$\frac{1}{N} \sum_{k=1}^N |G_{ik}|^2$ is smaller by the factor $\frac{1}{N\eta} \ll 1$
than the diagonal element $|G_{ii}|$.
This identity was first used systematically in the
proof of the local semicircle law for random matrices in \cite{MR2981427}.

The core of the proof is an induction on the spectral scale, where information about $G$ is passed on from the scale $\eta$ to the scale $\eta/2$.
(See Remark~\ref{rk:bootstrapping} below for a comparison of this induction
with the bootstrapping/continuity arguments used in the proofs of local laws in models with independent entries.)
The next lemma is a simple deterministic result that allows us to propagate bounds on the Green's function on a certain scale to weaker bounds on a smaller scale.
This result will play a crucial role in the induction step. In order to state it, we introduce the random error parameters
\begin{equation} \label{e:Gammadef}
  \Gamma \;\equiv\;
  \Gamma(z) \;\deq\; \max_{i,j} \abs{G_{ij}(z)}  \vee 1\,,
  \qquad
  \Gamma^* \;\equiv\;
  \Gamma^*(z) \;\deq\; \sup_{\eta' \geq \eta} \Gamma(E+\ii \eta')\,.
\end{equation}

\begin{lemma} \label{lem:Gbd}
  For any $M>1$ and $z \in \C_+$ we have
  $\Gamma(E+\ii\eta/M) \leq M \Gamma(E+\ii\eta)$.
\end{lemma}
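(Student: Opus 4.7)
The plan is to combine the resolvent identity~\eqref{e:resolv} with the Ward identity~\eqref{e:Ward} to bound $G(z')$ at the smaller scale $z' = E + \ii\eta/M$ in terms of $G(z)$ at the larger scale $z = E + \ii\eta$. Applying~\eqref{e:resolv} to $A = H-z'$ and $B = H - z$ gives
\begin{equation*}
  G_{ij}(z') \;=\; G_{ij}(z) + (z' - z) \sum_k G_{ik}(z)\, G_{kj}(z')
\end{equation*}
for each $i,j$. Taking absolute values, using $|G_{ij}(z)| \leq \Gamma(z)$ and $|z' - z| = \eta(1 - 1/M)$, applying Cauchy--Schwarz to the sum over $k$, and invoking~\eqref{e:Ward} at both spectral parameters to rewrite $\sum_k |G_{ik}(z)|^2 = \im G_{ii}(z)/\eta$ and $\sum_k|G_{kj}(z')|^2 = M\,\im G_{jj}(z')/\eta$ --- so that the factors of $\eta$ cancel against $|z'-z|$ --- I obtain, after bounding $\im G_{kk} \leq |G_{kk}| \leq \Gamma$,
\begin{equation*}
  |G_{ij}(z')| \;\leq\; \Gamma(z) + (1-1/M)\sqrt{M\, \Gamma(z)\, \Gamma(z')}\,.
\end{equation*}
Taking the maximum over $i,j$ on the left, and noting that the right-hand side is already $\geq \Gamma(z) \geq 1$, the same bound holds with $\max_{i,j}|G_{ij}(z')|$ replaced by $\Gamma(z')$.

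The remainder is purely algebraic. Dividing by $\Gamma(z) > 0$ and setting $u \deq \sqrt{\Gamma(z')/\Gamma(z)}$, the inequality reduces to $u^2 - (1 - 1/M)\sqrt{M}\,u - 1 \leq 0$. A direct check --- using the identity $(1-1/M)^2 M + 4 = (\sqrt{M} + 1/\sqrt{M})^2$ --- shows that the positive root of this quadratic is exactly $u = \sqrt{M}$, so $u \leq \sqrt{M}$, which is equivalent to $\Gamma(z') \leq M\,\Gamma(z)$, as required.

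There is no substantial obstacle here: the lemma is a short deterministic consequence of the Ward identity. The only point that requires a moment of thought is that the intermediate bound is implicit in $\Gamma(z')$, since Ward at the finer scale $\eta/M$ inevitably reintroduces a factor of $\Gamma(z')$; solving the resulting quadratic is exactly what produces the clean factor $M$ in the statement. The cruder alternative of bounding $\sum_k|G_{kj}(z')|^2$ by $N$ times the $\ell^\infty$-norm would cost a factor of $N$, so the use of the Ward identity at both scales is essential.
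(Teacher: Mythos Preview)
Your proof is correct and takes a genuinely different route from the paper. The paper argues infinitesimally: it shows that $\Gamma$ is locally Lipschitz in $\eta$ with $\abs{\partial_\eta \Gamma} \leq \Gamma/\eta$ (via the same resolvent--Cauchy--Schwarz--Ward combination you use, but applied at scales $\eta$ and $\eta+h$ with $h \to 0$), deduces $\partial_\eta(\eta\Gamma) \geq 0$, and integrates to get $\Gamma(\eta/M) \leq M\Gamma(\eta)$. You instead compare the two scales $\eta$ and $\eta/M$ in a single step, obtaining an implicit quadratic inequality for $u = \sqrt{\Gamma(z')/\Gamma(z)}$ whose positive root is exactly $\sqrt{M}$. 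Your approach is more elementary in that it avoids the Lipschitz/a.e.-derivative discussion entirely; the paper's approach avoids the quadratic algebra and yields the monotonicity of $\eta\Gamma(\eta)$ as a byproduct, though of course your inequality for arbitrary $M>1$ encodes the same monotonicity. The key observation that makes your one-shot argument close cleanly is the identity $(1-1/M)^2 M + 4 = (\sqrt{M}+1/\sqrt{M})^2$, which is a pleasant algebraic accident.
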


\begin{proof}
Fix $E \in \R$ and write $\Gamma(\eta) = \Gamma(E+\ii \eta)$. 
For sufficiently small $h$, since $|x\vee 1-y\vee 1|\leq |x-y|$ for $x,y>0$,
using the resolvent identity,
the Cauchy-Schwarz inequality, and \eqref{e:Ward}, we get
\begin{align*}
  \abs{\Gamma(\eta+h)-\Gamma(\eta)}
  &\;\leq\; \max_{i,j} \abs{G_{ij}(E+\ii (\eta+h)) - G_{ij} (E+\ii \eta)}
  \nonumber\\
  &\;\leq\; |h| \max_{i,j} \sum_k \abs{G_{ik}(E+\ii (\eta+h))G_{kj} (E+\ii \eta)}
  \;\leq\; |h| \sqrt{\frac{\Gamma(\eta+h)\Gamma(\eta)}{(\eta+h)\eta}}
  \,.
\end{align*}
Thus, $\Gamma$ is locally Lipschitz continuous, and its almost everywhere defined derivative satisfies
\begin{equation*}
  \absbb{\ddp{\Gamma}{\eta}}
  \;\leq\; \frac{\Gamma}{\eta}\,.
\end{equation*}
This implies $\ddp{}{\eta} (\eta \Gamma(\eta)) \geq 0$ and therefore $\Gamma(\eta/M) \leq M\Gamma(\eta)$ as claimed.
\end{proof}

The main ingredient of the proof of Theorem~\ref{thm:semicircle} is the following result, whose proof
constitutes the remainder of the paper. To state it, we introduce the set
\begin{equation} \label{e:D}
\f D \;\equiv\; \f D(\xi) \;\deq\; \hbb{ E + \ii \eta \col \frac{\xi^2}{N} \ll \eta \leq N\,, \; -N \leq E \leq N } \,,
\end{equation}
where the implicit absolute constant in $\ll$ is chosen large enough in the proof of the following result.

\begin{proposition} \label{prop:Lambda}
Suppose that $\xi>0$, $\zeta > 0$, and that $D \gg \xi^2$.
If for a fixed $z \in \f D$ we have 
  \begin{equation*}
    \Gamma^*(z) \;=\; O(1)
  \end{equation*}
  with probability at least $1-\ee^{-\zeta}$, then for the same $z$ we have
  \begin{equation} \label{e:Lambdaind}
\max_i \abs{G_{ii}(z) - m(z)} \;=\; O(F_z(\xi \Phi(z)))\,, \qquad \max_{i \neq j} \abs{G_{ij}(z)} \;=\; O(\xi\Phi(z))
    \,,
  \end{equation}
  with probability at least $1-\ee^{-(\xi\log\xi)\wedge \zeta + O(\log N)}$.
\end{proposition}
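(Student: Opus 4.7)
My plan is to derive, for each fixed $z\in\f D$ and each vertex $i$, an approximate scalar self-consistent equation of the form $G_{ii}(z) \approx -1/(z + s(z))$, where $s$ is the normalized trace of $G$ from \eqref{e:sdef}, and then to invoke stability of the semicircle equation $m=-1/(z+m)$ to obtain \eqref{e:Lambdaind}. For a Wigner matrix this equation is derived from Schur's complement formula together with independence of the $i$-th row of $H$ from the minor $H^{(i)}$. Since the rows of our adjacency matrix are constrained, I would replace row-independence by invariance of the law of $A$ under a local resampling by double-edge \emph{switchings}, a construction that randomizes the edges incident to $i$ while preserving the overall distribution.

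First I would construct the switching. Conditionally on the graph outside the radius-one neighborhood of $i$, I pick each of the $d$ edges incident to $i$ in turn, choose a second edge uniformly among those for which interchanging the two is admissible (preserves regularity, and simplicity in the uniform model), and execute the swap. Write $\tilde A$ for the resulting adjacency matrix and $\tilde H, \tilde G$ for the rescaled matrix and its resolvent. By construction $\tilde A$ and $A$ are equal in distribution up to a polynomially small bad event accounting for blocked switchings, the difference $\tilde H - H$ has rank $O(d)$ with entries of size $O(1/\sqrt d)$, and, most importantly, the set of $d$ neighbors of $i$ in $\tilde A$ is almost uniform among $d$-subsets of $\{1,\dots,N\}\setminus\{i\}$; the deviation from exact uniformity is precisely what contributes the $1/\sqrt D$ piece of $\Phi$ in \eqref{e:Phidef}.

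Next I would apply Schur's formula to $\tilde G_{ii}$ and take conditional expectation over the switching randomness. The main term is $-1/(z+Z_i)$ with $Z_i=\sum_{k,l\neq i}\tilde H_{ik}\tilde G^{(i)}_{kl}\tilde H_{il}$, whose conditional mean equals $\frac{1}{N-1}\tr\tilde G^{(i)}$ up to $O(\Gamma^*/\sqrt D)$ by the near-uniformity from step one. Combining the resolvent identity \eqref{e:resolv} to replace $\tilde G^{(i)}$ by $G^{(i)}$ at the cost of $O(d\,\Gamma^{*2}/\eta)$, a standard minor identity $|\tr G^{(i)}-\tr G|=O(\Gamma^{*2}/\eta)$, and the Ward identity \eqref{e:Ward}, yields $\frac{1}{N-1}\tr G^{(i)} = s(z)+O(\Phi(z))$. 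The fluctuations of $Z_i$ about its conditional mean are handled by a Doob martingale that reveals the $d$ switchings one at a time: each step perturbs $G$ by $O(\Gamma^{*2}/\eta)$ through a rank-four update, so Azuma's inequality combined with \eqref{e:Ward} produces a subgaussian tail of size $\sqrt{\im G_{ii}/(N\eta)}+1/\sqrt D$ at confidence $1-\exp(-\xi\log\xi)$; summing over the $O(N^2)$ index pairs $(i,j)$ to make the statement uniform contributes the $O(\log N)$ loss in the exponent.

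Averaging the per-vertex equation $G_{ii}\approx -1/(z+s+O(\xi\Phi))$ over $i$ yields a scalar quadratic equation for $s-m$ whose linearization has norm $|z+2m|\asymp\sqrt{|z^2-4|}$; a standard stability analysis then gives $|s-m|\leq F_z(\xi\Phi)$, the two regimes of $F_z$ in \eqref{e:Fdef} reflecting whether the quadratic correction is negligible or dominant. Substituting this back into the per-vertex equation recovers $|G_{ii}-m|=O(F_z(\xi\Phi))$; the off-diagonal bound $|G_{ij}|=O(\xi\Phi)$ follows from a two-vertex variant of the same resampling (switching the neighborhoods of $i$ and $j$ jointly) together with \eqref{e:Ward}. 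The main obstacle, in my view, is step one together with the conditional-expectation identity in step three: one must design a switching scheme that is invariant in distribution for each of the three models of Section~\ref{sec:intro-models} despite their different simplicity and matching constraints, that produces a neighborhood of $i$ whose conditional law is close enough to uniform that the Schur main term really agrees with $s(z)$ up to an additive $\Phi(z)$, and that supports a bounded-differences martingale whose single-step size is small enough to push the concentration down to the optimal scale $\eta\gg\xi^2/N$ required in Theorem~\ref{thm:semicircle}.
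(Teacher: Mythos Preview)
Your overall architecture—local resampling by switchings to restore enough randomness, a scalar self-consistent equation for the diagonal, a martingale concentration step, and stability of the semicircle equation—matches the paper. Two of the steps, however, are implemented differently there, and your versions have genuine gaps.

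\textbf{Self-consistent equation.} The paper does not use Schur's complement. Your Schur approach applies the minor formula to the fully switched graph $\tilde A$, in which all $d$ neighbours of $i$ have been resampled. The trouble is that each double switching also moves $O(1)$ edges in the complement of $\{i\}$; after $d$ switchings the minor $\tilde H^{(i)}$ differs from $H^{(i)}$ by a perturbation of rank $O(d)$ with entries of size $(d-1)^{-1/2}$, and the resulting error in $\frac{1}{N}\tr \tilde G^{(i)}$ is of order $d/(N\eta)$, which is not controlled by $\Phi$ once $d$ is a power of $N$ (so you would not reach the optimal scale $\eta\gg\xi^2/N$ for large $d$). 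The paper instead writes $1+zG_{11}=(d-1)^{-1/2}\sum_\mu\bigl(G_{\alpha_\mu 1}-\E^{[i]}G_{i1}\bigr)$ directly from $(H-z)G=I$ and, for each $\mu$ \emph{separately}, compares $G_{\alpha_\mu 1}$ with its copy under the single exchange $u_\mu\leftrightarrow\tilde u_\mu$ (Lemma~\ref{lem:Gdiff}). Because only one edge triple moves, the resolvent identity produces an error $O(d^{-1/2}\Phi)$ per $\mu$, summing to $O(\Phi)$; no minor ever appears.

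\textbf{Concentration.} Plain Azuma is not sufficient, and the paper says so explicitly before Lemma~\ref{lem:arcsinh}: the $L^\infty$ bound on a single martingale increment of $G_{ij}$ is only $O(d^{-1/2})$, so Azuma yields concentration on scale $O(1)$ after $d$ steps, not $O(\Phi)$. What is used is the Prokhorov $\arcsinh$ inequality, which additionally exploits a bound on the \emph{conditional variance} of each step. That bound, $\E_{\cal G_\mu}\abs{G_{ij}-\E_{\cal G_\mu}G_{ij}}^2=O(d^{-1}\Phi^2)$ (Lemma~\ref{lem:EGdiff}), comes from writing the increment via the resolvent identity as a bounded sum of terms $G_{ix}\tilde G_{yj}$ in which at least one of $x,y$ is approximately uniform under the switching, and then applying the Ward identity to the average over that uniform index (Lemma~\ref{lem:Gswitch}\,(ii)). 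This is where Ward actually enters; it controls second moments, not step sizes.

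Two smaller remarks. The paper uses \emph{double} switchings rather than single ones precisely so that in the decomposition $\tilde A^\mu-A=\Delta_{1\tilde a_\mu}-\Delta_{1a_\mu}+X$ every term of $X$ carries at least one approximately uniform index even after conditioning on $a_\mu,\tilde a_\mu$ (Remark~\ref{rk:switch3}); this is what the expectation computation in Lemma~\ref{lem:Gdiff} needs. And for the off-diagonal entries no joint two-vertex resampling is used: the algebraic identity $G_{12}(HG)_{11}-G_{11}(HG)_{12}=G_{12}$ reduces the estimate to the same single-vertex machinery.
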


Given Proposition~\ref{prop:Lambda}, Theorem~\ref{thm:semicircle} is a simple consequence.

\begin{proof}[Proof of Theorem~\ref{thm:semicircle}]
Let $\xi\log\xi \gg (\log N)^2$.
We first note that it suffices to prove \eqref{e:semicircle} for $z \in \f D$.
Indeed, since $d\ll N^2$ by assumption, the spectrum of $H$ is contained in the interval
$[-d/(2\sqrt{d-1}),d/(2\sqrt{d-1})] \subset [-\frac12 N, \frac12 N]$,
 and hence \eqref{e:semicircle} holds deterministically for $|E| \geq N$ by the spectral representation of $G$.
Similarly, the proof of \eqref{e:semicircle} is trivial for $\eta \geq N$.
Since $G$ is Lipschitz continuous in $z$ with Lipschitz constant bounded by $1/\eta^2 \leq N^2$,
it moreover suffices to prove \eqref{e:semicircle} for $z  \in \f D \cap (N^{-4} \Z^2)$. By a union bound, it suffices to
prove \eqref{e:semicircle} for each $E \in [-N,N] \cap N^{-4}\Z$.

Fix therefore $E \in [-N,N] \cap (N^{-4}\Z)$.
Let $K \deq \max \h{k \in \N \col N / 2^{k} \geq C \xi^2 / N}$, where $C > 0$ is the implicit absolute constant from the assumption $\eta \gg \xi^2 /N$ in the statement of the theorem. Clearly, $K \leq 4 \log N$.
For $k \in \qq{0, K}$, set $\eta_k \deq N/2^{k}$ and $z_k \deq E+\ii\eta_k$.
By induction on $k$, we shall prove that
\begin{equation} \label{induct_claim}
\Gamma^*(z_k) \leq 2 \text{ with probability at least } 1-\ee^{-\xi\log \xi + O(k\log N)}
\end{equation}
for $k \in \qq{0,K}$.
The claim \eqref{induct_claim} is trivial for $k=0$ since then $\eta_{k}= N$ and therefore \eqref{e:Getabd} implies
$\Gamma^*(z_k) \leq 1$ deterministically. 
Now assume that \eqref{induct_claim} holds for some $k \in \qq{0,K}$.
Then Lemma~\ref{lem:Gbd} applied with $\eta= \eta_{k}$ and $M = 2$ implies
\begin{equation} \label{e:bootstrap_assump}
\P\pb{
\Gamma^*(z_{k+1}) \geq 4
} \;\leq\; \ee^{-\xi\log\xi + O(k \log N)}\,.
\end{equation}
We may therefore apply Proposition~\ref{prop:Lambda} with $z = z_{k+1}$ and $\zeta = \xi \log \xi - O(k \log N)$.
Thus, we find that \eqref{e:Lambdaind} holds for $z = z_k$ with probability at least $1-\ee^{-\xi\log\xi + O((k+1)\log N)}$.
Since $\abs{m} \leq 1$ by \eqref{e:mbd}, we conclude that $\Gamma^* (z_{k+1}) \leq 2$ with with probability at least $1-\ee^{-\xi\log\xi + O((k+1)\log N)}$. This concludes the proof of the induction step, and hence of \eqref{e:Lambdaind} for all $z_k$ with $k \in \qq{0,K}$.

Finally, the argument may also be applied with $\xi$ replaced by $2\xi$,
concluding the proof since $\ee^{-2\xi\log 2\xi + O(\log N)^2} \leq \ee^{-\xi \log \xi}$ by assumption.
\end{proof}

\begin{remark} \label{rk:bootstrapping}
  The induction in the proof of Theorem~\ref{thm:semicircle}
  is \emph{not} a continuity (or
  bootstrapping) argument, as used e.g.\ in the works
  \cite{MR3068390,MR3098073,MR2481753} on local laws of models with independent
  entries. The \emph{multiplicative} steps $\eta \to \eta/2$ that we make
are far too large for a continuity argument to work, and we correspondingly
  obtain much weaker a priori estimates from the induction
  hypothesis.
  Thus, our proof relies on a priori control of $\Gamma$ instead of the error
  parameters $\Lambda_d$ and $\Lambda_o$ used in \cite{MR3068390,MR3098073,MR2481753}.
  The advantage, on the other hand, of the approach taken here
  is that we only have
  to perform an order $\log N$ steps, as opposed to the $N^C$ steps
  required in bootstrapping arguments. As evidenced by the proof of
  Theorem \ref{thm:semicircle},
  a logarithmic bound on the number of induction steps is crucial.
  An inductive approach was also taken in \cite{1311.0326},
where a local semicircle law without logarithmic corrections
  was proved for Wigner matrices with entries whose distributions
  are subgaussian.
\end{remark}

It therefore only remains to prove Proposition~\ref{prop:Lambda}. This
is the subject of the remainder of the paper, which we now briefly
outline.  We follow the concentration/expectation approach,
establishing concentration results on the entries of $G$ (Section
\ref{sec:concentration}) and computing the expectation of the diagonal
entries (Section \ref{sec:expectation}). All of this is performed with
respect to a conditional probability measure, which is constructed for
each fixed vertex. Roughly speaking, given a vertex, this conditional
probability measure randomizes the neighbours of the vertex in an
approximately uniform fashion. It is model-dependent and has to be
chosen with great care for all of the concentration/expectation
arguments of Sections \ref{sec:concentration}--\ref{sec:expectation}
to work. Its construction is easiest for the matching model, which we
explain in Section \ref{sec:parametrization}. The constructions for
the uniform and permutation models are given in Sections \ref{sec:UM}
and \ref{sec:PM} respectively.

\section{Local resampling}
\label{sec:parametrization}

All models of random regular graphs that we consider are invariant under permutation of vertices.
However, for our analysis, it is important to use a parametrization that distinguishes a fixed vertex.
Without loss of generality, we assume this vertex to be $1$.
This parametrization has to satisfy a series of properties, which are given in Proposition \ref{prop:switch} below.
Using these properties, in Sections~\ref{sec:concentration}--\ref{sec:expectation}, we complete the proof of Proposition \ref{prop:Lambda}. 
Loosely speaking, the parametrization allows us to resample the neighbours of $1$, independently, and only changing
a fixed number of edges in the remainder of the graph in a sufficiently random way.
In this section, we describe the parametrization and prove Proposition \ref{prop:switch} for the matching model.
The parametrizations for the uniform and permutation models are discussed in Section \ref{sec:UM} and \ref{sec:PM} respectively.

Random indices will play an important role throughout the paper.
We consistently use the letters $i,j,k,l,m,n$ to denote deterministic
indices, and $x,y$ to denote random indices.

\subsection{Local switchings} \label{sec:switchings}

Our basic strategy of \emph{local resampling} involves randomizing the neighbours of the fixed vertex $1$
by local changes of the graph, called \emph{switchings} in the graph theory literature \cite{MR1725006}.
We use \emph{double switchings} which involve three edges, as opposed to single switchings
which only involve two edges. Both are illustrated in
Figure~\ref{fig:switch}.

Throughout the following, we use the following conventions to describe
graphs. We consider general undirected graphs, which may have loops
and multiple edges. We consistently identify a graph with its
adjacency matrix $A$. The quantity $A_{ij} = A_{ji} \in \N$ is the
number of edges between $i$ and $j$, and $A_{ii} \in 2 \N$ is twice
the number of loops at $i$. The degree of $i$ is $\sum_{j} A_{ij}$,
which will always be equal to $d$ for all $i$. The graph $A$ is simple
if and only if it has no multiple edges or loops, i.e.\ $A_{ij} \in
\{0,1\}$ and $A_{ii} = 0$ for all $i,j$. Sometimes we endow edges with
a direction; we use the notation $ij$ for the edge $\{i,j\}$ directed
from $i$ to $j$.

\begin{figure}[t]
\begin{center}
\input{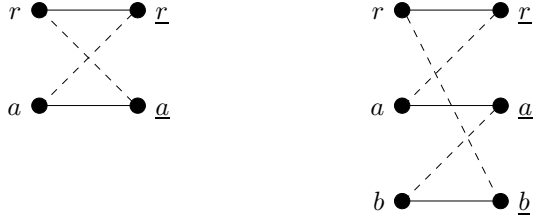}
\end{center}
\caption{Solid lines show edges of a graph before switching, dashed lines edges after a single switching (left)
  and after a double switching (right).
  In our application, $\ur$ is chosen to be $1$, so that the switching
  connects the vertex $1$ to a given vertex $a$.
  \label{fig:switch}}
\end{figure}

Let $\Delta_{ij}$ denote the adjacency matrix of a graph
containing only an edge between the vertices $i$ and $j$,
\begin{equation} \label{e:Eijdef}
  (\Delta_{ij})_{kl} \;\deq\; \delta_{ik}\delta_{jl} + \delta_{il}\delta_{jk}
  \,.
\end{equation}

To define switchings of a set of unoriented edges, it is convenient to assign directions to the edges
to be switched.
These directions determine which one of the possible switchings of the unoriented edges is chosen.
We define the \emph{single switching} of two edges $r\ur,a\ua$ of $A$ with the indicated directions
to be the graph
\begin{equation} \label{e:Asingleswitch}
  \tau_{r \ul r, a \ul a}(A) \;\deq\;
  A+\Delta_{\ur a} + \Delta_{r \ua} -\Delta_{r\ur} - \Delta_{a\ua}
\end{equation}
if $\abs{\h{r,\ur,a, \ua}} = 4$, and the graph $\tau_{r \ul r, a \ul a}(A) \deq A$ if $\abs{\h{r,\ur,a, \ua}} < 4$.
The \emph{double switching} of the three edges $r\ur, a\ua, b\ub$ of $A$ with the indicated directions is defined to be the graph
\begin{equation} \label{e:Adoubleswitch}
  \tau_{r \ul r, a \ul a, b \ul b}(A) \;\deq\; \tau_{b \ul r,a \ul a}(\tau_{r \ul r,b \ul b}(A)) \;=\;
  A+ \Delta_{\ur a} + \Delta_{\ua b} + \Delta_{\ub r} -\Delta_{r\ur}  - \Delta_{b\ub} - \Delta_{a\ua}
\end{equation}
if $\abs{\h{r,\ur,a, \ua, b, \ub}} = 6$, and the graph $\tau_{r \ul r, a \ul a, b \ul b}(A) \deq A$ if $\abs{\h{r,\ur,a, \ua, b, \ub}} < 6$.

Our goal is to use switchings to connect the distinguished vertex $1$ to essentially independent random vertices $a_1, \dots, a_d$
that are \emph{approximately uniform} in the sense of the next definition. 

\begin{definition}
A random variable $x$ with values in $\qq{1,N}$ is \emph{approximately uniform} if the total
variation distance of its distribution to the uniform distribution on $\qq{1,N}$ is of order $O\pb{\frac{1}{\sqrt{dD}}}$,
i.e.\ if $\sum_i \absb{\P(x=i)-\frac{1}{N}} = O\pb{\frac{1}{\sqrt{dD}}}$.
\end{definition}

To give an idea how approximately uniform random variables arise,
consider a switching with $\ur=1$ (to achieve our goal of connecting $1$ to a given vertex $a$ using a switching).
For simple graphs, a necessary condition to apply the switching \eqref{e:Adoubleswitch} is $a \neq 1$. Choosing
$a$ uniformly with this constraint means that it is uniform on $\qq{2,N}$. In particular,
the total variation distance of its distribution to that of the uniform distribution is
$O\pb{\frac{1}{N}} = O\pb{\frac{1}{\sqrt{dD}}}$.

Throughout this paper, $\frac{1}{\sqrt{dD}}$ appears frequently
as a bound on exceptional probabilities, and we tacitly use the estimates
\begin{equation}
  D \;\leq\; d \,, \qquad D \;\leq\; \sqrt{dD} \;\leq\; N \,,
\end{equation}
which follow directly from \eqref{e:D-UM}--\eqref{e:D-PMMM}, as well as
\begin{equation}
  \frac{1}{\sqrt{dD}} \;\leq\; \frac{1}{\sqrt{d}} \Phi \;\leq\; \Phi^2.
\end{equation}

We use the following conventions for conditional statements.

\begin{definition}
  Let $\sigG$ be a $\sigma$-algebra, $B$ an event, and $p \in
  [0,1]$. We say that, \emph{conditioned on $\cal G$, the event $B$
    holds with probability at least $p$} if $\P(B | \cal G) \geq p$
  almost surely. Moreover, we say that, \emph{conditioned on $\cal G$,
    the random variable $x$ is approximately uniform} if $\sum_i
  \absb{\P(x=i | \cal G)-\frac{1}{N}} = O\pb{\frac{1}{\sqrt{dD}}}$ almost
  surely.
\end{definition}

The use of double switchings opposed to single switchings ensures that either
condition (a) or (b) in the next lemma holds. These conditions will play an important role in
Section~\ref{sec:expectation}.
(That double switchings are in general more effective than single switchings is well known
in the combinatorial context; see for instance \cite{MR1725006} for a discussion.)

\begin{remark} \label{rk:switch3}
  Fix a $d$-regular graph $A$.
  For directed edges $r1,a\ua,b\ub,\ac\uac,\bc\ubc$ of $A$, we have
  \begin{equation} \label{e:switch3}
    \tau_{r1,\ac\uac,\bc\ubc}(A) - \tau_{r1,a\ua,b\ub}(A)
    \;=\;
    \Delta_{1\ac} - \Delta_{1a} + X
    \,,
  \end{equation}
  where $X$ is a sum of at most 8 terms $\pm \Delta_{x y}$. Explicitly, in the case
  \begin{equation} \label{disjoint_edges}
  \abs{\{1,r,a, \ul a, b, \ul b\}} \;=\;   \abs{\{1,r, \tilde a, \tilde {\ul a}, \tilde  b, \tilde {\ul b}\}} \;=\; 6
  \end{equation}
  we have
  \begin{equation} \label{X_def}
  X \;=\; \Delta_{\tilde {\ua} \tilde b} + \Delta_{\tilde {\ub} \tilde r}  - \Delta_{\tilde b\tilde {\ub}} - \Delta_{\tilde a\tilde {\ua}}
- \Delta_{\ua b} - \Delta_{\ub r} + \Delta_{b\ub} + \Delta_{a\ua}\,.
  \end{equation}
In particular, suppose that $A$ is deterministic and the directed edges $r1,a\ua,b\ub,\ac\uac,\bc\ubc$ are random such that \eqref{disjoint_edges} holds,
$a,\ua,\ac,\uac$ are approximately uniform, and, conditioned on $a,\ua,\ac,\uac$,
  the variables $b,\ub,\bc,\ubc$ are approximately uniform. Then for each term $\pm \Delta_{xy}$ we have
  (a) the random variables $x$ and $y$ are both approximately uniform, or
  (b) conditioned on $a,\ac$, at least one of $x$ and $y$ is approximately uniform.
\end{remark}

We emphasize that when we say that $x$ and $y$ are approximately uniform, this is a statement
about their individual distributions, and as such implies nothing about their joint distribution.

The introduction of switchings that connect $1$ to essentially independent random vertices $a_1, \dots, a_d$
is simplest in the matching model, in which the different neighbours of any given vertex are independent, so that
it suffices to consider a single neighbour of $1$ at a time.
In the next subsection, we explain in detail how this parametrization using switchings is defined for the matching model.

We state the conclusion, Proposition~\ref{prop:switch}, in great enough generality that it holds \emph{literally}
also for all of the other models, for which the more involved proofs are given in
Sections~\ref{sec:UM}--\ref{sec:PM}.
In the proof of Proposition~\ref{prop:Lambda}
(given in Sections~\ref{sec:concentration}--\ref{sec:expectation}),
and therefore in the proof of Theorem~\ref{thm:semicircle},
we only use the conclusion contained in Proposition~\ref{prop:switch}, and no other properties of the model.
Hence, Proposition \ref{prop:switch} summarizes everything about the random regular graphs that our proof requires.

\subsection{Matching model}
\label{sec:MM}

The matching model was defined in Section~\ref{sec:intro-models} in terms of $d$ independent
uniform perfect matchings of $\qq{1,N}$.
We first consider one such uniform perfect matching, i.e.\ a uniform $1$-regular graph.
We denote by $S_N$ the symmetric group of order $N$.
For $N$ even, denote by $M_N \subset S_N$ the set of perfect matchings of $\qq{1,N}$, which (as explained in Section \ref{sec:intro-models}) we identify with the subset of permutations whose cycles all have length $2$;
in particular $\pi=\pi^{-1}$ for $\pi \in M_N$.
For any perfect matching $\sigma \in M_N$, we denote the corresponding symmetric permutation matrix by
\begin{equation}  \label{e:Asigma}
  M(\sigma) \;\deq\; \frac12 \sum_{i=1}^N \Delta_{i\sigma(i)}
  \,.
\end{equation}
Note that $M(\cdot)$ is one-to-one.

Next, for $i,j,k \in \qq{1,N}$, we define the switching operation $T_{ijk} \col M_N \to M_N$ through
\begin{equation} \label{e:MM-Tijdef}
M(T_{ijk}(\pi)) \;\deq\; \tau_{\pi(i)i,j\pi(j),k\pi(k)}(M(\pi)) \,,
\end{equation}
where we recall that $\tau$ was defined in \eqref{e:Adoubleswitch}.
In particular, $T_{ijk}$ connects $i$ to $j$ (see Figure~\ref{fig:switch}) 
except in the exceptional case  $|\{i,j,k,\pi(i),\pi(j),\pi(k)\}| < 6$.

\begin{lemma} \label{lem:MM-tauij}
  Let $\pi$ be uniform over $M_N$, $i \in \qq{1,N}$ fixed,
  and $a,b$ independent and uniform over $\qq{1,N} \setminus \{i\}$.
  Then $T_{iab}(\pi)$ is uniform over $M_N$, and
  \begin{equation}\label{e:tauij-ij}
    (T_{iab}(\pi))(i) \;=\; a
  \end{equation}
  provided that $|\{i,a,b,\pi(i),\pi(a),\pi(b)\}| = 6$.
\end{lemma}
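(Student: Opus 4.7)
The statement has two parts: the uniformity of $T_{iab}(\pi)$ on $M_N$ and the identification $(T_{iab}(\pi))(i)=a$ in the non-degenerate case. The second part is essentially immediate from \eqref{e:Adoubleswitch}: with $r=\pi(i)$, $\ur=i$, $a_{\text{sw}}=a$, $\ua_{\text{sw}}=\pi(a)$, $b_{\text{sw}}=b$, $\ub_{\text{sw}}=\pi(b)$, the switching removes the three edges $\{\pi(i),i\},\{a,\pi(a)\},\{b,\pi(b)\}$ from $M(\pi)$ and adds the three edges $\{i,a\},\{\pi(a),b\},\{\pi(b),\pi(i)\}$, which for six distinct indices yields again an element of $M_N$ in which $i$ is paired with $a$.

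For the uniformity claim, the plan is to construct an explicit involution on the product space $X\deq M_N\times(\qq{1,N}\setminus\{i\})^2$ whose first coordinate recovers the switched matching. Specifically, I define $\Phi\colon X\to X$ by
\begin{equation*}
\Phi(\pi,a,b) \;\deq\; \begin{cases}(T_{iab}(\pi),\pi(i),b) & \text{if } |\{i,a,b,\pi(i),\pi(a),\pi(b)\}|=6,\\ (\pi,a,b) & \text{otherwise.}\end{cases}
\end{equation*}
The map lands in $X$ because $\pi(i)\neq i$ for every matching $\pi$, and $b\neq i$ by hypothesis.

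The key step is to check that $\Phi\circ\Phi=\id$. In the trivial branch this is obvious. In the non-degenerate branch, write $\pi'=T_{iab}(\pi)$ and $(a',b')=(\pi(i),b)$; a direct reading of the edge changes above gives $\pi'(i)=a$, $\pi'(\pi(i))=\pi(b)$, and $\pi'(b)=\pi(a)$, so that
\begin{equation*}
\{i,a',b',\pi'(i),\pi'(a'),\pi'(b')\} \;=\; \{i,\pi(i),b,a,\pi(b),\pi(a)\}\,,
\end{equation*}
which coincides with the original six-element set. Hence $(\pi',a',b')$ also lies in the non-degenerate branch, and a second application of the double switching $\tau_{\pi'(i)i,\pi(i)\pi'(\pi(i)),b\pi'(b)} = \tau_{ai,\pi(i)\pi(b),b\pi(a)}$ removes exactly the three edges introduced in the first switching and restores the original three edges of $M(\pi)$. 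Thus $T_{ia'b'}(\pi')=\pi$ and $\Phi(\pi',a',b')=(\pi,a,b)$. Verifying this cancellation of edges is the main (purely combinatorial) technical step; no other subtlety arises.

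With $\Phi$ an involution on the finite set $X$, it preserves the uniform probability measure on $X$. Since the hypothesis is precisely that $(\pi,a,b)$ is uniform on $X$, the pair $\Phi(\pi,a,b)$ is also uniform on $X$, and hence its first marginal $T_{iab}(\pi)$ is uniform on $M_N$, completing the proof. (Note $T_{iab}(\pi)$ equals $\pi$ in the degenerate branch, which is consistent with the first coordinate of $\Phi$, so no case distinction is needed in reading off the marginal.)
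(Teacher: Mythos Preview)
Your proof is correct and rests on the same combinatorial fact as the paper's, namely that a double switching composed with the appropriate relabelling of the auxiliary indices is an involution on $M_N\times(\qq{1,N}\setminus\{i\})^2$. The paper packages this as a reversibility check for the Markov kernel $(\pi,a,b)\mapsto T_{iab}(\pi)$, observing that for $\sigma\neq\sigma'$ there is at most one pair $(a,b)$ with $T_{iab}(\sigma)=\sigma'$ and that the existence of such a pair is symmetric in $(\sigma,\sigma')$; you instead exhibit the explicit involution $\Phi(\pi,a,b)=(T_{iab}(\pi),\pi(i),b)$ and verify $\Phi^2=\id$ directly. Your formulation is slightly more constructive (it names the reverse pair $(\pi(i),b)$ rather than merely asserting its existence), but the content and length of the two arguments are the same.
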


\begin{proof}
  To prove that $T_{iab}(\pi)$ is uniform over $M_N$, it suffices to check reversibility,
  i.e.\ that, for any fixed $\sigma,\sigma' \in M_N$,
  \begin{equation} \label{e:MM-rev}
    \P(T_{iab}(\pi) = \sigma' | \pi =\sigma) \;=\; \P(T_{iab}(\pi) =\sigma | \pi = \sigma') \,.
  \end{equation}
  Given $\sigma,\sigma' \in M_N$, $\sigma \neq \sigma'$, there is at most one pair $(a,b) \in (\qq{1,N}\setminus \{i\})^2$
  such that $T_{iab}(\sigma) = \sigma'$, and such a pair exists if and only if there exists a (different) pair $(a,b)$
  such that $T_{iab}(\sigma') = \sigma$ (see Figure~\ref{fig:switch} (right) for an illustration).
  If no such pairs exist, both sides of \eqref{e:tauij-ij} are zero.
  Otherwise, there exists precisely one pair $(a,b)$ such that $T_{iab}(\sigma) = \sigma'$,
  so that the left-hand side of \eqref{e:tauij-ij} is equal to $1/(N-1)^2$ because $(a,b)$
  is uniformly distributed over $(N - 1)^2$ elements;
  the same argument shows that the right-hand side of \eqref{e:tauij-ij} is also equal to $1/(N-1)^2$,
  which concludes the proof of \eqref{e:MM-rev}.
  Finally, \eqref{e:tauij-ij} is immediate from the definition of $T_{iab}$.
\end{proof}

The canonical realization of the probability space of the matching model is
the product of $d$ copies of the uniform measure on $M_N$.
For our analysis, we instead employ the larger probability space
$\Omega \deq \Omega_1 \times \cdots \times \Omega_d$ where
\begin{equation} \label{e:Omega_match}
  \Omega_\mu \;\deq\; M_N \times \qq{2,N} \times \qq{2,N} \,,
\end{equation}
also endowed with the uniform probability measure.
Elements of $\Omega_\mu$ are written as $(\pi_\mu,\amu,b_\mu)$.
We set  $\theta = (\pi_1, \dots, \pi_d)$,
$\umu = (a_{\mu},b_{\mu})$, and
\begin{equation} \label{e:sigmamu-MM}
  \sigma_\mu \;\deq\; T_{1 \amu \bmu}(\pi_\mu)
  \,.
\end{equation}
By Lemma~\ref{lem:MM-tauij},
$\sigma_1, \dots, \sigma_d$ are independent
uniform perfect matchings of $\qq{1,N}$, and therefore
the matching model is given by the adjacency matrix
\begin{equation} \label{e:A-M-MM}
  A \;=\; \sum_{\mu=1}^{d} M(\sigma_\mu) \,,
\end{equation}
which is a random variable on the probability space $\Omega$. 
To sum up, rather than working directly with the probability measure on matrices
that we are interested in, we use a measure-preserving lifting to a larger probability space, given by
$\Omega_\mu \to M_N \to \N^{N \times N}$
with $(\pi_\mu,a_\mu,b_\mu) \mapsto \sigma_\mu \deq T_{1a_\mu b_\mu}(\pi_\mu) \mapsto M(\sigma_\mu)$.

Throughout the following, we say that $(\alpha_1, \dots, \alpha_d) \in \qq{1,N}^d$ is an \emph{enumeration of the neighbours of $1$} if
\begin{equation} \label{e:neigh_enum}
A_{1 i} \;=\; \sum_{\mu = 1}^d \ind{i = \alpha_\mu}\,.
\end{equation}
(Recall that, as explained in the beginning of Section~\ref{sec:parametrization}, the vertex $1$ is distinguished.)
Defining $\alpha_\mu \deq \sigma_\mu(1)$, we find that $(\alpha_1, \dots, \alpha_d)$ is an enumeration of the neighbours of $1$.

\subsection{General parametrization}

Having described the probability space and the parametrization of the neighbours of $1$ for the matching model,
we now generalize this setup in order to admit other models of random regular graphs as well.

\begin{definition}[Parametrization of probability space] \label{def:parametrization}
We work on a finite probability space
\begin{equation} \label{e:Omega}
\Omega \;\deq\; \Theta \times U_1 \times \cdots \times U_d\,,
\end{equation}
whose points we denote by $(\theta, u_1, \dots, u_d)$.
Conditioned on $\theta \in \Theta$, the variables $u_1, \dots, u_d$ are independent.
For $\mu \in \qq{1,d}$ we define $\sigma$-algebras
\begin{align}
  \label{e:sigFdef}
  \sigF_\mu &\;\deq\; \sigma(\theta, \u_1,\dots, \u_\mu) \,,
  \\
  \label{e:sigGdef}
  \sigG_\mu &\;\deq\; \sigma(\theta, \u_1, \dots, \u_{\mu-1}, \u_{\mu+1}, \dots, \u_d) \,.
\end{align}
We also define $\sigF_0 \deq \sigma(\theta)$.
\end{definition}

In general, as in the case of the matching model in Section \ref{sec:MM},
the variable $u_\mu$ for $\mu \in \qq{1,d}$ determines (with high probability given $\theta \in \Theta$) the $\mu$-th neighbour of $1$.
Note that we have introduced an artificial ordering of the neighbours of $1$;
this ordering will prove convenient in Sections~\ref{sec:concentration}--\ref{sec:expectation}.
The interpretation of the $\sigma$-algebras  \eqref{e:sigFdef}--\eqref{e:sigGdef} is that
$\sigG_\mu$ determines all neighbours of $1$ except the $\mu$-th one,
and $\sigF_{\mu}$ determines the first $\mu$ neighbours of $1$.

Having constructed the probability space $\Omega$, we augment it with independent copies of the random variables $u_1, \dots, u_d$.

\begin{definition}[Augmented probability space] \label{def:parametrization2}
Let $\Omega$ be a probability space as in  Definition~\ref{def:parametrization}.
We augment $\Omega$ to a larger probability space $\tilde \Omega$ by adding independent copies of $u_\mu$
for each $\mu \in \qq{1,d}$. More precisely, we define
\begin{equation} \label{e:Omega_tilde}
\tilde \Omega \;\deq\; \Theta \times U_1 \times \cdots \times U_d \times U_1 \times \cdots \times U_d\,,
\end{equation}
whose points we denote by $(\theta, u_1, \dots, u_d, \tilde u_1, \dots, \tilde u_d)$.
We require that, conditioned on $\theta$, the variables $u_1, \dots, u_d, \tilde u_1, \dots, \tilde u_d$ are independent,
and that $u_\mu$ and $\tilde u_\mu$ have the same distribution.
On $\tilde \Omega$ we make use of the $\sigma$-algebras defined by \eqref{e:sigFdef}--\eqref{e:sigGdef}.

By definition, a random variable is a function 
$X \equiv X(\theta, u_1, \dots, u_d, \uc_1, \dots, \uc_d)$ on the augmented space $\tilde \Omega$.
Any function on $\Omega$ lifts trivially to an $\sigF_d$-measurable random variable.
Given a random variable $X \equiv X(\theta, u_1, \dots, u_d, \uc_1, \dots, \uc_d)$ and an index $\mu \in \qq{1, d}$,
we define the version $\tilde X^\mu$ of $X$ by exchanging the arguments $u_\mu$ and $\uc_\mu$ of $X$:
\begin{equation} \label{e:Xc}
  \Xcmu \;\deq\; X(\theta, u_1, \dots, u_{\mu-1}, \uc_\mu, u_{\mu + 1}, \dots, u_d, \uc_1, \dots, \uc_{\mu-1}, u_\mu, \uc_{\mu+1}, \dots, \uc_d) \,.
\end{equation}
\end{definition}

Throughout the following, the underlying probability space is always
the augmented space $\tilde \Omega$.
In particular, the vertex~$1$ is distinguished.
However, since our final conclusions are measurable with respect to $A = (A_{ij})$, and the law of $A$ is invariant under permutation of vertices,
they also hold for $1$ replaced with any other vertex; see in particular the proof of Lemma~\ref{lem:Gjj} below.

Remark~\ref{rk:switch3} and Lemma~\ref{lem:MM-tauij}
imply the following key result for the matching model, which is the main result of this section.
We state it in a sufficiently general form that holds for all graph models simultaneously;
the proof for the other models is given in Sections~\ref{sec:UM}--\ref{sec:PM}. 
For the matching model, the parametrization in its statement and the corresponding random
variables from \eqref{e:random_vars} were defined explicitly in Section~\ref{sec:MM}:
$a_i$ below \eqref{e:Omega_match}, $\alpha_i$ below \eqref{e:neigh_enum},
and $A$ in \eqref{e:A-M-MM}.

\begin{proposition}\label{prop:switch}
For any model of random $d$-regular graphs introduced in Section~\ref{sec:intro-models},
there exists a parametrization satisfying Definition \ref{def:parametrization},
augmented according to Definition \ref{def:parametrization2},
with $\sigF_d$-measur\-able random variables
\begin{equation} \label{e:random_vars}
a_1, \dots, a_d, \alpha_1, \dots, \alpha_d \in \qq{1,N} \,, \qquad A = (A_{ij})_{i,j \in \qq{1,N}}\,,
\end{equation}
such that the following holds.
\begin{enumerate}
\item
$A$ is the adjacency matrix of the $d$-regular random graph model under consideration,
and $(\alpha_1, \dots, \alpha_d)$ is an enumeration of the neighbours of $1$
in the sense of \eqref{e:neigh_enum}.
\item
(Neighbours of 1.)
Fix $\mu \in \qq{1,d}$. 
\begin{itemize}
\item[(1)]
Conditioned on $\sigG_\mu$, the random variable $\amu$ is approximately uniform.
\item[(2)]
Conditioned on $\sigF_0$, with probability $1-O\pb{\frac{1}{\sqrt{dD}}}$ we have $\alpha_\mu = a_\mu$.
\end{itemize}
\item
(Behaviour under resampling.)
Fix $\mu \in \qq{1,d}$.

\begin{itemize}
\item[(1)]
$\Acmu - \Amu$ is the sum of a bounded number of terms of the form $\pm \Delta_{xy}$ where
$x$ and $y$ are random variables in $\qq{1,N}$.
Conditioned on $\sigG_\mu$,
with probability $1-O\pb{\frac{1}{\sqrt{dD}}}$,
the number of such terms is constant. Conditioned on $\sigG_\mu$,
for each term $\pm \Delta_{xy}$ at least one of $x$ and $y$ is approximately uniform.
\item[(2)]
Conditioned on $\cal F_0$, with probability $1-O\pb{\frac{1}{\sqrt{dD}}}$ we have
\begin{equation}
  \label{e:HHEexp}
  \Acmu - \Amu
  \;=\; 
  \Delta_{1 \amuc} - \Delta_{1 \amu} + X\,,
\end{equation}
where $X$ is a sum of terms $\pm \Delta_{x y}$
such that one of the following two conditions holds: (a) conditioned on $\sigG_\mu$, the random variables $x$ and $y$ are both approximately uniform; or
(b) conditioned on $\sigG_\mu, \amu,\amuc$, at least one of $x$ and $y$ is approximately uniform.
(Here we abbreviated $\tilde a_\mu \equiv \tilde a_\mu^\mu$.)
\end{itemize}
\end{enumerate}
\end{proposition}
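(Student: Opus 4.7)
With the parametrization set up in Section~\ref{sec:MM}, I define $\alpha_\mu \deq \sigma_\mu(1)$ and $A \deq \sum_{\mu=1}^d M(\sigma_\mu)$, where $\sigma_\mu = T_{1\amu\bmu}(\pi_\mu)$ is as in \eqref{e:sigmamu-MM}, and verify the three items of Proposition~\ref{prop:switch} in turn. For item (i), invoking Lemma~\ref{lem:MM-tauij} in each factor $\Omega_\mu$, together with the conditional independence of $u_1,\dots,u_d$ given $\theta$ from Definition~\ref{def:parametrization}, shows that $\sigma_1,\dots,\sigma_d$ are independent uniform perfect matchings, so $A$ as in \eqref{e:A-M-MM} has the law of the matching model; the enumeration statement is immediate from \eqref{e:neigh_enum}.

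For item (ii)(1), since $u_\mu$ is independent of $\sigG_\mu$ by Definition~\ref{def:parametrization}, the conditional law of $\amu$ given $\sigG_\mu$ is uniform on $\qq{2,N}$, and its total variation distance from the uniform law on $\qq{1,N}$ is $\frac{1}{N} = O\pb{\frac{1}{\sqrt{dD}}}$ using $\sqrt{dD}\leq N$. For item (ii)(2), \eqref{e:tauij-ij} gives $\alpha_\mu = \amu$ on the event that the six vertices $\{1,\amu,\bmu,\pi_\mu(1),\pi_\mu(\amu),\pi_\mu(\bmu)\}$ are all distinct; a union bound over the $\binom{6}{2}$ possible coincidences, using only the marginal uniformity of $\pi_\mu,\amu,\bmu$ conditional on $\sigF_0$, shows the exceptional event has probability $O(1/N)$.

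Item (iii) is the heart of the verification, and I obtain it by applying Remark~\ref{rk:switch3} with the (deterministic given $\sigG_\mu$) base graph $M(\pi_\mu)$ and the five directed edges
\begin{equation*}
  \pi_\mu(1)\cdot 1,\quad \amu\cdot\pi_\mu(\amu),\quad \bmu\cdot\pi_\mu(\bmu),\quad \amuc\cdot\pi_\mu(\amuc),\quad \bmuc\cdot\pi_\mu(\bmuc).
\end{equation*}
Since $M(\sigma_\mu) = \tau_{\pi_\mu(1)1,\amu\pi_\mu(\amu),\bmu\pi_\mu(\bmu)}(M(\pi_\mu))$ by \eqref{e:MM-Tijdef}, and analogously for $M(\tilde\sigma_\mu^\mu)$, the decomposition \eqref{e:HHEexp} is read off from \eqref{e:switch3}--\eqref{X_def} on the non-exceptional event \eqref{disjoint_edges}, whose complement has probability $O(1/N)$ by a union bound conditional on $\sigF_0$. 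The approximate-uniformity hypotheses of the remark -- that $\amu,\amuc,\pi_\mu(\amu),\pi_\mu(\amuc)$ are approximately uniform and, conditional on these, $\bmu,\bmuc,\pi_\mu(\bmu),\pi_\mu(\bmuc)$ are too -- follow from the elementary fact that for $\pi$ uniform on $M_N$ and $i$ uniform on $\qq{2,N}$ independent of $\pi$, the image $\pi(i)$ is uniform on $\qq{1,N}\setminus\{\pi(1)\}$, hence approximately uniform; one first conditions on $\sigG_\mu$ (which fixes $\pi_\mu$ and leaves $\amu,\bmu,\amuc,\bmuc$ jointly uniform on $\qq{2,N}^4$) and then additionally on $\amu,\amuc$ when treating the $b$-variables. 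Item (iii)(2) is then the direct output of the (a)/(b) dichotomy of Remark~\ref{rk:switch3}, and item (iii)(1) follows from (iii)(2) by noting that approximate uniformity of $x$ conditional on $\sigG_\mu,\amu,\amuc$ implies approximate uniformity of $x$ conditional on $\sigG_\mu$ after averaging out $\amu,\amuc$.

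\textbf{Main obstacle.} For the matching model the independence of the $d$ matchings decouples the problem into a single-matching statement, and the proof is essentially careful bookkeeping driven by Lemma~\ref{lem:MM-tauij} and Remark~\ref{rk:switch3}. The genuine difficulty will arise in Sections~\ref{sec:UM}--\ref{sec:PM}, where the neighbours of $1$ are correlated across $\mu$ and resampling one neighbour forces further adjustments elsewhere in the graph; the model-dependent definitions \eqref{e:D-UM}--\eqref{e:D-PMMM} of $D$ are precisely the thresholds at which the exceptional probabilities in items (ii)(2) and (iii)(2) can still be kept at $O\pb{\frac{1}{\sqrt{dD}}}$.
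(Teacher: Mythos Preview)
Your argument is correct and follows essentially the same route as the paper: Lemma~\ref{lem:MM-tauij} for item (i) and (ii), and Remark~\ref{rk:switch3} applied to the pair of double switchings $\tau_{\pi_\mu(1)1,\amu\pi_\mu(\amu),\bmu\pi_\mu(\bmu)}$ and its tilded version for item (iii). The only point you glide over is the \emph{almost sure} first claim of (iii)(1) --- that $\Acmu-\Amu$ is always a sum of a bounded number of $\pm\Delta_{xy}$ --- which does not follow from (iii)(2) (the latter being a high-probability statement); the paper notes separately that each application of $T$ alters at most six edges, giving at most twelve terms unconditionally.
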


\begin{proof}[Proof of Proposition~\ref{prop:switch}: matching model]
The parametrization obeying Definition~\ref{def:parametrization}
and the random variables \eqref{e:random_vars} for the matching model
were defined in Section \ref{sec:MM}.
We augment the probability space according to Definition \ref{def:parametrization2}. 

The claim (i) follows immediately from Lemma~\ref{lem:MM-tauij}. To show (ii) and (iii), we fix $\mu \in \qq{1,d}$,
and drop the index $\mu$ from the notation and write for instance $\pi \equiv \pi_\mu$,
$a \equiv a_\mu$, and $\tilde A \equiv \tilde A^\mu$.

First, we prove (ii).
By definition, the random variable $a_\mu$ is uniform on $\qq{2,N}$ and
hence approximately uniform on $\qq{1,N}$, showing (ii)(1).
By \eqref{e:tauij-ij}, $\alpha_\mu=\sigma_\mu(1)=\amu$ holds on the event $|\{1,\pi(1),a,\pi(a),b,\pi(b)\}|=6$.
The latter event has probability $1-O(\frac{1}{N})  \geq 1-O(\frac{1}{\sqrt{dD}})$ conditioned on $\sigG_\mu$, and hence
in particular conditioned on $\sigF_0$, which proves (ii)(2).

Next, we prove (iii).
By the definitions \eqref{e:sigmamu-MM}--\eqref{e:A-M-MM},
\begin{equation*}
  \Ac - A
  \;=\; M(T_{1\ac\bc}(\pi))- M(T_{1ab}(\pi))
  \,.
\end{equation*}
By the definition of $T$ in \eqref{e:MM-Tijdef} and \eqref{e:Adoubleswitch},
any application of $T$ adds or removes at most $6$ terms $\Delta_{xy}$, and therefore
$\Ac-A$ is equal to a sum of at most 12 terms of the form $\pm \Delta_{xy}$,
which proves the first claim of (iii)(1).

To show the second claim of (iii)(1)
and to show (iii)(2), we may assume that
\begin{equation} \label{e:cond_MM_switch}
\abs{\h{1,\pi(1), a, \pi(a), b ,\pi (b)}} \;=\; 6 \,, \qquad
\abs{\h{1,\pi(1), \tilde a, \pi(\tilde a), \tilde b ,\pi (\tilde b)}} \;=\; 6\,,
\end{equation}
since this event occurs with probability at least $1 - O\pb{\frac1N} \geq 1 - O(\frac{1}{\sqrt{dD}})$
conditioned on $\cal G_\mu$ (and hence also conditioned on $\cal F_0$).
Under \eqref{e:cond_MM_switch}, we get
\begin{equation} \label{e:MM-AcA2}
  \Ac - A
  \;=\; M(T_{1\ac\bc}(\pi))- M(T_{1ab}(\pi))
  \;=\; \tau_{r1,\ac \uac, \bc\ubc}(M(\pi))- \tau_{r1,a\ua,b\ub}(M(\pi))
  \,,
\end{equation}
with $r=\pi(1)$, $\ua = \pi(a),\ub=\pi(b),\uac = \pi(\ac), \ubc = \pi(\bc)$.
As in Remark 3.3, we find that the right-hand side of \eqref{e:MM-AcA2} is
\begin{equation*}
  \Delta_{1\tilde a} + \Delta_{\pi(1) \pi(\tilde b)} + \Delta_{\tilde b \pi(\tilde a)} - \Delta_{\tilde b \pi(\tilde b)} - \Delta_{\tilde a \pi(\tilde a)}
  - \pb{\Delta_{1a} + \Delta_{\pi(1) \pi(b)} + \Delta_{b \pi(a)} - \Delta_{b \pi(b)} - \Delta_{a \pi(a)}}
  \,,
\end{equation*}
from which the claim is obvious.
\end{proof}

\subsection{Stability of the Green's function under resampling}

From now on we make use of the following notations for conditional expectations and conditional $L^p$-norms.

\begin{definition}
For any $\sigma$-algebra $\sigG$, we denote by $\E_{\sigG}=\E(\,\cdot\,|\sigG)$ and $\P_{\sigG} = \P(\,\cdot\,|\sigG)$
the conditional expectation and probability with respect to $\sigG$.
Moreover, we define the conditional $L^p$-norms by
\begin{align*}
  \|X\|_{L^p(\sigG)} &\;\deq\; \pb{ \E_{\sigG}|X|^p }^{1/p}  \quad (p\in[1,\infty)) \,,
  \\
  \|X\|_{L^\infty(\sigG)} &\;\deq\; \sup\hb{t>0 : \P_\sigG (|X|>t) >0} \,.
\end{align*}
In particular, $\|X\|_{L^p(\sigG)}$ is a $\sigG$-measurable random variable, and
\begin{equation*}
  \E_{\sigG}|X| \;=\; \|X\|_{L^1(\sigG)} \;\leq\; \|X\|_{L^\infty(\sigG)}
  \,.
\end{equation*}
Moreover, for any $\cal F_d$-measurable random variable $X \equiv X(\theta, \u_1, \dots, \u_d)$ we have
\begin{equation*}
  \|X\|_{L^\infty(\sigG_\mu)} \;=\; \max_{\u_\mu} |X(\theta, \u_1, \dots, \u_{\mu-1}, \u_\mu, \u_{\mu+1}, \dots \u_d)|
  \,.
\end{equation*}
\end{definition}

The following result is an important consequence of Proposition~\ref{prop:switch} for the Green's function.
It relies on the fundamental random control parameter
\begin{equation} \label{e:Gammamudef}
  \Gammamu \;\equiv\; \Gammamu(z)
  \;\deq\; \norm{\Gamma(z)}_{L^\infty(\sigG_\mu)}
  \,,
\end{equation}
where we recall the definition of $\Gamma(z)$ from \eqref{e:Gammadef}.
Also, we remind the reader that, according to
Definition~\ref{def:parametrization2}, a random variable (such as the
index $x$ or $y$ in the following lemma) is always defined on the
augmented probability space $\wt \Omega$, but the Green's function is
$\sigF_d$-measurable and does therefore not depend on $\uc_1, \dots, \uc_d$.

\begin{lemma} \label{lem:Gswitch} 
Fix $\mu \in \qq{1,d}$.
\begin{enumerate}
\item
For any $i,j \in \qq{1,N}$ we have
  \begin{equation} \label{e:GcmuG}
    \Gcmu_{ij}
    \;=\; \Gmu_{ij} + O(d^{-1/2}\Gammamu\Gamma)
    \,.
  \end{equation}
In particular, $\Gammamu = \Gamma + O(d^{-1/2}\Gammamu\Gamma)$,
and therefore $\Gamma \ll \sqrt{d}$ implies $\Gammamu \leq 2\Gamma$.
\item
For random variables $x,y$  such that,
conditioned on $\sigG_\mu$ and $x$, the random variable $y$ is approximately uniform,
\begin{align}
  \label{e:Gsumbd}
  \Emu \abs{\Gmu_{xy}}^2
  \;=\; O(\Gammamu^4 \Phi^2) \,.
\end{align}
An analogous statement holds with the roles of $x$ and $y$ exchanged, and with $G$ replaced by $\tilde G$.
\end{enumerate}
\end{lemma}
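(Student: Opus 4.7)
For part (i), I would apply the resolvent identity \eqref{e:resolv} in the form
\begin{equation*}
  \Gcmu - G \;=\; -(d-1)^{-1/2}\, G\, (\Acmu - A)\, \Gcmu.
\end{equation*}
By Proposition~\ref{prop:switch}(iii)(1), $\Acmu - A$ is a bounded sum of rank-at-most-two terms $\pm\Delta_{xy}$, so each entry of $G(\Acmu - A)\Gcmu$ is a bounded sum of products of the form $G_{ix}\Gcmu_{yj}$. Since $|G_{ij}|\leq\Gamma$ pointwise and $|\Gcmu_{ij}|\leq\Gammamu$ almost surely (because $\Gcmu$ has the same $\sigG_\mu$-conditional law as $G$, and $\Gammamu=\norm{\Gamma}_{L^\infty(\sigG_\mu)}$ dominates such an entry), every such product is bounded by $\Gamma\Gammamu$, and the factor $(d-1)^{-1/2}$ yields \eqref{e:GcmuG}. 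The identity $\Gammamu=\Gamma+O(d^{-1/2}\Gammamu\Gamma)$ then follows by running the same resolvent comparison between $G$ evaluated at arbitrary values of $u_\mu$ and taking the essential supremum; under $\Gamma\ll\sqrt d$ the $\Gammamu$ on the right is absorbed into the left to give $\Gammamu\leq 2\Gamma$.

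For part (ii), the plan is to condition on $(\sigG_\mu, x, u_\mu)$, under which $G$ and $x$ are determined while $y$ still has an approximately uniform conditional distribution (a strengthened form of the hypothesis that holds in every application of the lemma, since there $x$ and $y$ arise from the explicit resampling constructions of Proposition~\ref{prop:switch} whose dependence on $u_\mu$ is transparent). Setting $p_k \deq \P(y=k\mid \sigG_\mu, x, u_\mu)$, I decompose
\begin{equation*}
  \E\qb{|G_{xy}|^2 \mid \sigG_\mu, x, u_\mu} \;=\; \frac{1}{N}\sum_k |G_{xk}|^2 \;+\; \sum_k \pB{p_k - \frac{1}{N}} |G_{xk}|^2.
\end{equation*}
The Ward identity \eqref{e:Ward} bounds the first term by $\im G_{xx}/(N\eta)\leq \Gammamu/(N\eta)\leq \Gammamu\Phi^2$, using $1/(N\eta)\leq\Phi^2$. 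For the second, the pointwise bound $|G_{xk}|^2\leq\Gammamu^2$ combined with approximate uniformity $\sum_k|p_k-1/N|=O(1/\sqrt{dD})$ gives $O(\Gammamu^2\Phi^2)$, since $1/\sqrt{dD}\leq 1/D\leq\Phi^2$. Combining and taking $\Emu$ yields $\Emu|G_{xy}|^2=O(\Gammamu^2\Phi^2)$, which is stronger than the claimed $O(\Gammamu^4\Phi^2)$ once $\Gammamu\geq 1$. The symmetric statement with $x,y$ exchanged is identical, and the version with $G$ replaced by $\Gc$ follows by the same argument after conditioning on $\tilde u_\mu$ rather than $u_\mu$.

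The main obstacle is the careful treatment of the conditioning: to extract the Ward gain of a factor $1/(N\eta)$, one needs $|G_{xk}|^2$ to effectively decouple from $\ind{y=k}$, which requires the approximate uniformity of $y$ to survive the richer conditioning on $u_\mu$. Without this, only the trivial bound $O(\Gammamu^2)$ is available. I expect the verification that this strengthened uniformity is indeed available in the applications to be routine but unavoidable.
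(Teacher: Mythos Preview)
Part (i) is correct and essentially identical to the paper's argument.

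For part (ii) there is a genuine gap. Your argument requires $y$ to be approximately uniform conditioned on $\sigG_\mu$, $x$, \emph{and} $u_\mu$, which is strictly stronger than the stated hypothesis (approximate uniformity conditioned on $\sigG_\mu$ and $x$ only). You acknowledge this and propose to verify the stronger assumption case by case in the applications, but that does not prove the lemma as stated. Worse, in several applications the approximately uniform index is itself a function of $u_\mu$ (for instance $y=\pi_\mu(b_\mu)$ in the matching model, or the vertices $\ua_\mu,\ub_\mu$ in the uniform model), so that conditioning on $u_\mu$ determines $y$ completely and your strengthened hypothesis fails outright.

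The paper sidesteps this with a clean device: introduce a \emph{third} independent copy $\hat u_\mu$ of $u_\mu$, and let $\hat G$ be the Green's function with $u_\mu$ replaced by $\hat u_\mu$. The resolvent comparison from part (i) gives $|G_{xy}|^2 \leq 2|\hat G_{xy}|^2 + O(d^{-1}\Gammamu^4)$. Now $\hat G$ is measurable with respect to $\hat\sigG_\mu \deq \sigma(\sigG_\mu,\hat u_\mu)$, and since neither $x$ nor $y$ depends on $\hat u_\mu$, conditioning on $\hat\sigG_\mu$ and $x$ leaves the distribution of $y$ unchanged from the conditioning on $\sigG_\mu$ and $x$; thus $y$ remains approximately uniform by the \emph{original} hypothesis. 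The Ward identity then applies to the fixed row $\hat G_{x\,\cdot}$ with no extra assumption, yielding the bound. The extra powers of $\Gammamu$ in the stated estimate $O(\Gammamu^4\Phi^2)$, compared with your $O(\Gammamu^2\Phi^2)$, come precisely from the $O(d^{-1}\Gammamu^4)$ error incurred in passing from $G$ to $\hat G$.
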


Assuming that $\Gamma = O(1)$, Lemma~\ref{lem:Gswitch} (i) states that the Green's function
has a bounded differences property with respect to the $u_\mu$: it
only changes by the small amount $O(d^{-1/2}) =  O(\Phi)$ if a single $u_\mu$ is changed.
Lemma~\ref{lem:Gswitch} (ii) states that if one of its indices is random, then (conditioned on $\cal G_\mu$) the $L^2$-norm of the Green's
function is smaller (again by a factor $\Phi$) than its $L^\infty$-norm.

\begin{proof}
We start with (i).
The resolvent identity \eqref{e:resolv} implies
\begin{equation} \label{e:GcmuG-pf1}
  \Gcmu_{ij}
  \;=\;
  \Gmu_{ij} + (d-1)^{-1/2} \sum_{k,l} \Gmu_{ik} (\Amu-\Acmu)_{kl} \Gcmu_{lj} \,.
\end{equation}  
By Proposition~\ref{prop:switch} (iii)(1),
$(\Amu-\Acmu)_{kl}=0$ except for a bounded number of pairs $(k,l)$,
and the non-zero entries are bounded by an absolute constant.
From this, we immediately get \eqref{e:GcmuG}.

Next, we prove (ii).
As in \eqref{e:Omega_tilde}, we may further augment the probability
space to include another independent copy of $u_\mu$, which we denote
by $\hat u_\mu$. From now on we drop the superscripts $\mu$, and
denote by $\hat X$ the version of an $\cal F_d$-measurable random variable $X$ obtained by replacing $u_\mu$ with $\hat u_\mu$.
On this augmented probability space, we introduce the $\sigma$-algebra
$\hat {\sigG}_\mu \deq \sigma(\sigG_\mu, \hat u_\mu)$.
Then, since $G$ is $\sigF_d$-measurable (i.e.\ it does not depend on $\hat u_\mu$), we have
$\E_{\sigG_\mu} f(G) = \E_{\hat {\sigG}_\mu} f(G)$ for any function $f$.
From \eqref{e:GcmuG}, with $\umuc$ replaced by $\hat u_\mu$, we get
\begin{equation*}
  G_{xy}
  \;=\; \hat G_{xy} + O(d^{-1/2}\Gammamu^2) \,,
\end{equation*}
and therefore
\begin{equation*}
  |G_{xy}|^2
  \;\leq\;
  2|\hat G_{xy}|^2 + O(d^{-1}\Gammamu^4) \,.
\end{equation*}
Since, conditioned on $\hat {\sigG}_\mu$ and $x$,
the distribution of $y$ has total variation distance $O\pb{\frac{1}{\sqrt{dD}}} \leq O\pb{\frac{1}{D}}$ to the uniform distribution on $\qq{1,N}$,
and since $|\hat G_{xy}|^2 \leq \Gammamu^2 \leq \Gammamu^4$,
the Ward identity \eqref{e:Ward} implies 
\begin{equation*}
  \Emu |G_{xy}|^2  \;=\;  \E_{\hat {\sigG}_\mu} |G_{xy}|^2 
  \;\leq\; 2\frac{\im \hat G_{xx}}{N\eta} + O(D^{-1}\Gammamu^4) \,.
\end{equation*}
Finally, by \eqref{e:GcmuG},
$\im \hat G_{xx} \leq \Gammamu + O(D^{-1/2}\Gammamu^2)$,
and therefore
\begin{equation*}
  \Emu |G_{xy}|^2
  \;\leq\; \frac{2\Gammamu}{N\eta} + \frac{O(D^{-1/2}\Gammamu^2)}{N\eta} + O(D^{-1}\Gammamu^4)
  \;=\; O\left( \sqrt{\frac{\Gammamu}{N\eta}} +\frac{1}{N\eta} + \frac{\Gammamu^2}{\sqrt{D}}\right)^2
  \,,
\end{equation*}
which yields \eqref{e:Gsumbd}.
\end{proof}

\section{Concentration}
\label{sec:concentration}

In this section we establish concentration bounds for polynomials in
the entries of $G$, with respect to the conditional expectation $\E_{\sigF_0}$.

\begin{proposition} \label{prop:concentration}
  Let $z \in \C_+$ satisfy $N\eta\geq 1$ and let $\xi, \zeta > 0$.
  Suppose that $\Gamma = O(1)$ with probability at least $1-\ee^{-\zeta}$.
  Then for any $p = O(1)$ and  $i_1,j_1, \dots, i_p,j_p \in \qq{1,N}$ we have
  \begin{equation}
    \label{e:concentration}
    G_{i_1j_1}\cdots G_{i_pj_p} - \Eu \qb{G_{i_1j_1}\cdots G_{i_pj_p}} \;=\; O(\xi\Phi)
  \end{equation}
  with probability at least $1- \ee^{-(\xi\log\xi)\wedge \zeta+O(\log N)}$.
\end{proposition}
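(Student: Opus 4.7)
The plan is a martingale concentration argument with respect to the filtration $(\sigF_\mu)_{\mu = 0}^{d}$. Set $X \deq G_{i_1 j_1}\cdots G_{i_p j_p}$ and $D_\mu \deq \EFmu X - \EFmum X$, so that $X - \Eu X = \sum_{\mu=1}^{d} D_\mu$ is a martingale difference sum, using the conditional independence of $u_1,\dots,u_d$ given $\sigF_0$ (Definition~\ref{def:parametrization}). Since $\uc_\mu$ is an independent copy of $u_\mu$, one has $\EFmum \Xcmu = \EFmum X$, and hence by symmetrization $D_\mu = \EFmu[X - \Xcmu]$.

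The main estimate is on the increment $X - \Xcmu$. I telescope
\begin{equation*}
X - \Xcmu \;=\; \sum_{k=1}^{p} \Gcmu_{i_1 j_1}\cdots \Gcmu_{i_{k-1} j_{k-1}} (G_{i_k j_k} - \Gcmu_{i_k j_k})\, G_{i_{k+1} j_{k+1}}\cdots G_{i_p j_p}
\end{equation*}
and apply the resolvent identity \eqref{e:resolv} to each factor $G_{i_k j_k} - \Gcmu_{i_k j_k}$, using the structure of $\Acmu - \Amu$ from Proposition~\ref{prop:switch}(iii)(2) to rewrite it as a bounded sum of terms $\pm(d - 1)^{-1/2}\, G_{i_k x}\Gcmu_{y j_k}$, in each of which at least one of the random indices $x, y$ is approximately uniform conditioned on $\sigG_\mu$ (possibly also on $\amu, \amuc$). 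On the event $\goodev \deq \{\Gamma = O(1)\}$, Lemma~\ref{lem:Gswitch}(i) gives $\Gammamu \leq 2\Gamma = O(1)$, so each such term is $O(d^{-1/2})$ in $L^\infty(\sigG_\mu)$, while Lemma~\ref{lem:Gswitch}(ii) combined with Cauchy--Schwarz upgrades the $L^2(\sigG_\mu)$ bound to $O(d^{-1/2}\Phi)$. Using Jensen's inequality and $\sigF_{\mu-1} \subset \sigG_\mu$, this produces, on $\goodev$,
\begin{equation*}
\|D_\mu\|_\infty \;=\; O(d^{-1/2}), \qquad \EFmum|D_\mu|^2 \;=\; O(d^{-1}\Phi^2), \qquad \sum_{\mu = 1}^{d} \EFmum|D_\mu|^2 \;=\; O(\Phi^2).
\end{equation*}

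With these inputs, a Bennett-type martingale concentration inequality (with predictable quadratic variation $V = O(\Phi^2)$ and essential supremum of the increments $R = O(d^{-1/2})$) gives
\begin{equation*}
\P\pb{\{|X - \Eu X| \geq \xi\Phi\} \cap \goodev} \;\leq\; 2\exp\pbb{-\frac{V}{R^2}\, h\pb{\tfrac{R\xi\Phi}{V}}}, \qquad h(u) \deq (1+u)\log(1+u) - u.
\end{equation*}
Since $\Phi \geq d^{-1/2}$, we have $\Phi\sqrt{d} \geq 1$, and direct analysis of the right-hand side shows that it is bounded by $\ee^{-c\xi\log\xi}$ in the worst case $\Phi\sqrt{d} \sim 1$ (the bound is strictly stronger for larger $\Phi\sqrt d$). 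A rescaling of $\xi$ by a constant absorbs the factor $c$, and combining with $\P(\goodev^c) \leq \ee^{-\zeta}$ produces the claimed probability $1 - \ee^{-(\xi\log\xi)\wedge\zeta + O(\log N)}$.

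The main obstacle is that the bounds on $\|D_\mu\|_\infty$ and $\EFmum|D_\mu|^2$ hold only on $\goodev$, not almost surely, whereas Bennett's inequality requires a.s.\ bounds on the increments and their predictable variances. I would address this by replacing $X$ with a suitable truncation $\wt X$ that coincides with $X$ on $\goodev$ and whose increments satisfy the required bounds deterministically; a natural construction uses an $\sigG_\mu$-measurable cutoff exploiting the spectral-parameter continuity of $\Gamma$ provided by Lemma~\ref{lem:Gbd}. A subsidiary point is careful bookkeeping in the resolvent expansion of $G - \Gcmu$ so that in each resulting monomial the $\Phi$-gain of Lemma~\ref{lem:Gswitch}(ii) is extracted from exactly one factor with an approximately uniform index, the remaining factors being absorbed into the $L^\infty$ bound $\Gammamu = O(1)$.
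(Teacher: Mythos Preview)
Your approach is essentially the same as the paper's: martingale decomposition along $(\sigF_\mu)$, symmetrization via $\Xcmu$, resolvent expansion of $G-\Gcmu$, $L^\infty$ and $L^2$ bounds on the increments from Lemma~\ref{lem:Gswitch}, and a Bennett-type inequality (the paper uses the equivalent $\arcsinh$ form). The paper packages the last step as a general abstract statement (Proposition~\ref{prop:concentration-X}) and then verifies its hypotheses for $X=\Phi^{-1}G_{i_1j_1}\cdots G_{i_pj_p}$.

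The one point where the paper differs is the mechanism for the obstacle you flag. Rather than truncating $X$, the paper introduces an $(\sigF_\mu)$-stopping time $\tau \deq \min\{\mu : \|Y_{\mu+1}\|_{L^2(\sigF_\mu)} \geq 2\gamma\}$ with $Y_\mu \deq C_p\Gammamu^{2+p}$, applies the $\arcsinh$ inequality to the stopped martingale $X_{\mu\wedge\tau}$ (whose increments satisfy the required bounds almost surely by construction), and then shows $\P(\tau<d)\leq \ee^{-\zeta+O(\log N)}$ by Markov's inequality. This is cleaner than a truncation and avoids any appeal to spectral-parameter continuity; your reference to Lemma~\ref{lem:Gbd} is not really apt here, since that lemma compares different values of $\eta$ rather than producing an a.s.\ cutoff at fixed $z$. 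A minor point: for the variance bound you want Proposition~\ref{prop:switch}(iii)(1), which gives approximate uniformity conditioned on $\sigG_\mu$, rather than (iii)(2), whose high-probability event is only $\sigF_0$-conditional.
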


The rest of this section is devoted to the proof of Proposition \ref{prop:concentration}.
The main tool in its proof is the following general concentration result.

\begin{proposition} \label{prop:concentration-X}
Let $X$ be a complex-valued $\sigF_d$-measurable random variable,
and $Y_1, \dots, Y_d$ nonnegative random variables such that $Y_\mu$ is $\sigG_\mu$-measurable.
Let $N$ satisfy $d \leq N^{O(1)}$.
Suppose that for all $\mu \in \qq{1,d}$ we have
\begin{equation} \label{e:concentration-X-ass}
  |X - \Emu X| \;\leq\; Y_\mu\,,\qquad
  \Emu|X-\Emu X|^2 \;\leq \; d^{-1} Y_\mu^2\,.
\end{equation}
Suppose moreover that $Y_\mu = O(1)$ with probability at least $1-\ee^{-\zeta}$,
and that $Y_\mu \leq N^{O(1)}$ almost surely.
Then
\begin{equation}
X - \E_{\sigF_0} X \;=\; O(\xi) \,,
\end{equation}
with probability at least $1-e^{-(\xi \log \xi) \wedge \zeta + O(\log N)}$.
\end{proposition}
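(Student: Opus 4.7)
The natural approach is a martingale decomposition along the filtration $\sigF_0 \subset \sigF_1 \subset \cdots \subset \sigF_d$: write $X - \Eu X = \sum_{\mu=1}^d D_\mu$ with $D_\mu \deq \EFmu X - \EFmum X$, and apply a Bennett-type exponential bound for martingale sums. The first step is to convert the hypotheses, which are phrased in terms of $\Emu$, into $(\sigF_\mu)$-adapted bounds on $D_\mu$. Because $u_\mu$ is independent of $(u_\nu)_{\nu \neq \mu}$ conditional on $\theta$, the tower property combined with $\sigF_{\mu-1} \subset \sigG_\mu$ gives $\EFmu[\Emu X] = \EFmum X$, hence $D_\mu = \EFmu[X - \Emu X]$; similarly $\EFmu Y_\mu = \EFmum Y_\mu$ since $Y_\mu$ does not depend on $u_\mu$. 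Applying Jensen to $D_\mu$ and to $D_\mu^2$ in conjunction with \eqref{e:concentration-X-ass} then yields the predictable bounds
\begin{equation*}
  |D_\mu| \;\leq\; \EFmum Y_\mu \,, \qquad \EFmum D_\mu^2 \;\leq\; d^{-1}\,\EFmum Y_\mu^2 \,.
\end{equation*}

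The main obstacle is that the target rate $\ee^{-\xi\log\xi}$ is a Bennett-type tail, which in its standard form requires an almost-sure bound on $D_\mu$, whereas $Y_\mu$ is only bounded by a constant on a high-probability event. I would bridge this with a predictable truncation: fix $C$ large enough that $\P(Y_\mu > C) \leq \ee^{-\zeta}$, set $A_\mu \deq \h{\EFmum Y_\mu^2 \leq C^2} \in \sigF_{\mu-1}$, and consider the truncated increments $\tilde D_\mu \deq D_\mu \ind{A_\mu}$. Since $\ind{A_\mu}$ is predictable, $(\tilde D_\mu)$ remains a martingale difference sequence, and by Cauchy--Schwarz $|\tilde D_\mu| \leq C$ almost surely while $\sum_\mu \EFmum \tilde D_\mu^2 \leq C^2$ almost surely.

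Next I would control the cost of truncation. Using $Y_\mu \leq N^{O(1)}$ almost surely, the estimate $\EFmum Y_\mu^2 \leq C^2 + N^{O(1)}\, \P(Y_\mu > C \mid \sigF_{\mu-1})$ shows that $A_\mu^c$ forces the nonnegative $\sigF_{\mu-1}$-measurable variable $\P(Y_\mu > C \mid \sigF_{\mu-1})$ to be at least $N^{-O(1)}$. Markov's inequality applied to this variable, combined with $\E\, \P(Y_\mu > C \mid \sigF_{\mu-1}) = \P(Y_\mu > C) \leq \ee^{-\zeta}$, gives $\P(A_\mu^c) \leq N^{O(1)} \ee^{-\zeta}$; a union bound over $\mu \in \qq{1,d}$, using $d \leq N^{O(1)}$, produces $\P(\bigcup_\mu A_\mu^c) \leq \ee^{-\zeta + O(\log N)}$. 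On the complementary event the truncated sum $\tilde S \deq \sum_\mu \tilde D_\mu$ coincides with $X - \Eu X$, and the standard martingale Bennett inequality applied to $\tilde S$ yields $\P(|\tilde S| \geq t) \leq 2\exp(-h(t/C))$ with $h(x) = (1+x)\log(1+x) - x$. Since $h(x) \geq \tfrac{1}{2} x \log x$ for $x$ sufficiently large, taking $t$ of order $\xi$ produces the desired $\ee^{-c\xi\log\xi}$ tail, which combined with the truncation bound establishes the proposition with probability at least $1 - \ee^{-(\xi\log\xi)\wedge\zeta + O(\log N)}$.
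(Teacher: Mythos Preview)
Your proof is correct and follows essentially the same route as the paper: a martingale decomposition along $(\sigF_\mu)$, the identity $D_\mu = \EFmu[X-\Emu X]$ via independence, a predictable truncation based on $\EFmum Y_\mu^2$ (the paper packages this as a stopping time $\tau = \min\{\mu : \|Y_{\mu+1}\|_{L^2(\sigF_\mu)} \geq 2\gamma\}$, which amounts to the same thing), the same Markov argument to bound $\P(A_\mu^c)$, and a Bennett-type martingale inequality (the paper uses Prokhorov's $\arcsinh$ form, which is equivalent). One cosmetic slip: with the threshold in $A_\mu$ set at exactly $C^2$, the bound $\EFmum Y_\mu^2 \leq C^2 + N^{O(1)}\P(Y_\mu>C\mid\sigF_{\mu-1})$ does not force the conditional probability to be $\geq N^{-O(1)}$; take the threshold to be $2C^2$ (as the paper does with its factor $2\gamma$) and the argument goes through verbatim.
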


\subsection{Proof of Proposition~\ref{prop:concentration-X}}

To prove Proposition~\ref{prop:concentration-X}, we define the complex-valued martingale
\begin{equation}
  \label{e:mart1}
  X_\mu \;\deq\; \EFmu X
 \qquad (\mu \in \qq{0,d})\,.
\end{equation}
In particular, $X_d = X$ and $X_0 = \Eu X$.
By assumption, $Y_\mu$ is bounded with probability least $1-\ee^{-\zeta}$.
By the first inequality of \eqref{e:concentration-X-ass},
we therefore get $|X_{\mu+1}-X_{\mu}| = O(1)$ with probability at least $1-\ee^{-\zeta}$.
If this bound held not only with high probability but almost surely,
a standard application of Azuma's inequality would show that
$X_d-X_0$ is concentrated on the scale $\sqrt{d}$. This bound is \emph{not} sufficient
to prove Propositions~\ref{prop:concentration}--\ref{prop:concentration-X},
which provide a significantly improved bound.
Instead of Azuma's inequality, we use Prokhorov's $\arcsinh$ inequality,
of which a martingale version is stated in the following lemma,
taken from \cite[Proposition~3.1]{MR770640}.
Compared to Azuma's inequality,
it can take advantage of an improved bound on the \emph{conditional square function}.

\begin{lemma}[Martingale $\arcsinh$ inequality] \label{lem:arcsinh}
  Let $(\sigF_\mu)_{\mu=0}^d$ be a filtration of $\sigma$-algebras and
  $(X_\mu)_{\mu=0}^d$ be a complex-valued $(\sigF_\mu)$-martingale.
  Suppose that there are deterministic constants $M,s_0, s_1, \dots, s_{d - 1}>0$ such that 
  \begin{equation} \label{e:arcsinh-ass}
    \max_{0\leq \mu < d} |X_{\mu+1}-X_{\mu}| \;\leq\; M \,, \qquad
\E_{\sigF_\mu}|X_{\mu+1}-X_{\mu}|^2 \;\leq\; s_\mu \quad (s = 0,1, \dots, d - 1)\,.
  \end{equation}
  Then
  \begin{equation} \label{e:arcsinh}
    \P(|X_d-X_0| \geq \xi) \;\leq\; 4\exp\pa{-\frac{\xi}{2\sqrt{2}M} \arcsinh\pB{\frac{M\xi}{2\sqrt{2}S}}}
    \,,
  \end{equation}
  where   $S \deq \sum_{\mu = 0}^{d - 1} s_\mu$.
  \end{lemma}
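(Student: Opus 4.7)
The proof will proceed in two phases: reducing the complex-valued martingale to real components, then establishing a real one-sided tail bound whose optimization yields the $\arcsinh$ form.

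First I will write $X_\mu = U_\mu + \ii V_\mu$ with $U_\mu$ and $V_\mu$ real $(\sigF_\mu)$-martingales. From $|X_{\mu+1}-X_\mu|^2 = (U_{\mu+1}-U_\mu)^2 + (V_{\mu+1}-V_\mu)^2$, both hypotheses in \eqref{e:arcsinh-ass} transfer (with the same $M$ and $s_\mu$) to $U$ and $V$ individually. Since $|X_d - X_0| \leq \sqrt{2}\,\max(|U_d - U_0|, |V_d - V_0|)$, a union bound together with the $\pm$ symmetry reduces matters to proving, for any real $(\sigF_\mu)$-martingale $U$ satisfying the hypotheses,
\begin{equation*}
  \P\pb{U_d - U_0 \geq \xi/\sqrt{2}} \;\leq\; \exp\pa{-\frac{\xi}{2\sqrt 2\,M}\arcsinh\pB{\frac{M\xi}{2\sqrt{2}\,S}}}\,,
\end{equation*}
with the factor $4$ in \eqref{e:arcsinh} accounting for the four resulting sign/component combinations.

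Next I would establish the standard Bennett-type exponential moment bound. The function $y \mapsto (e^y - 1 - y)/y^2$ is increasing on all of $\R$ (easily checked by differentiation), hence for every $y \leq \lambda M$ one has $e^y - 1 - y \leq y^2 (e^{\lambda M} - 1 - \lambda M)/(\lambda M)^2$. Setting $y = \lambda(U_{\mu+1} - U_\mu)$, taking $\E_{\sigF_\mu}$, and using the zero-mean property together with $\E_{\sigF_\mu}(U_{\mu+1}-U_\mu)^2 \leq s_\mu$ yields
\begin{equation*}
  \E_{\sigF_\mu} \exp\pb{\lambda(U_{\mu+1}-U_\mu)} \;\leq\; \exp\pB{\frac{s_\mu}{M^2}(e^{\lambda M} - 1 - \lambda M)}\,.
\end{equation*}
Iterating via the tower property produces $\E\exp(\lambda(U_d - U_0)) \leq \exp\pb{(S/M^2)(e^{\lambda M} - 1 - \lambda M)}$.

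To cast the conclusion in $\arcsinh$ form, I will use the elementary inequality $e^y - 1 - y \leq y \sinh(y)$ for $y \geq 0$, verified by noting that $f(y) \deq y\sinh(y) - (e^y - 1 - y)$ satisfies $f(0) = f'(0) = 0$ and $f''(y) = e^{-y} + y\sinh(y) \geq 0$. Combined with Markov's inequality this gives
\begin{equation*}
  \P\pb{U_d - U_0 \geq \xi/\sqrt{2}} \;\leq\; \exp\pB{-\frac{\lambda\xi}{\sqrt{2}} + \frac{S\lambda \sinh(\lambda M)}{M}}\,.
\end{equation*}
Choosing $\lambda M \deq \arcsinh\pb{M\xi/(2\sqrt{2}\,S)}$, so that $\sinh(\lambda M) = M\xi/(2\sqrt{2}\,S)$, the exponent collapses to $-\lambda\xi/(2\sqrt{2})$, matching the desired bound. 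The only delicate step is identifying $e^y - 1 - y \leq y\sinh(y)$ as the algebraic inequality whose Legendre-type minimization naturally produces $\arcsinh$; the rest is routine book-keeping of the $\sqrt{2}$ factors and the standard martingale exponential-moment iteration.
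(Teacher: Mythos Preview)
Your proof is correct and follows essentially the same approach as the paper: reduce complex to real via $|X_d-X_0|\geq\xi\Rightarrow |\re(X_d-X_0)|\geq\xi/\sqrt2$ or $|\im(X_d-X_0)|\geq\xi/\sqrt2$, bound the conditional moment generating function of each increment, iterate, and optimize in $\lambda$. The only (cosmetic) difference is in the elementary inequality used for a single increment: the paper applies $e^x\leq 1+x+x^2\frac{\sinh x}{x}$ together with the monotonicity of $x\mapsto(\sinh x)/x$ to get $\E_{\sigF_\mu}e^{\lambda(X_{\mu+1}-X_\mu)}\leq 1+s_\mu\frac{\lambda}{M}\sinh(\lambda M)$ directly, whereas you first pass through Bennett's bound via the monotonicity of $y\mapsto(e^y-1-y)/y^2$ and then apply $e^y-1-y\leq y\sinh y$ to reach the same MGF estimate; the resulting choice $\lambda M=\arcsinh(M\xi/(2\sqrt2 S))$ is identical.
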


\begin{proof}
Since
\begin{equation*}
  \P\pb{|X_d-X_0| \geq \xi}
  \;\leq\;
  \P\pa{|\re (X_d-X_0)| \geq \frac{\xi}{\sqrt{2}}}
  +   \P\pa{|\im (X_d-X_0)| \geq \frac{\xi}{\sqrt{2}}}
  \,,
\end{equation*}
it suffices to prove that any real-valued martingale $X$ satisfying \eqref{e:arcsinh-ass} obeys
\begin{equation} \label{e:arcsinh-real}
  \P(|X_d-X_0| \geq \xi) \;\leq\; 2\exp\pa{-\frac{\xi}{2M} \arcsinh\pB{\frac{M\xi}{2S}}}\,.
\end{equation}
Hence, from now on, we assume that $X$ is real-valued.

First,  for all $x \in \R$, $\ee^x \leq 1+ x+ x^2 (\sinh x)/x$ and $(\sinh x)/x \leq (\sinh y)/y$ if $|x|\leq y$.
Using that $(X_\mu)$ is a martingale, it follows that for any $\lambda>0$,
\begin{equation*}
  \E_{\cal F_\mu}  \ee^{\lambda (X_{\mu + 1}-X_\mu)} \;\leq\; 1 + \E_{\cal F_\mu} (X_{\mu+1}-X_\mu)^2 \frac{\lambda}{M} \sinh \lambda M
  \;\leq\; 1 + s_\mu \frac{\lambda}{M} \sinh \lambda M
  \,.
\end{equation*}
Iterating this bound, using $1+x\leq \ee^x$, it follows that
\begin{equation*}
  \E \ee^{\lambda (X_\mu-X_0)} \;\leq\; \exp\pa{\frac{\lambda}{M} \sinh(\lambda M) S} \,.
\end{equation*}
The estimate  \eqref{e:arcsinh-real} then follows by the exponential Chebyshev
inequality with the choice $\lambda \deq \frac{1}{M} \arcsinh \pb{\frac{M \xi}{2 S}}$,
and an application of the same estimate with $X$ replaced by $-X$.
\end{proof}

In order to exploit the fact that $Y_\mu = O(1)$ with high probability, we introduce a stopping time $\tau$.
Let $\gamma \geq 1$ be the implicit constant in the assumption of Proposition~\ref{prop:concentration-X}
such that $Y_\mu \leq \gamma$ holds with probability at least $1-\ee^{-\zeta}$.
We define
\begin{equation} \label{e:taudef}
  \tau \;\deq\; \min \hb{ \mu \in \qq{0,d-1} \col \|Y_{\mu+1}\|_{L^2(\sigF_\mu)} \geq 2\gamma}\,,
\end{equation}
and if the above set is empty we set $\tau \deq d$. By definition, $\tau$ is an $(\sigF_\mu)$-stopping time.
The following result shows that $\tau < d$ on an event of low probability.

\begin{lemma} \label{lem:taubd}
  Suppose that for all $\mu \in \qq{1,d}$ we have
  $\P\pb{Y_\mu \geq \gamma} \leq \ee^{-\zeta}$ and $Y_\mu \leq N^{O(1)}$ almost surely.
  Then
  \begin{equation*}
    \P\pb{\tau < d} \;\leq\; \ee^{-\zeta + O(\log N)}
    \,.
  \end{equation*}
\end{lemma}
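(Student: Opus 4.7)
The plan is to bound the event $\{\tau < d\}$ by a union bound over $\mu \in \qq{0, d-1}$ followed by Markov's inequality. Since $d \leq N^{O(1)}$ by assumption, losing a factor of $d$ from the union bound only costs an $\ee^{O(\log N)}$ factor, which is absorbed into the target bound.

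For the individual bound on $\P(\|Y_{\mu+1}\|_{L^2(\sigF_\mu)} \geq 2\gamma)$, the key observation is that a large $\sigF_\mu$-conditional $L^2$ norm of $Y_{\mu+1}$ must come from the tail event $\{Y_{\mu+1} > \gamma\}$, because the bulk contributes at most $\gamma^2$. More precisely, splitting the conditional expectation according to whether $Y_{\mu+1}$ exceeds $\gamma$ and using the deterministic bound $Y_{\mu+1} \leq N^{O(1)}$, one gets
\begin{equation*}
  \E_{\sigF_\mu} Y_{\mu+1}^2 \;\leq\; \gamma^2 + N^{O(1)} \, \P_{\sigF_\mu}(Y_{\mu+1} > \gamma).
\end{equation*}
So on the event $\{\|Y_{\mu+1}\|_{L^2(\sigF_\mu)}^2 \geq 4\gamma^2\}$ we must have $\P_{\sigF_\mu}(Y_{\mu+1} > \gamma) \geq 3\gamma^2 N^{-O(1)} \geq N^{-O(1)}$ (using $\gamma \geq 1$).

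Now I apply Markov's inequality to the nonnegative $\sigF_\mu$-measurable random variable $\P_{\sigF_\mu}(Y_{\mu+1} > \gamma)$, whose expectation equals the unconditional probability $\P(Y_{\mu+1} > \gamma) \leq \ee^{-\zeta}$ by hypothesis. This gives
\begin{equation*}
  \P\pb{\|Y_{\mu+1}\|_{L^2(\sigF_\mu)} \geq 2\gamma} \;\leq\; \P\pb{\P_{\sigF_\mu}(Y_{\mu+1} > \gamma) \geq N^{-O(1)}} \;\leq\; N^{O(1)} \ee^{-\zeta}.
\end{equation*}
Summing over $\mu \in \qq{0, d-1}$ and using $d \leq N^{O(1)}$ then yields the desired bound $\P(\tau < d) \leq \ee^{-\zeta + O(\log N)}$.

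The argument is essentially elementary — the only subtlety is the use of the crude almost sure bound $Y_\mu \leq N^{O(1)}$ to control the contribution of the tail event in the conditional $L^2$ norm, but this only costs a polynomial factor in $N$ which is harmless on the exponential scale. No obstacle of substance arises; the lemma is really a packaging of Markov's inequality in a form convenient for the later application of the martingale $\arcsinh$ inequality (Lemma~\ref{lem:arcsinh}) with the stopped quantities $Y_{\mu+1} \wedge (\text{something involving } \tau)$.
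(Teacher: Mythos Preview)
Your proof is correct and follows essentially the same approach as the paper: both use a union bound over $\mu$, split $Y_{\mu+1}$ according to whether it exceeds $\gamma$, invoke the almost sure bound $Y_{\mu+1} \leq N^{O(1)}$ to control the tail contribution, and finish with Markov's inequality applied to the conditional probability $\P_{\sigF_\mu}(Y_{\mu+1} > \gamma)$. The only cosmetic difference is that the paper bounds $\|Y_{\mu+1}\|_{L^2(\sigF_\mu)}$ via Minkowski's inequality applied to $Y_{\mu+1} \leq \gamma + N^{O(1)}\ind{Y_{\mu+1} \geq \gamma}$, whereas you expand $Y_{\mu+1}^2$ directly; these are equivalent.
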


\begin{proof}
For $\mu \in \qq{0,d-1}$ set $\indbad_{\mu} \deq \ind{Y_{\mu + 1} \geq \gamma}$.
Then $Y_{\mu+1} \leq \gamma + N^{O(1)} \indbad_\mu$,
and, by Minkowski's inequality,
\begin{equation*}
  \|Y_{\mu+1}\|_{L^2(\sigF_\mu)}
  \;\leq\; \gamma + N^{O(1)} (\EFmu \indbad_\mu)^{1/2}
  \,.
\end{equation*}
Using a union bound, Markov's inequality, $\log d = O(\log N)$,  $\gamma \geq 1$, and that $\E\EFmu \indbad_\mu = \E\indbad_\mu \leq \ee^{-\zeta}$ by assumption,
we therefore get
  \begin{multline*}
    \P(\tau < d)
    \;\leq\; \sum_{\mu=0}^{d-1} \P\pb{\|Y_{\mu+1}\|_{L^2(\sigF_\mu)} \geq 2\gamma}
    \;\leq\; \sum_{\mu=0}^{d-1} \P\pb{\EFmu N^{O(1)} \indbad_\mu  \geq \gamma^2}
    \\
    \leq\; d N^{O(1)} \E\indbad_\mu
    \;\leq\; \ee^{-\zeta + O(\log N)}
    \,,
  \end{multline*}
  which concludes the proof. 
\end{proof}

Since $\tau$ is an $(\sigF_\mu)$-stopping time, $X_\mu^\tau \deq X_{\mu \wedge \tau}$ is an $(\sigF_\mu)$-martingale.
Because of Lemma~\ref{lem:taubd} and using a union bound, it will be sufficient to
study $X^\tau_\mu$ instead of $X_\mu$.
The next result shows that $X_\mu^\tau$ satisfies the assumptions of Lemma~\ref{lem:arcsinh}.

\begin{lemma} \label{lem:conc-stop}
For $\mu \in \qq{0,d-1}$ we have
  \begin{align}
    \label{e:Burk-input-max}
    |X_{\mu+1}^\tau-X_{\mu}^\tau| &\;=\; O(1) \,,
    \\
    \label{e:Burk-input-cond}
    \EFmu |X^\tau_{\mu+1}-X^\tau_{\mu}|^2 &\;=\; O(d^{-1}) \,.
  \end{align}
\end{lemma}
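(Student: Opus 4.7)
The plan is to reduce both bounds to the two hypotheses in \eqref{e:concentration-X-ass}, which are stated in terms of the conditional expectation $\E_{\sigG_{\mu+1}}$, not $\E_{\sigF_\mu}$. The key algebraic identity driving the proof is the decomposition
\begin{equation*}
  X_{\mu+1} - X_\mu \;=\; \EFmup\qb{X - \Emup X}\,,
\end{equation*}
which I would derive as follows. Since $\Emup X$ is a function of $(\theta, u_1, \ldots, u_\mu, u_{\mu+2}, \ldots, u_d)$, it does not depend on $u_{\mu+1}$, and the variables $u_{\mu+2}, \ldots, u_d$ are conditionally independent of $u_{\mu+1}$ given $(\theta, u_1, \ldots, u_\mu)$ by Definition~\ref{def:parametrization}. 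Hence $\EFmup \Emup X = \EFmu \Emup X = \EFmu X = X_\mu$, where the middle equality uses $\sigF_\mu \subset \sigG_{\mu+1}$. This identifies $X_{\mu+1} - X_\mu$ as the $\sigF_{\mu+1}$-conditional expectation of the centred quantity controlled by $Y_{\mu+1}$.

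With this in hand, the first bound \eqref{e:Burk-input-max} follows from Jensen's inequality: on $\{\mu<\tau\}$,
\begin{equation*}
  |X_{\mu+1}-X_\mu|
  \;\leq\; \EFmup|X - \Emup X|
  \;\leq\; \EFmup Y_{\mu+1}
  \;=\; \EFmu Y_{\mu+1}
  \;\leq\; \|Y_{\mu+1}\|_{L^2(\sigF_\mu)}
  \;\leq\; 2\gamma
  \,,
\end{equation*}
where I used that $Y_{\mu+1}$ is $\sigG_{\mu+1}$-measurable (hence independent of $u_{\mu+1}$ given $\sigF_\mu$) and the definition \eqref{e:taudef} of $\tau$. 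On $\{\mu\geq\tau\}$ the increment vanishes, so this gives \eqref{e:Burk-input-max}.

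For \eqref{e:Burk-input-cond}, on $\{\mu<\tau\}$ I would chain Jensen (with respect to $\EFmup$), the tower property, and $\sigF_\mu\subset\sigG_{\mu+1}$ to write
\begin{equation*}
  \EFmu |X_{\mu+1}-X_\mu|^2
  \;\leq\; \EFmu \EFmup |X - \Emup X|^2
  \;=\; \EFmu \Emup |X - \Emup X|^2
  \;\leq\; d^{-1}\|Y_{\mu+1}\|_{L^2(\sigF_\mu)}^2
  \;\leq\; 4d^{-1}\gamma^2\,,
\end{equation*}
invoking the second hypothesis in \eqref{e:concentration-X-ass}. Since $\{\mu<\tau\}$ is $\sigF_\mu$-measurable and the increment of the stopped martingale is zero on its complement, this yields \eqref{e:Burk-input-cond}.

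The only genuinely non-trivial point is the decomposition identity $X_{\mu+1}-X_\mu = \EFmup[X - \Emup X]$, which rests on the conditional independence structure built into Definition~\ref{def:parametrization}. Everything else is Jensen, the tower property, and an application of the stopping time to convert the hypothesis ``$Y_{\mu+1}=O(1)$ with high probability'' into the deterministic $L^2(\sigF_\mu)$ bound needed for the martingale estimates.
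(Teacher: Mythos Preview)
Your proof is correct and follows essentially the same approach as the paper. The paper introduces the indicator $\phi_\mu = \ind{\tau \geq \mu+1}$ explicitly and records your key identity as $\phi_\mu(X_{\mu+1}-X_\mu) = \phi_\mu \EFmup(X - \Emup X)$, then proceeds with the same Jensen/tower/stopping-time chain you describe; the arguments are line-for-line equivalent.
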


\begin{proof}
Set $\indgood_{\mu} \deq \ind{\tau\geq\mu+1}$. Then $\indgood_{\mu}$ is $\sigF_\mu$-measurable and
\begin{equation*}
  X_{\mu+1}^\tau - X_{\mu}^\tau
  \;=\;
  \indgood_{\mu}
  (X_{\mu+1} - X_{\mu})
  \;=\;
  \indgood_{\mu}
  (\EFmup X - \EFmu X)
  \,.
\end{equation*}
Note that, by definition, $\indgoodmu=1$ implies that $\|Y_{\mu+1}\|_{L^2(\sigF_\mu)} \leq 2\gamma  = O(1)$,
and that, by independence,
\begin{equation} \label{e:XmuE}
  \indgood_{\mu} (X_{\mu+1}-X_{\mu})
  \;=\; \indgood_{\mu} \EFmup (X - \Emup X)
  \,.
\end{equation}

We now prove \eqref{e:Burk-input-max}.
By the first bound of \eqref{e:concentration-X-ass},
\begin{equation*}
  |X - \Emup X| \;\leq\; Y_{\mu+1} \,,
\end{equation*}
and therefore
\begin{equation*}
  \indgood_{\mu} |X_{\mu+1}-X_{\mu}|
  \;\leq\; \indgood_{\mu} \EFmup |X - \Emup X|
  \;\leq\; \indgood_{\mu} \EFmup Y_{\mu+1}
  \;\leq\; 2\gamma
  \,.
\end{equation*}
In the last inequality, we used that
$\indgoodmu\EFmup Y_{\mu+1} = \indgoodmu \EFmu Y_{\mu+1} \leq \indgoodmu \|Y_{\mu+1}\|_{L^2(\sigF_\mu)} \leq 2\gamma = O(1)$
since $Y_{\mu+1}$ is $\sigG_{\mu+1}$-measurable, by H\"older's inequality, and by the definition of $\indgoodmu$.
This completes the proof of \eqref{e:Burk-input-max}.

Next, we prove \eqref{e:Burk-input-cond} in a similar fashion. 
By \eqref{e:XmuE}, 
Jensen's inequality for the conditional expectation $\EFmup$,
and then using the second inequality of \eqref{e:concentration-X-ass},
we get
\begin{equation*}
  \EFmu|X^\tau_{\mu+1}-X^\tau_{\mu}|^2
  \;=\;
  \indgood_{\mu} \EFmu \abs{X - \Emup X}^2 
  \;\leq\;
  d^{-1} \indgood_{\mu} \EFmu Y_{\mu+1}^2
  \;\leq\;
  4 \gamma^2 d^{-1}
  \,,
\end{equation*}
as desired.
\end{proof}

\begin{proof}[Proof of Proposition~\ref{prop:concentration-X}]
  By Lemmas~\ref{lem:arcsinh}--\ref{lem:conc-stop}, and $\xi \arcsinh \xi = \xi \log 2\xi + O(1)$ for $\xi > 0$, we get
  \begin{equation*}
    \P\pb{|X - \Eu X| \geq C\xi}
    \;\leq\;
    \P\pb{|X^\tau_d-X^\tau_0| \geq C \xi } + P\p{\tau < d}
    \;\leq\; \ee^{-(\xi\log\xi)\wedge \zeta + O(\log N)}
    \,,
  \end{equation*}
  for a sufficiently large constant $C$.
\end{proof}

\subsection{Proof of Proposition~\ref{prop:concentration}}

Throughout the remainder of this section, we assume that $N\eta \geq 1$ and $D \geq 1$.
From Definitions~\ref{def:parametrization}--\ref{def:parametrization2} we recall
the $\sigma$-algebras $\sigG_\mu$ and $\sigF_\mu$,
as well as the version $\Xcmu$ of a
random variable $X$.
In particular, we can express the conditional variance 
of an $\sigF_d$-measurable complex-valued random variable $X$ as
\begin{equation}
  \label{e:varindep}
  \E_{\sigG_\mu} \abs{X-\E_{\sigG_\mu}X}^2
  \;=\; \frac12 \, \E_{\sigG_\mu} \abs{\Xmu-\Xcmu}^2
  \,.
\end{equation}

The following result is the main ingredient in the verification of the second bound of \eqref{e:concentration-X-ass}.
For its statement, we recall the definition of $\Gammamu$ from \eqref{e:Gammamudef}. 

\begin{lemma}\label{lem:EGdiff}
We have
\begin{equation} \label{e:EGdiff}
\Emu \abs{G_{ij} - \Emu  G_{ij}}^2
\;=\; O\pb{d^{-1}\Gammamu^6\Phi^2} 
\,.
\end{equation}
\end{lemma}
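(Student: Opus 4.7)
The plan is to reduce the variance to a resampling identity and then apply the resolvent identity together with the key properties of the parametrization from Proposition~\ref{prop:switch} and the $L^2$-bounds from Lemma~\ref{lem:Gswitch}.

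First, I would rewrite the conditional variance via the resampled copy, as in \eqref{e:varindep}:
\[
\E_{\sigG_\mu}\abs{G_{ij}-\E_{\sigG_\mu}G_{ij}}^2 \;=\; \tfrac12\,\E_{\sigG_\mu}\abs{G_{ij}-\Gcmu_{ij}}^2.
\]
So it suffices to show $\E_{\sigG_\mu}\absb{\Gcmu_{ij}-G_{ij}}^2 = O(d^{-1}\Gammamu^6\Phi^2)$. Applying the resolvent identity \eqref{e:resolv} to $G$ and $\Gcmu$ (and using $\Hcmu-\Hmu=(d-1)^{-1/2}(\Acmu-\Amu)$) gives
\[
\Gcmu_{ij}-G_{ij} \;=\; (d-1)^{-1/2}\sum_{k,l}\Gcmu_{ik}(\Amu-\Acmu)_{kl}G_{lj}.
\]
By Proposition~\ref{prop:switch}(iii)(1), $\Amu-\Acmu$ is a sum of a bounded number of terms $\pm\Delta_{xy}$, and for each such term at least one of $x,y$ is approximately uniform conditioned on $\sigG_\mu$. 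Hence the above sum reduces to a bounded number of terms of the form $\pm(d-1)^{-1/2}\Gcmu_{ix}G_{yj}$ or $\pm(d-1)^{-1/2}\Gcmu_{iy}G_{xj}$.

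Next, for each such term, I would bound the factor with the non-uniform index pointwise by $\Gammamu$ (using the pointwise bound on $\Gamma$ and the fact that $\Gcmu$ has the same $L^\infty(\sigG_\mu)$ bound as $G$, since it is obtained by swapping $\u_\mu\leftrightarrow\uc_\mu$, neither of which is $\sigG_\mu$-measurable). For the factor with the approximately uniform index (say $y$), Lemma~\ref{lem:Gswitch}(ii) yields
\[
\E_{\sigG_\mu}\abs{G_{yj}}^2 \;=\; O(\Gammamu^4\Phi^2),
\]
and the analogous bound for $\Gcmu$. Combining, each individual term contributes
\[
(d-1)^{-1}\,\E_{\sigG_\mu}\abs{\Gcmu_{ix}G_{yj}}^2 \;\leq\; (d-1)^{-1}\Gammamu^2\cdot O(\Gammamu^4\Phi^2) \;=\; O(d^{-1}\Gammamu^6\Phi^2).
\]
Since there are only a bounded number of terms, $(a+b+\cdots)^2 \leq C(a^2+b^2+\cdots)$ gives the claim.

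The minor subtlety I expect is the exceptional event (of $\sigG_\mu$-conditional probability $O(1/\sqrt{dD})$) on which the number of terms in $\Amu-\Acmu$ exceeds the generic constant from Proposition~\ref{prop:switch}(iii)(1). This is not a real obstacle: the total number of terms is always bounded by an absolute constant, and for each of them the same Cauchy--Schwarz plus Lemma~\ref{lem:Gswitch}(ii) bound applies; since the ``with high probability'' statement in Proposition~\ref{prop:switch}(iii)(1) is only needed for the identification of the specific leading structure (relevant in Section~\ref{sec:expectation}, not here), the $L^2$ estimate is insensitive to it. The only genuine input is that for each term at least one of $x,y$ is approximately uniform conditioned on $\sigG_\mu$, which is guaranteed unconditionally by Proposition~\ref{prop:switch}(iii)(1).
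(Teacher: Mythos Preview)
Your overall strategy matches the paper's: reduce to $\E_{\sigG_\mu}\abs{G_{ij}-\Gcmu_{ij}}^2$ via \eqref{e:varindep}, apply the resolvent identity, and control the resulting terms using Lemma~\ref{lem:Gswitch}(ii). The main body of the argument is correct and in fact slightly cleaner than the paper's, since you use $(a_1+\cdots+a_\ell)^2 \leq \ell\sum_p a_p^2$ rather than expanding the square into cross terms.

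However, your handling of the exceptional event is incorrect. You assert that ``for each term at least one of $x,y$ is approximately uniform conditioned on $\sigG_\mu$, which is guaranteed unconditionally by Proposition~\ref{prop:switch}(iii)(1)''. This is a misreading: the approximate-uniformity clause in Proposition~\ref{prop:switch}(iii)(1) is only established on the high-probability event where the number of terms is constant. Concretely, for the uniform model, on the complementary event the difference $A-\Acmu$ acquires extra terms arising from switchings of \emph{other} triples $S_\nu$ with $\nu\neq\mu$ (see \eqref{e:UM-TM} and Lemma~\ref{lem:UM-condprob2}(ii)); the indices of those terms are $\sigG_\mu$-measurable and hence cannot be approximately uniform conditioned on $\sigG_\mu$. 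Lemma~\ref{lem:Gswitch}(ii) therefore does not apply to them, and your ``same Cauchy--Schwarz plus Lemma~\ref{lem:Gswitch}(ii)'' bound fails for those terms.

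The fix is precisely the splitting you dismissed, and it is what the paper does. Let $\chi$ be the indicator of the good event from Proposition~\ref{prop:switch}(iii)(1) and $\bar\chi=1-\chi$. On $\{\chi=1\}$ your argument goes through verbatim. On $\{\bar\chi=1\}$ one instead uses the deterministic bound $\abs{G_{ij}-\Gcmu_{ij}}=O(d^{-1/2}\Gammamu^2)$ from \eqref{e:GcmuG} (which holds regardless of uniformity), combined with $\E_{\sigG_\mu}(\bar\chi)=O\pb{\tfrac{1}{\sqrt{dD}}}=O(\Phi^2)$; this contributes $O(d^{-1}\Gammamu^4\Phi^2)\leq O(d^{-1}\Gammamu^6\Phi^2)$, as required.
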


\begin{proof}
We abbreviate $\Gc \equiv \Gcmu$. 
Applying \eqref{e:varindep} to $G_{ij}$, we get
\begin{equation}  \label{e:EGdifff}
\Emu \abs{G_{ij} - \Emu  G_{ij}}^2
\;\leq\;
\Emumuc \abs{G_{ij} - \Gc_{ij}}^2
\,.    
\end{equation}%
Let $\chi$ be the indicator function of the event of $\cal G_\mu$-probability at least $1 - O\pb{\frac{1}{\sqrt{dD}}}$
from Proposition~\ref{prop:switch} (iii)(1), and set $\bar\chi=1-\chi$.
Then the right-hand side of \eqref{e:EGdifff} is bounded by
\begin{equation} \label{e:EGdiff12}
\Emumuc \abs{(G_{ij} - \Gc_{ij})\chi}^2
+ \norm{G_{ij} - \Gc_{ij}}_{L^\infty(\sigG_\mu)}^2 \Emumuc (\bar\chi)
\,.
\end{equation}

To estimate both terms, we use that, by the resolvent identity and Proposition~\ref{prop:switch} (iii)(1),
there are a bounded (and possibly random) number  $\ell$ of random variables $(x_1,y_1), \dots, (x_\ell,y_\ell)$ such that
\begin{equation} \label{e:EGdiffresolv}
  |G_{ij} - \Gc_{ij}|
  \;\leq\; (d-1)^{-1/2} \sum_{k,l=1}^N |G_{ik} (A-\Ac)_{kl} \Gc_{lj}|
  \;=\; (d-1)^{-1/2} \sum_{p=1}^\ell |G_{ix_p} \Gc_{y_pj}|
  \,.
\end{equation}

We focus first on the second term of \eqref{e:EGdiff12}.
By Proposition~\ref{prop:switch} (iii)(1)  and \eqref{e:EGdiffresolv},
\begin{equation*}
  \norm{G_{ij} - \Gc_{ij}}_{L^\infty(\sigG_\mu)} 
  \;=\;
  O(d^{-1/2} \Gammamu^2)
  \;=\;
  O(d^{-1/2} \Gammamu^3)
  \,.
\end{equation*}
By the definition of $\bar\chi$ and Proposition~\ref{prop:switch} (iii)(1),
$\Emumuc(\bar\chi) = O(\frac{1}{\sqrt{dD}})= O(\Phi^2)$. This implies that the second term
in \eqref{e:EGdiff12} is bounded by the right-hand side of \eqref{e:EGdiff}.

Next, we estimate the first term of \eqref{e:EGdiff12}.
By the definition of $\chi$ and Proposition~\ref{prop:switch} (iii)(1),
the number $\ell$ in \eqref{e:EGdiffresolv} is constant on the support of $\chi$,
and, conditioned on $\sigG_\mu$, for each $p \in \qq{1,\ell}$,
at least one of $x_p$ and $y_p$ is approximately uniform.
Therefore
\begin{equation} \label{e:EGdiffsq}
  \Emu \abs{(G_{ij} - \Gc_{ij})\chi}^2 
  \;\leq\;
  \frac{1}{d-1}
  \sum_{p,q=1}^\ell
  \Emumuc
    |G_{ix_p} \Gc_{y_p j} G_{ix_q} \Gc_{y_q j}|
    \,,
\end{equation}
where,
conditioned on $\sigG_\mu$, for each $(p,q)$,
at least two of $x_p,y_p, x_q,y_q$ are approximately uniform.
We estimate two of the four factors of $G$ or $\Gc$ by $\Gammamu$,  including those without an approximately uniform index, and
use the Cauchy-Schwarz inequality to decouple the remaining two factors of $G$ or $\Gc$,
each of which has at least one approximately uniform index.
Then using \eqref{e:Gsumbd} we find that each such term is bounded by $O(\Gammamu^6\Phi^2)$.
Since the sum in  \eqref{e:EGdiffsq} has a bounded number of terms, the claim follows.
\end{proof}

\begin{proof}[Proof of Proposition \ref{prop:concentration}]
We verify the assumptions of Proposition \ref{prop:concentration-X}.
Given $p = O(1)$, set $Y_{\mu} \deq C_p\Gammamu^{2+p}$ for a sufficiently large constant $C_p$.
By definition, $Y_\mu$ is $\sigG_\mu$-measurable.
Moreover, by assumption, $\Gamma = O(1)$ with probability at least $1-\ee^{-\zeta}$.
Hence, Lemma~\ref{lem:Gswitch} (i) implies that $\Gammamu \leq 2 \Gamma = O(1)$ with probability at least $1-\ee^{-\zeta}$,
so that $Y_\mu = O(1)$ with probability at least $1-\ee^{-\zeta}$.
Moreover, the trivial bound \eqref{e:Getabd} and $N\eta \geq 1$ imply
$Y_\mu = O(\eta^{-2-p}) = O(N^{2+p}) = N^{O(1)}$.
We conclude that $Y_\mu$ satisfies the conditions from the statement of Proposition~\ref{prop:concentration-X}.

We first complete the proof for $p=1$.
Let $X \deq \Phi^{-1} G_{ij}$.
Then, by \eqref{e:GcmuG}  and $\Phi^{-1} \leq d^{1/2}$,
  \begin{equation} \label{e:XEmuXinfbd}
    |X-\Emu X| \;\leq\;  \Phi^{-1} |G_{ij} - \Emu G_{ij}| \;=\; \Phi^{-1} O(d^{-1/2} \Gammamu^2) \;=\; O(\Gammamu^2) \;\leq\; Y_{\mu}\,,
  \end{equation}
assuming that the constant $C_p$ was chosen sufficiently large.
This establishes the first estimate of \eqref{e:concentration-X-ass}. The second estimate of \eqref{e:concentration-X-ass}
follows from Lemma~\ref{lem:EGdiff}.
Therefore Proposition~\ref{prop:concentration} follows from Proposition~\ref{prop:concentration-X}.

Next, we deal with the case of general $p$.
For $k \in \qq{1,p}$ abbreviate $Q_k \deq G_{i_k j_k}$ and consider $X \deq \Phi^{-1} Q_1 \cdots Q_p$.
By telescoping, $\Phi(X - \Emu X)$ is equal to
\begin{equation*}
  \sum_{k=1}^{p}
  \Bigl[
Q_1 \cdots Q_{k-1}
  (Q_k -\Emu Q_k)
  \Emu (Q_{k+1} \cdots   Q_p)
-
Q_1 \cdots Q_{k-1}
  \Emu\pb{ (Q_k -\Emu Q_k)
  (Q_{k+1} \cdots   Q_p)}
  \Bigr]
  \,,
\end{equation*}
and therefore
\begin{equation*}
  |X - \Emu X|
  \;\leq\;
  \Phi^{-1} \Gammamu^{p-1} \sum_{k=1}^{p}
  \qB{
    |Q_k -\Emu Q_k|
    + \Emu     |Q_k -\Emu Q_k|}
  \,.
\end{equation*}
Using \eqref{e:GcmuG},
we therefore conclude that $|X-\Emu X| \leq Y_\mu$ (after choosing $C_p$ large enough).
Moreover, since $(a_1+\cdots+a_{2p})^2 \leq (2p)^2(a_1^2 + \dots + a_{2p}^2)$, by the conditional Jensen inequality
and Lemma~\ref{lem:EGdiff}, we find
\begin{equation*}
  \Emu \abs{X - \Emu X}^2
  \;\leq\;
  O(p^2) \Phi^{-2} \Gammamu^{2p-2} \max_{i,j}
    \Emu \abs{G_{ij}-\Emu G_{ij}}^2
  \;\leq\; O\pbb{\frac{p^2}{d}} \Gammamu^{2p+4} \,,
\end{equation*}
which is bounded by $d^{-1} Y_\mu^2$ (after choosing $C_p$ large enough).
The claim now follows from Proposition~\ref{prop:concentration-X}.
\end{proof}

\section{Expectation}
\label{sec:expectation}

In this section we prove Proposition~\ref{prop:Lambda}. We use the spectral parameters
\begin{equation} \label{e:def_z0_z}
z_0 \;=\; E + \ii \eta_0 \,, \qquad  z \;=\; E + \ii \eta \,, \qquad \xi^2/N \;\ll\; \eta_0\;\leq\; \eta\;\leq\; N\,.
\end{equation}
Fix $z_0$ as in \eqref{e:def_z0_z}. To prove Proposition \ref{prop:Lambda}, we assume that $D \gg \xi^2$ and that
\begin{equation} \label{e:Gamma_star_assump}
\P(\Gamma^*(z_0) \geq \gamma) \;=\; \P \pb{\max \hb{\Gamma(E + \ii \eta) \col \eta \geq \eta_0} \geq \gamma} \;\leq\; \ee^{-\zeta}
\end{equation}
for some constant $\gamma=O(1)$.
Recall the function $F \equiv F_z$ from \eqref{e:Fdef} and $\Phi$ from \eqref{e:Phidef}.
To prove Proposition~\ref{prop:Lambda} it suffices to show that, with probability at least $1-\ee^{-(\xi\log\xi)\wedge \zeta+O(\log N)}$,
\begin{align}
  \label{e:Lambdad}
  \max_i |G_{ii} - m|
  &\;=\;
  O(F(\xi\Phi))
  \,,
  \\
  \label{e:Lambdao}
  \max_{i\neq j} |G_{ij}|
  &\;=\;
  O(\xi\Phi)
  \,,
\end{align}
for any $z$ satisfying \eqref{e:def_z0_z}.

The proof of \eqref{e:Lambdad}--\eqref{e:Lambdao} proceeds in the following steps:
\begin{enumerate}
\item
Estimate of $s - m$, where
$s$ is the Stieltjes transform \eqref{e:sdef} of the empirical spectral measure,
and $m$ the Stieltjes transform \eqref{e:mdef} of the semicircle law.
\item
Estimate of $G_{ii} - m$\,.
\item
Estimate of $G_{ij}$ for $i \neq j$.
\end{enumerate}
Step (i) represents most of the work. Throughout this section we make the assumption \eqref{e:Gamma_star_assump}.

\subsection{High probability a priori bounds}

For the proof of Proposition~\ref{prop:Lambda} we use the following convenient notion of high probability.

\begin{definition} \label{defn:highprob}
  Given a parameter $t>0$,
  an event $\goodev$ holds with $t$-\emph{high probability}, abbreviated $t$-HP,
  if $\P(\goodev^c) \leq \ee^{-t+O(\log N)}$.
\end{definition}

In the nontrivial case $t \gg \log N$, the notion of $t$-high probability is stronger than the standard notion of
high probability (and in fact implies what is occasionally called overwhelming probability).
By definition and a union bound,
an intersection of $N^{O(1)}$ many events that each hold with $t$-high probability holds with $t$-high probability.
Moreover, if $\goodev$ holds with $t$-HP then $\Eu \ind{\badev} \leq N^{-k}$
with $t$-HP for any constant $k > 0$.
Indeed, by Markov's inequality,
\begin{equation} \label{e:Euhighprob}
  \P\pb{\Eu\ind{\Xi^c} > 1/N^k} \;\leq\; N^k \E\Eu\ind{\badev} \;=\; e^{k\log N} \P(\badev) \;\leq\; \ee^{-t + O(\log N)}
  \,.
\end{equation}
From now on, these properties will be used tacitly.

Furthermore, from now on, the parameter $t$ in Definition~\ref{defn:highprob}
will always be
\begin{equation}
  t \;\deq\; (\xi\log\xi)\wedge \zeta
\end{equation}
with $\zeta$ and $\xi$ the parameters given in the assumption of Proposition~\ref{prop:Lambda}.
Then,  for any $z$ as in \eqref{e:def_z0_z}, we get from the assumption \eqref{e:Gamma_star_assump}
and Proposition~\ref{prop:concentration} that,
with $t$-HP, for all deterministic $i,j,k,l,m,n \in \qq{1,N}$,
\begin{equation} \label{e:Gjj-conc1}
  |G_{ij}| \;=\; O(1)\,, \quad
  \Eu G_{ij} \;=\; G_{ij} + O(\xi\Phi)\,,
\end{equation}
and
\begin{equation} \label{e:Gjj-conc2}
  \Eu(G_{ij}G_{kl}) \;=\; G_{ij}G_{kl} + O(\xi\Phi)\,, \quad
  \Eu(G_{ij}G_{kl}G_{mn}) \;=\; G_{ij}G_{kl}G_{mn} + O(\xi\Phi)
  \,.
\end{equation}
To prove Proposition~\ref{prop:Lambda}, we need to show that \eqref{e:Lambdad}--\eqref{e:Lambdao}
then also hold with $t$-HP.

\subsection{Derivation of self-consistent equation}

In this subsection we derive the self-consistent equation, \eqref{e:Gjj} below,
which will allow us to obtain estimates on the entries of $G$ and hence prove Proposition \ref{prop:Lambda}.
The following lemma is, in combination with the concentration bounds \eqref{e:Gjj-conc1}--\eqref{e:Gjj-conc2},
the main estimate in its derivation. For its statement, recall from Proposition \ref{prop:switch} that
$(\alpha_1, \dots, \alpha_d)$ is an enumeration of the neighbours of $1$.
For the following we introduce the abbreviation
\begin{equation} \label{e:Ei}
  \Ei F(i) \;\deq\; \frac{1}{N}\sum_i F(i) \,,
\end{equation}
so that, under $\Ei$, $i$ is regarded as a uniform random variable
that is independent of all other randomness. With this notation,
we may express the the Stieljes transform \eqref{e:sdef} of the
empirical spectral measure as $s = \E^{[i]} G_{ii}$.

\begin{lemma} \label{lem:Gdiff}
Fix $\mu \in \qq{1,d}$.
Given $z \in \C_+$ with $N\eta \geq 1$, suppose that $\Gamma = O(1)$ with $t$-HP.
Then for all fixed $j,k,l \in \qq{1,N}$, 
\begin{align}
\label{e:Gdiff1}
\Eu\pB{G_{\alpha_\mu j} - \Ei G_{i j} + (d-1)^{-1/2} s G_{1j}}
&\;=\;
O(d^{-1/2} \Phi)
\,,
\\
\label{e:Gdiff2}
\Eu\pB{G_{kl}\pB{G_{\alpha_\mu j} - \Ei G_{i j} + (d-1)^{-1/2} s G_{1j}}}
&\;=\;
O(d^{-1/2} \Phi)
\,,
\end{align}
with $t$-HP.
\end{lemma}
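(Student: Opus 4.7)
Both bounds \eqref{e:Gdiff1} and \eqref{e:Gdiff2} will follow from the stronger conditional estimate $\Emu[G_{\alpha_\mu j} - \Ei G_{ij} + (d-1)^{-1/2} s G_{1j}] = O(d^{-1/2}\Phi)$ on the $t$-HP event $\{\Gamma = O(1)\}$: for \eqref{e:Gdiff1} take $\Eu$, and for \eqref{e:Gdiff2} split $G_{kl} = \Emu G_{kl} + (G_{kl} - \Emu G_{kl})$, so that the $\Emu G_{kl}$ piece factors out of $\Emu$ and is bounded by $\Gammamu = O(1)$, while the fluctuation piece contributes a conditional covariance controlled via Cauchy--Schwarz and Lemma~\ref{lem:EGdiff}, which gives $(\Emu|G_{kl} - \Emu G_{kl}|^2)^{1/2} = O(d^{-1/2}\Phi)$, more than enough. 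A crude deterministic bound on the complementary bad event handles the passage from the conditional bound to the $\Eu$-bound stated with $t$-HP.

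The key step is therefore reducing $G_{\alpha_\mu j}$ to a form to which the resampling apparatus can be applied. First, Proposition~\ref{prop:switch}(ii)(2) replaces $\alpha_\mu$ by $a_\mu$ up to an $\sigF_0$-exceptional set of probability $O(1/\sqrt{dD}) = O(d^{-1/2}\Phi)$. Working on the augmented space $\wt\Omega$, I would then introduce $\tilde u_\mu$ and the resolvent identity
\begin{equation*}
G_{a_\mu j} = \Gcmu_{a_\mu j} + (d-1)^{-1/2}\bigl[G(\Acmu - A)\Gcmu\bigr]_{a_\mu j}.
\end{equation*}
The first term is relatively clean: conditioned on $\sigG_\mu$, the variables $u_\mu$ and $\tilde u_\mu$ are independent, so $a_\mu$ (a function of $u_\mu$) is conditionally independent of $\Gcmu$ (a function of $\tilde u_\mu$). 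Combining with the approximate uniformity of $a_\mu$ (Proposition~\ref{prop:switch}(ii)(1)) and the swap-invariance identity $\Emu \Gcmu_{ij} = \Emu G_{ij}$ yields $\Emu \Gcmu_{a_\mu j} = \Emu \Ei G_{ij} + O(\Gammamu \Phi^2)$.

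The remaining work is the correction term, which expands via Proposition~\ref{prop:switch}(iii)(2) as $\Acmu - A = \Delta_{1\tilde a_\mu} - \Delta_{1 a_\mu} + X$. This produces five contributions: four ``off-diagonal'' ones --- $G_{a_\mu 1}\Gcmu_{\tilde a_\mu j}$, $G_{a_\mu \tilde a_\mu}\Gcmu_{1j}$, $G_{a_\mu 1}\Gcmu_{a_\mu j}$, and each summand of $[GX\Gcmu]_{a_\mu j}$ --- and one ``diagonal'' contribution $G_{a_\mu a_\mu}\Gcmu_{1j}$. For the off-diagonal terms, cases (a) and (b) of Proposition~\ref{prop:switch}(iii)(2) guarantee that at least one of the indices of each $G$- or $\Gcmu$-factor is approximately uniform, so Cauchy--Schwarz together with the Ward-type bound in Lemma~\ref{lem:Gswitch}(ii) yields $O(\Phi^2)$ for the product, hence $O(d^{-1/2}\Phi)$ after the $(d-1)^{-1/2}$ prefactor. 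The diagonal contribution produces the non-trivial correction: I would first replace $G_{a_\mu a_\mu}$ by $\Gcmu_{a_\mu a_\mu}$ at cost $O(d^{-1/2})$ via Lemma~\ref{lem:Gswitch}(i), and then apply conditional independence, approximate uniformity, and swap invariance through $\Emu[\Gcmu_{ii}\Gcmu_{1j}] = \Emu[G_{ii}G_{1j}]$ to conclude $\Emu[G_{a_\mu a_\mu}\Gcmu_{1j}] = \Emu[s G_{1j}] + O(d^{-1/2} + \Phi^2)$, matching the $(d-1)^{-1/2}sG_{1j}$ term in the claim up to error $O(d^{-1/2}\Phi)$.

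The principal technical obstacle I foresee lies in the correction-term analysis: correctly isolating the single summand $G_{a_\mu a_\mu}\Gcmu_{1j}$ as the source of the deterministic correction, performing the $G \to \Gcmu$ replacement (which is unavoidable, since $G_{a_\mu a_\mu}$ is correlated with $a_\mu$ through $u_\mu$ whereas $\Gcmu$ is not), and then verifying case-by-case that every summand of the ``remainder'' $X$ still has an approximately uniform index after Cauchy--Schwarz --- which requires combining the two distinct uniformity scenarios (a) and (b) of Proposition~\ref{prop:switch}(iii)(2) with the possibility of marginalizing out $a_\mu$ and $\tilde a_\mu$ to apply Lemma~\ref{lem:Gswitch}(ii). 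Once the conditional estimate is established, the remaining steps (the $\Eu$-passage for \eqref{e:Gdiff1} and the covariance decomposition for \eqref{e:Gdiff2}) are routine.
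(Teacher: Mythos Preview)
Your resolvent decomposition is essentially the paper's argument run through the symmetry $u_\mu \leftrightarrow \tilde u_\mu$: the paper writes $\Eu G_{\alpha j} = \Eu\Emuc\bigl(\chi\,\Gc_{\ac j}\bigr)+O(d^{-1/2}\Phi)$ via the swap $(G,a)\to(\Gc,\ac)$ and then expands $\Gc_{\ac j}-G_{\ac j}$, whereas you keep the index $a_\mu$ and expand $G_{a_\mu j}-\Gc_{a_\mu j}$. The diagonal contribution in the paper is $G_{\ac\ac}\Gc_{1j}$, where $G$ and $\ac$ are already $\sigG_\mu$-conditionally independent, so no further swap is needed; in your version the diagonal term is $G_{a_\mu a_\mu}\Gc_{1j}$, forcing the extra replacement $G\to\Gc$ via Lemma~\ref{lem:Gswitch}(i). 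Both routes work.

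There is, however, a genuine gap in your plan. You claim the $\sigG_\mu$-conditional estimate $\Emu[\,\cdots\,]=O(d^{-1/2}\Phi)$ on the event $\{\Gamma=O(1)\}$, and then deduce \eqref{e:Gdiff2} by the covariance splitting $G_{kl}=\Emu G_{kl}+(G_{kl}-\Emu G_{kl})$. But the inputs you invoke --- Proposition~\ref{prop:switch}(ii)(2) and (iii)(2) --- are stated with high probability \emph{conditioned on $\sigF_0$}, not on $\sigG_\mu$. For the uniform model this matters: there is a $\sigG_\mu$-measurable event (namely $\{h_\mu=0\}$ in the notation of Section~\ref{sec:UM-exceptprob}) of $\sigF_0$-probability $O(d/N)$ on which $\alpha_\mu\neq a_\mu$ with $\sigG_\mu$-probability close to $1$, and on which your $\Emu$-bound simply fails. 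So the stated conditional estimate is not true on $\{\Gamma=O(1)\}$ alone. The paper sidesteps this by working at the $\Eu$-level throughout (this is why Lemma~\ref{lem:Gmanip} is formulated with an outer $\Eu$), and it proves \eqref{e:Gdiff2} not by a covariance reduction but by carrying the factor $G_{kl}$ through the same computation, using the three-factor estimates \eqref{e:GGGchi}, \eqref{e:GGGuc}, \eqref{e:GGacxGy1}. Your covariance shortcut is attractive, but to make it rigorous you must either (a) enlarge your ``good'' event to a $\sigG_\mu$-measurable one on which both the $\alpha_\mu=a_\mu$ replacement and the decomposition \eqref{e:HHEexp} hold with $\sigG_\mu$-probability $1-O(1/\sqrt{dD})$, and then absorb its $\sigF_0$-small complement separately, or (b) abandon the $\Emu$-level claim and argue at the $\Eu$-level as the paper does.
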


Recall from \eqref{e:Xc} that $\tilde X^\mu$ is the version
of a random variable $X \equiv X(\theta, u_1, \dots, u_d, \uc_1, \dots, \uc_d)$
obtained from $X$ by exchanging its arguments $u_\mu$ and $\uc_\mu$.
Throughout this section we make use of the indicator function
\begin{equation} \label{e:chimudef}
\chi_\mu \;\deq\; \psi_\mu \tilde \psi_\mu^\mu
\quad \text{where }
\psi_\mu \;\deq\; \ind{\alpha_\mu = a_\mu} \, \indb{\text{claim \eqref{e:HHEexp} from Proposition \ref{prop:switch} holds}}
\,.
\end{equation}
Note that $\chi_\mu = \tilde \chi^\mu_\mu$.
Moreover, by Proposition~\ref{prop:switch} (ii)(2) and (iii)(2) as well as a union bound,
we have $\E_{\sigF_0}(\chi_\mu) = 1-O\pb{\frac{1}{\sqrt{dD}}}$.

For brevity, given a fixed index $\mu \in \qq{1,d}$, we
often drop sub- and superscripts $\mu$, and write simply
\begin{equation} \label{e:muabbr}
  \alpha\;\equiv\;\alpha_\mu\,,\quad
  a\;\equiv\;a_\mu\,,\quad
  \ac\;\equiv\;\ac_\mu\,,\quad
  \chi\;\equiv\;\chimu\,,\quad
  \Ac \;\equiv\; \Acmu\,,\quad
  \Gc \;\equiv\; \Gcmu\,.
\end{equation}
(As in Proposition \ref{prop:switch}, we always abbreviate $\tilde a_\mu \equiv \tilde a_\mu^\mu$.)

The following lemma provides several elementary bounds on the Green's function.
It is the main computational tool in the proof of Lemma \ref{lem:Gdiff}.

\begin{lemma} \label{lem:Gmanip}
Given $z \in \C_+$ with $N\eta \geq 1$, suppose that $\Gamma = O(1)$ holds with $t$-HP.
Fix $\mu \in \qq{1,d}$, and use the abbreviations \eqref{e:muabbr}.
Then the following estimates hold with $t$-HP.

\begin{enumerate}
\item
For all $j \in \qq{1,N}$ we have
\begin{align}
  \label{e:Gac1}
  \Eu \Emuc (G_{\ac j})
  &\;=\;  \Eu \Ei (G_{ij}) 
  + O\pb{\tfrac{1}{\sqrt{dD}}} 
  \,,
  \\
  \label{e:GacacG11}
  \Eu\Emuc (G_{\ac\ac} G_{jj})
  &\;=\; \Eu\Ei (G_{ii} G_{jj})
  + O\pb{\tfrac{1}{\sqrt{dD}}} 
  \,.
\end{align}
\item
For all $i,j,k,l,m,n \in \qq{1,N}$ we have
\begin{align}
  \label{e:Gchi}
  \Eu \Emuc (\chi G_{ij})
  &\;=\;
  \Eu \Emuc (G_{ij})
  + O\pb{\tfrac{1}{\sqrt{dD}}} 
  \,,
  \\
  \label{e:GGchi}
  \Eu\Emuc(\chi G_{ij}G_{kl})
  &\;=\;
  \Eu\Emuc(G_{ij}G_{kl})
  + O\pb{\tfrac{1}{\sqrt{dD}}} 
  \,,
  \\
  \label{e:GGGchi}
  \Eu\Emuc(\chi G_{ij}G_{kl}G_{mn})
  &\;=\;
  \Eu\Emuc(G_{ij}G_{kl}G_{mn})
  + O\pb{\tfrac{1}{\sqrt{dD}}} 
  \,.
\end{align}
Analogous statements hold if some factors $G$ are replaced with $\tilde G$.
\item
For any $i,j,k,l,m,n \in \qq{1,N}$ we have
\begin{align}
  \label{e:GGuc}
  \Eu\Emuc (G_{ij} \Gc_{kl})
  &\;=\; \Eu\Emuc  (G_{ij} G_{kl})
  + O\pb{\tfrac{1}{\sqrt{D}}}
  \,,
  \\
  \label{e:GGGuc}
  \Eu\Emuc (G_{ij} \Gc_{kl} G_{mn})
  &\;=\; \Eu\Emuc  (G_{ij} G_{kl} G_{mn})
  + O\pb{\tfrac{1}{\sqrt{D}}}
  \,.
  \end{align}
  \item
  If
(a) conditioned on $\sigG_\mu$ and $\ac$, the random variable $x$ is approximately uniform,
or
(b) conditioned on $\sigG_\mu$, the random variable $y$ is approximately uniform, then
\begin{align}
  \label{e:GacxGy1}
  \Eu\Emuc(G_{\ac x}\Gc_{y1}) 
  &\;=\;
  O\p{\Phi}
  \,,
  \\
  \label{e:GGacxGy1}
  \Eu\Emuc(G_{ij} G_{\ac x}\Gc_{y1})
  &\;=\;
  O\p{\Phi}
  \,.
\end{align}
\end{enumerate}
\end{lemma}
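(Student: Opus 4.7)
The plan is to prove each of the four parts by reducing to substitutions whose errors are controlled via the a priori bound $\Gamma(z) = O(1)$ from assumption~\eqref{e:Gamma_star_assump}, which by Lemma~\ref{lem:Gswitch}(i) also gives $\Gammamu = O(1)$ on the same $t$-HP event; call this event $\Xi$. On $\badev$, the trivial bound $|G_{ij}| \leq 1/\eta_0 \leq N^{O(1)}$ together with \eqref{e:Euhighprob} gives $\Eu\ind{\badev} \leq N^{-K}$ with $t$-HP for any $K$, so contributions from $\badev$ are much smaller than any claimed error ($1/\sqrt{dD}$, $1/\sqrt{D}$, or $\Phi$) and can be ignored throughout. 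The crucial structural observation used below is that on the augmented probability space, tilded quantities such as $\Gc$ and $\ac = \tilde a_\mu^\mu$ are functions of $\uc_\mu$ while $G$ and $a_\mu$ are functions of $u_\mu$, and these two groups are conditionally independent given $\sigG_\mu$.

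For part (i), this conditional independence yields
\begin{equation*}
\Eu(G_{\ac j}) - \Eu\Ei(G_{ij}) \;=\; \Eu\sum_k\pb{\Pmu(\ac=k) - \tfrac{1}{N}}\Emu G_{kj}\,,
\end{equation*}
and on $\Xi$ we bound $|\Emu G_{kj}| \leq \Gammamu = O(1)$ while $\sum_k|\Pmu(\ac=k)-1/N| = O(1/\sqrt{dD})$ follows from Proposition~\ref{prop:switch}(ii)(1) applied to $a_\mu$ (which has the same $\sigG_\mu$-conditional law as $\ac$). Equation~\eqref{e:GacacG11} is entirely analogous using $|\Emu(G_{\ac\ac}G_{jj})| \leq \Gammamu^2 = O(1)$. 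For part (ii), Proposition~\ref{prop:switch}(ii)(2),(iii)(2) and a union bound give $\Eu(1-\chimu) = O(1/\sqrt{dD})$; since products of up to three $G$ or $\Gc$ factors are bounded by $\Gammamu^3 = O(1)$ on $\Xi$, the estimates \eqref{e:Gchi}--\eqref{e:GGGchi} follow immediately. Part (iii) is handled by Lemma~\ref{lem:Gswitch}(i), which on $\Xi$ gives $\Gc_{kl} - G_{kl} = O(d^{-1/2}\Gammamu\Gamma) = O(d^{-1/2}) \leq O(1/\sqrt{D})$ (using $D \leq d$), with the remaining factors bounded by $O(1)$.

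For part (iv), apply Cauchy--Schwarz under $\Emu$:
\begin{equation*}
\absb{\Emu(G_{\ac x}\Gc_{y1})} \;\leq\; \pb{\Emu|G_{\ac x}|^2}^{1/2}\pb{\Emu|\Gc_{y1}|^2}^{1/2}\,.
\end{equation*}
In case (a), approximate uniformity of $x$ given $\sigG_\mu,\ac$ triggers Lemma~\ref{lem:Gswitch}(ii) (with $\ac$ in the role of the conditioning index) to give $\Emu|G_{\ac x}|^2 = O(\Gammamu^4\Phi^2)$, while $|\Gc_{y1}|\leq\Gammamu$. In case (b), we swap: bound $|G_{\ac x}|\leq\Gammamu$ trivially, and use the tilde-version of Lemma~\ref{lem:Gswitch}(ii) with approximate uniformity of $y$ given $\sigG_\mu$ (treating the fixed index $1$ as deterministic) to obtain $\Emu|\Gc_{y1}|^2 = O(\Gammamu^4\Phi^2)$. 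Either way the product is $O(\Phi)$ on $\Xi$, yielding \eqref{e:GacxGy1}; the additional $G_{ij}$ factor in \eqref{e:GGacxGy1} contributes another factor of $O(1)$ on $\Xi$, giving the same bound.

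The main technical obstacle is the careful exploitation of conditional independence of $u_\mu$- and $\uc_\mu$-measurable quantities given $\sigG_\mu$, which is what delivers the sharp error $O(1/\sqrt{dD})$ in (i) rather than the weaker $O(d^{-1/2})$ one would obtain by naively replacing $G_{kj}$ with an independent copy via Lemma~\ref{lem:Gswitch}(i). A secondary bookkeeping issue is ensuring that the $t$-HP claim (which concerns $\sigF_0$-measurable quantities) is propagated through the outer $\Eu$; this is exactly the content of \eqref{e:Euhighprob}.
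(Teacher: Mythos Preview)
Your proof is correct and follows essentially the same approach as the paper's. Both arguments split into a good event where $\Gamma_\mu = O(1)$ and a negligible complement controlled via \eqref{e:Euhighprob}; both use conditional independence of $\tilde a$ and $G$ given $\sigG_\mu$ for part (i), the bound $\Eu(1-\chi) = O(1/\sqrt{dD})$ for part (ii), Lemma~\ref{lem:Gswitch}(i) for part (iii), and Cauchy--Schwarz combined with Lemma~\ref{lem:Gswitch}(ii) for part (iv). The only cosmetic difference is that the paper works with the explicitly $\sigG_\mu$-measurable indicator $\phi \deq \ind{\Gamma_\mu \leq 2\gamma}$, whereas your event $\Xi = \{\Gamma \leq \gamma\}$ is only $\sigF_d$-measurable; this causes no difficulty because every quantity you bound on $\Xi$ (such as $\Emu G_{kj}$ or $\Emu|G_{\tilde a x}|^2$) is already controlled by the $\sigG_\mu$-measurable $\Gamma_\mu$, and $\Xi \subset \{\Gamma_\mu \leq 2\gamma\}$.
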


\begin{proof}
Fix $\mu \in \qq{1,d}$, and, as in the statement of the lemma,
use the shorthand notation \eqref{e:muabbr}.
Denote by $\indgood$ the indicator function of the event 
$\Gammamu \leq 2\gamma$, and set $\indbad = 1-\indgood$.
By definition, $\phi$ is $\sigG_\mu$-measurable.
By \eqref{e:GcmuG}, $\h{\Gammamu \leq 2\gamma} \subset \h{\Gamma \leq \gamma}$,
so that, by assumption, $\indgood=1$ with $t$-HP, for a constant $\gamma=O(1)$.
In particular, as noted around \eqref{e:Euhighprob},
for any constant $k$,
the event $\hb{\eta^{-k} \Eu \indbad \leq \frac{1}{N} \leq \frac{1}{\sqrt{dD}}}$ holds with $t$-HP.

\smallskip\noindent
(i)
We show \eqref{e:Gac1}; the proof of \eqref{e:GacacG11} is analogous.
Since, conditioned on $\sigF_0$, $\tilde a$ and $G$ are independent,
and the total variation distance between the distribution of $\tilde a$
and the uniform distribution on $\qq{1,N}$ is $O\pb{\frac{1}{\sqrt{dD}}}$,
\begin{align}
  \Eu\Emuc (G_{\ac j})
  &\;=\; \Eu\Emuc (G_{\ac j}\phi) + O\p{\Eu(\eta^{-1} \indbad)}
  \nonumber\\
  &\;=\; \Eu\Ei (G_{ij}\phi) + O\pb{\tfrac{1}{\sqrt{dD}}}
  \;=\; \Eu\Ei (G_{ij}) + O\pb{\tfrac{1}{\sqrt{dD}}}
  \,,
\end{align}
with $t$-HP.

\smallskip \noindent
(ii)
We show \eqref{e:GGchi}; the proofs of \eqref{e:Gchi} and \eqref{e:GGGchi} are analogous.
Since $\chi \leq 1$,
\begin{equation}
  \Eu\Emuc((1-\chi) G_{ij}G_{kl})
  \;=\;\Eu\Emuc(\indgood(1-\chi) G_{ij}G_{kl})
   + O\p{\Eu(\eta^{-2}\indbad)}
  \,.
\end{equation}
The first term is bounded by $O\pb{\Eu\Emuc(1-\chi)} = O\pb{\frac{1}{\sqrt{dD}}}$ by the definition of $\chi$
and Proposition~\ref{prop:switch}.
The second term is also $O\pb{\frac{1}{\sqrt{dD}}}$ with $t$-HP,
as observed at the beginning of the proof.

\smallskip\noindent
(iii)
We show \eqref{e:GGuc}; the proof of \eqref{e:GGGuc} is analogous.
Since $\eta^{-2} \Eu\indbad = O(\frac{1}{\sqrt{D}})$ with  $t$-HP  
and $\indgood |G_{ij}| = O(1)$, we get from \eqref{e:GcmuG} that
\begin{align}
  \Eu\Emuc
  (G_{ij} G_{kl})
  \;=\;
  \Eu\Emuc
  (\indgood G_{ij} G_{kl})
  + O\pb{\tfrac{1}{\sqrt{D}}}
  &\;=\;
  \Eu\Emuc
  (\indgood G_{ij} \Gc_{kl})
  + O\pb{\tfrac{1}{\sqrt{D}}}
  \nonumber\\
  &\;=\;
  \Eu\Emuc
  (G_{ij} \Gc_{kl})
  + O\pb{\tfrac{1}{\sqrt{D}}}
  \,,
\end{align}
 with $t$-HP.

\smallskip\noindent
(iv) We show \eqref{e:GacxGy1}; the proof of \eqref{e:GGacxGy1} is analogous.
 Under assumption (a),
the Cauchy-Schwarz inequality and \eqref{e:Gsumbd} imply
\begin{align} \label{e:Gtildeaa11bd}
  |\Eu\Emuc(G_{\ac x}\Gc_{y1})|
  &\;\leq\;
  \Eu\indgood \Emuc|G_{\ac x}\Gc_{y1}|
  +
  \eta^{-2}\Eu\indbad
  \nonumber\\
  &\;\leq\;
  O(\Eu \indgood \Emu \Emuc|G_{\ac x}|^2)^{1/2}
  +
  \eta^{-2}\Eu\indbad
  \;=\;
  O\p{\Phi} 
  \,,
\end{align}
with $t$-HP, where we used $\indgood\Emu |\Gc_{y1}|^2 \leq \indgood\Gammamu^2 \leq 4\gamma^2 =O(1)$.
Similarly, under assumption (b),
\begin{equation}
  |\Eu\Emuc(G_{\ac x}\Gc_{y1})|
  \;\leq\;
  O(\Eu \indgood \Emu \Emuc|\Gc_{y1}|^2)^{1/2}
  +
  \eta^{-2}\Eu\indbad
  \;=\;
  O\p{\Phi}
  \,,
\end{equation}
with $t$-HP, where we again used \eqref{e:Euhighprob}.
This completes the proof.
\end{proof}

\begin{proof}[Proof of Lemma \ref{lem:Gdiff}]
The proofs of both estimates are analogous, and we only prove \eqref{e:Gdiff1}.
Throughout the proof, we use Lemma~\ref{lem:Gmanip} repeatedly, and estimate
$\frac{1}{\sqrt{dD}} \leq \frac{1}{\sqrt{d}}\Phi$.
Since $\mu \in \qq{1,d}$ is fixed, we also use the abbreviations \eqref{e:muabbr}
in the remainder of the proof,
and use the indicator function $\chi \equiv \chi_\mu$ defined in \eqref{e:chimudef}.
By definition, conditioned on $\sigF_0$, the random variables $u$ and $\uc$ are identically distributed,
so that $G_{aj}$ and $\Gc_{\ac j}$ are also identically distributed. (Recall the definition \eqref{e:Xc} and the convention \eqref{e:muabbr}.)
Thus, by \eqref{e:Gchi}, and since $\alpha=a$ on the support of $\chi$ (by definition \eqref{e:chimudef} of $\chi$), we obtain
\begin{equation}
  \Eu(G_{\alpha j})
  \;=\; \Eu\Emuc(\chi G_{\alpha j}) + O\p{d^{-1/2}\Phi}
  \;=\; \Eu\Emuc(\chi \Gc_{\ac j}) + O\p{d^{-1/2}\Phi}
  \,,
\end{equation}
with $t$-HP,
where in the last step we also used that $\chi = \tilde \chi^\mu$.
By \eqref{e:Gac1} and \eqref{e:Gchi}, with $t$-HP,
\begin{equation}
  \Eu\Ei(G_{ij})
  \;=\; \Eu\Emuc(\chi G_{\ac j})
  + O\p{d^{-1/2}\Phi}
  \,.
\end{equation}
This implies, with $t$-HP,
\begin{equation} \label{e:1zEG11}
\Eu(G_{\alpha j} - \Ei G_{i j})
\;=\; \Eu \Emuc \chi \p{\Gc_{\ac j} - G_{\ac j}}
+ O\p{d^{-1/2}\Phi}
\,.
\end{equation}

By the resolvent identity, 
$\Gc - G = (d-1)^{-1/2} G (A-\Ac)\Gc$,
and therefore by Proposition~\ref{prop:switch} (iii)(2), on the event $\{\chi=1\}$ we have
\begin{equation} \label{e:GDGi1}
  \Gc_{\ac j} - G_{\ac j} \;=\;
  (d-1)^{-1/2} \pb{- G_{\ac\ac}\Gc_{1j} + S }
  \,,
\end{equation}
where $S$ a sum of a bounded number of terms of the form $ \pm G_{\ac x}\Gc_{y j}$
with random variables $x$ and $y$
such that at least one of the following two conditions is satisfied:
conditioned on $\sigG_\mu$ and $\ac$, the random variable $x$ is approximately uniform, or,
conditioned on $\sigG_\mu$, the random variable $y$ is approximately uniform.
(For example, $S$ contains the term $G_{\ac a} \Gc_{1j}$ corresponding to $(x,y)=(a,1)$.
Conditioned on $\sigG_\mu$, the random variable $x=a$ is approximately uniform
and independent of $\ac$, so that $x$ is approximately uniform conditioned on $\sigG_\mu$ and $\ac$.)
Therefore, by \eqref{e:GGchi}, \eqref{e:GGuc}, and \eqref{e:GacxGy1}, we get
\begin{equation} \label{e:EX}
  |\Eu\Emuc (\chi S)|
  \;\leq\; \Eu\Emuc |S| + O(\Phi) 
  \;=\; O(\Phi) \,,
\end{equation}
with $t$-HP.
Similarly, by \eqref{e:GGchi}, \eqref{e:GGuc}, and \eqref{e:GacacG11}, we get
\begin{align} \label{e:GiiG11}
  \Eu\Emuc (\chi G_{\ac\ac} \Gc_{1j})
  &\;=\;
  \Eu\Emuc (G_{\ac\ac} \Gc_{1j}) + O(\Phi) 
  \nonumber\\
  &\;=\;
  \Eu\Emuc (G_{\ac\ac} G_{1j}) + O(\Phi) 
  \;=\;
  \Eu\Ei (G_{ii} G_{1j}) + O(\Phi)
  \,,
\end{align}
with $t$-HP.
From \eqref{e:1zEG11}--\eqref{e:GiiG11}, we conclude that
\begin{equation}
\Eu\pb{G_{\alpha j} - \Ei G_{i j}}
\;=\; (d-1)^{-1/2}\pb{
  - \Eu (\E^{[i]} G_{ii} G_{1j}) + O(\Phi) 
}\,,
\end{equation}
with $t$-HP.
Since $\E^{[i]} G_{ii} = s$, we obtain \eqref{e:Gdiff1}.
The proof of \eqref{e:Gdiff2} is analogous, using
\eqref{e:GGGchi} instead of \eqref{e:GGchi}, \eqref{e:GGGuc} instead of \eqref{e:GGuc},
and \eqref{e:GGacxGy1} instead of \eqref{e:GacxGy1}.
\end{proof}

The main idea of the proof of Lemma \ref{lem:Gdiff} is \eqref{e:1zEG11}:
the left-hand side is a difference of Green's functions with
\emph{different indices}, while the right-hand side is
(up to a small error) a difference of Green's functions with the
\emph{same indices} but the first Green's function is computed in
terms of a \emph{switched} graph.

We now have all of the ingredients to derive the self-consistent equation for the diagonal entries of $G$.

\begin{lemma} \label{lem:Gjj}
  Given $z \in \C_+$ with $N\eta \geq 1$,
  suppose that \eqref{e:Gjj-conc1}--\eqref{e:Gjj-conc2} hold with $t$-HP.
  Then, we have with $t$-HP, for all $j \in \qq{1,N}$,
  \begin{equation} \label{e:Gjj}
    1 + (s+z) G_{jj} \;=\; O((1+|z|)\xi\Phi)\,.
  \end{equation}
In particular, with $t$-HP,
  \begin{equation} \label{e:Gjjs}
    1+sz+s^2 \;=\; O((1+|z|)\xi\Phi) \,.
  \end{equation}
\end{lemma}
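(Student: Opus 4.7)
The plan is to derive a self-consistent equation for $G_{jj}$ directly from the defining matrix identity $(H-z)G=I$ and close it using Lemma \ref{lem:Gdiff} together with the concentration estimates \eqref{e:Gjj-conc1}--\eqref{e:Gjj-conc2}. By the permutation invariance of the law of $A$ (remarked after Definition \ref{def:parametrization2}), the estimate $\abs{1+(s+z)G_{jj}}$ is a functional of $A$ whose distribution is independent of $j$, so it suffices to prove \eqref{e:Gjj} for $j=1$; the general case then follows by a union bound over $j\in\qq{1,N}$, which costs only $O(\log N)$ in the $t$-HP exponent. Reading off row $1$ of $(H-z)G=I$ and using \eqref{e:H} together with the enumeration \eqref{e:neigh_enum}, one obtains the algebraic identity
\begin{equation*}
  1 + z G_{11} \;=\; (d-1)^{-1/2}\qbb{\sum_{\mu=1}^d G_{\alpha_\mu 1} \;-\; d\,\Ei G_{i1}}.
\end{equation*}

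Next I would apply $\Eu$ to both sides and invoke Lemma \ref{lem:Gdiff} with $j=1$ on each term of the sum:
\begin{equation*}
  \Eu G_{\alpha_\mu 1} - \Eu\Ei G_{i1}
  \;=\; -(d-1)^{-1/2}\Eu(s G_{11}) + O(d^{-1/2}\Phi).
\end{equation*}
Summing over $\mu\in\qq{1,d}$ and multiplying by $(d-1)^{-1/2}$, the $d$ per-term errors combine with the prefactor as $(d-1)^{-1/2}\cdot d\cdot d^{-1/2}=O(1)$ to give a total error of $O(\Phi)$, while the main term becomes $-(d/(d-1))\Eu(sG_{11}) = -\Eu(sG_{11})+O(1/d)$; since $\abs{sG_{11}}=O(1)$ under the concentration hypotheses and $1/d\leq 1/D\leq\Phi^2\leq\Phi$, this correction is absorbed and yields
\begin{equation*}
  \Eu(1 + z G_{11}) \;=\; -\Eu(s G_{11}) + O(\Phi).
\end{equation*}

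Finally, I would remove $\Eu$ with the concentration hypotheses. The estimate \eqref{e:Gjj-conc1} gives $\Eu G_{11}=G_{11}+O(\xi\Phi)$ and hence $\Eu(1+zG_{11})=1+zG_{11}+O(\abs{z}\xi\Phi)$. Writing $sG_{11}=\Ei(G_{ii}G_{11})$ and applying \eqref{e:Gjj-conc2} to each $(i,i,1,1)$ together with a union bound over $i\in\qq{1,N}$ yields $\Eu(sG_{11})=sG_{11}+O(\xi\Phi)$ with $t$-HP. Substituting and using $\xi\geq 1$ produces $1+(s+z)G_{11}=O((1+\abs{z})\xi\Phi)$, which is \eqref{e:Gjj} for $j=1$. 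The particular case \eqref{e:Gjjs} follows by averaging \eqref{e:Gjj} over $j\in\qq{1,N}$ (again by union bound) and recognizing $\Ei G_{jj}=s$ on the left-hand side.

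The proof is essentially a careful bookkeeping exercise once Lemma \ref{lem:Gdiff} is in hand, since all of the hard switching and expectation analysis has been packaged there. The one quantitative point that must be monitored is the precise cancellation of the $(d-1)^{-1/2}$ prefactor against the $d$-term sum: Lemma \ref{lem:Gdiff}'s error of $O(d^{-1/2}\Phi)$ per term is exactly what is needed, and any weaker per-term bound would produce a $\sqrt{d}$ loss that could not be absorbed into the final error $O(\Phi)$.
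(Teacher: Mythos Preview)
Your proof is correct and follows essentially the same approach as the paper's: reduce to $j=1$ by permutation invariance, expand row $1$ of $(H-z)G=I$ to obtain the identity $1+zG_{11}=(d-1)^{-1/2}\sum_\mu(G_{\alpha_\mu 1}-\E^{[i]}G_{i1})$, apply $\Eu$ and Lemma~\ref{lem:Gdiff} termwise, absorb the $d/(d-1)$ correction, and then remove $\Eu$ via the concentration bounds \eqref{e:Gjj-conc1}--\eqref{e:Gjj-conc2}. Your treatment of $\Eu(sG_{11})$ via $s G_{11}=\E^{[i]}(G_{ii}G_{11})$ and a union bound over $i$ is exactly the mechanism the paper leaves implicit when it invokes \eqref{e:Gjj-conc2}.
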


\begin{proof}
The event that \eqref{e:Gjj} holds is measurable with respect to $A$.
By invariance of the law of $A$ under permutation of vertices, and a union bound, it therefore suffices to establish \eqref{e:Gjj} for $j=1$ only.
Then \eqref{e:Gjjs} follows by averaging \eqref{e:Gjj} over $j$.

To show \eqref{e:Gjj} with $j=1$, we make use of the larger probability space $\tilde \Omega$ from Definitions~\ref{def:parametrization}--\ref{def:parametrization2}, where the vertex~$1$ is distinguished.
By \eqref{e:Gjj-conc1}--\eqref{e:Gjj-conc2}, it is sufficient to show that, with $t$-HP,
\begin{equation}
  \label{e:EG11}
  1 + z \Eu G_{11}
  \;=\;
  - \Eu \pb{s G_{11}}
  + O(\Phi)
  \,.
\end{equation}
To show \eqref{e:EG11}, we use that by $(H-z)G = I$ and \eqref{e:H}, with \eqref{e:Ei},
\begin{align}
1 + z G_{11}
\;=\; \sum_i H_{1i} G_{i1}
&\;=\;  (d - 1)^{-1/2}  \sum_i \sum_{\mu = 1}^d \pbb{\delta_{i \alpha_\mu} - \frac{1}{N}} G_{i 1}
\nonumber
\\
&\;=\;  (d - 1)^{-1/2}  \sum_{\mu = 1}^d \Ei \pb{G_{\alpha_\mu 1} - G_{i 1}}
\,.
\label{e:1zG11}
\end{align}
Taking the conditional expectation $\Eu$ on both sides of \eqref{e:1zG11}
and using Lemma~\ref{lem:Gdiff}, we get
\begin{align}
1+z\Eu G_{11}
&\;=\;
- \frac{d}{d - 1} \Eu (s G_{11})
+ O(\Phi) 
\end{align}
with $t$-HP.
This implies \eqref{e:EG11} and therefore completes the proof.
\end{proof}

Under the assumptions of Proposition~\ref{prop:Lambda},
the statement of Lemma~\ref{lem:Gjj} may be strengthened as follows.

\begin{lemma} \label{lem:Gjjstrengthened}
Let $z_0$ be as in \eqref{e:def_z0_z} and suppose that \eqref{e:Gamma_star_assump} holds.
Then with $t$-HP the estimates \eqref{e:Gjj}--\eqref{e:Gjjs}
hold simultaneously for all $z$ as in \eqref{e:def_z0_z}.
\end{lemma}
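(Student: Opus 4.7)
The goal is to upgrade Lemma~\ref{lem:Gjj}, which for a single $z$ produces the self-consistent estimates \eqref{e:Gjj}--\eqref{e:Gjjs} with $t$-high probability, to a statement that is uniform in $z$ on the strip in \eqref{e:def_z0_z}. I will use a standard net-plus-Lipschitz argument: apply the fixed-$z$ result on a deterministic grid of spectral parameters, then transfer the bounds to arbitrary $z$ by the deterministic Lipschitz continuity of the Green's function.

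First, observe that \eqref{e:Gamma_star_assump} and the definition of $\Gamma^*$ immediately give, on a single event of probability at least $1-\ee^{-\zeta}$, that $\Gamma(z) = O(1)$ simultaneously for all $z$ with $\eta \geq \eta_0$. Hence the hypothesis of Proposition~\ref{prop:concentration} is satisfied at every such $z$. Next, restrict attention to the box $\f D_0 \deq \{E + \ii\eta : |E| \leq N,\ \eta_0 \leq \eta \leq N\}$; outside $\f D_0$ the conclusions \eqref{e:Gjj}--\eqref{e:Gjjs} are trivial, either from $|G_{ij}(z)| \leq 1/\eta$ when $\eta > N$ or from the deterministic spectral bound $\spec(H) \subset [-N/2, N/2]$ when $|E| > N$. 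Introduce a grid $\cal N \deq \f D_0 \cap (N^{-C}\Z)^2$ for a large absolute constant $C$ (to be chosen below), which has cardinality $|\cal N| = N^{O(1)}$.

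Now apply Proposition~\ref{prop:concentration} to each of the monomials $G_{ij}$, $G_{ij}G_{kl}$, $G_{ij}G_{kl}G_{mn}$ in the entries of $G(z)$ for $z \in \cal N$ and $i,j,k,l,m,n \in \qq{1,N}$. A union bound over the $N^{O(1)}$ choices of $z$ and indices preserves $t$-HP, so the concentration estimates \eqref{e:Gjj-conc1}--\eqref{e:Gjj-conc2} hold simultaneously over the grid and all index tuples with $t$-HP. Plugging this into Lemma~\ref{lem:Gjj} (again with a union bound over $j \in \qq{1,N}$ and $z \in \cal N$) yields \eqref{e:Gjj}--\eqref{e:Gjjs} simultaneously for all $j$ and all $z \in \cal N$ with $t$-HP.

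It remains to pass from $\cal N$ to all of $\f D_0$. For arbitrary $z \in \f D_0$ pick the nearest $z' \in \cal N$, so $|z-z'| \leq \sqrt{2}\, N^{-C}$. The resolvent identity \eqref{e:resolv} gives $|\partial_z G_{ij}(z)| \leq \norm{G(z)^2}_{\max} \leq N/\eta^2$, and the analogous bound holds for $s$ and $m$. Since $\eta \geq \eta_0 \gg \xi^2/N$ and $\eta_0 \geq N^{-1}$, the Lipschitz constants are at most $N^{O(1)}$, so for $C$ sufficiently large (e.g.\ $C = 10$) the differences $|G_{ij}(z) - G_{ij}(z')|$, $|s(z)-s(z')|$, $|m(z)-m(z')|$ are all bounded by $N^{-5}$, which is much smaller than $\Phi(z) \geq N^{-1/2}$ and hence also much smaller than the target error $(1+|z|)\xi\Phi$. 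Substituting these deterministic estimates into the bound \eqref{e:Gjj} at the grid point $z'$ therefore transfers it to $z$ with only an additional $O(\Phi)$ error, and averaging over $j$ gives \eqref{e:Gjjs}. No significant obstacle is expected in this argument; the only thing to verify carefully is that the Lipschitz error is negligible compared with the target $O((1+|z|)\xi\Phi)$, which is immediate from the polynomial lower bound $\eta \geq \eta_0 \gg \xi^2/N$.
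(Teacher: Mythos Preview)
Your argument is correct and follows essentially the same route as the paper: apply Lemma~\ref{lem:Gjj} on a polynomial-size grid of spectral parameters, take a union bound (which is absorbed by the $O(\log N)$ in the definition of $t$-HP), and then interpolate using the deterministic Lipschitz bound $\abs{\partial_z G_{ij}} \leq \eta^{-2} \leq N^2$. One small point: in the setting \eqref{e:def_z0_z} the real part $E$ is fixed (it is the real part of $z_0$) and only $\eta$ ranges over $[\eta_0,N]$, so a one-dimensional net in $\eta$ suffices and your discussion of the region $|E|>N$ and $\eta>N$ is unnecessary; this does not affect the validity of your proof.
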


\begin{proof}
Set
\begin{equation}
  \eta_{l} \;\deq\; \eta_0+l/N^4\,, \quad l \in \qq{0, N^5} \,,
\end{equation}
and $z_l \deq E + \ii \eta_l$.
Since \eqref{e:Gjj-conc1}--\eqref{e:Gjj-conc2} hold uniformly with $t$-HP for any $\eta \geq \eta_0$,
by Lemma~\ref{lem:Gjj} and a union bound,
\eqref{e:Gjj}--\eqref{e:Gjjs} hold simultaneously at all $z_l$ with $l \in \qq{0,N^5}$,
with $t$-HP.
Since $(\eta_{l})_l$ is a $1/N^4$-net of $[\eta_0,\eta_0+N]$
and $G_{ij}$ is Lipschitz continuous with constant $1/\eta^2 \leq N^2$,
the claim follows.
\end{proof}

\subsection{Stability of the self-consistent equation}

In Lemma~\ref{lem:Gjjstrengthened} we showed that, with $t$-HP,
\begin{equation} \label{e:Gjjs2}
  s^2+sz+1 \;=\; O((1+|z|)\xi\Phi) \,.
\end{equation}
It may be easily checked that the Stieltjes transform of the semicircle law \eqref{e:mdef}
is the unique solution $m: \C_+ \to \C_+$ of the equation
\begin{equation} \label{e:m_eq}
  m^2 + mz + 1 \;=\; 0 \,.
\end{equation}
To show that $m$ and $s$ are close, we
use the stability of the equation \eqref{e:m_eq},
in the form provided by the following deterministic lemma.
The stability of the solutions of the equation \eqref{e:m_eq} is a standard tool in the proofs of local semicircle laws for Wigner matrices; 
see e.g.\ \cite{MR2481753}.
Our version given below has weaker assumptions than previously used stability
estimates; in particular, we do not (and cannot) assume an upper bound on the spectral parameter $E$.

\begin{lemma} \label{lem:smdiff}
Let $s : \C_+ \to \C_+$ be continuous, and set
\begin{equation} \label{e:sR}
  R \;\deq\; s^2+sz+1 \,.
\end{equation}
For $E\in \R$, $\eta_0>0$ and  $\eta_1 \geq 3 \vee \eta_0$, suppose that there is a
nonincreasing continuous function
$r \col [\eta_0,\eta_1] \to [0,1]$
such that
$|R(E+\ii\eta)| \leq (1+|E + \ii \eta|)r(\eta)$ for all $\eta \in [\eta_0,\eta_1]$.
Then for all $z = E+\ii\eta$ with $\eta \in [\eta_0,\eta_1]$ we have
\begin{equation} \label{e:smbd}
  |s-m| \;=\; O(F(r)) 
  \,,
\end{equation}
where $F$ was defined in \eqref{e:Fdef}.
\end{lemma}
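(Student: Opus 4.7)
The plan is to convert the scalar equation for $s$ into an explicit algebraic relation with $m$ and solve it directly. Subtracting $m^2+mz+1=0$ from $s^2+sz+1=R$ and using $2m+z=\sqrt{z^2-4}$ (the branch with $\im\sqrt{z^2-4}>\eta$, as forced by $m\in\C_+$) gives
\begin{equation*}
\Delta^2+\sqrt{z^2-4}\,\Delta \;=\; R, \qquad \Delta \;\deq\; s-m.
\end{equation*}
Setting $u\deq 2s+z$ and $v\deq 2m+z$, so that $u^2=z^2-4+4R$, $v^2=z^2-4$, and $\im(u),\im(v)>\eta$ (both branches uniquely fixed by $s,m\in\C_+$), the factorization $(u-v)(u+v)=4R$ yields the compact formula
\begin{equation*}
\Delta \;=\; \frac{u-v}{2} \;=\; \frac{2R}{u+v}\,,
\end{equation*}
which is well-defined since $\im(u+v)>2\eta>0$.

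From this representation I would extract the two constituents of $F_z(r)=[(1+1/\sqrt{|z^2-4|})r]\wedge\sqrt{r}$ separately. In the \emph{bulk regime} $|R|\leq c|z^2-4|$ for a small absolute constant $c$, the branch condition $\im(u)>0$ selects $u$ as a small perturbation of $+v$ rather than of $-v\in\C_-$, so $|u+v|\gtrsim|v|=\sqrt{|z^2-4|}$, giving
\begin{equation*}
|\Delta|\;\lesssim\;\frac{|R|}{\sqrt{|z^2-4|}}\;\leq\;\frac{(1+|z|)r}{\sqrt{|z^2-4|}}\;\lesssim\;\pbb{1+\frac{1}{\sqrt{|z^2-4|}}}r\,,
\end{equation*}
the last step using $(1+|z|)/\sqrt{|z^2-4|}=O(1+1/\sqrt{|z^2-4|})$ (checked separately for $|z|$ large, where $\sqrt{|z^2-4|}\sim|z|$, and for $|z|$ bounded). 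In the \emph{far-edge regime} $|R|\geq c'|z^2-4|$ for a sufficiently large constant $c'$, the inequality $|z^2-4|\leq|R|/c'\leq(1+|z|)/c'$ combined with $r\leq 1$ forces $|z|=O(1)$; moreover $|u|^2\geq 4|R|-|z^2-4|\gtrsim|R|\gtrsim|v|^2$, so $|u+v|\gtrsim\sqrt{|R|}$ and $|\Delta|\lesssim\sqrt{|R|}\lesssim\sqrt{r}$. Combining the two regimes yields $|\Delta|=O(F_z(r))$.

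The main obstacle will be bridging the intermediate regime $|R|\sim|z^2-4|$, where neither $u\approx v$ nor $|u|\gg|v|$ holds, and the constraint $\im(u),\im(v)>0$ precludes exact but not near-cancellation of $u+v$. To cover this regime I would run a continuity argument in $\eta$ anchored at $\eta=\eta_1\geq 3$: at this starting value the perturbative expansion $\sqrt{z^2-4+4R}=\sqrt{z^2-4}\,\sqrt{1+4R/(z^2-4)}$ is under explicit control and unambiguously picks out the branch of $u$ close to $+\sqrt{z^2-4}$. This identification propagates to all $\eta\in[\eta_0,\eta_1]$ by continuity of $s$ (hence of $u$) in $\eta$, since the two branches of $\sqrt{z^2-4+4R}$ can exchange only where the discriminant vanishes, at which point the candidate value of $\Delta$ is unique and the square-root bound $|\Delta|\leq\sqrt{|R|}$ is automatic. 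The role of the hypothesis $\eta_1\geq 3$ is precisely to furnish such an anchor.
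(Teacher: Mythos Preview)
Your setup via $u=2s+z$, $v=2m+z$, $\Delta=2R/(u+v)$ is a repackaging of the paper's approach: both establish $|s-m|\wedge|s-\hat m|\leq 3F_z(r)$ from the quadratic formula and then use continuity in $\eta$, anchored at $\eta_1\geq 3$, to pin down $|s-m|$ as the one attaining the minimum.

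The gap is in your bulk-regime step. You claim that the condition $\im u>0$ by itself forces $u$ to be the small perturbation of $+v$ rather than of $-v$. This fails whenever $v=\sqrt{z^2-4}$ lies close to the real axis, which is exactly what happens for $|E|>2$ and $\eta$ small: there $\im v$ is only of order $\eta$, while the perturbation radius $2\sqrt{|R|}\leq 2\sqrt{c}\,|v|$ can be much larger, so a point within that radius of $-v$ can still lie in $\C_+$. The bounds $\im u,\im v>\eta$ yield only $|u+v|>2\eta$, not $|u+v|\gtrsim|v|$. Hence the continuity argument is not a side patch for the ``intermediate regime'' but the essential mechanism in the bulk as well; this is precisely the paper's third case, which observes that $|m-\hat m|=\sqrt{|z^2-4|}$ exceeds twice the universal bound on $|s-m|\wedge|s-\hat m|$, so the two distances cannot exchange as $\eta$ varies. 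A minor second point: your anchor at $\eta_1$ via the expansion $\sqrt{1+4R/(z^2-4)}$ need not be perturbative when $\eta_1=3$ (take $E=0$, $r(\eta_1)$ near $1$, so $4|R|/|z^2-4|$ can exceed $1$); the paper instead uses directly that $|s-\hat m|\geq \im s-\im\hat m>\eta\geq 3$, which already exceeds the bound $3F_z(r)\leq 3$.
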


\begin{proof}
  Denote by $m$ and $\hat m$ the two solutions of \eqref{e:m_eq} with positive
  and negative imaginary parts, respectively:
  \begin{equation} \label{e:mmhat}
    m \;=\; \frac{-z  + \sqrt{z^2 - 4}}{2}\,, \qquad
    \hat m \;=\; \frac{-z - \sqrt{z^2 - 4}}{2} \,,
  \end{equation}
  where the square root is chosen so that $\im m > 0$.
  Hence, $\im \sqrt{z^2-4} > \eta$ and consequently
  $\im \hat m < - \eta$; we shall use this bound below.
  Note that $m$ and $\hat m$ are continuous.
  Set $v \deq s-m$ and $\hat v \deq s -\hat m$.
  Since
  \begin{equation}
    s \;=\; \frac{-z  \pm \sqrt{z^2 - 4 + 4R}}{2} 
  \end{equation}
  and since for any complex square root $\sqrt{\, \cdot\,}$ and $w,\zeta \in \C$ we have
  \begin{equation}
   |\sqrt{w+\zeta}-\sqrt{w}| \wedge |\sqrt{w+\zeta}+\sqrt{w}| \;\leq\;
   \frac{|\zeta|}{\sqrt{|w|}} \wedge \sqrt{|\zeta|}
   \,,
  \end{equation}
  we deduce that
  \begin{equation} \label{e:vhatvM}
    |v| \wedge |\hat v| \;\leq\;
    \frac{2 |R|}{\sqrt{|z^2-4|}} \wedge \sqrt{|R|}
    \;\leq\; 3F(r)
    \,.
  \end{equation}
  In the last inequality, we used that $\abs{R} \leq (1+|z|)r$ and that, for any $r \in [0,1]$,
  \begin{equation} \label{e:3F}
    \frac{2 (1+|z|)r}{\sqrt{|z^2-4|}} \wedge \sqrt{(1+|z|)r}
    \;\leq\; 3F(r)
    \,.
  \end{equation}
  
  The proof is divided into three cases.
  First, consider the case $(1+|z|)r(\eta) \geq |z^2-4|/16$. 
  Then, using \eqref{e:3F} and the fact that $\abs{\hat v - v} = \sqrt{\abs{z^2 - 4}}$, 
  we get
  \begin{equation}
    |\hat v|
    \;\leq\; |v| + \sqrt{|z^2-4|}
    \;=\; |v| + O\pBB{\frac{2(1+|z|)r}{\sqrt{|z^2-4|}} \wedge \sqrt{(1+|z|)r}}
    \;=\; |v|+ O(F(r))
    \,,
  \end{equation}
  and hence $|v| \leq |v| \wedge |\hat v| + O(F(r)) = O(F(r))$ by \eqref{e:vhatvM}.

  Next, consider the case $\eta \geq 3$. Then, on the one hand, by \eqref{e:vhatvM}
  and the assumption $r \in [0,1]$, we have
  $|v| \wedge |\hat v|  \leq 3F(r) \leq 3$.
  On the other hand, since $\im s > 0$ and $\im \hat m< -\eta \leq  -3$,
  \begin{equation}
    |\hat v| \;\geq\; |\im s - \im \hat m| \;>\; |\im \hat m| \;>\;  3 \,,
  \end{equation}
  and together we conclude that $|\hat v| > |v|$, so that the claim follows from \eqref{e:vhatvM}.
  
  Finally, consider the case $(1+|z|) r(\eta) < |z^2-4|/16$ 
  and $\eta < 3$.
  Without loss of generality, we set $\eta_0 = \eta$.
  Since $r$ is nonincreasing and $|z^2-4|$ increasing in $\eta$,
  and since $(1+|z|) \leq 4(1+|z_0|)$ for $\eta \in [\eta_0,3]$,
  we then have $(1+|z|)r(\eta) < |z^2-4|/4$ for all $\eta \in [\eta_0, 3]$.
  Therefore
  \begin{equation} \label{e:vhatv1}
    |v-\hat v| \;=\; \sqrt{|z^2-4|}
    \;\geq\; 2 \pBB{ \frac{2  (1+|z|) r}{\sqrt{|z^2-4|}} \wedge \sqrt{ (1+|z|)r} }
    \quad \text{for all $\eta \in [\eta_0, 3]$} \,.
  \end{equation}
  On the other hand, by \eqref{e:vhatvM}, and since $|R| \leq (1+|z|)r$, 
  \begin{equation} \label{e:vhatv2}
    |v| \wedge |\hat v|
    \;\leq\; \frac{2(1+|z|)r}{\sqrt{|z^2-4|}} \wedge \sqrt{(1+|z|)r}
    \quad \text{for all $\eta \in [\eta_0,3]$} \,.
  \end{equation}
  By continuity and \eqref{e:vhatv1}--\eqref{e:vhatv2},
  it suffices to show $|v(z)| < |\hat v(z)|$ for some $\eta \in [\eta_0, 3]$,
  since then $|v(z)| < |\hat v(z)|$ for all $\eta \in [\eta_0, 3]$.
  Since we have already shown $\abs{v(z)} < \abs{\hat v(z)}$ for $\eta=3$,
  the proof is complete. 
\end{proof}

\subsection{Proof of Proposition~\ref{prop:Lambda}}

We now have all the ingredients we need to complete the proof of \eqref{e:Lambdad}--\eqref{e:Lambdao}
under the assumption \eqref{e:Gamma_star_assump}, and hence the proof of Proposition \ref{prop:Lambda}.

\begin{proof}[Proof of \eqref{e:Lambdad}]
Let $z_0 = E + \ii \eta_0 \in \f D$ be given, where $\f D$ was defined in \eqref{e:D}.
Set $\eta_1=N$.

Lemma~\ref{lem:Gjjstrengthened} shows that, with $t$-HP,
for all $\eta \in [\eta_0, \eta_1]$,
the function $s$ satisfies \eqref{e:sR} with
\begin{equation}
  |R(z)| \;\leq\; (1+|z|) r(\eta)\,,
\end{equation}
where $r(\eta) \deq C \xi \Phi (E+\ii \eta)$ and $C$ is some large enough absolute constant.
Hence, $r$ is decreasing in $\eta$ and,
since $\xi\Phi \ll 1$ by assumption, we have $r \in [0,1]$.
From Lemma~\ref{lem:smdiff}, it therefore follows that $|m-s| = O(F(C\xi\Phi)) = O(F(\xi\Phi))$
for all $\eta \in [\eta_0,\eta_1]$, with $t$-HP.

Having determined $s$, we now estimate $G_{jj} - m$.
By \eqref{e:Gjj} and since $s=m+O(F(\xi\Phi))$, we find
\begin{equation} \label{e:sGii1}
  1+(z+m)G_{jj} \;=\; O(F(\xi\Phi))G_{jj} + O((1+|z|)\xi\Phi)
\end{equation}
with $t$-HP.
From \eqref{e:mdef} it is easy to deduce that  $z+m = -1/m$ and $(1+|z|)|m| = O(1)$. Hence,
by \eqref{e:sGii1} and since $G_{jj} = O(1)$ with $t$-HP,
\begin{equation} \label{e:sGii2}
   m-G_{jj} \;=\; O(F(\xi\Phi))G_{jj} + O(\xi\Phi) \;=\; O(F(\xi\Phi))
\end{equation}
with $t$-HP, as claimed.
\end{proof}

The off-diagonal entries of the Green's function can be estimated using a similar argument.

\begin{proof}[Proof of \eqref{e:Lambdao}]
From $(HG)_{ij} = ((H-z)G)_{ij} + z G_{ij} = \delta_{ij} + zG_{ij}$, it follows that
\begin{equation}
  G_{12} (HG)_{11} - G_{11} (HG)_{12} 
  \;=\; G_{12} \,,
\end{equation}
and therefore
\begin{equation} \label{e:G12newid}
  G_{12}
  \;=\; G_{12} \sum_{i} H_{1i} G_{i1} - G_{11} \sum_{i} H_{1i}G_{i2}
  \,.
\end{equation}
As in \eqref{e:1zG11}, using \eqref{e:G12newid} and \eqref{e:H}, we find
\begin{equation} 
  G_{12}
  \;=\; - (d-1)^{-1/2} \sum_{\mu=1}^d \Ei \pB{ G_{11}\pB{G_{\alpha_\mu 2} -G_{i2}} - G_{12}\pB{G_{\alpha_\mu 1}-G_{i1}}}
  \,.
\end{equation}
Using \eqref{e:Gdiff2} we therefore get, with $t$-HP,
\begin{equation}
  \Eu G_{12}
  \;=\; - (d-1)^{-1} \sum_{\mu=1}^d \Eu \pB{ G_{11} s G_{12} - G_{12} s G_{11} } + O(\Phi)
  \;=\; O(\Phi) \,.
\end{equation}
By Proposition~\ref{prop:concentration},
therefore $G_{12} = \Eu G_{12} + O(\xi\Phi) = O(\xi\Phi)$ with $t$-HP.
Again, by symmetry and a union bound, the claim then holds uniformly with $12$ replaced by $ij$.
\end{proof}

Summarizing, we have proved that, assuming
\eqref{e:Gamma_star_assump}, the estimates \eqref{e:Lambdad} and
\eqref{e:Lambdao} hold with $t$-HP.  Hence the proof of
Proposition~\ref{prop:Lambda} (and consequently of
Theorem~\ref{thm:semicircle}) is complete.

This proof of Theorem \ref{thm:semicircle} relies on
Proposition~\ref{prop:switch}, which we proved for the matching model
in Section \ref{sec:MM}. In order to establish Theorem
\ref{thm:semicircle} for the uniform and permutation models, we still
have to prove Proposition \ref{prop:switch} for these models. This is
done in Sections \ref{sec:UM} and \ref{sec:PM}, which constitute the
rest of the paper.

\section{Uniform model}
\label{sec:UM}

In this section we prove Proposition~\ref{prop:switch} for the uniform model.
We identify a simple graph on the vertices $\qq{1,N}$ with its set of edges $E$,
where an edge $e \in E$ is a subset of $\qq{1,N}$ with two elements.
The adjacency matrix of a set of edges $E$ is by definition
\begin{equation}
  M(E) \;\deq\; \sum_{\{i,j\} \in E} \Delta_{ij} \,,
\end{equation}
where $\Delta_{ij}$ was defined in \eqref{e:Eijdef}. Note that $M(\cdot)$ is one-to-one,
i.e.\ the matrix $M(E)$ uniquely determines the set of edges $E$. 
For a subset $S \subset E$ of edges we denote by $[S] \deq \bigcup_{e \in S} e$ the set of vertices
incident to any edge in $S$. Moreover, for a subset $B \subset \qq{1,N}$ of vertices, we define $E\vert_B \deq \h{e \in E \col e \subset B}$
to be the subgraph of $E$ induced on $B$.

\begin{figure}[t]
\begin{center}
\input{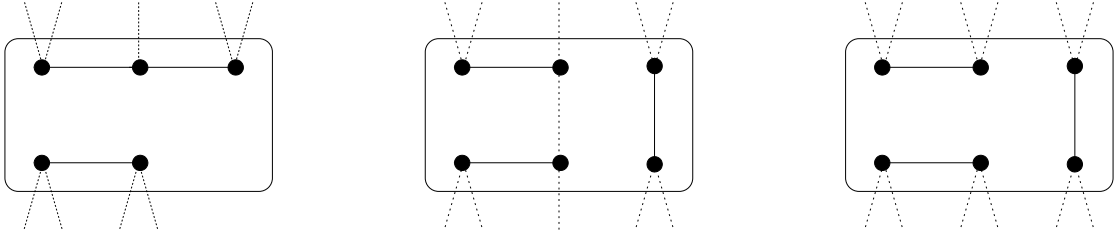}
\end{center}
\caption{
  Each of the three diagrams illustrates the subgraph incident to a set $S \subset E$ of three distinct edges ($\abs{S} = 3$).
  We draw the edges of $S$ with solid lines and the edges of $E \setminus S$ with dotted lines.
  In the left and centre diagrams we have $I(E,S) = 0$: in the left diagram $\abs{[S]} < 6$,
  while in the centre diagram $\abs{[S]} = 6$ but $E\vert_{[S]} \neq S$.   In the right diagram we have $I(E,S)=1$.
  \label{fig:S1}}
\end{figure}

\subsection{Switchings}
\label{sec:UM-switch}

For a subset $S \subset E$ with $\abs{S} = 3$ we define the indicator function
\begin{equation*}
I(E,S) \;\deq\; \ind{E\vert_{[S]} \text{ is $1$-regular}} \;=\; \ind{\abs{[S]} = 6, E\vert_{[S]} = S}\,.
\end{equation*}
The interpretation of $I(E,S) = 1$ is that $S$ is a switchable subset of $E$,
i.e.\ any double switching of the three edges $S$ results again in
a simple $d$-regular graph; see Figure \ref{fig:S1}. A switching of the edges $S$ may be identified
with a perfect matching of the vertices $[S]$.
There are eight perfect matchings $S'$ of $[S]$ such that $S \cap S' = \emptyset$.
We enumerate these matchings in an arbitrary way as $S'_s$ with $s \in \qq{1,8}$, and set
\begin{equation} \label{e:UM-TSsdef}
  T_{S,s}(E) \;\deq\; (E \setminus S) \cup S_s'
  \,,
\end{equation}
and say that $T_{S,s}(E)$ is a switching of $E$. (Compare this with Figure~\ref{fig:switch} (right)
in which one such perfect matching is illustrated.)
Note that there are $r\ur,a \ul a,b \ul b$ depending on $(S,s)$ such that
\begin{equation} \label{e:MTtau}
  M(T_{S,s}(E)) \;=\; \tau_{r\ur,a\ua,b\ub}(M(E)) \,,
\end{equation}
with the right-hand side defined by \eqref{e:Adoubleswitch}.
This correspondence will be made explicit later.
The definition \eqref{e:UM-TSsdef} implies, for $I(E,S) = 1$, that
$T_{S,s}(E)$ is a simple $d$-regular graph, and that
\begin{equation} \label{e:UM-Tincl}
  E \setminus S \;=\; T_{S,s}(E) \cap (E\setminus S) \,.
\end{equation}

Next, take two disjoint subsets $S_1, S_2 \subset E$ satisfying $I(E, S_1) = I(E,S_2) = 1$
and $[S_1] \cap [S_2] = \h{1}$. Thus, we require the sets $S_1$ and $S_2$ to be incident
to exactly one common vertex, which we set to be $1$; see Figure \ref{fig:S2}.
Then $S_2 \subset E\setminus S_1$ and $S_1 \subset E\setminus S_2$,
and \eqref{e:UM-Tincl} implies that the two compositions
$T_{S_1,s_1}(T_{S_2,s_2}(E))$ and $T_{S_2,s_2}(T_{S_1,s_1}(E))$
are well-defined and coincide,
\begin{equation} \label{e:UM-Tcomm}
  T_{S_1,s_1}(T_{S_2,s_2}(E))
  \;=\;
  T_{S_2,s_2}(T_{S_1,s_1}(E))
  \,.
\end{equation}

Let $S$ satisfy $I(E,S) = 1$ and $1 \in [S]$.
The map $T_{S,s}(E)$ switches the unique edge $\{1,i\} \in S$ incident to $1$ to a new edge $\{1,j\} \notin S$ with $j \in [S]$.
Our next goal is to extend this switching to a \emph{simultaneous} switching 
of all neighbours of $1$. As already seen in \eqref{e:UM-Tcomm}, simultaneous multiple switchings are not always possible,
and our construction will in fact only switch those neighbours of $1$ that can be switched without disrupting any other neighbours of $1$.
The remaining neighbours will be left unchanged.
Ultimately, this construction will be effective because the number of neighbours of $1$ that cannot be switched will be small with high probability. 

Let $(e_1(E), \dots, e_d(E))$ be an 
enumeration of the edges in $E$ incident to $1$, and denote by
\begin{equation} \label{e:def_calS}
\cal S_\mu(E) \;\deq\; \hB{S \subset E \col e_\mu(E) \in S \,, \abs{S} = 3 \,, 1 \notin e \text{ for } e \in S \setminus \{e_\mu(E)\}}
\end{equation}
the set of unordered triples of distinct edges in $E$ containing $e_\mu(E)$ and no other edge incident to $1$.
Conditioned on $E$, we define a random variable $(\f S, \f s)$, where $\f S = (S_1, \dots, S_d)$ and $\f s = (s_1, \dots, s_d)$,
uniformly distributed over $\cal S_1(E) \times \cdots \times \cal S_d(E) \times \qq{1,8}^d$.
In particular, conditioned on $E$, the random variables $(S_1, s_1), \dots, (S_d, s_d)$ are independent.

\begin{figure}[t]
\begin{center}
\input{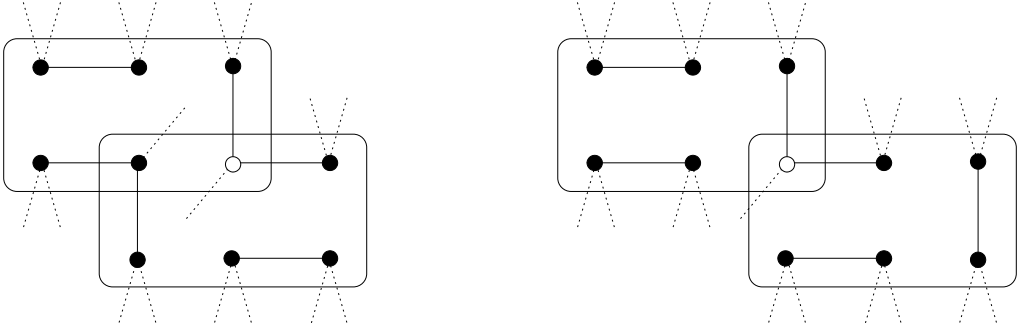}
\end{center}
\caption{
  Each of the two diagrams illustrates
  the subgraph incident to $S_1$ and $S_2$ for two sets of three edges $S_1, S_2 \subset E$ satisfying $I(E,S_1) = I(E, S_2) = 1$.
  The edges of $S_1 \cup S_2$ are drawn with solid lines and the edges of $E \setminus (S_1 \cup S_2)$ with dashed lines.
  The vertex $1$ is drawn using a white circle.
  In terms of the indicator functions $J_\mu$ defined in \eqref{e:UM-Jdef},
  in the left diagram we have $J_1(E,\f S) = J_2(E, \f S) =0$ since $[S_1] \cap [S_2] \neq \{1\}$,
  and in the right diagram $J_1(E,\f S) = J_2(E,\f S) = 1$.
  \label{fig:S2}}
\end{figure}

For $\mu\in \qq{1,d}$ we define the indicator functions
\begin{align}
\label{e:UM-Idef}
I_\mu \;\equiv\; I_\mu(E, \f S) &\;\deq\; I(E, S_\mu)\,, 
\\
\label{e:UM-Jdef}
J_\mu \;\equiv\; J_\mu(\f S) &\;\deq\; \indb{[S_\mu] \cap [S_\nu] = \{1\} \text{ for all $\nu \neq \mu$}}\,,
\end{align}
and the set
\begin{equation} \label{e:UM-Wdef}
W \;\equiv\; W(E,\f S) \;\deq\; \h{ \mu \in \qq{1,d} : I_{\mu}(E, \f S) J_{\mu}(\f S) = 1}\,.
\end{equation}
Their interpretation is as follows.
On the event $\{I_\mu = 1\}$, the edges $S_\mu$ are switchable in the sense that
any switching of them results in a simple $d$-regular graph.
The interpretation of $\{J_\mu = 1\}$ is that the edges of $S_\mu$
do not interfere with the edges of any other $S_\nu$, and hence any switching of them
will not influence or be influenced by the switching of another triple
of edges: on the event $\{J_\mu = 1\}$, \eqref{e:UM-Tcomm} implies that $T_{S_\mu,s_\mu}$ commutes
with $T_{S_\nu,s_\nu}$ for all $\nu \neq \mu$.
The set $W$ lists the neighbours of $1$ that can be switched simultaneously; see Figure \ref{fig:S2}.

Let $\mu_1, \dots,\mu_k$, where $k \leq d$, be an arbitrary enumeration of $W$, and set
\begin{equation}   \label{e:UM-Tdef}
T_{\f S, \f s}(E) \;\deq\; \pb{T_{S_{\mu_1},s_{\mu_1}} \circ \cdots \circ T_{S_{\mu_k},s_{\mu_k}}} (E)
\,.
\end{equation}
By \eqref{e:UM-Tcomm}, the right-hand side 
is well-defined and independent of the order of the applications of the $T_{S_\mu,s_\mu}$.
Equivalently, in terms of adjacency matrices, $T_{\f S, \f s}(E)$ is given by
\begin{equation} \label{e:UM-TM}
  M(T_{\f S, \f s}(E)) - M(E) \;=\; \sum_{\mu \in W} \pb{M(T_{S_\mu,s_\mu}(E)) - M(E)}
  \,,
\end{equation}
where we used that, by construction of $W$, any switchings $\mu \neq \nu$ with $\mu, \nu \in W$ 
do not interfere with each other, so that
$M(T_{S_\mu,s_\mu} T_{S_{\nu,s_\nu}}(E)) - M(T_{S_\nu,s_\nu}(E))= M(T_{S_\mu,s_\mu}(E)) - M(E)$.

The following result ensures that the simultaneous switching leaves
the uniform distribution on simple $d$-regular graphs invariant. For
this property to hold, it is crucial that, as in \eqref{e:def_calS},
we admit configurations $\f S$ that may have edges that cannot be
switched. The more naive approach of only
averaging over configurations $\f S$ in which
all edges can be switched simultaneously does not leave the
uniform measure invariant. The price of admitting configurations
$\f S$ that do not switch some neighbours of $1$ is mitigated by the fact
that such configurations are exceptional and occur with small probability,
i.e.\ conditioned on $E$, $I_\mu(E, \f S) = J_\mu(\f S) = 1$ with high
probability.

\begin{lemma} \label{lem:UM-rev}
If $E$ is a uniform random simple $d$-regular graph, and $(\f S, \f s)$ is uniform
over $\cal S_1(E) \times \cdots \times \cal S_d(E) \times \qq{1,8}^d$, then $T_{\f S, \f s} (E)$
is a uniform random simple $d$-regular graph.
\end{lemma}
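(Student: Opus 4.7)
The plan is to establish the lemma via reversibility: the Markov kernel $K(E, E') \deq \P(T_{\f S, \f s}(E) = E' \mid E)$ is symmetric on the set $\f G$ of simple $d$-regular graphs on $\qq{1, N}$, so the uniform measure is stationary and the claim follows. Since $|\cal S_\mu(E)| = \binom{Nd/2 - d}{2}$ depends only on $N, d$, the weight $\prod_\mu |\cal S_\mu(E)|^{-1} 8^{-d}$ is $E$-independent, and symmetry of $K$ reduces to the combinatorial equality
\[
\#\hb{(\f S, \f s) : T_{\f S, \f s}(E) = E'} \;=\; \#\hb{(\f S', \f s') : T_{\f S', \f s'}(E') = E}\,,
\]
which I would prove by exhibiting an explicit bijection.

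For the construction, fix $(E, \f S, \f s)$ with $E' \deq T_{\f S, \f s}(E)$ and $W \deq W(E, \f S)$. The disjointness condition $J_\mu = 1$ for $\mu \in W$ implies that the symmetric difference $E \triangle E'$ decomposes into blocks indexed by $\mu \in W$, each supported on the six vertices $[S_\mu]$, with pairwise intersection $\{1\}$. Let $S^{\star}_\mu \deq (S_\mu)'_{s_\mu}$ denote the switched triple for $\mu \in W$. The switching replaces the unique edge of $S_\mu$ at $1$ by a new edge at $1$, so there is a unique permutation $\pi$ of $\qq{1,d}$ with $e_{\pi(\mu)}(E') = e_\mu(E)$ for $\mu \notin W$, and $e_{\pi(\mu)}(E')$ equal to the unique edge of $S^{\star}_\mu$ incident to $1$ for $\mu \in W$. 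For $\nu = \pi(\mu)$, I set $S'_\nu \deq S^{\star}_\mu$ if $\mu \in W$ and $S'_\nu \deq S_\mu$ otherwise, with $s'_\nu \in \qq{1,8}$ the index such that $(S'_\nu)'_{s'_\nu} = S_\mu$ in the first case (choosing the enumerations of the eight matchings of $[S_\mu]$ and $[S'_\nu] = [S_\mu]$ in a common canonical manner), and $s'_\nu \deq s_\mu$ in the second.

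Three verifications then complete the argument: (i) $(\f S', \f s') \in \prod_\nu \cal S_\nu(E') \times \qq{1,8}^d$, since for $\mu \notin W$ we have $S_\mu \subset E \cap E'$ still containing $e_{\pi(\mu)}(E')$ and no other edge at $1$, while for $\mu \in W$ we have $[S^{\star}_\mu] = [S_\mu]$, giving $I(E', S^{\star}_\mu) = 1$; (ii) $T_{\f S', \f s'}(E') = E$, which by \eqref{e:UM-TM} reduces to $W(E', \f S') = \pi(W)$, itself a consequence of $J_\nu$ depending only on the vertex sets $[S'_\nu] = [S_\mu]$; (iii) the resulting map $(E,\f S,\f s) \mapsto (E',\f S',\f s')$ is an involution (up to the relabelling $\pi$), hence a bijection. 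The main obstacle will be the bookkeeping around $\pi$ and the enumeration of the eight matchings in \eqref{e:UM-TSsdef}: for the construction to be genuinely involutive, this enumeration must depend only on the vertex set $[S_\mu]$ (and not on which of the $15$ perfect matchings of these six vertices is currently used), so that the correspondence $s_\mu \leftrightarrow s'_{\pi(\mu)}$ is a bijection of $\qq{1,8}$. With this choice in place and the invariance of $I_\nu$ and $J_\nu$ under the swap verified, the symmetry of $K$ and hence the lemma follow.
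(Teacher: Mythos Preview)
Your approach is correct and essentially the same as the paper's: both prove reversibility of the transition kernel via an explicit bijection between configurations $(\f S,\f s)$ mapping $E\to E'$ and those mapping $E'\to E$. The paper streamlines your bookkeeping concerns by factoring out $8^{-|W|}$ upfront (so the $s$-enumeration never has to be inverted) and by building an edge bijection $\phi\colon E_1\to E_2$ that is the identity on $E_1\cap E_2$ and blockwise on $E_1\triangle E_2$, which makes the invariance of $I_\mu$ and $J_\mu$ under the push-forward $\f S\mapsto\phi(\f S)$ immediate and absorbs your permutation $\pi$ into the choice of edge enumeration for $E_2$.
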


\begin{proof}
It suffices to show reversibility of the transformation $E \mapsto T_{\f S, \f s}(E)$ with respect to the uniform measure,
i.e.\ that for any fixed simple $d$-regular graphs $E_1, E_2$ we have
\begin{equation} \label{transition_p}
\P(T_{\f S, \f s}(E) = E_2 | E = E_1) \;=\; \P(T_{\f S, \f s}(E) = E_1 | E = E_2)\,.
\end{equation}
Note that $(\f S, \f s)$ is uniformly distributed over
$\cal S_1(E_1) \times \cdots \times \cal S_d(E_1) \times \qq{1,8}^d$ on the
left-hand side and over $\cal S_1(E_2) \times \cdots \times \cal S_d(E_2) \times \qq{1,8}^d$ on the right-hand side.

First, given two (simple $d$-regular) graphs $E_1,E_2$, we say that $E_2$ is a switching of $E_1$ if there exist $(\f S,\f s)$ such that
$E_2=T_{\f S,\f s}(E_1)$,
and note that $E_1$ is a switching of $E_2$ if and only if $E_2$ is a switching of $E_1$.
If these conditions do not hold, then both sides of \eqref{transition_p} are zero.
We conclude that it suffices to show \eqref{transition_p} for the case that $E_2$ is a switching of $E_1$
(or, equivalently, $E_1$ is a switching of $E_2$).
In words, it suffices to show that the probability that $E_1$ is switched to $E_2$
is the same as the probability that $E_2$ is switched to $E_1$. To this end, we first construct a
bijection $\phi \col E_1 \to E_2$ between the edges of the two graphs,
and then show that the conditioned probability measure is invariant under this bijection.
The bijection is deterministic.

\begin{figure}[t]
\begin{center}
\input{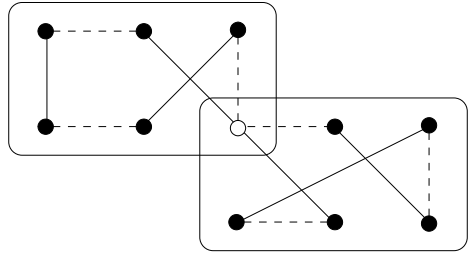}
\end{center}
\caption{
  The lines (solid and dashed) illustrate the edges of $E_\triangle$.
  The solid lines are the edges of $E_1$ and the dashed line the edges of $E_2$.
 The white circle represents the vertex $1$.
  Note that $E_\Delta|_{\h{2,\dots, N}}$ consists of $|W|=2$ disconnected subgraphs, each of
  which has $5$ vertices and $4$ edges. These subgraphs are given by
  the encircled regions with the vertex $1$ and its incident edges removed. The sets $B_\mu$
  are given by the encircled vertices including $1$.
  \label{fig:revproof1}}
\end{figure}

Define $E_\bigtriangleup \deq E_1 \bigtriangleup E_2$ and $E_\cap \deq E_1 \cap E_2$,
where $\bigtriangleup$ denotes the symmetric difference.
Define $W \deq \{\mu \col e_\mu (E_1) \in E_\bigtriangleup\}$.
The interpretation of $W$ is the index set of neighbours of $1$ in $E_1$ that were switched in going from $E_1$ to $E_2$.
This is the same set as the set from \eqref{e:UM-Wdef}. Note that $W$ is now deterministic: for $E_1$ and $E_2$ that are switchings of each other, the set $W$ is uniquely determined.
Since $E_2$ is a switching of $E_1$, and by the constraints in the indicator functions $I_\mu$ and $J_\mu$ in the definition of $T_{\f S, \f s}$,
we find that $E_\bigtriangleup \vert_{\{2, \dots, N\}}$ consists of $\abs{W}$ disconnected subgraphs,
each of which has $5$ vertices and $4$ edges.
Each such subgraph is adjacent to a unique edge $e_\mu(E_1)$ where $\mu \in W$.
For $\mu \in W$, we denote by $B_\mu$ the set of vertices consisting of $1$
and the vertices of the subgraph that is adjacent to $e_\mu(E_1)$.
By construction, $E_2 \vert_{B_\mu}$ is a switching of $E_1 \vert_{B_\mu}$ (and vice versa);
both are $1$-regular graphs on six vertices.
(The interpretation of $B_\mu$ is that the switching $T_{\f S, \f s}$ that maps $E_1$ to $E_2$ satisfies $S_\mu = E_1 \vert_{B_\mu}$.)
This construction is illustrated in Figure~\ref{fig:revproof1}.

Now we define the bijection $\phi \col E_1 \to E_2$.
For each $\mu \in W$, we choose $\phi$ to be a bijection from $E_1 \vert_{B_\mu}$ to $E_2 \vert_{B_\mu}$
such that $\phi(e_\mu(E_1))$ is incident to $1$.
(For each such $\mu$ there are two possible choices for this bijection; this choice is immaterial.)
Without loss of generality, we can choose the enumeration $e_\mu(E_2)$ of $E_2$ such that
$e_\mu(E_2) = \phi(e_\mu(E_1))$.
This defines a bijection $\phi \col E_1 \cap E_\bigtriangleup \to E_2 \cap E_\bigtriangleup$.
We extend it to a bijection $\phi \col E_1 \to E_2$ by setting $\phi(e) \deq e$ for $e \in E_\cap$.

With these preparations, we now show \eqref{transition_p}.
Given $E_1$ and $E_2$ that are switchings of each other,
we have constructed a set $W \subset \qq{1,d}$ and subsets $B_\mu$ for $\mu \in W$,
such that $E_1 \vert_{B_\mu}$ and $E_2 \vert_{B_\mu}$ are $1$-regular graphs obtained
from a unique switching from each other. Since  the $s_\mu$ are independent and since
for each $\mu \in W$ the random variable $s_\mu$ is uniform on $\qq{1,8}$,
we find that the left-hand side of \eqref{transition_p} is equal to
\begin{equation} \label{e:UM-rev-LHS}
8^{-\abs{W}} \, \P\pB{S_\mu = E_1 \vert_{B_\mu} \text{ for } \mu \in W\,,\, \hb{\mu \in \qq{1,d} \col I_\mu(E_1, \f S) J_\mu(\f S) = 1} = W}\,,
\end{equation}
where $\f S$ is uniform over $\cal S_1(E_1) \times \cdots \times \cal S_d(E_1)$.

By an identical argument, we find that the right-hand side of \eqref{transition_p} is equal to
\begin{equation} \label{e:UM-rev-LHS2}
8^{-\abs{W}} \, \P\pB{S_\mu = E_2 \vert_{B_\mu} \text{ for } \mu \in W\,,\, \hb{\mu \in \qq{1,d} \col I_\mu(E_2, \f S) J_\mu(\f S) = 1} = W}\,,
\end{equation}
where $\f S$ is uniform over $\cal S_1(E_2) \times \cdots \times \cal S_d(E_2)$. Note that, by construction, $W$ and $B_\mu$ for $\mu \in W$ are the same in both \eqref{e:UM-rev-LHS} and \eqref{e:UM-rev-LHS2}.

What remains is to show that \eqref{e:UM-rev-LHS} and \eqref{e:UM-rev-LHS2} are equal.
In order to prove this, we abbreviate $\phi(\f S) \deq (\phi(S_1), \dots, \phi(S_d))$.
Then the definitions of $I_\mu$, $J_\mu$, and $\phi$ imply that 
for all $\f S \in \cal S_1(E_1) \times \cdots \times \cal S_d(E_1)$ we have
\begin{equation}
  I_\mu(E_1, \f S) \;=\; I_\mu(\phi(E_1), \phi(\f S))\,, \quad
  J_\mu(\f S) = J_\mu(\phi(\f S)) \,,
\end{equation}
and hence \eqref{e:UM-rev-LHS} is equal to
\begin{equation} \label{e:UM-rev-RHS}
8^{-\abs{W}} \, \P\pB{\phi(S_\mu) = \phi(E_1) \vert_{B_\mu} \text{ for } \mu \in W\,,\, \hb{\mu \in \qq{1,d} \col I_\mu(\phi(E_1), \phi(\f S)) J_\mu(\phi(\f S)) = 1} = W}\,.
\end{equation}
Since $\phi(E_1) = E_2$ and
since $\phi$ is a bijection from $\cal S_1(E_1) \times \cdots \cal S_d(E_1)$ 
to $\cal S_1(\phi(E_1)) \times \cdots \times \cal S_d(\phi(E_1))$,
and therefore $\phi(\f S)$ is uniform over $\cal S_1(E_2) \times \cdots \times \cal S_d(E_2)$,
we conclude that \eqref{e:UM-rev-RHS} is equal to \eqref{e:UM-rev-LHS2}. This concludes the proof.
\end{proof}

\subsection{Estimate of exceptional configurations}
\label{sec:UM-exceptprob}

In preparation of the proof of Proposition~\ref{prop:switch} for the uniform model,
we now define 
the probability space $\Omega$ from Definition \ref{def:parametrization}.
The space $\Theta$ is the set of simple $d$-regular graphs (identified with their sets of edges $E$), and
\begin{equation*}
U_\mu \;\deq\; \hb{\text{sets of three distinct edges of the complete graph on $N$ vertices}} \times \qq{1,8}\,.
\end{equation*}
Hence, the probability space $\Omega = \Theta \times U_1 \times \cdots \times U_d$ from Definition \ref{def:parametrization}
consists of elements
\begin{equation*}
(\theta, u_1, \dots, u_d) \;=\;
\pb{E, (S_1,s_1), \dots, (S_d, s_d)} \,,
\end{equation*}
where we denote elements of $\Theta$ by $\theta = E$ and elements of $U_\mu$ by $u_\mu = (S_\mu, s_\mu)$.
Next, we define the (non-uniform) probability measure on the set $\Omega$.
To that end, we first endow the set of simple $d$-regular graphs $\Theta$ with the uniform probability measure.
For each $E \in \Theta$,
we fix an arbitrary enumeration $e_1(E), \dots, e_d(E) \in E$ of the edges of $E$ incident to $1$.
Then, conditioned on $E \in \Theta$, we take $u_1, \dots, u_d$ to be independent,
with $S_\mu$ uniformly distributed over $\cal S_\mu(E)$ defined in \eqref{e:def_calS},
and $s_\mu$ uniformly distributed over $\qq{1,8}$.
(In other words, the probability measure is uniform on $(\theta, u_1, \dots, u_d) \in \Omega$ such that
each $S_\mu$ contains $e_\mu(E)$ and no other edge incident to~$1$.)
Having defined the probability space $\Omega$, we augment it to $\tilde \Omega$ according to Definition \ref{def:parametrization2}.

From now on, we always condition on $E$ and write $e_\mu \equiv e_\mu(E)$. 
We denote by $p_\mu , q_\mu$ the two edges in $S_\mu$
not incident to $1$ (ordered in an arbitrary fashion), so that $S_\mu = \{e_\mu, p_\mu, q_\mu\}$.
The next lemma provides some general properties of the random sets $S_\mu$.

\begin{lemma} \label{lem:UM-condprob}
The following holds for any fixed $\mu \in \qq{1,d}$.
\begin{enumerate}
\item
There are at most five $\nu \neq \mu$ such that
\begin{equation} \label{e:unif-Fmu-maxbd}
  [S_\mu]\cap [S_\nu] \neq \h{1}
  \qquad \text{and} \qquad
  [S_\nu] \cap [S_\kappa] = \h{1} \text{ for all $\kappa \neq \mu,\nu$}
  \,.
\end{equation}
\item
For any symmetric function $F$ we have
\begin{equation} \label{e:UM-Sapproxunif}
  \E_{\sigG_\mu} F(p_\mu,q_\mu) \;=\;
  \frac{1}{(Nd/2)^2} \sum_{p,q\in E} F(p,q)
  + O\pbb{\frac{1}{N}} \|F\|_\infty
  \,.
\end{equation}
Similarly, for any function $F$ we have
\begin{equation} \label{e:UM-Sapproxunif1}
  \E_{\sigG_\mu,q_\mu} F(p_\mu) \;=\;
  \E_{\sigG_\mu,p_\mu} F(q_\mu) \;=\;
  \frac{1}{Nd/2} \sum_{p\in E} F(p)
  + O\pbb{\frac{1}{N}} \|F\|_\infty
  \,.
\end{equation}
\end{enumerate}
\end{lemma}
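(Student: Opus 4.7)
The plan for (i) is a pigeonhole/disjointness argument on the ``footprints'' $[S_\nu]$. Suppose $\nu_1 \neq \nu_2$ both satisfy \eqref{e:unif-Fmu-maxbd}; applying the second condition for $\nu_1$ with $\kappa = \nu_2$ forces $[S_{\nu_1}] \cap [S_{\nu_2}] = \{1\}$. Thus the sets $[S_\nu] \setminus \{1\}$, as $\nu$ ranges over indices satisfying \eqref{e:unif-Fmu-maxbd}, are pairwise disjoint. On the other hand, the first condition requires each such $[S_\nu]$ to meet $[S_\mu]$ in at least one vertex other than $1$. Injecting each such $\nu$ into a distinct element of $[S_\mu] \setminus \{1\}$, and observing that $S_\mu$ consists of three edges so $|[S_\mu]| \leq 6$ and hence $|[S_\mu]\setminus\{1\}| \leq 5$, yields the bound.

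For (ii), the plan is a direct computation. Conditioned on $\sigG_\mu$, the set $\{p_\mu, q_\mu\}$ is uniform on the $\binom{M}{2}$ unordered pairs of distinct edges of $E$ not incident to $1$, where $M \deq Nd/2 - d$; choosing the ordering uniformly at random (which does not affect symmetric functions) makes $(p_\mu, q_\mu)$ uniform on the $M(M-1)$ ordered such pairs. Setting $N_e \deq Nd/2$, I would then write
\begin{equation*}
  \E_{\sigG_\mu} F(p_\mu, q_\mu)
  \;=\;
  \frac{1}{M(M-1)} \sum_{(p,q)\in E^2} F(p,q)
  \;-\;
  \frac{1}{M(M-1)} \sum_{(p,q)\in E^2 \setminus \cal P} F(p,q),
\end{equation*}
where $\cal P$ is the set of ordered pairs with $p\neq q$ and neither incident to $1$. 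The excluded set $E^2 \setminus \cal P$ has $O(dN_e)$ elements (the diagonal contributes $N_e$, the pairs with $p$ or $q$ incident to $1$ contribute $O(dN_e)$), giving an error of $O(dN_e)/M(M-1) \cdot \|F\|_\infty = O(1/N) \|F\|_\infty$ after using $M \asymp N_e \asymp Nd$. Replacing $1/M(M-1)$ by $1/N_e^2$ produces an additional error $|N_e^2 - M(M-1)|/(M(M-1)N_e^2) = O(d/N_e^3)$, which, multiplied by the full unrestricted sum of size at most $N_e^2\|F\|_\infty$, contributes a further $O(1/N)\|F\|_\infty$. Combining proves \eqref{e:UM-Sapproxunif}. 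Estimate \eqref{e:UM-Sapproxunif1} is obtained by the same scheme: given $\sigG_\mu$ and $q_\mu$ (resp.\ $p_\mu$), the other edge is uniform on the $M - 1$ edges of $E$ not incident to $1$ and distinct from the conditioned edge, and the same comparison to the uniform measure on all $N_e$ edges produces an $O(1/N) \|F\|_\infty$ error.

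Neither part poses a real obstacle; the substance of (i) is the single observation that the pairwise disjointness of the $[S_\nu]\setminus\{1\}$ is precisely what the second clause of \eqref{e:unif-Fmu-maxbd} delivers, and (ii) is bookkeeping. The only mild subtlety is the convention on the ordering of $p_\mu, q_\mu$, which I would either eliminate by symmetrizing or handle by declaring the ordering to be uniformly random conditional on the unordered pair; both approaches give the clean expressions above without affecting the statement for symmetric $F$ in \eqref{e:UM-Sapproxunif} or for the single-variable $F$ in \eqref{e:UM-Sapproxunif1}.
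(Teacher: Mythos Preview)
Your proposal is correct and follows essentially the same route as the paper. For (i), the paper also uses the pairwise disjointness of the $[S_\nu]\setminus\{1\}$ (phrased as ``each $p\in [S_\mu]\setminus\{1\}$ can be contained in at most one $S_\nu$'') together with $|[S_\mu]\setminus\{1\}|\le 5$; for (ii), the paper likewise writes $\E_{\sigG_\mu}F(p_\mu,q_\mu)$ as a normalized sum over distinct pairs in $E\setminus\partial 1$ and compares it to the full sum over $E^2$, absorbing the diagonal and $\partial 1$-contributions into an $O(1/N)\|F\|_\infty$ error.
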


\begin{proof}
We begin with (i).
Let $B$ be the set of $\nu \neq \mu$ satisfying \eqref{e:unif-Fmu-maxbd}.
By definition, for $\nu \in B$, $[S_\nu] \cap [S_\kappa] = \h{1}$ for all $\kappa \neq \mu,\nu$.
Thus, each $p \in [S_\mu] \setminus \{1\}$ can be contained in at most one $S_\nu$ with $\nu \in B$.
The claim follows since $[S_\mu]\setminus\{1\}$ has at most five elements.

Next, we prove (ii).
Conditioned on $\sigG_\mu$, the two edges $p_\mu,q_\mu$ are by definition of $S_\mu$ chosen
to be distinct and uniformly distributed on the $Nd/2-d$ edges not incident to $1$.
Let $\partial 1 = \{e \in E: 1 \in e\}$
be the set of edges in $E$ incident to $1$.
Then
\begin{align} \label{e:UM-approxunif-pf}
  \Emu F(p_\mu,q_\mu)
  &\;=\;
  \frac{1}{2 \binom{Nd/2-d}{2}} \sum_{p,q \in E \setminus \partial 1: p \neq q} F(p,q)
  \nonumber\\
  &\;=\;
  \frac{1}{(Nd/2)^2} \sum_{p,q \in E} F(p,q)
  + O\pbb{\frac{1}{N}} \|F\|_\infty
  \,.
\end{align}
This shows \eqref{e:UM-Sapproxunif}; the proof of \eqref{e:UM-Sapproxunif1} is analogous.
\end{proof}

Next, we derive some basic estimates on the indicator functions
$I_\mu$ and $J_\mu$ and the random set $W$. Ideally, we would like to
ensure that with high probability $I_\mu J_\mu = 1$. While this event
does hold with high probability conditioned on $\cal F_0$, it does
\emph{not} hold with high probability conditioned on $\cal G_\mu$.
In fact, conditioned on $\cal G_\mu$, it may happen that $I_\mu J_\mu = 0$
almost surely. This happens if there exists a $\nu \neq \mu$
such that $[S_\nu] \cap e_\mu \neq \{1\}$. The latter event is clearly
independent of $S_\mu$. To remedy this issue, we introduce
 the $\sigG_\mu$-measurable
indicator function
\begin{equation*}
h_\mu \;\deq\; \ind{e_\mu \cap H_\mu = \{1\}} \,, \qquad H_\mu \;\deq\; \bigcup_{\nu \neq \mu} [S_{\nu}]
\end{equation*}
which indicates whether such a bad event
takes place. Then, instead of showing that conditioned on $\cal G_\mu$
we have $I_\mu J_\mu = 1$ with high probability, we show that
conditioned on $\cal G_\mu$ we have $I_\mu J_\mu = h_\mu$ with high probability.
This estimate will in fact be enough for our purposes,
by a simple analysis of the two cases $h_\mu = 1$ and $h_\mu = 0$. In
the former case, we are in the generic regime that allows us to
perform the switching of $e_\mu$ with high probability, and in the
latter case the switching of $e_\mu$ is trivial no matter the value of
$S_\mu$.

For the statement of the following lemma, we recall
the random set $W$ defined in \eqref{e:UM-Wdef}, and that $\tilde W^\mu$
is obtained from $W$ by replacing the argument $\umu$ with $\umuc$.
We also recall
that $X \triangle Y$ denotes the symmetric difference of the sets $X$ and $Y$.

\begin{lemma}  \label{lem:UM-condprob2}
The following holds for any fixed $\mu \in \qq{1,d}$.
\begin{enumerate}
\item
Conditioned on $\sigG_\mu$, with probability $1-O\pb{\frac{d}{N}}$, 
we have
\begin{equation} \label{e:unif-G-good}
  \ind{\mu \in W} \;=\; I_\mu J_\mu \;=\; h_\mu
  \,.
\end{equation}
Conditioned on $\sigF_0$, with probability $1-O\pb{\frac{d}{N}}$,
we have $\mu \in W$ (i.e.\ $h_\mu=1$).
\item
Almost surely, we have $|W\triangle \tilde W^\mu \setminus \{\mu\}| \leq 10$.
\item
Conditioned on $\sigG_\mu$, with probability $1-O(\frac{d}{N})$, we have $W\triangle \tilde W^\mu = \emptyset$.
\end{enumerate}

\end{lemma}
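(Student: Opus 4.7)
First I would observe that all three parts can be attacked using Lemma~\ref{lem:UM-condprob}: part~(ii) of that lemma provides the approximately uniform distribution of the edges $p_\mu, q_\mu$ (and their tilde copies) conditional on $\sigG_\mu$, while part~(i) controls the interactions between different sets $[S_\nu]$.

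For part~(i), I would split on the $\sigG_\mu$-measurable event $\{h_\mu = 1\}$. On $\{h_\mu = 0\}$, the edge $e_\mu$ already shares a non-$1$ vertex with some $[S_\nu]$, so $J_\mu = 0$ deterministically and $I_\mu J_\mu = 0 = h_\mu$ without loss. On $\{h_\mu = 1\}$, both $I_\mu = 1$ and $J_\mu = 1$ can fail only if $p_\mu$ or $q_\mu$ lies in an explicit set of $O(d^2)$ ``bad'' edges: those incident to $[e_\mu]$, to each other, to $H_\mu$, or creating an extra chord within $[S_\mu]$. Since $E$ has $\sim Nd/2$ edges, Lemma~\ref{lem:UM-condprob}(ii) bounds the $\sigG_\mu$-conditional probability of failure by $O(d/N)$. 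For the second assertion of~(i) it suffices to show $\P_{\sigF_0}(h_\mu = 1) \geq 1 - O(d/N)$ and combine with the first assertion: the non-$1$ endpoint $v$ of $e_\mu$ is $\sigF_0$-measurable, for each $\nu \neq \mu$ Lemma~\ref{lem:UM-condprob}(ii) gives $\P_{\sigF_0}(v \in [S_\nu]) = O(1/N)$ (using that $v \notin [e_\nu]$ because $E$ is simple), and a union bound over $\nu$ yields the claim.

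For part~(ii), I would classify the $\nu \neq \mu$ lying in $W \triangle \tilde W^\mu$. Since $I_\nu = I(E, S_\nu)$ does not depend on $u_\mu$, only $J_\nu$ can change, and only through the $\kappa = \mu$ clause of its definition. In the ``leaving'' case $J_\nu(\f S) = 1$, $J_\nu(\tilde{\f S}^\mu) = 0$, the index $\nu$ satisfies $[S_\nu] \cap [S_\kappa] = \{1\}$ for all $\kappa \neq \nu$ and $[S_\nu] \cap [\tilde S_\mu^\mu] \neq \{1\}$, which is exactly the hypothesis of Lemma~\ref{lem:UM-condprob}(i) with $\tilde S_\mu^\mu$ playing the role of $S_\mu$; this yields at most $5$ such $\nu$. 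The symmetric ``entering'' case is bounded by Lemma~\ref{lem:UM-condprob}(i) applied to $\f S$ itself, giving another $5$, for a total of at most $10$.

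For part~(iii), I would handle $\nu = \mu$ and $\nu \neq \mu$ separately. The event $\{\mu \in W \triangle \tilde W^\mu\}$ has $\sigG_\mu$-probability $O(d/N)$: by~(i) applied to $u_\mu$ and then to $\uc_\mu$ (which share the same $\sigG_\mu$-conditional distribution), both $\ind{\mu \in W}$ and $\ind{\mu \in \tilde W^\mu}$ coincide with the $\sigG_\mu$-measurable indicator $h_\mu$ outside an event of conditional probability $O(d/N)$. For $\nu \neq \mu$, the classification in~(ii) forces $v \notin [S_\nu]$ in the ``leaving'' case (since $[S_\nu] \cap [S_\mu] = \{1\}$ and $v \in [S_\mu]$), so some endpoint of $\tilde p_\mu^\mu$ or $\tilde q_\mu^\mu$ distinct from $v$ must land in $H_\mu$; since $|H_\mu| = O(d)$ and Lemma~\ref{lem:UM-condprob}(ii) applies verbatim to the tilde copies, the conditional probability of this event is $O(d/N)$. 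The ``entering'' case is handled symmetrically by replacing $\tilde p_\mu^\mu, \tilde q_\mu^\mu$ with $p_\mu, q_\mu$. The main obstacle is in part~(iii), namely isolating the minimal sufficient combinatorial conditions---phrased purely as the endpoints of the random edges avoiding the $\sigG_\mu$-measurable set $H_\mu$---that force the statuses of every $\nu$ to coincide under $\f S$ and $\tilde{\f S}^\mu$; once these conditions are crystallized, the probability bounds follow mechanically.
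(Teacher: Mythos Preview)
Your proposal is correct and follows essentially the same approach as the paper: part~(i) via the split on $h_\mu$ and the $O(d^2)$-bad-edge count, part~(ii) via Lemma~\ref{lem:UM-condprob}(i) applied once with $S_\mu$ and once with $\tilde S_\mu^\mu$, and part~(iii) via the event that the random edges $p_\mu,q_\mu,\tilde p_\mu^\mu,\tilde q_\mu^\mu$ avoid $H_\mu$. The only cosmetic difference is in part~(iii): the paper observes directly that on the good event each $J_\nu$ with $\nu\neq\mu$ becomes $\sigG_\mu$-measurable (hence unchanged under resampling of $u_\mu$), whereas you reach the same conclusion by the contrapositive leaving/entering case analysis; your explicit handling of the case $\nu=\mu$ via part~(i) is in fact cleaner than the paper's somewhat implicit treatment.
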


\begin{proof}
We first show the claim of (i) concerning conditioning on $\sigG_\mu$.
First, the definition of $J_\mu$ immediately implies that $I_\mu J_\mu = 0$ if $h_\mu = 0$.
Therefore and since $h_\mu$ is $\sigG_\mu$-measurable, it suffices to show
that, conditioned on $\sigG_\mu$ such that $h_\mu=1$,
we have $I_\mu J_\mu = 1$ with probability $1-O(\frac{d}{N})$.
Hence, for the following argument we condition on $\cal G_\mu$ and suppose that $h_\mu = 1$.
We estimate
\begin{equation} \label{e:unif-Gmu-good-pf}
  \P_{\cal G_\mu} (I_\mu J_\mu = 0) \;\leq\; \P_{\cal G_\mu} (I_\mu = 0) + \P_{\cal G_\mu}(J_\mu = 0)\,.
\end{equation}

The first term on the right-hand side of \eqref{e:unif-Gmu-good-pf} is equal to
\begin{align}
& \P_{\cal G_\mu} \pb{E \vert_{e_\mu \cup p_\mu \cup q_\mu} \text{ is not 1-regular}}
\notag \\
\label{e:1reg_split1}
&\;=\; \P_{\cal G_\mu} \pb{E \vert_{e_\mu \cup p_\mu} \text{ is not 1-regular}}
\\ \label{e:1reg_split2}
& \qquad + 
\P_{\cal G_\mu} \pb{E \vert_{e_\mu \cup p_\mu} \text{ is 1-regular} \,,\, E \vert_{e_\mu \cup p_\mu \cup q_\mu} \text{ is not 1-regular}}
\end{align}
We first estimate \eqref{e:1reg_split1}.
Since $p_\mu$ is uniformly distributed under the constraint $p_\mu \notin \partial 1$,
we find that \eqref{e:1reg_split1} is bounded by $O(d / (dN)) = O(1/N)$.
Similarly, given $p_\mu$ such that $E \vert_{e_\mu \cup p_\mu}$ is 1-regular,
$q_\mu$ is uniformly distributed under the constraint $q_\mu \notin \partial 1 \cup \{p_\mu\}$.
Moreover, if $E \vert_{e_\mu \cup p_\mu \cup q_\mu}$ is not 1-regular then a vertex of $q_\mu$
must coincide with or be a neighbour of a vertex in $e_\mu \cup p_\mu$.
From this we deduce that \eqref{e:1reg_split2} is bounded by $O(d / (dN)) = O(1/N)$.
We have therefore estimated the first term on the right-hand side of \eqref{e:unif-Gmu-good-pf} by $O(1/N)$.

To estimate the second term on the right-hand side of \eqref{e:unif-Gmu-good-pf}, since
\begin{equation}
\{J_\mu(\f S) = 0 \}
\;=\; \{[S_\mu] \cap H_\mu \neq \h{1} \}
\;\subset\; \{e_\mu \cap H_\mu \neq \h{1} \} \cup  \{ (p_\mu \cup q_\mu) \cap H_\mu \neq \emptyset \}
\end{equation}
and since $\{e_\mu \cap H_\mu = \h{1}\}$ holds by assumption, 
it suffices to estimate $\Pmu((p_\mu \cup q_\mu) \cap H_\mu \neq \emptyset)$.
Clearly, $H_\mu \setminus \{1\}$ has at most $5(d-1)$ vertices.
This implies that $p \cap H_\mu \neq \emptyset$ for at most $O(d^2)$ edges $p$.
Taking $F(p,q) \deq \ind{p\cap H_\mu \neq \emptyset} + \ind{q \cap H_\mu \neq \emptyset}$
in \eqref{e:UM-Sapproxunif}, we therefore get $(p_\mu\cup q_\mu) \cap H_\mu \neq \emptyset$ with probability $O(d/N)$.
This proves that conditioned on $\sigG_\mu$, with probability $1-O(\frac{d}{N})$, the event \eqref{e:unif-G-good} holds.

Next, we show that conditioned on $\sigF_0$, with probability $1-O(\frac{d}{N})$, we also have $h_\mu=1$.
By a union bound and since $e_\nu \cap e_\mu = \{1\}$ if $\nu \neq \mu$,
\begin{equation}
  \P_{\sigF_0}(h_\mu = 0)
  \;=\;
  \P_{\sigF_0}(e_\mu \cap H_\mu \neq \h{1})
  \;\leq\;
  \sum_{\nu=1}^d \P_{\sigF_0}(e_\mu \cap (p_\nu \cup q_\nu) \neq \emptyset)
  \;=\; O\pbb{\frac{d}{N}},
\end{equation}
as claimed. 
This completes the proof of (i).

Next, we show (ii).
We write $\tilde W \equiv \tilde W^\mu$ and $\tilde S_\mu \equiv \tilde S_\mu^\mu$.
We then need to show $|W \triangle \tilde W \setminus \{\mu\}| \leq 10$.
For this, we make the following observations.
\begin{enumerate}
\item[1.]
If $[S_\nu] \cap [S_\kappa] \neq \h{1}$ for some $\nu \neq \kappa$, with both distinct from $\mu$,
then $\nu \notin W$ regardless of $u_\mu$; therefore then also $\nu \notin \tilde W$ and
$\nu \notin W \triangle \tilde W$.
 
Hence, all $\nu \in (W \triangle \tilde W)\setminus \{\mu\}$ satisfy $[S_\nu] \cap [S_\kappa] = \h{1}$ 
for all $\kappa \notin \{\mu,\nu\}$.
\item[2.]
Under this last condition, i.e.\ $\nu \neq \mu$ satisfies $[S_\nu] \cap [S_\kappa] = \h{1}$ for all $\kappa \notin \{\mu,\nu\}$,
by definition $\nu \in W$ if and only if $[S_\nu] \cap [S_\mu] = \h{1}$ and $E|_{[S_\nu]}$ is $1$-regular,
and $\nu \in \tilde W$ if and only if $[S_\nu] \cap [\tilde S_\mu] = \h{1}$ and $E|_{[S_\nu]}$ is $1$-regular.

Hence, $\nu\in (W\triangle \tilde W) \setminus \{\mu\}$
requires $[S_\nu] \cap [S_\mu] \neq \h{1}$ or $[S_\nu] \cap [\tilde S_\mu] \neq \h{1}$.
\end{enumerate}
We conclude that all $\nu \in (W\triangle \tilde W) \setminus \{\mu\}$ obey \eqref{e:unif-Fmu-maxbd}
or \eqref{e:unif-Fmu-maxbd} with $S_\mu$ replaced by $\tilde S_\mu$.
Therefore, Lemma~\ref{lem:UM-condprob}(i) implies $|(W \triangle \tilde W)\setminus \{\mu\}| \leq 5+5$,
as claimed. 

Finally, we establish (iii).
For this, observe that $I_\nu$ is independent of $p_\mu,q_\mu$ and that
$J_\nu$ is independent of $p_\mu,q_\mu$ on the event $\{ (p_\mu \cup q_\mu) \cap H_\mu = \emptyset \}$.
In the proof of (i), we have already shown that the latter event has probability at least $1-O(\frac{d}{N})$
conditioned on $\sigG_\mu$. This concludes the proof.
\end{proof}

\subsection{Proof of Proposition~\ref{prop:switch} for the uniform model}

With the preparations provided in Sections~\ref{sec:UM-switch}--\ref{sec:UM-exceptprob},
we now verify the claims of Proposition~\ref{prop:switch} for the uniform model.

\begin{proof}[Proof of Proposition~\ref{prop:switch}: uniform model]
The parametrization obeying Definitions~\ref{def:parametrization}--\ref{def:parametrization2}
was defined at the beginning of Section~\ref{sec:UM-exceptprob}.
The random variables $a_1, \dots, a_d, \alpha_1, \dots, \alpha_d$, and $A$ are defined as follows.
By definition, $A$ is the graph with edge set $T_{\f S, \f s} (E)$, i.e.
\begin{equation}
  A \;\deq\; M(T_{\f S,\f s}(E)) \,.
\end{equation}
Moreover, $\alpha_\mu$ is by definition the unique vertex incident to $1$ in the subgraph
\begin{equation*}
\begin{cases}
T_{\f S, \f s}(E)|_{[S_\mu]} & \text{if } \mu \in W
\\
\{e_\mu(E)\} & \text{if } \mu \notin W \,,
\end{cases}
\end{equation*}
where we recall that $W$ was defined in \eqref{e:UM-Wdef}.
The definition of $\alpha_\mu$ is illustrated in Figure~\ref{fig:alphamu}.
Then by Lemma~\ref{lem:UM-rev},
$A$ is the adjacency matrix of the uniform model.
To see that $\alpha_1, \dots, \alpha_d$ are an enumeration of the neighbours of $1$,
it suffices to show that $\alpha_\mu \neq \alpha_\nu$ for $\mu \neq \nu$.
This follows from the following simple observations: if $\mu,\nu \notin W$ then $e_\mu \neq e_\nu$;
if $\mu,\nu \in W$ then by definition of $W$ we have $[S_\mu] \cap [S_\nu] = \{1\}$;
if $\mu \in W$ and $\nu \notin W$ then by definition of $W$ we have $e_\nu \cap [S_\mu] = \{1\}$.
This proves (i). 

\begin{figure}[t]
\begin{center}
\input{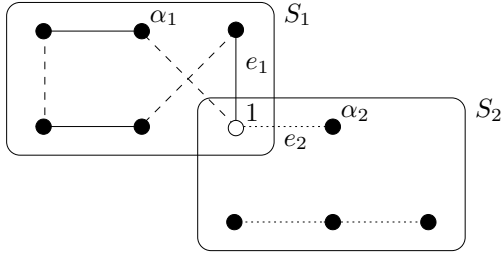}
\end{center}
\caption{
  Solid lines depict edges of $E \setminus T_{\f S,\f s}(E)$,
  dashed lines edges of $T_{\f S,\f s}(E) \setminus E$,
  and dotted lines edges of $E \cap T_{\f S,\f s}(E)$.
  For the graph $E$, the two encircled groups contain the three edges of $S_1$ and the three edges of $S_2$ respectively.
  In the switching $E \mapsto T_{\f S,\f s}(E)$, the solid edges $S_1$ are switched to the dashed configuration;
  hence, $1 \in W$ and $\alpha_1$ is the unique neighbour of $1$ in $[S_1]$ after the switching.
  On the other hand, $S_2$ is not switchable (so that $2 \notin W$) since $\abs{[S_2]}<6$, and $\alpha_2$
  is given by the original neighbour of $1$ in $e_2$.
  \label{fig:alphamu}}
\end{figure}

What remains is the definition of $a_\mu$ and the proof of (ii) and (iii).
To define $a_\mu$, we denote by $p_\mu$ and $q_\mu$ the two edges in $S_\mu$ not incident to 1,
ordered in an arbitrary fashion. (Note that, by definition of $\cal S_\mu$ from \eqref{e:def_calS},
$e_\mu$, $p_\mu$, and $q_\mu$ are distinct but not necessarily disjoint.)
We label the vertices of $p_\mu = \{p_\mu^1, p_\mu^2\}$ and $q_\mu = \{q_\mu^1,q_\mu^2\}$ in
an arbitrary fashion, and take the pair $(a_\mu, b_\mu)$ to be uniformly distributed
(parametrized by $s_\mu \in \qq{1,8}$) in the set
\begin{equation} \label{e:UM-abdef}
\hb{(p_\mu^1, q_\mu^1), (p_\mu^1, q_\mu^2), (p_\mu^2, q_\mu^1), (p_\mu^2, q_\mu^2), (q_\mu^1, p_\mu^1), (q_\mu^1, p_\mu^2), (q_\mu^2, p_\mu^1), (q_\mu^2, p_\mu^2)}\,.
\end{equation}
More precisely, we parametrize $(a_\mu, b_\mu) \equiv (a_\mu(S_\mu,s_\mu), b_\mu(S_\mu, s_\mu))$,
with $s_\mu \in \qq{1,8}$,
in such a way that, in the nontrivial case $I_\mu = 1$,
the switching $T_{S_\mu, s_\mu}(E)$ from \eqref{e:UM-TSsdef} is given
as in \eqref{e:MTtau} by
\begin{equation} \label{e:UM-Ttau}
  M(T_{S_\mu,s_\mu}(E)) \;=\; \tau_{r_\mu1, a_\mu\ua_\mu, b_\mu\ub_\mu}(M(E)) \,,
\end{equation}
where $\tau$ is defined in \eqref{e:Adoubleswitch},
and the vertices $r_\mu, \ul a_\mu, \ul b_\mu$ are defined by the conditions
$e_\mu = \{1, r_\mu\}$ and $\{p_\mu, q_\mu\} = \{\{a_\mu, \ul a_\mu\}, \{b_\mu, \ul b_\mu\}\}$.
Note that if $e_\mu, p_\mu, q_\mu$ are disjoint, $a_\mu$ is uniformly distributed on
$p_\mu \cup q_\mu$ and $b_\mu$ uniformly distributed on $p_\mu$ or $q_\mu$, whichever $a_\mu$ does not belong to.

We shall show (ii) and (iii) with the high-probability events 
given by those on which the conclusions of Lemma~\ref{lem:UM-condprob2} hold.
More precisely, the high-probability event in (iii)(1) is given by
\begin{equation} \label{e:unif-G-good-pf}
  \Xi_\mu \;\deq\;
  \h{I_\mu J_\mu = h_\mu} \cap \h{W \triangle \tilde W^\mu = \emptyset }
  \,
\end{equation}
and the high-probability event in (ii)(2) and (iii)(2) is given by
\begin{equation} \label{e:unif-F-good-pf}
  \Sigma_\mu \;\deq\;
  \h{I_\mu J_\mu = 1} \cap \h{W \triangle \tilde W^\mu = \emptyset }
  \;=\; \h{\mu \in W}  \cap \h{W \triangle \tilde W^\mu = \emptyset } \;=\; \Xi_\mu \cap \h{h_\mu = 1}
  \,.
\end{equation}
By Lemma~\ref{lem:UM-condprob2} (i) and (iii), recalling that $\cal F_0 \subset \cal G_\mu$
and $\frac{d}{N} = O\pb{\frac{1}{\sqrt{dD}}}$ by \eqref{e:D-UM},
\begin{equation} \label{Xi_Sigma_prob}
  \P_{\sigG_\mu}(\Xi_\mu) \;\geq\; 1-O\pbb{\frac{1}{\sqrt{dD}}}\,,\qquad
  \P_{\sigF_0}(\Sigma_\mu) \;\geq\; 1-O\pbb{\frac{1}{\sqrt{dD}}}\,.\qquad
\end{equation}

We now show (ii).
From the definition of $(a_\mu, b_\mu)$, we find that $a_\mu$ is chosen uniformly among the four (not necessarily distinct) vertices $p_\mu^1,p_\mu^2,q_\mu^1,q_\mu^2$. Therefore we get, for any function $f$ on $\qq{1, N}$ that
\begin{equation*}
\E_{\cal G_\mu} f(a_\mu) \;=\; \frac{1}{N} \sum_{i = 1}^N f(i) + O \pbb{\frac{1}{N}} \norm{f}_\infty\,,
\end{equation*}
where we used \eqref{e:UM-Sapproxunif} with $F(p_\mu,q_\mu) \deq \frac{1}{4} (f(p_\mu^1) + f(p_\mu^2) + f(q_\mu^1) + f(q_\mu^2))$ and the fact that each vertex is contained in exactly $d$ edges, so that $\frac{1}{Nd/2} \sum_{\{e^1, e^2\} \in E} \frac{1}{2} (f(e^1) + f(e^2)) = \frac{1}{N} \sum_{i = 1}^N f(i)$. This shows (ii)(1).
To verify (ii)(2), we use that on the event $\Sigma_\mu$ we have $\mu \in W$.
Moreover, from \eqref{e:UM-Ttau} we find that on $\Sigma_\mu$ we have $\{1,a_\mu\} \in T_{S_\mu, s_\mu}(E)$,
and consequently, using the definition of $W$ and \eqref{e:UM-Tdef}, $\{1, a_\mu\} \in T_{\f S, \f s}(E)$.
Since $a_\mu \in [S_\mu]$ the definition of $\alpha_\mu$ immediately implies that $a_\mu = \alpha_\mu$ on $\Sigma_\mu$.
Together with \eqref{Xi_Sigma_prob}, this concludes the proof of (ii)(2).

To show (iii), we often drop the sub- and superscripts $\mu$ and abbreviate $a \equiv \amu$, $\Ac \equiv \Acmu$, and so on.
For the first claim of (iii)(1), it suffices to show that $A-\Ac$ is always
a sum of at most $72$ terms $\pm \Delta_{xy}$.
By \eqref{e:UM-TM},
\begin{align}
A-\Ac
&\;=\;
\ind{\mu \in W}(M(T_{S_\mu,s_\mu}(E))-M(E)) - \ind{\mu \in \tilde W}(M(T_{\tilde S_\mu,\tilde s_\mu}(E))-M(E))
\nonumber\\
&\qquad
+ \sum_{\nu \in W \triangle \tilde W \setminus \{\mu\}} \pm (M(T_{S_\nu,s_\nu}(E))-M(E))
\,,
\end{align}
(where the sign $\pm$ in the last sum is $+$ if $\nu\in W$ and $-$ if $\nu \in \tilde W$).
In Lemma~\ref{lem:UM-condprob2} (ii), we proved that $|W \triangle \tilde W \setminus \{\mu\}| \leq 10$.
Therefore, since each term $M(T_{S,s}(E))-M(E)$ is the sum of six terms $\pm \Delta_{xy}$ by \eqref{e:Adoubleswitch},
we find that $A-\Ac$ is the sum of at most $12 \times 6 = 72$ terms $\pm \Delta_{xy}$, as desired.
This proves the first claim of (iii)(1).

Next, we verify the second claim of (iii)(1) and (iii)(2).
By \eqref{Xi_Sigma_prob} we may assume that the event $\Xi_\mu$ holds.
In particular, since $W \triangle \tilde W = \emptyset$,
\eqref{e:UM-TM} implies
\begin{equation} \label{e:AAcTTpf}
A-\Ac \;=\;
h \pB{ M(T_{S_\mu,s_\mu}(E))-M(T_{\tilde S_\mu,\tilde s_\mu}(E)) }
\,.
\end{equation}
In the case $h=0$, the right-hand side vanishes and
the second claim of (iii)(1) is trivial.
On the other hand, if $h =1$, by \eqref{e:UM-Ttau},
\begin{equation} \label{e:AAcTtaupf}
A-\Ac
\;=\;
M(T_{S_\mu,s_\mu}(E))-M(T_{\tilde S_\mu,\tilde s_\mu}(E))
\;=\;
\tau_{r1,a\ua,b\ub}(M(E))-\tau_{r1,\ac\uac,\bc\ubc}(M(E))
\,.
\end{equation}
As in the proof of (ii)(1), 
we find that conditioned on $\sigG_\mu$ each of the random variables $a,b,\ua,\ub$ is approximately uniform,
and using \eqref{e:UM-Sapproxunif1} instead of \eqref{e:UM-Sapproxunif}, that $b,\ub$ are each approximately uniform conditioned on $\sigG_\mu$ and $a,\ua$.
The same holds with $a,b,\ua,\ub$ replaced by $\ac,\bc,\uac,\ubc$. Thus, under the probability distribution conditioned on $\sigG_\mu$
and the event $\{h = 1\}$, we have
\begin{equation} \label{e:AAcDeltapf}
A-\Ac
\;=\;
\Delta_{1a}-\Delta_{1\ac} + X
\,,
\end{equation}
where $X$ is of the form \eqref{X_def} and has the property (a) or (b) from Remark~\ref{rk:switch3} (with all probabilities given by conditioning on $\cal G_\mu$).
Since for each term $\pm \Delta_{xy}$ in $X$ at least one of $x$ and $y$ is approximately uniform,
and since $a$ and $\ac$ are approximately uniform, we conclude that,
conditioned on $\cal G_\mu$ and the event $\Xi_\mu \cap \{h = 1\}$,
$A-\Ac$ is given by a sum of $10$ terms $\pm \Delta_{xy}$ such that at
least one of $x$ and $y$ is approximately uniform.
Together with the trivial identity $A - \tilde A = 0$ if $h = 0$ and \eqref{Xi_Sigma_prob},
this shows the second claim of (iii)(1).

Since $\Sigma_\mu = \Xi_\mu \cap \{h = 1\}$,
the identity \eqref{e:AAcDeltapf} also holds on the event $\Sigma_\mu$.
Recalling \eqref{Xi_Sigma_prob} and the form of $X$ from \eqref{X_def}, 
we obtain (iii)(2).
This concludes the proof.
\end{proof}

\section{Permutation model}
\label{sec:PM}

\newcommand{\ap}{a_+}
\newcommand{\am}{a_-}
\newcommand{\bp}{b_+}
\renewcommand{\bm}{b_-}
\newcommand{\apc}{\tilde a_+}
\newcommand{\amc}{\tilde a_-}
\newcommand{\bpc}{\tilde b_+}
\newcommand{\bmc}{\tilde b_-}

In this section we prove Proposition~\ref{prop:switch} for the permutation model.
First, we consider the $2$-regular random graph defined by a single uniform
permutation.
As before, the symmetric group of order $N$ is denoted by $S_N$,
and for any permutation $\sigma \in S_N$, the associated
\emph{symmetrized} permutation matrix is denoted by
\begin{equation}  \label{e:Asigma-recall}
  P(\sigma) \;\equiv\; P(\sigma^{-1}) \;\deq\; \sum_{i=1}^N \Delta_{i\sigma(i)}
  \,.
\end{equation}

\begin{figure}[t]
\begin{center}
\input{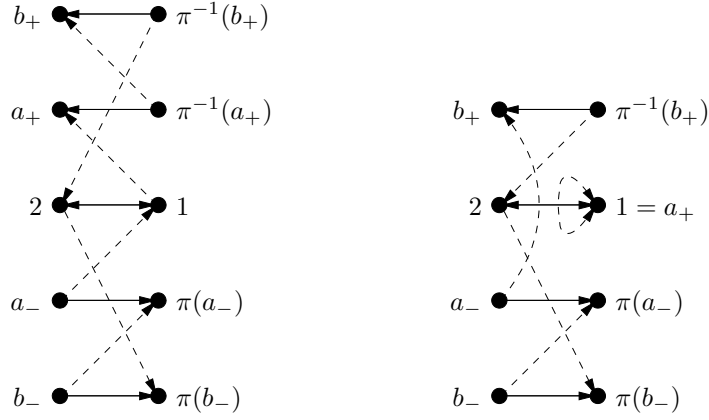}
\end{center}
\caption{The solid arrows depict the permutation $\pi$, and the dashed arrows the permutation $T_{\ap,\am,\bp,\bm}(\pi)$
  defined in \eqref{e:sigmamap-PM}.
  The left diagram depicts the generic case in which all of the elements of $\h{1,2,\ap,\am,\bp,\bm,\pi^{-1}(\ap),\pi^{-1}(\bp),\pi(\am),\pi(\bm)}$ are distinct.
  The right diagram depicts the case where $a_+=1$ but the remaining vertices are distinct; this leads to a loop at $1$.
  \label{fig:PM-switch}}
\end{figure}

Denote by $\gamma_{ij}=\gamma_{ji} \in S_N$ the transposition that exchanges $i$ and $j$.
For the remainder of this subsection, we identify $S_{N-2}$
as the subset of $S_N$ of permutations that exchange $1$ and $2$,
\begin{equation} \label{e:SNm2}
S_{N-2} \;\equiv\; \h{\pi \in S_N \col \pi(1) = 2, \pi(2) = 1}\,.
\end{equation}
For $\pi \in S_{N-2}$ and $\ap,\am,\bp,\bm \in \qq{1,N}$, we define $T_{\ap\am\bp\bm}(\pi) \in S_N$ by
\begin{equation} \label{e:sigmamap-PM}
  T_{\ap\am\bp\bm}(\pi) \;\deq\; \gamma_{2\bp} \gamma_{2\ap}\pi \gamma_{2\am} \gamma_{2\bm}
  \,.
\end{equation}
As illustrated in Figure~\ref{fig:PM-switch}, 
  in the case that $\h{1,2,\ap,\am,\bp,\bm,\pi^{-1}(\ap),\pi^{-1}(\bp),\pi(\am),\pi(\bm)}$ are distinct,
  the action of $T$ on $\pi$ amounts to two double switchings, as depicted in Figure~\ref{fig:switch}.

\begin{lemma} \label{lem:unif-PM}
If $(\pi, \ap,\am,\bp,\bm)$ is uniform 
on $S_{N-2} \times \qq{1,N} \times \qq{2,N}^3$,
then $T_{\ap\am\bp\bm}(\pi)$ is uniform on $S_N$.
Moreover, 
\begin{equation} \label{e:sigma1a}
  (T_{\ap\am\bp\bm}(\pi))(1) \;=\; \ap \,, \qquad (T_{\ap\am\bp\bm}(\pi))^{-1}(1) \;=\; \am \,,
\end{equation}
provided that $|\{1,2,\ap,\am,\bp,\bm\}| = 6$.
\end{lemma}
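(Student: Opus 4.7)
The plan is to factor the map $T_{a_+ a_- b_+ b_-}(\pi) = L_{a_+,b_+}(R_{a_-,b_-}(\pi))$, where
\[
R_{a_-,b_-}(\pi) \;\deq\; \pi\gamma_{2a_-}\gamma_{2b_-}, \qquad L_{a_+,b_+}(\sigma) \;\deq\; \gamma_{2b_+}\gamma_{2a_+}\sigma,
\]
exploiting that $(a_+,b_+)$ and $(a_-,b_-)$ are independent. I would then prove uniformity in two stages: first, that $R_{a_-,b_-}(\pi)$ is uniform on $\cal T \deq \{\sigma \in S_N \col \sigma(1) = 2\}$ (of cardinality $(N-1)!$); and second, that $L_{a_+,b_+}(\tau)$ is uniform on $S_N$ when $\tau$ is uniform on $\cal T$ independently of $(a_+,b_+)$. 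The numerics $|S_{N-2}|(N-1)^2 = (N-1)\cdot|\cal T|$ and $|\cal T|\cdot N(N-1) = (N-1)\cdot|S_N|$ are consistent with every element of each target being hit exactly $N-1$ times.

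For the first stage, I would observe that $R_{a_-,b_-}(\pi)(1) = \pi(1) = 2$ automatically since $a_-,b_- \neq 1$, so the image lies in $\cal T$. Next, $R_{a_-,b_-}(\pi)^{-1}(1) = \gamma_{2b_-}\gamma_{2a_-}(2) =: w(a_-,b_-)$ depends only on $(a_-,b_-)$, and a short case analysis (distinguishing whether $a_- = 2$, $b_- = 2$, $a_- = b_-$, or all of $a_-,b_-$ are distinct in $\qq{3,N}$) shows that the map $(a_-,b_-) \mapsto w$ has every fiber in $\qq{2,N}$ of size exactly $N-1$. For each fixed $(a_-,b_-)$, the map $\pi \mapsto R_{a_-,b_-}(\pi)$ is a bijection from $S_{N-2}$ onto the slice $\{\sigma \in \cal T \col \sigma^{-1}(1) = w(a_-,b_-)\}$, which has size $(N-2)!$; injectivity is trivial, and surjectivity follows by inverting $R$ and checking the definition of $S_{N-2}$. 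Combining, each $\sigma \in \cal T$ receives exactly $N-1$ preimages, giving the uniform distribution on $\cal T$. The second stage is structurally identical: I would verify $L_{a_+,b_+}(\tau)(1) = \gamma_{2b_+}\gamma_{2a_+}(2) =: v(a_+,b_+)$ and show by the analogous case analysis that $v$ hits every element of $\qq{1,N}$ with fiber size $N-1$ (now out of $N(N-1)$ pairs), while $\tau \mapsto L_{a_+,b_+}(\tau)$ is a bijection from $\cal T$ onto $\{\sigma \in S_N \col \sigma(1) = v(a_+,b_+)\}$.

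The second claim \eqref{e:sigma1a} then follows by direct computation: under $|\{1,2,a_+,a_-,b_+,b_-\}| = 6$ we have $a_+ \notin \{1,2,b_+\}$, hence
\[
T(\pi)(1) \;=\; \gamma_{2b_+}\gamma_{2a_+}(\pi(1)) \;=\; \gamma_{2b_+}\gamma_{2a_+}(2) \;=\; \gamma_{2b_+}(a_+) \;=\; a_+,
\]
and symmetrically $T(\pi)^{-1}(1) = a_-$ after using $a_- \notin \{1,2,b_-\}$. I expect the main obstacle to be the fiber-counting for $v$ and $w$: although the combinatorics is elementary, it requires careful bookkeeping of the coincidence cases among $\{2,a_\pm,b_\pm\}$, and one must verify that the precise counts $(N-1)^2$ and $N(N-1)$ decompose into exactly $N-1$ and $N$ fibers of size $N-1$ respectively, so that the resulting weighting is uniform and the proof closes.
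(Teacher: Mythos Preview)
Your proposal is correct, but it takes a genuinely different route from the paper. The paper does not factor through the intermediate space $\cal T$; instead it fixes $(b_+,b_-)$ and shows that the inner map
\[
(\pi,a_+,a_-)\;\longmapsto\;\gamma_{2a_+}\pi\gamma_{2a_-}
\]
is a \emph{bijection} from $S_{N-2}\times\qq{1,N}\times\qq{2,N}$ onto $S_N$, by writing down a two-case explicit inverse (distinguishing $\tilde\sigma(1)\neq1$ from $\tilde\sigma(1)=1$). Since pre- and post-composition by the fixed transpositions $\gamma_{2b_+}$ and $\gamma_{2b_-}$ are bijections of $S_N$, the full map $(\pi,a_+,a_-)\mapsto T_{a_+a_-b_+b_-}(\pi)$ is a bijection for each $(b_+,b_-)$, and averaging over $(b_+,b_-)$ preserves uniformity. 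This grouping $(\pi,a_+,a_-)\mid(b_+,b_-)$ is orthogonal to yours $(\pi,a_-,b_-)\mid(a_+,b_+)$.

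The paper's approach has the advantage of bypassing any fiber-counting case analysis: the single explicit inverse handles everything at once. Your two-stage argument is perfectly valid, but note that the fiber counting you flag as the ``main obstacle'' is in fact immediate and requires no case analysis: since $\gamma_{2a_-}(2)=a_-$ always, you have $w(a_-,b_-)=\gamma_{2b_-}(a_-)$, and for each fixed $b_-\in\qq{2,N}$ the map $a_-\mapsto\gamma_{2b_-}(a_-)$ is a bijection of $\qq{2,N}$ (it fixes $1$). Hence every $w\in\qq{2,N}$ has exactly $N-1$ preimages, one for each $b_-$. The same one-line observation works for $v$. So your approach closes cleanly as well; the paper's is just a bit more compact.
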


\begin{proof}[Proof of Lemma~\ref{lem:unif-PM}]
First, we show that the map
$\tilde\sigma: S_{N-2} \times \qq{1,N} \times \qq{2,N} \to S_N$ defined by
\begin{equation}
(\pi,\ap,\am) \;\longmapsto\; \tilde\sigma = \tilde\sigma(\pi,\ap,\am) \;\deq\; \gamma_{2\ap} \pi \gamma_{2\am}
\end{equation}
is a bijection. 
This follows from the following explicit inverse map $\tilde\sigma \mapsto (\pi,\ap,\am)$:
\begin{itemize}
\item[(i)]
if $\tilde\sigma(1) \neq 1$ then $\ap \deq \tilde\sigma(1)$, $\am \deq \tilde\sigma^{-1}(1)$,
and $\pi \deq \gamma_{2\ap} \sigma \gamma_{2\am}$; 
\item[(ii)]
if $\tilde\sigma(1) = 1$ then $\ap \deq 1$, $\am \deq \tilde\sigma^{-1}(2)$,
and $\pi \deq \tilde\sigma \gamma_{2\am}$.
\end{itemize}
Therefore, for all fixed $\bp,\bm \in \qq{1,N}$, also the map
\begin{equation}  \label{e:tildesigmapq}
  S_{N-2} \times \qq{1,N} \times \qq{2,N} \;\longrightarrow\; S_N, \quad (\pi,\ap,\am) \;\longmapsto\; \gamma_{2\bp} \gamma_{2\ap}\pi \gamma_{2\am}\gamma_{2\bm}
\end{equation}
is a bijection.
In particular, under \eqref{e:tildesigmapq}, the uniform distribution on $S_{N-2} \times \qq{1,N} \times \qq{2,N}$
is pushed forward to the uniform distribution on $S_N$.
Clearly, the distribution remains uniform after averaging over the independent random
variables $\bp,\bm \in \qq{2,N}$.
Finally, \eqref{e:sigma1a} is easily verified.
This completes the proof.
\end{proof}

The probability space for the permutation model (see Definition \ref{def:parametrization})
is realized as \eqref{e:Omega} endowed with the uniform probability measure,
where $\Theta \deq (S_{N-2})^{d/2}$ and
\begin{alignat}{2}
  U_\mu &\;\deq\;
  \qq{1,N} \times \qq{2,N} 
  &\qquad& (\mu \in \qq{1,d/2}) \,, \\
  U_\mu &\;\deq\;
  \qq{2,N} \times \qq{2,N}
  &\qquad& (\mu \in \qq{d/2 +1,d}) \,.
\end{alignat}
Elements of $\Theta$ and $U_\mu$ are written as $\theta = (\pi_1, \dots, \pi_{d/2}) \in \Theta$ and
$\umu = (a_{\mu},b_{\mu}) \in U_\mu$. For $\mu \in \qq{1,d/2}$ we define the random variable
\begin{equation} \label{e:sigmamu-PM}
  \sigma_\mu
  \;\deq\; T_{a_{\mu} a_{d-\mu} b_{\mu} b_{d-\mu}}(\pi_\mu)\,.
\end{equation}
By Lemma~\ref{lem:unif-PM}, $\sigma_1, \dots, \sigma_{d/2}$ are i.i.d.\ uniform permutations in $S_N$.
The adjacency matrix of the permutation model is given by
\begin{equation} \label{e:A-M-PM}
  A
  \;\deq\; \sum_{\mu=1}^{d/2} P(\sigma_\mu) 
  \,.
\end{equation}
It is convenient to augment the sequence $(\sigma_\mu)$ to be indexed by $\qq{1,d}$
by defining $\sigma_\mu \deq \sigma_{d-\mu}^{-1}$ for $\mu \in \qq{d/2 + 1, \mu}$.
Hence $P(\sigma_\mu)=P(\sigma_{d-\mu})$ for all $\mu \in \qq{1,d}$.
Also $\alpha_\mu \deq \sigma_\mu(1)$, where $\mu \in \qq{1,d}$, 
is an enumeration of the neighbours of $1$ in $A$.

\begin{proof}[Proof of Proposition~\ref{prop:switch}: permutation model]
We use the parametrization of the probability space defined above,
which satisfies the conditions of Definition \ref{def:parametrization},
and augment it according to Definition \ref{def:parametrization2}.
Then claim (i) follows immediately from Lemma~\ref{lem:unif-PM}.

To show (ii), we first recall
that $a_{\mu}$ is uniform on $\qq{1,N}$ if $\mu \in \qq{1, d/2}$
and that $a_{\mu}$ is uniform on $\qq{2,N}$ if $\mu \in \qq{d/2+1, d}$.
Either way, conditioned on $\cal G_\mu$, $a_\mu$ is approximately uniform on $\qq{1,N}$.
Moreover, for $\mu \in \qq{1,d}$,
conditioned on $\sigF_0$, with probability $1-O(\frac1N)$,
the event $\{|\{1,2,a_\mu,b_\mu,a_{d-\mu},b_{d-\mu}\}|=6\}$ holds.
By Lemma~\ref{lem:unif-PM}, on this event we have, for $\mu \in \qq{1,d/2}$, 
$\alpha_\mu =  \sigma_\mu(1)=a_{\mu}$ and $\alpha_{d-\mu} = \sigma_\mu^{-1}(1) = a_{d-\mu}$.
This concludes the proof of (ii).

What remains is the proof of (iii). We fix $\mu \in \qq{1,d}$, and drop the
index $\mu$ from the notation and write
$\Ac \equiv \Acmu$, $\sigma \equiv \sigma_\mu$ and $\sigma_- \equiv \sigma_{d-\mu}$, and so forth.
Then set
\begin{equation}
  \varrho \;\deq\;
  \gamma_{2b_-} \gamma_{2a_-} \pi^{-1} \,,
\end{equation}
so that $\sigma = \gamma_{2b} \gamma_{2a} \varrho^{-1}$.
Given $\varrho$, set $A_\varrho(a,b) \deq P(\gamma_{2b}\gamma_{2a} \varrho^{-1})$.
Then, for all $a,b,\ac, \bc$,
\begin{align}
\label{e:EHHexpa-pf}
A_\varrho(\ac,b)-A_\varrho(a,b)
&\;=\; \Delta_{\varrho \ac,b} - \Delta_{\varrho a, b} + \Delta_{\varrho a, \gamma_{2b}  \gamma_{2\ac} a} - \Delta_{\varrho \ac,\gamma_{2b}  \gamma_{2a} \ac} + \Delta_{\varrho 2, \gamma_{2b} \ac} - \Delta_{\varrho 2, \gamma_{2b} a}\,,
\\
\label{e:EHHexpp-pf}
A_\varrho(a,\bc)-A_\varrho(a,b)
&\;=\;  \Delta_{\tau \bc,2} - \Delta_{\tau b,2} + \Delta_{\tau b,  \gamma_{2\bc} b} - \Delta_{\tau \bc, \gamma_{2b}\bc} + \Delta_{\tau 2, \bc} - \Delta_{\tau 2, b} \,,
\end{align}
where $\tau \deq \varrho\gamma_{2a}$.
Indeed, to verify \eqref{e:EHHexpa-pf},
note that $A_\varrho(\ac,b)- A_\varrho(a,b)$ is given by
\begin{equation} 
  \sum_{i=1}^N \pB{\Delta_{i,\gamma_{2b}\gamma_{2\ac}\varrho^{-1}(i)} - \Delta_{i,\gamma_{2 b}\gamma_{2 a}\varrho^{-1}(i)}}
  \;=\; \sum_{i=1}^N \pB{\Delta_{\varrho(i), \gamma_{2b}\gamma_{2\ac}(i)} - \Delta_{\varrho(i),\gamma_{2b}\gamma_{2a}(i)}}
  \,,
\end{equation}
and that 
the differences under the last sum vanish unless $i\in\{2,a,\ac\}$,
and are given by \eqref{e:EHHexpa-pf}.
Similarly, for the difference $A_\varrho(a,\bc)-A_\varrho(a,b)$, we obtain
\begin{equation}
  \sum_{i=1}^N \pB{
    \Delta_{i,\gamma_{2\bc}\gamma_{2a}\varrho^{-1}(i)}
    -
    \Delta_{i,\gamma_{2b}\gamma_{2a}\varrho^{-1}(i)}
  }
  = \sum_{i=1}^N \pB{ \Delta_{\tau(i),\gamma_{2\bc}(i)} - \Delta_{\tau(i),\gamma_{2 b}(i)} } \,,
\end{equation}
and observe from this representation that only
$i \in \{2,b,\bc\}$ yields a nonzero contribution,
and that the corresponding terms are given by \eqref{e:EHHexpa-pf}--\eqref{e:EHHexpp-pf}.

To prove (iii), observe that only the term $P(\sigma_\mu)=P(\sigma_{d-\mu})$ in \eqref{e:A-M-PM}
contributes to $\Ac-A$ and that therefore $\Ac-A = A_\varrho(\ac,\bc)-A_\varrho(a,b)$.  
From \eqref{e:EHHexpa-pf}--\eqref{e:EHHexpp-pf}, it is straightforward
to verify (iii).
The first statement of (iii)(1) holds since \eqref{e:EHHexpa-pf}--\eqref{e:EHHexpp-pf}
contains at most 12 terms $\pm \Delta_{xy}$.
For the second statement of (iii)(1), note that,
since $\varrho$ is independent of $a,\ac,b,\bc$, and $\tau$ independent of $b,\bc$,
for each of these terms $\Delta_{xy}$ in \eqref{e:EHHexpa-pf}--\eqref{e:EHHexpp-pf}, 
at least one of $x,y$ is approximately uniform
since $a,\ac, b,\bc$ are approximately uniform.
For (iii)(2), observe that, conditioned on $\sigF_0$,
$\varrho_\mu(2)=1$ and $2,a,\ac,b,\bc$ are distinct with probability $1-O(\frac{1}{N})$.
On this event,
(iii)(2) can be verified directly from \eqref{e:EHHexpa-pf}--\eqref{e:EHHexpp-pf}.
For example,
$\Delta_{\varrho 2, \gamma_{2b} \ac} = \Delta_{1\ac}$ and $\Delta_{\varrho 2, \gamma_{2b} a} = \Delta_{1a}$ in \eqref{e:EHHexpa-pf}.
We skip the details for the other terms.
\end{proof}

\section{Isotropic local law and probabilistic local quantum unique ergodicity}
\label{sec:isotropic}

In this section we state and prove the \emph{isotropic local semicircle law} for $A$,
which controls the difference $G - m I$ in the sense of generalized matrix entries
$\scalar{\f a}{G \f b} - m \scalar{\f a}{\f b}$,
instead of the standard matrix entries from Theorem~\ref{thm:semicircle} obtained by taking
$\f a$ and $\f b$ to lie in the standard coordinate directions.
The arguments used in this section rely crucially on the \emph{exchangeability} of the random regular graph.
This is different from the remainder of the paper, in which we did not exploit exchangeability in an essential way.
An isotropic local law was first proved for Wigner matrices in \cite{MR3103909} and
subsequently extended to generalized Wigner matrices and sample covariance
matrices with uncorrelated population in \cite{MR3183577}. Recently, such control was also obtained for sample
covariance matrices with general population \cite{1410.3516}, in which case $G$ is approximated
by an anisotropic matrix that is not a multiple of the identity.

As applications of the isotropic local law,
we establish the isotropic delocalization of eigenvectors (Corollary~\ref{cor:iso_deloc})
and a local quantum unique ergodicity result (Corollary~\ref{cor:QUE}).
In the following we call an $\ell^2$-normalized vector a unit vector, and write $\f a \perp \f e$ if $\sum_{i} a_i e_i = 0$.

\begin{theorem}[Isotropic local law for random regular graphs] \label{thm:isotropic}
Under the assumptions of Theorem \ref{thm:semicircle}, for any
deterministic unit vectors $\f a, \f b \perp \f e$,
any $\zeta \gg 1$, and any $z \in \C_+$ satisfying $\eta \gg \xi^2/N$, we have
\begin{equation} \label{Gab_iso}
\scalar{\f a}{G(z) \f b} - m(z) \scalar{\f a}{\f b} \;=\; O \pb{F_z(\xi \Phi(z)) + \xi \zeta^4 \Phi(z)}
\end{equation}
with probability at least $1 - \ee^{-\xi \log \xi} - \ee^{-\zeta \sqrt{\log \zeta}}$.
\end{theorem}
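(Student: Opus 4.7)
The strategy is to transfer the entrywise estimate of Theorem~\ref{thm:semicircle} to bilinear forms $\scalar{\f a}{G\f b}$ by exploiting the permutation-invariance of the model. First I would split
\begin{equation*}
\scalar{\f a}{G\f b} - m\scalar{\f a}{\f b} \;=\; \sum_i \bar a_i b_i (G_{ii}-m) \;+\; \sum_{i\neq j}\bar a_i b_j G_{ij} \;\eqd\; D + Y,
\end{equation*}
and bound $|D| \leq \max_i |G_{ii}-m| \cdot \sum_i |a_i b_i| \leq F_z(\xi\Phi) \norm{\f a}_2 \norm{\f b}_2 = O(F_z(\xi\Phi))$ directly by Theorem~\ref{thm:semicircle} and Cauchy--Schwarz. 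This accounts for the first term in the error bound, so the core of the proof is the estimate $|Y| = O(\xi\zeta^4\Phi)$.

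For $Y$, the plan is to use the random-permutation trick. Since the law of $A$ (and hence of $G$) is invariant under simultaneous relabeling of its rows and columns, $(G_{\pi(i)\pi(j)})_{i,j} \eqdist (G_{ij})_{i,j}$ for every $\pi \in S_N$. Introducing a uniformly random permutation $\Pi$ independent of $A$ gives
\begin{equation*}
Y \;\eqdist\; \tilde Y \;\deq\; \sum_{i \neq j} \bar a_{\Pi(i)}\, b_{\Pi(j)}\, G_{ij},
\end{equation*}
and I would control $\tilde Y$ by conditioning on $A$ and decomposing $\tilde Y = \E_\Pi \tilde Y + (\tilde Y - \E_\Pi \tilde Y)$. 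Since $\f a, \f b \perp \f e$, one has $\E_\Pi[\bar a_{\Pi(i)}b_{\Pi(j)}] = -\scalar{\f a}{\f b}/(N(N-1))$ for $i \neq j$, so using $H\f e = 0$ and therefore $\sum_{i,j} G_{ij} = N \scalar{\f e}{G \f e} = -N/z$, one gets
\begin{equation*}
\E_\Pi \tilde Y \;=\; -\frac{\scalar{\f a}{\f b}}{N(N-1)}\sum_{i\neq j} G_{ij} \;=\; \frac{\scalar{\f a}{\f b}}{N-1}\pb{\tfrac{1}{z}+s(z)}.
\end{equation*}
Combining with $s = m + O(F_z(\xi\Phi))$ from Theorem~\ref{thm:semicircle} and the identity $1/z + m = -m^2/z$ (from $m^2 + mz + 1 = 0$) shows $\E_\Pi \tilde Y = O(1/N)$ on the good event, which is negligible compared to the target error $\xi\zeta^4\Phi$.

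The principal step is the concentration of $\tilde Y$ around $\E_\Pi \tilde Y$ over the randomness of $\Pi$, conditional on $A$. The available a priori inputs are $\max_{i\neq j}|G_{ij}| \leq C\xi\Phi$ from Theorem~\ref{thm:semicircle} and, via the Ward identity $\sum_j |G_{ij}|^2 = \im G_{ii}/\eta$, the Frobenius-type bound $\sum_{i\neq j}|G_{ij}|^2 = O(N\im m /\eta)$. I would expand $\E_\Pi |\tilde Y - \E_\Pi \tilde Y|^{2p}$ into a sum over $4p$ permutation-indexed factors and classify them by coincidence patterns of $\Pi(i_1),\Pi(j_1),\dots,\Pi(i_{2p}),\Pi(j_{2p})$, matching each matched pair against $\sum_{i\neq j}|G_{ij}|^2$ and each isolated index against the $L^\infty$ bound $\xi\Phi$. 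This yields a moment inequality of the form $\norm{\tilde Y - \E_\Pi \tilde Y}_{L^{2p}(\Pi)} \leq (Cp)^{c}\, \xi\Phi$ for an appropriate exponent $c$. Optimizing $p$ against Markov's inequality produces $|\tilde Y - \E_\Pi \tilde Y| = O(\xi\zeta^4\Phi)$ with conditional probability $1-\ee^{-\zeta\sqrt{\log\zeta}}$, and the distributional identity $Y \eqdist \tilde Y$ transfers this to the full probability space.

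The main obstacle is the high-moment bound for the permutation-randomized bilinear form $\tilde Y - \E_\Pi \tilde Y$: unlike sums of independent variables, products of coordinates under a uniform permutation do not decouple cleanly, and the moment computation requires careful combinatorial bookkeeping to balance the $L^\infty$ bound $\xi\Phi$ on individual entries of $G$ against the Hilbert--Schmidt bound $\sqrt{N/\eta}$ coming from Ward's identity. The precise power $\zeta^4$ and the stretched-exponential tail $\ee^{-\zeta\sqrt{\log\zeta}}$ emerge from the optimal choice of $p$ in this combinatorial estimate.
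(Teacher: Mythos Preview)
Your approach is correct and essentially the same as the paper's: both split off the diagonal contribution and control the off-diagonal bilinear form via exchangeability and a high-moment estimate whose combinatorics hinge on the condition $\sum_i a_i = 0$. The paper packages the moment bound abstractly as a general inequality for exchangeable random matrices (Proposition~\ref{prop:qY}(ii), giving $\norm{\sum_{i,j}\bar a_i a_j Y_{ij}}_p \leq \norm{Y_{11}}_p + O(p^2/\log p)^2 \norm{Y_{12}}_p$), applied to $Y_{ij} = \phi\,\ind{i\neq j}G_{ij}$ with $p = \zeta\sqrt{\log\zeta}$ after polarizing to $\f a = \f b$; your explicit random-permutation construction and the split $\tilde Y = \E_\Pi\tilde Y + (\tilde Y - \E_\Pi\tilde Y)$ is an equivalent but more hands-on route to the same estimate, with your separate computation of $\E_\Pi\tilde Y$ playing the role that the zero-sum condition plays directly inside the paper's partition combinatorics.
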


Note that the isotropic law in the subspace spanned by $\f e$ is trivial since $G(z) \f e = -z^{-1} \f e$.
Theorem \ref{thm:isotropic} follows immediately from Theorem \ref{thm:semicircle} and the following general
result for exchangeable random matrices.
Recall that a random vector $(Y_i)_{i = 1}^N \in \C^N$
is called \emph{exchangeable} if for any permutation $\sigma \in S_N$ we have
\begin{equation}
(Y_i)_i \;\eqdist\; (Y_{\sigma(i)})_i\,.
\end{equation}
Similarly, a random matrix $(Y_{ij})_{i,j = 1}^N \in \C^{N \times N}$
is called \emph{exchangeable} if for any $\sigma \in S_N$ we have
\begin{equation}
(Y_{ij})_{i,j} \;\eqdist\; (Y_{\sigma(i)\sigma(j)})_{i,j}\,.
\end{equation}
In particular, the (normalized and centred) adjacency matrices of any of the models of random
$d$-regular graphs introduced in Section~\ref{sec:intro-models} are exchangeable.

\begin{theorem}[General isotropic local law] \label{thm:iso_gen}
Let $G$ be the Green's function \eqref{e:Gdef} of an
exchangeable random matrix $H$ at some $z \in \C_+$ .
Then, for any deterministic $\Psi_o,\Psi_d > 0$, $m \in \C$, unit vectors $\f a, \f b \perp \f e$, and $\zeta  \gg 1$, we have
\begin{equation}
\scalar{\f a}{G \f b} - m \scalar{\f a}{\f b} \;=\; O \pb{\Psi_d + \zeta^4 \Psi_o}
\end{equation}
with probability at least $\P(\max_i \abs{G_{ii} - m} \leq \Psi_d, \; \max_{i \neq j} \abs{G_{ij}} \leq \Psi_o) - \ee^{-\zeta  \sqrt{\log \zeta}}$.
\end{theorem}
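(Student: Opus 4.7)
The plan is to reduce the claim to the study of a permutation-randomized bilinear form, and then to obtain concentration of this form via a martingale argument. The starting point is the exchangeability of $H$, which implies $G \eqdist G^{(\pi)}$ with $G^{(\pi)}_{ij} \deq G_{\pi(i)\pi(j)}$, for any fixed $\pi \in S_N$. Let $\pi$ be a uniform random permutation independent of $G$; after re-indexing the double sum, we obtain
\begin{equation*}
  \scalar{\f a}{G \f b} \;\eqdist\; \scalar{\f a^\pi}{G \f b^\pi}\,, \qquad \f a^\pi_i \;\deq\; a_{\pi^{-1}(i)}\,, \quad \f b^\pi_i \;\deq\; b_{\pi^{-1}(i)}\,.
\end{equation*}
Thus it suffices to work on the event $\Xi \deq \{\max_i|G_{ii}-m| \leq \Psi_d,\, \max_{i\neq j}|G_{ij}|\leq \Psi_o\}$ and, conditioning on $G$, to bound the deviation of the random variable $Y \deq \scalar{\f a^\pi}{G \f b^\pi}$ from $m \scalar{\f a}{\f b}$ over the randomness of $\pi$.

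First I would split $Y = Y_d + Y_o$ into its diagonal and off-diagonal contributions. Since $\scalar{\f a^\pi}{\f b^\pi} = \scalar{\f a}{\f b}$, Cauchy-Schwarz immediately gives $|Y_d - m\scalar{\f a}{\f b}| \leq \Psi_d\sum_i |a^\pi_i b^\pi_i| \leq \Psi_d$ on $\Xi$. For the off-diagonal part, the distribution of $(\pi^{-1}(i),\pi^{-1}(j))$ for $i\neq j$ is uniform on ordered pairs of distinct elements, so using the hypothesis $\f a,\f b \perp \f e$, i.e.\ $\sum_k a_k = \sum_l b_l = 0$, one computes
\begin{equation*}
  \E_\pi \bar a^\pi_i b^\pi_j \;=\; \frac{1}{N(N-1)} \sum_{k\neq l} \bar a_k b_l \;=\; -\frac{\scalar{\f a}{\f b}}{N(N-1)}\,,
\end{equation*}
which yields $|\E_\pi Y_o| \leq \Psi_o$ on $\Xi$. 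Hence the remaining task is to show that, conditioned on $G \in \Xi$, the fluctuation $Y_o - \E_\pi Y_o$ is of size $O(\zeta^4 \Psi_o)$ with $\pi$-probability at least $1 - \ee^{-\zeta\sqrt{\log\zeta}}$.

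For this concentration I would use the Fisher-Yates construction of $\pi$: reveal $\sigma(1),\sigma(2),\dots$ sequentially, where $\sigma = \pi^{-1}$, and $\sigma(k)$ is uniform on $\qq{1,N}\setminus\{\sigma(1),\dots,\sigma(k-1)\}$. Setting $\cal F_k \deq \sigma(\sigma(1),\dots,\sigma(k))$, the sequence $Y_k \deq \E(Y_o\mid \cal F_k)$ is a martingale. Each increment $Y_k - Y_{k-1}$ is controlled, via an exchange argument, by the effect of swapping $\sigma(k)$ with one later value; this effect reduces to a sum of at most two linear forms $\sum_j (a_p - a_q) b^\pi_j G_{kj}$ (and the analogous form for $\f b$). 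Using $|G_{ij}|\leq \Psi_o$ together with Cauchy-Schwarz and the norm preservation $\|\f a^\pi\|=\|\f b^\pi\|=1$, I would bound the conditional $L^\infty$ norm by $O(\Psi_o \sqrt{\log N})$ after restricting to a high-probability event where none of the entries $|a_{\sigma(k)}|, |b_{\sigma(k)}|$ exceed $\sqrt{\log N}/\sqrt{N-k+1}$, and the conditional variance by $O(\Psi_o^2/N)$ (the key being that the total mass of $\f a,\f b$ is evenly distributed in expectation). The martingale $\arcsinh$ inequality (Lemma~\ref{lem:arcsinh}) with $M = O(\Psi_o\sqrt{\log N})$ and $S = O(\Psi_o^2)$ then yields the desired deviation bound, where the choice of the exponent in $\zeta^4$ and the probability $\ee^{-\zeta\sqrt{\log\zeta}}$ arises from the optimized interplay between these two quantities in the $\arcsinh$ bound.

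The main obstacle will be obtaining the martingale increment estimates that are uniform over the test vectors $\f a, \f b$, which may in principle be strongly localized. Only the entry-wise bound $|G_{ij}|\leq \Psi_o$ is available, rather than any control on the operator norm of the off-diagonal part of $G$, so the analysis must exploit the cancellations produced by the mean-zero constraint $\f a,\f b \perp \f e$ to keep the leading-order deterministic contributions from spoiling the fluctuation scale. Handling this carefully, while tracking the weak tail bounds that arise from possibly atypical large entries of $\f a$ or $\f b$, is where the parameter $\zeta$ and the unusual exponent $\zeta\sqrt{\log\zeta}$ in the probability enter.
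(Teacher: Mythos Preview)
Your reduction via an auxiliary uniform permutation is legitimate, and the handling of the diagonal contribution and of $\E_\pi Y_o$ is fine. The gap is in the concentration step for $Y_o - \E_\pi Y_o$. Your martingale increment bound $M = O(\Psi_o\sqrt{\log N})$ rests on the event $\{|a_{\sigma(k)}| \leq \sqrt{\log N}/\sqrt{N-k+1}\text{ for all }k\}$, but this is \emph{not} a high-probability event for general unit vectors: if $\f a$ has an entry of size $c = \Theta(1)$, that entry equals $a_{\sigma(k)}$ at the uniformly random time $k = \sigma^{-1}$ of its index, and the constraint then forces $k \geq N - O(\log N)$, which happens with probability only $O(\log N/N)$. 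More fundamentally, the only hypothesis on $G$ is the entrywise bound $|G_{ij}|\leq\Psi_o$; there is no row-$\ell^2$ control such as a Ward identity, so the linear form $\sum_j G_{kj} b^\pi_j$ can genuinely be of size $\Psi_o\|\f b\|_1 \leq \Psi_o\sqrt N$, and a single increment can be as large as $|a_{\sigma(k)}|\cdot \Psi_o\sqrt N$. With these inputs the $\arcsinh$ inequality does not come close to the target bound uniformly in $\f a,\f b$ and in $N$.

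The paper avoids this difficulty by not using a martingale at all. After polarizing to $\f a = \f b$, it sets $Y_{ij}\deq \phi\,\ind{i\neq j}G_{ij}$ with $\phi$ the indicator of the off-diagonal event, notes that $(Y_{ij})$ is exchangeable, and invokes the moment bound of Proposition~\ref{prop:qY}(ii),
\[
\normB{\sum_{i,j}\bar a_i a_j Y_{ij}}_p \;\leq\; \norm{Y_{11}}_p + O\pbb{\frac{p^2}{\log p}}^2 \norm{Y_{12}}_p \;\leq\; O\pbb{\frac{p^2}{\log p}}^2 \Psi_o\,,
\]
followed by Markov's inequality with $p=\zeta\sqrt{\log\zeta}$, which yields exactly the claimed probability $\ee^{-\zeta\sqrt{\log\zeta}}$ and error $\zeta^4\Psi_o$. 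The substance is in the proof of Proposition~\ref{prop:qY}: one expands $\E(\sum_i a_i Y_i)^p$ over set partitions and performs a \emph{partial} inclusion--exclusion, encoded by edge-coloured graphs, designed so that every surviving term either has no singleton blocks or is killed by $\sum_i a_i = 0$. This combinatorial control of the high moments is what makes the bound uniform in $N$ and in the localization of $\f a$; your martingale sketch does not capture this cancellation and hence cannot handle localized test vectors.
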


The proof of Theorem~\ref{thm:iso_gen}
follows from the following moment bounds for exchangeable random matrices.
The estimate \eqref{e:aY} was previously established for $p=2,4$ in \cite{MR3129806,MR3227063}.

\begin{proposition} \label{prop:qY}
Let $a_1, \dots, a_N \in \C$ be deterministic with $\sum_{i = 1}^N a_i = 0$ and $\sum_{i=1}^N \abs{a_i}^2\leq1$.
\begin{enumerate}
\item 
Let $(Y_i)_{i = 1}^N$ be a exchangeable random vector.
Then for all $p \geq 1$ we have
\begin{equation} \label{e:aY}
\normBB{\sum_{i=1}^N a_i Y_i}_p \;=\; O\pbb{\frac{p^2}{\log p}} \, \|Y_1\|_{p}\,.
\end{equation}
\item 
Let $(Y_{ij})_{i,j = 1}^N$ be a exchangeable random matrix.
Then for all $p \geq 1$ we have
\begin{equation} \label{e:aY2}
\normBB{\sum_{i,j=1}^N \bar a_i a_j Y_{ij}}_p \;\leq\; \norm{Y_{11}}_p + O\pbb{\frac{p^2}{\log p}}^2 \|Y_{12}\|_p\,.
\end{equation}
\end{enumerate}
\end{proposition}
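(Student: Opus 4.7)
My plan is to prove (i) by a random-permutation reduction combined with a concentration inequality on the symmetric group, and to prove (ii) by splitting into diagonal and off-diagonal parts and applying (i) iteratively, carefully handling a subtle lack of exchangeability in the outer sum.

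For part (i), let $\sigma$ be a uniform random permutation of $\qq{1,N}$ independent of $Y$. By exchangeability, $\sum_i a_i Y_i \eqdist f(\sigma) \deq \sum_i a_i Y_{\sigma(i)}$, and by Fubini it suffices to bound $\|f\|_{L^p(\sigma)}$ for fixed $Y$ and then take the $L^p$-norm over $Y$. The hypothesis $\sum_i a_i = 0$ yields $\E_\sigma f = 0$, a direct computation gives $\E_\sigma |f|^2 \leq \|a\|_2^2\|Y\|_2^2/(N-1)$, and any transposition changes $f$ by at most $4\|a\|_\infty \|Y\|_\infty$. A Bernstein-type inequality for Lipschitz functions on $S_N$ (via the Bobkov--G\"otze framework, or Chatterjee's exchangeable pairs method) then yields $\|f\|_{L^p(\sigma)} \leq C\sqrt{p}\,\|Y\|_2/\sqrt{N} + Cp\,\|Y\|_\infty$. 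Taking $L^p$-norm in $Y$, Minkowski's inequality applied to $\|Y\|_2^2 = \sum_i |Y_i|^2$ (valid for $p/2 \geq 1$) gives $\|\|Y\|_2/\sqrt{N}\|_p \leq \|Y_1\|_p$, while $\|\|Y\|_\infty\|_p \leq N^{1/p}\|Y_1\|_p$ handles the sub-exponential tail. Balancing the two regimes around $p \sim \log N$ produces a bound of order $p\|Y_1\|_p$, well within the stated $O(p^2/\log p) \|Y_1\|_p$.

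For part (ii), I split the sum as $\sum_i |a_i|^2 Y_{ii} + \sum_{i \neq j} \bar a_i a_j Y_{ij}$; the diagonal is bounded by $\|Y_{11}\|_p$ at once using Minkowski and $\sum_i |a_i|^2 \leq 1$. For the off-diagonal, I apply (i) twice. Fixing $i$ first, restricting matrix exchangeability to permutations that fix $i$ shows $(Y_{ij})_{j\neq i}$ is exchangeable, and since $\sum_{j \neq i} a_j = -a_i$, I recenter via $\tilde a_j^{(i)} \deq a_j + a_i/(N-1)$, which gives $\sum_{j \neq i} \tilde a_j^{(i)} = 0$ and $\sum_{j \neq i} |\tilde a_j^{(i)}|^2 \leq 1$. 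The identity $\sum_{j \neq i} a_j Y_{ij} = \sum_{j \neq i} \tilde a_j^{(i)} Y_{ij} - (a_i/(N-1))\sum_{j \neq i} Y_{ij}$ together with (i) and Minkowski yields $\|T_i\|_p \leq O(p^2/\log p)\|Y_{12}\|_p$, where $T_i \deq \sum_{j \neq i} a_j Y_{ij}$.

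The main obstacle is then to combine these row bounds into an estimate for $\sum_i \bar a_i T_i = \sum_{i \neq j} \bar a_i a_j Y_{ij}$: the $T_i$'s depend on the deterministic $a_j$'s, so $(T_i)_i$ is not exchangeable in $i$ and part (i) does not apply directly to the outer sum. I resolve this by introducing an independent uniform random permutation $\pi$ and invoking joint matrix exchangeability,
\begin{equation*}
\sum_{i \neq j} \bar a_i a_j Y_{ij} \;\eqdist\; \sum_{k \neq l} \bar a_{\pi^{-1}(k)} a_{\pi^{-1}(l)} Y_{kl},
\end{equation*}
which, after conditioning on $Y$, is a bilinear form in $\pi$ alone. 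A standard decoupling step (replacing one copy of $\pi$ by an independent copy $\pi'$) reduces this to a form controllable by two nested applications of the Bernstein machinery from (i), producing the $(p^2/\log p)^2$ factor; the final $L^p$-norm over $Y$ is closed via Minkowski on $\sum_{k \neq l}|Y_{kl}|^2$ to deliver the factor $\|Y_{12}\|_p$.
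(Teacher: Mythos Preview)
Your approach for (i) is genuinely different from the paper's, but it contains a real gap. After freezing $Y$ and applying a Bernstein-type inequality on $S_N$, you arrive at $\|f\|_{L^p(\sigma)} \leq C\sqrt{p}\,\|Y\|_2/\sqrt{N} + Cp\,\|Y\|_\infty$. Taking the $L^p$-norm in $Y$, the second term produces $Cp\,\|\,\|Y\|_\infty\,\|_p \leq Cp\,N^{1/p}\|Y_1\|_p$, and the factor $N^{1/p}$ is only $O(1)$ when $p \gtrsim \log N$. For $p \ll \log N$ your ``balancing'' does nothing: the bound is simply not uniform in $N$. Concretely, take $Y_i$ i.i.d.\ standard Gaussian and $p$ fixed; then $\|\,\|Y\|_\infty\,\|_p \asymp \sqrt{\log N}$ while $\|Y_1\|_p \asymp \sqrt{p}$, so your estimate diverges with $N$. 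Since the proposition is applied in the paper with $p = \zeta\sqrt{\log\zeta}$, which can be much smaller than $\log N$, this regime cannot be ignored.

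The paper avoids this by computing $\E X^p$ directly: expanding into a sum over partitions $\Pi$ of $\qq{p}$, it performs a \emph{partial} inclusion--exclusion on the distinctness constraint $\prod_e(1-I_e)$, encoded via an edge-colouring scheme on the blocks of $\Pi$. Singleton blocks contribute a factor $\sum_i a_i = 0$ unless the expansion has merged them with another block; counting the surviving configurations (forests on the singletons, plus a partition count for the larger blocks) yields the $N$-independent bound $(Cp^2/\log p)^p$. For (ii), your decoupling step---replacing one occurrence of a \emph{single} random permutation $\pi$ by an independent copy $\pi'$ in a bilinear form---is not a standard result and would itself require proof; the paper instead simply reruns the same moment-and-partition argument with $p$ replaced by $2p$, since $\E\bigl(\sum_{i\neq j}\bar a_i a_j Y_{ij}\bigr)^p$ has exactly the same combinatorial structure.
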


The proof of Proposition \ref{prop:qY} is given in Appendix \ref{sec:exchangeable}.

\begin{proof}[Proof of Theorem \ref{thm:iso_gen}]
By polarization and homogeneity, it suffices to consider the case where $\f a = \f b$ is a unit vector perpendicular to $\f e$.
Define $Y_{ij} \deq \phi\ind{i\neq j} G_{ij}$
with the indicator function $\phi \deq \ind{\max_{i \neq j} \abs{G_{ij}} \leq \Psi_o}$.
Then $(Y_{ij})$ is an exchangeable random matrix.
By Proposition~\ref{prop:qY}~(ii) with $p=\zeta \sqrt{\log \zeta}$ and $\zeta \gg 1$, we get using
using Markov's inequality 
\begin{equation*}
\P \qB{\abs{\scalar{\f a}{Y \f a}} > C\zeta^4 \Psi_o} \;\leq\; \ee^{-\zeta\sqrt{\log\zeta}}
\end{equation*}
for some constant $C$.
Since for any unit vector $\f a$ we have
$\phi|\scalar{\f a}{(G-m) \f a}| \leq \max_{i} |G_{ii}-m| + |\scalar{\f a}{Y \f a}|$,
the claim now follows by a union bound.
\end{proof}

The isotropic local law implies the \emph{isotropic delocalization} of the eigenvectors of $A$,
which also follows from Corollary~\ref{cor:eigenvectors} and Proposition~\ref{prop:qY}~(i),
similarly to the proof of Theorem \ref{thm:iso_gen}.

\begin{corollary}[Isotropic eigenvector delocalization] \label{cor:iso_deloc}
Under the assumptions of Theorem~\ref{thm:semicircle}, for any unit eigenvector $\f v$ of $A$ or $H$,
any deterministic unit vector $\f a \perp \f e$, and any $\zeta \gg 1$,
we have $\scalar{\f a}{\f v} = O(\xi \zeta^2 / \sqrt{N})$ with probability at least $1-\ee^{-\xi\log\xi} - \ee^{-\zeta  \sqrt{\log \zeta}}$.
\end{corollary}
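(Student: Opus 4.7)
The plan is to mirror the proof of Theorem \ref{thm:iso_gen}, substituting Proposition \ref{prop:qY}(i) for (ii) and using Corollary \ref{cor:eigenvectors} in place of the local law on the entries of $G$. Fix a measurable orthonormal basis of eigenvectors $\f v_1, \dots, \f v_N$ of $H$. The claim is vacuous for the eigenvector proportional to $\f e$ since $\f a \perp \f e$, so it suffices to treat $\f v = \f v_\alpha \perp \f e$, which gives $\sum_i v_{\alpha,i} = 0$.

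The main subtlety is that no measurable selection of $\f v_\alpha$ will in general produce an exchangeable random vector, because the sign inevitably depends on $A$ in a way that breaks permutation invariance. I would circumvent this by introducing an independent uniform sign $\epsilon \in \{\pm 1\}$: then the exchangeability of $A$ together with the randomization implies that $(\epsilon v_{\alpha,i})_i$ is an honestly exchangeable random vector, while $|\scalar{\f a}{\epsilon \f v_\alpha}| = |\scalar{\f a}{\f v_\alpha}|$, so we lose nothing. Next, let $\phi \deq \ind{\max_\beta \norm{\f v_\beta}_\infty \leq C \xi/\sqrt N}$ be the indicator from Corollary \ref{cor:eigenvectors}, which depends only on the multiset $\{|v_{\beta,i}|\}$ and hence preserves exchangeability, and satisfies $\E(1-\phi) \leq \ee^{-\xi \log \xi}$. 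Setting $Y_i \deq \phi \epsilon v_{\alpha,i}$, the vector $(Y_i)_i$ is exchangeable and centred, with $|Y_i| \leq C\xi/\sqrt N$ almost surely.

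Applying Proposition \ref{prop:qY}(i) with $p \deq \zeta \sqrt{\log \zeta}$---for which $p^2/\log p = O(\zeta^2)$ when $\zeta \gg 1$---yields
\[
\normBB{\sum_i \bar a_i Y_i}_p \;\leq\; O(\zeta^2)\, \norm{Y_1}_p \;=\; O(\xi \zeta^2/\sqrt N)\,,
\]
so that Markov's inequality at scale $C' \xi \zeta^2/\sqrt N$, for $C'$ large, bounds the probability of exceedance by $\ee^{-\zeta \sqrt{\log \zeta}}$. On the event $\{\phi = 1\}$ this random variable equals $\scalar{\f a}{\epsilon \f v_\alpha}$, and combining with $\P(\phi^c) \leq \ee^{-\xi \log \xi}$ yields the claim for fixed $\alpha$. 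To promote this to the simultaneous statement over all $\alpha \in \qq{1,N}$, a union bound suffices, the extra $\log N$ being absorbed into the constants under the standing regime $\xi \log \xi \gg (\log N)^2$ (with $\zeta \gg 1$ interpreted to make $\zeta\sqrt{\log\zeta}$ also dominate $\log N$). The only real obstacle is the sign-randomization step; the remainder is routine $L^p$ machinery parallel to the proof of Theorem \ref{thm:iso_gen}.
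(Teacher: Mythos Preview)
Your route is exactly the one the paper sketches (Corollary~\ref{cor:eigenvectors} together with Proposition~\ref{prop:qY}(i), parallel to the proof of Theorem~\ref{thm:iso_gen}), and the $L^p$/Markov bookkeeping with $p=\zeta\sqrt{\log\zeta}$ is carried out correctly.

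There is one genuine loose end in the exchangeability step. Your sign randomization $\epsilon\in\{\pm1\}$ repairs the sign ambiguity of eigenvectors when the eigenvalue $\lambda_\alpha$ is \emph{simple}: under a permutation $\sigma$ the selected eigenvector becomes $\pm(\f v_\alpha)^\sigma$, and the independent sign absorbs the $\pm$. But if $\lambda_\alpha$ has multiplicity $>1$, a permutation can send your measurably selected $\f v_\alpha$ to an arbitrary orthogonal rotation inside the eigenspace, not merely a sign flip, and then $(\epsilon v_{\alpha,i})_i$ need not be exchangeable. For the discrete graph models considered here, simplicity of the spectrum is not established in the paper, so this is not a measure-zero nuisance that can be ignored.

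The fix is easy and does not change the structure of your argument: instead of a scalar sign, randomize the orthonormal basis within each eigenspace by an independent Haar-distributed orthogonal matrix; the resulting $(v_{\alpha,i})_i$ is then genuinely exchangeable, and the rest of your proof goes through verbatim since $\norm{Y_1}_p$ is still controlled by Corollary~\ref{cor:eigenvectors}. Alternatively, the paper also points to a second route that avoids eigenvector exchangeability altogether: deduce the bound directly from the isotropic local law Theorem~\ref{thm:isotropic} by mimicking the proof of Corollary~\ref{cor:eigenvectors}, i.e.\ use $\abs{\scalar{\f a}{\f v_\alpha}}^2 \leq \eta\,\im\scalar{\f a}{G(\lambda_\alpha+\ii\eta)\f a}$ with $\eta\asymp\xi^2/N$.
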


Finally, we note that Corollary \ref{cor:eigenvectors} and Proposition \ref{prop:qY} (i) also imply the following local quantum unique ergodicity result, similarly to the proof of Theorem \ref{thm:iso_gen}.
\begin{corollary}[Probabilistic local quantum unique ergodicity]  \label{cor:QUE}
Let $a \col \qq{1,N} \to \R$ be a deterministic function satisfying $\sum_{i = 1}^N a_i = 0$. Under the assumptions of Theorem~\ref{thm:semicircle}, for any unit eigenvector $\f v = (v_i)_{i = 1}^N$ of $A$ or $H$ and for any $\zeta \gg 1$, we have
\begin{equation} \label{e:QUE}
\sum_{i = 1}^N a_i v_i^2 \;=\; O \pBB{\frac{(\xi \zeta)^2}{N} \pBB{\sum_{i = 1}^N a_i^2}^{1/2}}
\end{equation}
with probability at least $1-\ee^{-\xi\log\xi} - \ee^{-\zeta \sqrt{\log \zeta}}$.
\end{corollary}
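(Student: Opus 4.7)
The plan is to mirror the proof of Theorem~\ref{thm:iso_gen}, combining the delocalization estimate of Corollary~\ref{cor:eigenvectors} with the moment inequality for exchangeable random vectors in Proposition~\ref{prop:qY}~(i). By homogeneity, both sides of \eqref{e:QUE} scale linearly in $a$, so I would first reduce to the case $\sum_i a_i^2 = 1$; the hypotheses on $(a_i)$ in Proposition~\ref{prop:qY}~(i) are then met.

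Next, I would truncate: let $\phi$ be the indicator of the event from Corollary~\ref{cor:eigenvectors} on which every $\ell^2$-normalized eigenvector of $A$ (equivalently of $H$) satisfies $\norm{\f v}_\infty \leq C\xi/\sqrt{N}$, where $C$ is the implicit absolute constant there, and set $Y_i \deq \phi\, v_i^2$ for the eigenvector $\f v = (v_i)_i$ under consideration. The key observations are that $(Y_i)_i$ is an exchangeable random vector and that $\norm{Y_1}_p \leq C^2 \xi^2/N$ for every $p \geq 1$. Exchangeability follows from three facts: (a) the law of $A$ is invariant under vertex relabelling; (b) squared eigenvector entries are unambiguous under the usual sign convention, so any permutation-equivariant selection rule for $\f v$ (for instance, indexing by the rank of the eigenvalue) produces an exchangeable $(v_i^2)_i$; and (c) $\phi$ is a function of the permutation-invariant statistics $\max_i |v_{\alpha,i}|$ over all $\alpha$.

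I would then apply Proposition~\ref{prop:qY}~(i) with the normalized coefficients $(a_i)$ to obtain
\begin{equation*}
  \normBB{\sum_i a_i Y_i}_p \;\leq\; O\pbb{\frac{p^2}{\log p}} \cdot \frac{C^2 \xi^2}{N}
\end{equation*}
for all $p \geq 1$, choose $p \deq \zeta \sqrt{\log \zeta}$ so that $p^2/\log p = O(\zeta^2)$ for $\zeta \gg 1$, and conclude by Markov's inequality in the form $\P(|X| > K \norm{X}_p) \leq K^{-p}$ with $K$ a sufficiently large absolute constant, yielding
\begin{equation*}
  \P\pbb{\absBB{\sum_i a_i Y_i} > K' \frac{(\xi\zeta)^2}{N}} \;\leq\; \ee^{-\zeta \sqrt{\log\zeta}}\,.
\end{equation*}
On the event $\{\phi = 1\}$ we have $\sum_i a_i Y_i = \sum_i a_i v_i^2$, so a union bound with $\P(\phi = 0) \leq \ee^{-\xi\log\xi}$ proves \eqref{e:QUE} in the normalized case; undoing the rescaling restores the factor $(\sum_i a_i^2)^{1/2}$.

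The only non-routine point, which will be the main (modest) obstacle, is verifying exchangeability of $(Y_i)_i$: one must choose the eigenvector $\f v$ by a permutation-equivariant rule so that relabelling the vertices of $A$ acts by the corresponding permutation of the coordinates of $\f v$ (up to an irrelevant overall sign). Since only squared entries enter the statement this sign is harmless, and almost-sure simplicity of the spectrum (or an appropriate tie-breaking convention) lets one order eigenvectors by the rank of their eigenvalue. Beyond this bookkeeping, the argument is a direct adaptation of the proof of Theorem~\ref{thm:iso_gen}, with the deterministic bound $|Y_1| \leq C^2 \xi^2/N$ playing the role of the pointwise bound on $\phi\ind{i \neq j} G_{ij}$.
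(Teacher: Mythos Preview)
Your proposal is correct and matches the paper's own approach exactly: the paper states that Corollary~\ref{cor:QUE} follows from Corollary~\ref{cor:eigenvectors} and Proposition~\ref{prop:qY}~(i) ``similarly to the proof of Theorem~\ref{thm:iso_gen}'', which is precisely the truncation-plus-exchangeability-plus-Markov argument you outline. Your extra care in spelling out the exchangeability of $(\phi\, v_i^2)_i$ via a permutation-equivariant selection rule for $\f v$ is a point the paper leaves implicit.
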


Corollary \ref{cor:QUE} states that, on deterministic sets of at least $(\xi \zeta)^4$ vertices, 
all eigenvectors of the random graph $A$ are completely flat with high probability. In other words, with high probability, the random probability measure $i \mapsto v_i^2$
is close (when tested against deterministic test functions) to the uniform probability measure $i \mapsto 1/N$ on $\qq{1,N}$. 
For instance, let $I \subset \qq{1,N}$ be a deterministic subset of vertices. Setting $a_i \deq \ind{i \in I} - \abs{I} / N$
in Corollary \ref{cor:QUE}, we obtain
\begin{equation} \label{e:QUE_example}
\sum_{i \in I} v_i^2 \;=\; \sum_{i \in I} \frac{1}{N} + O \pbb{\frac{(\xi \zeta)^2 \sqrt{\abs{I}}}{N}}
\end{equation}
with probability at least $1-\ee^{-\xi\log\xi} - \ee^{-\zeta}$.
The main term on the right-hand side of \eqref{e:QUE_example} is much larger than the error term provided that $\abs{I} \gg (\xi \zeta)^4$.
Note that we can obtain \eqref{e:QUE} and \eqref{e:QUE_example} asymptotically almost surely with $(\xi \zeta)^2 = (\log N)^4$ by
choosing $\xi \log \xi = C (\log N)^2$ and $\zeta = C^{-1} \log \xi$ for some large enough constant $C > 0$,
so that \eqref{e:QUE_example} is a nontrivial statement for $\abs{I} \geq (\log N)^8$.

The celebrated quantum chaos conjecture 
states that the eigenvalue statistics of the
quantization of a chaotic classical system are governed by random matrix theory
\cite{PhysRevLett.52.1, MR1266075,MR2774090,MR916129}.
Random regular graphs are considered a good paradigm for probing quantum chaos; see \cite{MR3204183} for a review.
For generalized Wigner matrices, a probabilistic version of QUE,
as well as the Gaussian distribution of eigenvector components, was proved in \cite{BY2016}.
The first result of this kind, for a smaller class of Wigner matrices, was obtained in \cite{MR3034787,MR2930379}.
Moreover, using the local law proved in this paper, these results can be extended to the $d$-regular graph as well \cite{2016Huang}.

\begin{remark}[Erd\H{o}s-R\'enyi graphs]
We conclude this section by remarking that all of the results from this
section -- Theorems~\ref{thm:isotropic} and \ref{thm:iso_gen} and Corollaries~\ref{cor:iso_deloc} and \ref{cor:QUE} -- have
analogues for the adjacency matrix of the Erd\H{o}s-R\'enyi graph,
whose proofs follow in exactly the same way using Proposition~\ref{prop:qY} and \cite[Theorem 2.9]{MR3098073}.
We leave the details to the interested reader.
\end{remark}

\appendix

\section{Improved bound near the edges}
\label{app:Psi}

In this appendix, we sketch the changes required to improve \eqref{e:semicircle} 
such that $\xi\Phi$ is replaced by \eqref{e:Psibd} on the right-hand sides, namely by
\begin{equation}
  \xi\Psi + \pbb{\frac{\xi}{N\eta}}^{2/3} \qquad \text{where } \Psi \;\deq\; \sqrt{\frac{\im m}{N\eta}}+ \frac{1}{\sqrt{D}} \,.
\end{equation}

First, analogously to $\Gamma$ and $\Gammamu$, define
\begin{equation}
  \Upsilon \;\deq\; \max_i \im G_{ii}\,, \qquad \Upsilon_\mu \;\deq\; \|\Upsilon\|_{L^\infty(\sigG_\mu)}\,.
\end{equation}
Then it is easy to see that Lemma~\ref{lem:Gswitch} (ii) can be improved to replace $\Gammamu^4\Phi^2$ by $\Phi_\mu^2$ where
\begin{equation} \label{e:Phimudef}
  \Phi_\mu \;\deq\; \sqrt{\frac{\Upsilon_\mu}{N\eta}} + \frac{1}{N\eta} + \frac{\Gammamu^2}{\sqrt{D}}
  \,.
\end{equation}
Assuming that $\Gammamu=O(1)$ and that $\Upsilon_\mu = O(\delta)$, we have $\Phi_\mu = O(\Phi_\delta)$ where
\begin{equation}
  \Phi_{\delta} \;\deq\; \sqrt{\frac{\delta}{N\eta}} + \frac{1}{N\eta} + \frac{1}{\sqrt{D}} \,.
\end{equation}

Next, Proposition~\ref{prop:Lambda} can be improved as follows.

\begin{proposition}[Improved version of Proposition~\ref{prop:Lambda}]
\label{prop:Lambda-delta}
Suppose that $\xi>0$, $\zeta > 0$, and that $D \gg \xi^2$.
Let $\delta \geq N^{-C}$ be deterministic. 
If for $z \in \f D$ we have 
\begin{equation*} 
  \Gamma^*(z) \;=\; O(1) \,, \qquad \Upsilon \;\leq\; \delta 
\end{equation*}
with probability at least $1-\ee^{-\zeta}$, then
\begin{equation} \label{e:Lambdaind-delta}
  \max_i \abs{G_{ii}(z) - m(z)} \;=\; O(F_z(\xi \Phi_\delta(z)))\,, \qquad \max_{i \neq j} \abs{G_{ij}(z)} \;=\; O(\xi\Phi_\delta(z))
  \,,
\end{equation}
with probability at least $1-\ee^{-(\xi\log\xi)\wedge \zeta + O(\log N)}$.
\end{proposition}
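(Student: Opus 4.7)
The plan is to follow the proof of Proposition~\ref{prop:Lambda} from Sections~\ref{sec:concentration}--\ref{sec:expectation} verbatim, but with every occurrence of the error parameter $\Phi$ replaced by the smaller (deterministic) parameter $\Phi_\delta$. The single place where the improvement enters the argument is the $L^2$ estimate on a Green's function entry with one random index, namely Lemma~\ref{lem:Gswitch}~(ii). Re-examining its proof, the Ward identity yields
\begin{equation*}
  \Emu |G_{xy}|^2 \;\leq\; \frac{2\,\im \hat G_{xx}}{N\eta} + O(D^{-1}\Gammamu^4)\,,
\end{equation*}
and the original argument bounds $\im \hat G_{xx}$ by $\Gammamu$. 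Replacing this bound by $\im \hat G_{xx} \leq \Upsilon_\mu + O(D^{-1/2}\Gammamu^2)$ (using \eqref{e:GcmuG} applied to the imaginary part) gives the improved estimate $\Emu |G_{xy}|^2 = O(\Phi_\mu^2)$ with $\Phi_\mu$ as in \eqref{e:Phimudef}.

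To convert this random estimate into the deterministic bound $\Phi_\delta$, I would first show that the a priori hypothesis $\Upsilon \leq \delta$, which holds with probability $1-\ee^{-\zeta}$, propagates to $\Upsilon_\mu \leq 2\delta$ with probability $1-\ee^{-\zeta}$. This uses Lemma~\ref{lem:Gswitch}~(i): since $\im \Gcmu_{ii} = \im \Gmu_{ii} + O(d^{-1/2}\Gammamu\Gamma)$ and $\Upsilon_\mu = \sup_{\tilde u_\mu} \im \tilde G_{ii}^\mu$, together with $\Gammamu \leq 2\Gamma$ on the event $\Gamma = O(1)$, a Markov-type argument identical to \eqref{e:Euhighprob} gives $\Upsilon_\mu \leq 2\delta$ with $t$-HP as long as $\delta \geq N^{-C}$. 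On this event, $\Phi_\mu = O(\Phi_\delta)$, so the refined Lemma~\ref{lem:Gswitch}~(ii) provides $\Emu|G_{xy}|^2 = O(\Phi_\delta^2)$.

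With this deterministic replacement in hand, each downstream use of $\Phi$ can be replaced by $\Phi_\delta$ without changing the logical structure of the proof. Specifically, the key $L^2$ bound \eqref{e:EGdiff} in Lemma~\ref{lem:EGdiff} becomes $\Emu|G_{ij}-\Emu G_{ij}|^2 = O(d^{-1}\Gammamu^6 \Phi_\delta^2 + D^{-1})$, which yields the second assumption of Proposition~\ref{prop:concentration-X} with $Y_\mu$ of the form $C_p \Gammamu^{2+p} \Phi_\delta/\Phi$ (or a similar renormalization), so that the concentration conclusion \eqref{e:concentration} becomes $G_{i_1 j_1}\cdots G_{i_pj_p} - \Eu[G_{i_1 j_1}\cdots G_{i_pj_p}] = O(\xi\Phi_\delta)$. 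The expectation computations of Lemma~\ref{lem:Gmanip} and Lemma~\ref{lem:Gdiff} then carry through, using \eqref{e:GacxGy1}--\eqref{e:GGacxGy1} with the right-hand side $O(\Phi_\delta)$ instead of $O(\Phi)$, since the only input needed is the refined $L^2$ estimate. This produces the improved self-consistent equation $1 + sz + s^2 = O((1+|z|)\xi\Phi_\delta)$.

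Finally, the stability Lemma~\ref{lem:smdiff} applies with $r(\eta) \deq C\xi\Phi_\delta(E+\ii\eta)$, yielding $|s - m| = O(F(\xi\Phi_\delta))$ and hence $\max_i |G_{ii} - m| = O(F(\xi\Phi_\delta))$ and $\max_{i\neq j}|G_{ij}| = O(\xi\Phi_\delta)$ as in the original proof. The main (essentially only) obstacle is making sure that the a priori bound $\Upsilon \leq \delta$ passes cleanly to $\Upsilon_\mu \leq O(\delta)$; this is where the requirement $\delta \geq N^{-C}$ enters, since the Markov argument controlling the unlikely event on which $\Gamma$ or $\Upsilon$ is large costs a polynomial factor in $N$ that has to be dominated by the polynomial tolerance in $\delta$.
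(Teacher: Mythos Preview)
Your approach is essentially the same as the paper's: both identify that the sole source of improvement is the Ward-identity step in Lemma~\ref{lem:Gswitch}~(ii), upgrade it to $\Emu|G_{xy}|^2 = O(\Phi_\mu^2)$, feed this into Lemma~\ref{lem:EGdiff} and Proposition~\ref{prop:concentration}, and then rerun Section~\ref{sec:expectation} verbatim with $\Phi$ replaced by $\Phi_\delta$.

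One small imprecision: the claim ``$\Upsilon_\mu \leq 2\delta$'' does not follow, because Lemma~\ref{lem:Gswitch}~(i) only gives $\Upsilon_\mu \leq \Upsilon + O(d^{-1/2}\Gammamu^2)$, and $d^{-1/2}$ can be much larger than $\delta$. What you actually need (and what does hold on the good event) is $\Phi_\mu = O(\Phi_\delta)$; this follows from $\Upsilon_\mu \leq \delta + O(d^{-1/2})$ since $\sqrt{d^{-1/2}/(N\eta)} \leq \tfrac12\bigl(D^{-1/2} + (N\eta)^{-1}\bigr) \leq \Phi_\delta$ by AM--GM. The paper handles this more cleanly by leaving $\Phi_\mu$ random and absorbing the ratio $\Phi_\mu/\Phi_\delta$ into the $\sigG_\mu$-measurable $Y_\mu$ of Proposition~\ref{prop:concentration-X}; the condition $\delta \geq N^{-C}$ then guarantees the almost-sure polynomial bound $Y_\mu \leq N^{O(1)}$. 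Your Markov-argument explanation of where $\delta \geq N^{-C}$ enters is not quite right; it is this a.s.\ bound on $Y_\mu$, not the high-probability bound, that uses it.
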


\begin{proof}[Sketch of proof]
We first verify that in all estimates of Sections \ref{sec:concentration}--\ref{sec:expectation},
the parameter $\Phi$ arises from just two possible sources: from $D^{-1/2}$ or Lemma~\ref{lem:Gswitch} (ii).

In particular, by the improved version of Lemma~\ref{lem:Gswitch} discussed around \eqref{e:Phimudef},
Lemma~\ref{lem:EGdiff} can be improved so that $\Gammamu^6\Phi^2$ is replaced by $\Gammamu^2 \Phi_\mu^2$.
This implies an improved version of Proposition~\ref{prop:concentration} 
in which the assumption that $\Gamma = O(1)$ holds with probability at least $1-\ee^{-\zeta}$
is replaced by the assumption $\Gamma = O(1)$ and $\Upsilon = O(\delta)$ with the same probability,
and $\Phi$ is replaced by $\Phi_\delta$ in the conclusion.
The proof of the improved version of Proposition~\ref{prop:concentration} is then analogous to the proof of Proposition~\ref{prop:concentration}
given in Section~\ref{sec:concentration}. In particular, note that using $\delta \geq N^{-C}$,
it is easy to verify that there are $Y_\mu$ satisfying all required conditions and
$\Gamma^{2p}\Phi_\mu^2 \leq Y_\mu^2\Phi_\delta^2$.

Similarly, given the improved versions of Lemma~\ref{lem:Gswitch} and Proposition~\ref{prop:concentration},
the proof of the improved version of Proposition~\ref{prop:Lambda} is identical to that given in Section~\ref{sec:expectation}.
\end{proof}

Finally, given Proposition~\ref{prop:Lambda-delta},
the proof of Theorem~\ref{thm:semicircle} involves
a slightly more involved induction than that given in Section~\ref{sec:outline}, in which we propagate both estimates
\begin{equation} \label{e:imGii-ind}
\Gamma \;\leq\; 2\,, \qquad  \Upsilon \;\leq\; Q^2 \pBB{ \im m + \sqrt{\xi\Psi + \pbb{\frac{\xi}{N\eta}}^{2/3}} }\,,
\end{equation}
simultaneously, for some sufficiently large constant $Q$.
The hypothesis \eqref{e:imGii-ind} is trivial for $\eta \geq 1$.
By an explicit spectral decomposition (or by an argument analogous to the proof of Lemma~\ref{lem:Gbd}), it is easy to verify that
$\eta \im G_{ii}$ is increasing in $\eta$, and hence \eqref{e:imGii-ind} for some $z=E+\ii\eta$
implies
\begin{equation} \label{e:imGii-ind2}
  \Upsilon(z') \;\leq\; \delta(z') \;\deq\;
  4Q^2 \pBB{ \im m(z') + \sqrt{\xi\Psi(z') + \pbb{\frac{\xi}{N\eta'}}^{2/3}} }\,,
\end{equation}
for $z'=E+\ii\eta'$ with $\eta'=\eta/2$.
Hence, we may apply Proposition~\ref{prop:Lambda-delta}.
It is easy to verify that
\begin{equation}
  \xi \Phi_\delta \;\leq\; O(Q^2) \pBB{\xi\Psi + \pbb{\frac{\xi}{N\eta}}^{2/3}} \,.
\end{equation}
Therefore, using $F_z(r) \leq \sqrt{r}$, we get from Proposition~\ref{prop:Lambda-delta} that
\begin{equation}
  \Upsilon(z') \;\leq\; \im m(z') + O(\sqrt{\xi\Phi_\delta})
  \;\leq\; O(Q) \pBB{ \im m + \sqrt{\xi\Psi + \pbb{\frac{\xi}{N\eta}}^{2/3}} }\,,
\end{equation}
and this propagates the induction hypothesis \eqref{e:imGii-ind} since $O(Q) \leq Q^2$ for a sufficiently
large $Q$.

\section{Moment bounds for exchangeable random matrices: proof of Proposition \ref{prop:qY}} \label{sec:exchangeable}

In this appendix we prove Proposition \ref{prop:qY}.
To avoid extraneous notational complications arising from complex conjugates,
we suppose that all quantities are real-valued. We abbreviate $\qq{n} \deq \qq{1,n}$,
and denote by $\fra P_n$ the set of partitions of $\qq{n}$.
By H\"older's inequality, it suffices to consider the case $p \in 2\N$.

We begin with (i).
Abbreviate $X \deq \sum_i a_{i} Y_{i}$.
For $\f i \in \qq{N}^p$ define $P(\f i) \in \fra P_p$ as the
partition generated by the equivalence relation $k \sim l$ if and only if $i_k = i_l$.
Then we get
\begin{equation} \label{EXp0}
\E X^p \;=\;  \sum_{\f i \in \qq{N}^p} \prod_{k = 1}^p a_{i_k} \, \E \prod_{k = 1}^p Y_{i_k} \;=\;
\sum_{\Pi \in \fra P_p} K(\Pi) \sum_{\f i \in \qq{N}^p} \ind{P(\f i) = \Pi} \prod_{k = 1}^p a_{i_k}\,,
\end{equation}
where $K(\Pi) \deq \E \prod_{k = 1}^p Y_{i_k}$ for any $\f i$ satisfying $P(\f i) = \Pi$.
That $K(\Pi)$ is well defined, i.e.\ independent of the choice of $\f i$, follows from the exchangeability of $(Y_i)$.
For future use we also note that by H\"older's inequality we have $|K(\Pi)| \leq \|Y_1\|_p^p$.
We use the notation $\pi \in \Pi$ for the blocks of $\Pi$.
Next, we rewrite the sum over $\f i \in \qq{N}^p$ as a sum over $\f r = (r_\pi)_{\pi \in \Pi} \in \qq{N}^\Pi$ to get
\begin{equation} \label{EXp}
\E X^p \;=\; \sum_{\Pi \in \fra P_p} K(\Pi) \sum_{\f r \in \qq{N}^\Pi}^* \prod_{\pi \in \Pi} a_{r_\pi}^{\abs{\pi}}\,,
\end{equation}
where the star on top of the of the sum indicates summation over distinct indices $\f r$, i.e.
\begin{equation} \label{repulsion}
\sum_{\f r}^* \;=\; \sum_{\f r} \prod_{e \in \cal E(\Pi)} (1 - I_e(\f r))\,,
\end{equation}
where $\cal E(\Pi) \deq \{\{\pi,\pi'\} \col \pi,\pi' \in \Pi, \pi \neq \pi'\}$ is the set of edges of
the complete graph on the vertex set $\Pi$, and $I_{\{\pi,\pi'\}}(\f r) \deq \ind{r_\pi = r_{\pi'}}$. 

We need to estimate the right-hand side of \eqref{EXp} by exploiting the condition $\sum_i a_i = 0$.
Obtaining the bound of order $(p^2/\log p)^p$ requires some care in handling the combinatorics.
We shall multiply out the product in \eqref{repulsion}, which has to be done with moderation to avoid overexpanding,
since the resulting sum is highly oscillatory.
The naive expansion $\prod_{e \in \cal E(\Pi)} (1 - I_e) = \sum_{E \subset \cal E(\Pi)} \prod_{e \in E} (-I_e)$ is too rough.
Instead, we only expand a subset of the edges $\cal E(\Pi)$, and leave some edges $e$ unexpanded,
meaning that the associated factors $(1 - I_e)$ remain.

The partial expansion of the product $\prod_{e \in \cal E(\Pi)} (1 -
I_e)$ is best formulated using edge-coloured graphs. We consider graphs on $\Pi$ whose edges are coloured
black or white, and denote by $B$ the set
of black edges and by $W$ the set of white edges.
Thus, an edge-coloured graph is a pair $(B,W) \subset \cal E(\Pi)^2$ satisfying $B \cap W = \emptyset$.
For any edge-coloured graph $(B,W)$ we define
\begin{equation} \label{bw_edges}
J_{B,W} \;\deq\; \prod_{e \in B} (- I_e) \prod_{e \in W} (1 - I_e)\,.
\end{equation}
Hence,
each black edge $e \in B$ encodes the indicator function $-I_e$ and
each white edge $e \in W$ the indicator function $1 - I_e$.
Note that for $e \in W$ we have the trivial identity
\begin{equation} \label{expansion_iter}
J_{B,W} \;=\; J_{B, W \setminus \{e\}} + J_{B \cup \{e\}, W \setminus \{e\}}\,.
\end{equation}
We shall define a process on the set of edge-coloured graphs that operates on each white edge, either
leaving it as it is or generating two new graphs using \eqref{expansion_iter},
one graph where this white edge is removed, and another graph where the white edge is replaced by a black one.
To that end, we choose a total order on $\cal E(\Pi)$ and denote by
$e-$ and $e+$ the immediate predecessor and successor of $e$.
We denote by $e_{\text{min}}$ and $e_{\text{max}}$ the smallest and
largest edges of $\cal E(\Pi)$,
and introduce the formal additional edge $0$ to be the
immediate predecessor of $e_{\text{min}}$.

For each $e \in \cal E(\Pi)$ we shall
define a set of $\cal G_\Pi(e)$ of edge-coloured graphs $(B,W)$ such that
$W$ contains all edges greater than $e$.
The sets $\cal G_\Pi(e)$ are defined recursively as follows. First, we set $\cal G_\Pi(0) \deq \{(\emptyset, \cal E(\Pi))\}$. Thus, $\cal G_\Pi(0)$ consists of the complete graph with all edges coloured white.
Then for $e_{\text{min}} \leq e \leq e_{\text{max}}$ the set $\cal G_\Pi(e)$ is obtained from $\cal G_\Pi(e-)$ by
\begin{equation} \label{def_calG}
\cal G_\Pi(e) \;\deq\; \bigcup_{(B,W) \in \cal G_\Pi(e-)} \cal U(B,W,e)\,,
\end{equation}
where $\cal U(B,W,e)$ is a set of one or two edge-coloured graphs obtained from $(B,W)$, using one of the two formulas
\begin{align}
\label{U2}
\cal U(B,W,e) \;&=\; \{(B, W \setminus \{e\}), (B \cup \{e\}, W \setminus \{e\})\}\,,
\\
\label{U1}
\cal U(B,W,e) \;&=\; \{(B,W)\}\,; 
\end{align}
which choice among \eqref{U2} and \eqref{U1} to make will be determined in \eqref{choice_U} below.
The choice \eqref{U2} amounts to multiplying out $1 - I_e$
and \eqref{U1} to not multiplying out $1 - I_e$.
Note that, by construction, we always have $e \subset W$ on the right-hand side of \eqref{def_calG}.
Moreover, by \eqref{expansion_iter}, no matter which choice we make between \eqref{U2} and \eqref{U1},
we always have the identity
\begin{equation*}
\sum_{(B,W) \in \cal G_\Pi(e-)} J_{B,W} \;=\; \sum_{(B,W) \in \cal G_\Pi(e)} J_{B,W}
\end{equation*}
for all $e \in \cal E(\Pi)$, and hence by induction
\begin{equation} \label{expansion_result}
\prod_{e \in \cal E(\Pi)} (1 - I_e) \;=\; \sum_{(B,W) \in \cal G_\Pi(e_{\text{max}})} J_{B,W}\,.
\end{equation}
Note that always choosing \eqref{U2} leads to the identity
$\prod_{e \in \cal E(\Pi)} (1 - I_e) = \sum_{B \subset \cal E(\Pi)} \prod_{e \in B} (-I_e)$,
which, as explained above, is too rough;
conversely, always using \eqref{U1} leads to the trivial identity
$\prod_{e \in \cal E(\Pi)} (1 - I_e) = \prod_{e \in \cal E(\Pi)} (1 - I_e)$.

In order to define which choice of $\cal U$ in \eqref{U2}--\eqref{U1} we make,
we also colour the vertices $\Pi$ black or white.
A vertex is black if it is a block of size one and white if it is a block of size greater than one.
This defines a splitting of the vertices $\Pi= \Pi_1 \sqcup \Pi_2$
into black vertices $\Pi_1$ are white vertices $\Pi_2$, and also induces
a splitting of the edges $\cal E(\Pi) = \cal E_1(\Pi) \sqcup \cal E_{12}(\Pi) \sqcup \cal E_2(\Pi)$,
where $\cal E_i(\Pi)$ is the set of edges connecting two vertices of $\Pi_i$ for $i = 1,2$,
and $\cal E_{12}(\Pi)$ is the set of edges connecting two vertices of different colours.
We choose the total order on $\cal E(\Pi)$ so that $\cal E_1(\Pi) < \cal E_{12}(\Pi) < \cal E_{2}(\Pi)$.
With this order, we define our choice of $\cal U$:
\begin{equation} \label{choice_U}
\text{use }
\begin{cases}
\eqref{U2} & \text{if $e$ is incident to a black vertex that is not incident to a black edge,}
\\
\eqref{U1} & \text{otherwise.}
\end{cases}
\end{equation}
See Figure~\ref{fig:colouredgraphs} for an illustration of the resulting process on coloured graphs.

Let $(B,W) \in \cal G_\Pi(e_{\text{max}})$. 
The following properties can be checked in a straightforward manner by induction:
(a) $W$ is uniquely determined by $B$ (given the colouring of the vertices and the total order on $\cal E(\Pi)$);
(b) $B$ is a forest (i.e.\ a disjoint union of trees);
(c) a black vertex can only be incident to a white edge if it is also incident to a black edge;
(d) two white vertices cannot be connected by a black edge;
(e) a black and a white vertex can only be connected by a black edge if the
black vertex is not incident to any other black edge.

\begin{figure}[t]
\begin{center}
\includegraphics{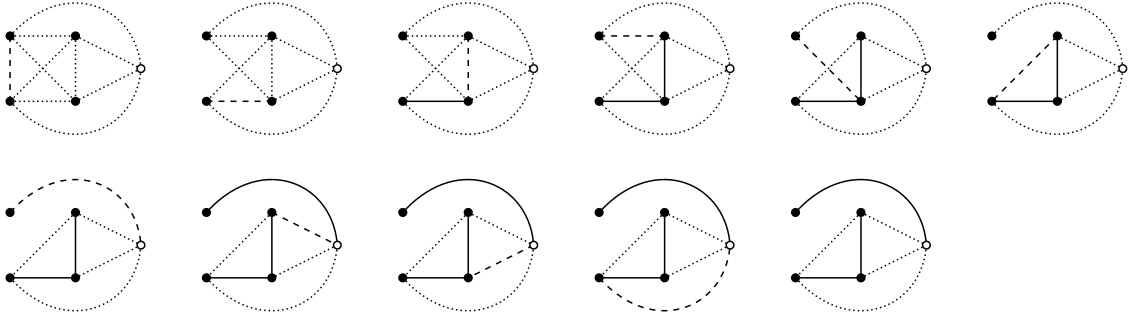}
\end{center}
\caption{The process from $\cal G_\Pi(0)$ to $\cal G_{\Pi}(e_{\mathrm{max}})$.
  Black and white vertices are depicted using black and white dots, respectively.
  Black and white edges are depicted using solid and dotted lines, respectively.
  Since the complete graph on 5 vertices has 10 edges, there are 10 steps, i.e.\ maps from a coloured graph to another.
  We start from the complete graph whose edges are all white.
  We highlight the white edge $e$ indexing the subsequent step by drawing it using dashed lines.
  In each step using \eqref{U2}, we choose to draw one of the two possible resulting graphs.
  The steps 6,8,9,10 use \eqref{U1}, and the other steps use \eqref{U2}.
  \label{fig:colouredgraphs}}
\end{figure}

Now going back to \eqref{EXp}, we find using \eqref{expansion_result} that
\begin{equation}
\E X^p \;=\; \sum_{\Pi \in \fra P_p} K(\Pi) \sum_{(B,W) \in \cal G_\Pi(e_{\text{max}})} \sum_{\f r \in \qq{N}^\Pi}J_{B,W}(\f r) \prod_{\pi \in \Pi} a_{r_\pi}^{\abs{\pi}}
\end{equation}
By property (c), if there is a black vertex that is not incident to a black edge, it is also not incident to a white edge,
and $\sum_i a_i = 0$ therefore implies
$\sum_{\f r \in \qq{N}^\Pi}J_{B,W}(\f r) \prod_{\pi \in \Pi} a_{r_\pi}^{\abs{\pi}} = 0$.
Therefore the sum over $(B,W)$ can be restricted
to graphs in which every black vertex is incident to at least one black edge.
For such graphs, $\sum_i a_i^2 \leq 1$ and $\abs{J_{B,W}} \leq 1$ imply the
bound $\absb{\sum_{\f r \in \qq{N}^\Pi}J_{B,W}(\f r) \prod_{\pi \in \Pi} a_{r_\pi}^{\abs{\pi}}} \leq 1$.
Using $|K(\Pi)| \leq \norm{Y_{1}}_{p}^{p}$ we conclude
\begin{equation} \label{EXp_est1}
\E X^p \;\leq\; \norm{Y_{1}}_{p}^{p} \sum_{\Pi \in \fra P_p} \abs{\cal G_\Pi(e_{\text{max}})}\,.
\end{equation}

It remains to estimate the sum on the right-hand side of \eqref{EXp_est1}. For fixed $\Pi$,
by (a) above it suffices to estimate the number of $B$ satisfying the remaining conditions (b)--(e).
From now on, all graph-theoretic notions always pertain to $B$, i.e.\ we discard all white edges.
Let $\Phi$ denote the set of black vertices not adjacent (by a black edge) to a white vertex:
\begin{equation*}
\Phi \;\deq\; \h{\pi \in \Pi_1 \col \text{$\pi$ is not adjacent to a vertex of $\Pi_2$}} \,.
\end{equation*}
We shall estimate the number of graphs $B$ on $\Pi$ associated with any fixed $\Phi$.
By (e), each vertex $\pi \in \Pi_1 \setminus \Phi$ has degree at most one.
With (d), this gives the upper bound $\abs{\Pi_1 \setminus \Phi}^{\abs{\Pi_2}}$ on the possible choices of
$B$ in $\Pi \setminus \Phi$. 
Moreover, by (b), $B$ is a forest on $\Phi$. 
By Cayley's formula $n^{n-2} \leq n^n$ for the number of trees on $n$ vertices
and the bound $\abs{\fra P_n} \leq (n / \log n)^n$ on the number of partitions of a set,
we find that there are at most $(\abs{\Phi}^2 / \log \abs{\Phi})^{\abs{\Phi}}$ forests on $\Phi$.
In summary, we conclude that the number of graphs $B$ associated with $\Pi$ and $\Phi \subset \Pi_1$
is bounded by $(\abs{\Phi}^2 / \log \abs{\Phi})^{\abs{\Phi}} (\abs{\Pi_1}- \abs{\Phi})^{\abs{\Pi_2}}$.

Abbreviating $k = \abs{\Pi_1}$ and $l = \abs{\Phi}$, we therefore obtain
\begin{equation*}
\sum_{\Pi \in \fra P_p} \abs{\cal G_\Pi(e_{\text{max}})}
\;\leq\; \sum_{k = 0}^p \sum_{l = 0}^k \binom{p}{k} \pbb{\frac{p - k}{\log (p - k)}}^{p - k} \binom{k}{l} \pbb{\frac{l^2}{\log l}}^l (k - l)^{p - k}
\;\leq\; \pbb{\frac{C p^2}{\log p}}^p
\end{equation*}
for some universal constant $C > 0$,
where the factor $\binom{p}{k}$ accounts for the choice of $\Pi_1$,
the factor $((p - k) / \log (p - k))^{p - k}$ for the choice of $\Pi_2$,
the factor $\binom{k}{l}$ for the choice of $\Phi$,
and the factor $(l^2 / \log l)^l (k - l)^{p - k}$ for the choice of $B$ as explained above.
Here in the last inequality we used
\begin{equation*}
\pbb{\frac{p - k}{\log (p - k)}}^{p - k}
(k - l)^{p - k} \;\leq\; \pbb{\frac{p^2}{\log p}}^{p-k}\,,\qquad
\pbb{\frac{l^2}{\log l}}^l \;\leq\; \pbb{\frac{p^2}{\log p}}^k
\end{equation*}
for $0 \leq l \leq k \leq p$.
This concludes the proof of (i).

Next, we prove (ii). 
By splitting $Y$ into its diagonal and off-diagonal entries and using Minkowski's inequality,
it suffices to prove \eqref{e:aY2} under the assumption $Y_{ii} = 0$ for all $i$.
Similarly to the proof of (i), we write
\begin{equation*}
\E \pBB{\sum_{i,j} a_i a_j Y_{ij}}^p \;=\; \sum_{\f i \in \qq{N}^{2p}} \prod_{k = 1}^{2p} a_{i_k} \, \E \prod_{k = 1}^p Y_{i_{2k -1} i_{2k}}
\;=\; \sum_{\Pi \in \fra P_{2p}} \tilde K(\Pi) \sum_{\f i \in \qq{N}^{2p}} \ind{P(\f i) = \Pi} \prod_{k = 1}^{2p} a_{i_k}\,,
\end{equation*}
where $\tilde K(\Pi) \deq \E \prod_{k = 1}^p Y_{i_{2k -1} i_{2k}}$ for any $\f i$ satisfying $P(\f i) = \Pi$
(recall the definition of $P(\f i) \in \fra P_{2p}$ above \eqref{EXp0}).
As in the proof of (i), $\tilde K(\Pi)$ is well-defined by exchangeability of $(Y_{ij})$.
By H\"older's inequality and exchangeability, we have the bound $\abs{\tilde K(\Pi)} \leq \norm{Y_{12}}_p^p$.
Now the proof of (i) following \eqref{EXp0} may be taken over verbatim, by replacing $p$ with $2p$. This concludes the proof of (ii). The proof of Proposition \ref{prop:qY} is therefore complete.

\section*{Acknowledgements}

AK was partly supported by Swiss National Science Foundation grant 144662.
HTY was partly supported by NSF grants DMS-1307444 and DMS-1606305.
HTY and RB were partly supported by a Simons Investigator Award.
The authors gratefully acknowledge the hospitality and support of the
Institute for Advanced Study in Princeton, and the National Center for Theoretical Sciences
and the National Taiwan University in Taipei, where part of this research was carried out.
The authors' stay at the IAS was supported by NSF grant DMS-1128155.

\bibliography{all}

\def\polhk#1{\setbox0=\hbox{#1}{\ooalign{\hidewidth
  \lower1.5ex\hbox{`}\hidewidth\crcr\unhbox0}}}
\begin{thebibliography}{10}

\bibitem{MR875835}
N.~Alon.
\newblock Eigenvalues and expanders.
\newblock {\em Combinatorica}, 6(2):83--96, 1986.
\newblock Theory of computing (Singer Island, Fla., 1984).

\bibitem{1304.4343}
N.~Anantharaman and E.L. Masson.
\newblock Quantum ergodicity on large regular graphs, 2013.
\newblock Preprint, arXiv:1304.4343.

\bibitem{1505.06700-aop}
R.~Bauerschmidt, J.~Huang, A.~Knowles, and H.-T. Yau.
\newblock Bulk eigenvalue statistics for random regular graphs.
\newblock {\em Ann. Probab.}, 2016+.
\newblock To appear.

\bibitem{MR3227063}
F.~Benaych-Georges and A.~Guionnet.
\newblock Central limit theorem for eigenvectors of heavy tailed matrices.
\newblock {\em Electron. J. Probab.}, 19:no. 54, 27, 2014.

\bibitem{MR3183577}
A.~Bloemendal, L.~Erd{\H{o}}s, A.~Knowles, H.-T. Yau, and J.~Yin.
\newblock Isotropic local laws for sample covariance and generalized {W}igner
  matrices.
\newblock {\em Electron. J. Probab.}, 19:no. 33, 53, 2014.

\bibitem{PhysRevLett.52.1}
O.~Bohigas, M.J. Giannoni, and C.~Schmit.
\newblock Characterization of chaotic quantum spectra and universality of level
  fluctuation laws.
\newblock {\em Phys. Rev. Lett.}, 52:1--4, Jan 1984.

\bibitem{Bord15}
C.~Bordenave.
\newblock A new proof of {F}riedman's second eigenvalue {T}heorem and its
  extension to random lifts.
\newblock Preprint, arXiv:1502.04482, 2015.

\bibitem{MR3129806}
C.~Bordenave and A.~Guionnet.
\newblock Localization and delocalization of eigenvectors for heavy-tailed
  random matrices.
\newblock {\em Probab. Theory Related Fields}, 157(3-4):885--953, 2013.

\bibitem{MR2724665}
C.~Bordenave and M.~Lelarge.
\newblock Resolvent of large random graphs.
\newblock {\em Random Structures Algorithms}, 37(3):332--352, 2010.

\bibitem{2016Huang}
P.~Bourgade, J.~Huang, and H.-T. Yau.
\newblock Eigenvector statistics of sparse random matrices, 2016.
\newblock Preprint, arXiv:1609.09022.

\bibitem{BY2016}
P.~Bourgade and H.-T. Yau.
\newblock The eigenvector moment flow and local quantum unique ergodicity.
\newblock {\em Comm. Math. Phys.}, pages 1--48, 2016.

\bibitem{MR3038543}
S.~Brooks and E.~Lindenstrauss.
\newblock Non-localization of eigenfunctions on large regular graphs.
\newblock {\em Israel J. Math.}, 193(1):1--14, 2013.

\bibitem{1311.0326}
C.~Cacciapuoti, A.~Maltsev, and B.~Schlein.
\newblock {Bounds for the Stieltjes transform and the density of states of
  Wigner matrices}.
\newblock {\em Probab.\ Theory Related Fields}, Nov 2014.

\bibitem{1411.0243}
N.A. Cook.
\newblock On the singularity of adjacency matrices for random regular digraphs,
  2014.
\newblock Preprint, arXiv:1411.0243.

\bibitem{MR2334585}
C.~Cooper, M.~Dyer, and C.~Greenhill.
\newblock Sampling regular graphs and a peer-to-peer network.
\newblock {\em Combin. Probab. Comput.}, 16(4):557--593, 2007.

\bibitem{MR3078290}
I.~Dumitriu, T.~Johnson, S.~Pal, and E.~Paquette.
\newblock Functional limit theorems for random regular graphs.
\newblock {\em Probab. Theory Related Fields}, 156(3-4):921--975, 2013.

\bibitem{MR3025715}
I.~Dumitriu and S.~Pal.
\newblock Sparse regular random graphs: spectral density and eigenvectors.
\newblock {\em Ann. Probab.}, 40(5):2197--2235, 2012.

\bibitem{MR2964770}
L.~Erd{\H{o}}s, A.~Knowles, H.-T. Yau, and J.~Yin.
\newblock Spectral statistics of {E}rd{\H o}s-{R}\'enyi {G}raphs {II}:
  {E}igenvalue spacing and the extreme eigenvalues.
\newblock {\em Comm. Math. Phys.}, 314(3):587--640, 2012.

\bibitem{MR3068390}
L.~Erd{\H{o}}s, A.~Knowles, H.-T. Yau, and J.~Yin.
\newblock The local semicircle law for a general class of random matrices.
\newblock {\em Electron. J. Probab.}, 18:no. 59, 58, 2013.

\bibitem{MR3098073}
L.~Erd{\H{o}}s, A.~Knowles, H.-T. Yau, and J.~Yin.
\newblock Spectral statistics of {E}rd{\H o}s-{R}\'enyi graphs {I}: {L}ocal
  semicircle law.
\newblock {\em Ann. Probab.}, 41(3B):2279--2375, 2013.

\bibitem{MR2481753}
L.~Erd{\H{o}}s, B.~Schlein, and H.-T. Yau.
\newblock Local semicircle law and complete delocalization for {W}igner random
  matrices.
\newblock {\em Comm. Math. Phys.}, 287(2):641--655, 2009.

\bibitem{MR2917064}
L.~Erd{\H{o}}s and H.-T. Yau.
\newblock Universality of local spectral statistics of random matrices.
\newblock {\em Bull. Amer. Math. Soc. (N.S.)}, 49(3):377--414, 2012.

\bibitem{MR2981427}
L.~Erd{\H{o}}s, H.-T. Yau, and J.~Yin.
\newblock Bulk universality for generalized {W}igner matrices.
\newblock {\em Probab. Theory Related Fields}, 154(1-2):341--407, 2012.

\bibitem{MR2437174}
J.~Friedman.
\newblock A proof of {A}lon's second eigenvalue conjecture and related
  problems.
\newblock {\em Mem. Amer. Math. Soc.}, 195(910):viii+100, 2008.

\bibitem{FriedmanKahnSzemeredi}
J.~Friedman, J.~Kahn, and E.~Szemer{\'e}di.
\newblock On the second eigenvalue of random regular graphs.
\newblock In {\em Proceedings of the Twenty-first Annual ACM Symposium on
  Theory of Computing}, STOC '89, pages 587--598, New York, NY, USA, 1989. ACM.

\bibitem{1305.1039}
L.~Geisinger.
\newblock Convergence of the density of states and delocalization of
  eigenvectors on random regular graphs, 2013.
\newblock Preprint, arXiv:1305.1039.

\bibitem{1504.05170}
J.~Huang, B.~Landon, and H.-T. Yau.
\newblock Bulk universality of sparse random matrices, 2015.
\newblock Preprint, arXiv: 1504.05170.

\bibitem{MR1691538}
D.~Jakobson, S.D. Miller, I.~Rivin, and Z.~Rudnick.
\newblock Eigenvalue spacings for regular graphs.
\newblock In {\em Emerging applications of number theory ({M}inneapolis, {MN},
  1996)}, volume 109 of {\em IMA Vol. Math. Appl.}, pages 317--327. Springer,
  New York, 1999.

\bibitem{MR770640}
W.B. Johnson, G.~Schechtman, and J.~Zinn.
\newblock Best constants in moment inequalities for linear combinations of
  independent and exchangeable random variables.
\newblock {\em Ann. Probab.}, 13(1):234--253, 1985.

\bibitem{MR0109367}
H.~Kesten.
\newblock Symmetric random walks on groups.
\newblock {\em Trans. Amer. Math. Soc.}, 92:336--354, 1959.

\bibitem{MR3034787}
A.~Knowles and J.~Yin.
\newblock Eigenvector distribution of {W}igner matrices.
\newblock {\em Probab. Theory Related Fields}, 155(3-4):543--582, 2013.

\bibitem{MR3103909}
A.~Knowles and J.~Yin.
\newblock The isotropic semicircle law and deformation of {W}igner matrices.
\newblock {\em Comm. Pure Appl. Math.}, 66(11):1663--1750, 2013.

\bibitem{1410.3516}
A.~Knowles and J.~Yin.
\newblock Anisotropic local laws for random matrices, 2014.
\newblock Preprint, arXiv:1410.3516.

\bibitem{1504.03605}
B.~Landon and H.-T. Yau.
\newblock Convergence of local statistics of {D}yson {B}rownian motion, 2015.
\newblock Preprint, arXiv:1504.03605.

\bibitem{MR629617}
B.D. McKay.
\newblock The expected eigenvalue distribution of a large regular graph.
\newblock {\em Linear Algebra Appl.}, 40:203--216, 1981.

\bibitem{MR790916}
B.D. McKay.
\newblock Asymptotics for symmetric {$0$}-{$1$} matrices with prescribed row
  sums.
\newblock {\em Ars Combin.}, 19(A):15--25, 1985.

\bibitem{MR2433888}
S.J. Miller and T.~Novikoff.
\newblock The distribution of the largest nontrivial eigenvalues in families of
  random regular graphs.
\newblock {\em Experiment. Math.}, 17(2):231--244, 2008.

\bibitem{MR2647344}
I.~Oren and U.~Smilansky.
\newblock Trace formulas and spectral statistics for discrete {L}aplacians on
  regular graphs ({II}).
\newblock {\em J. Phys. A}, 43(22):225205, 13, 2010.

\bibitem{MR3385636}
D.~Puder.
\newblock Expansion of random graphs: new proofs, new results.
\newblock {\em Invent. Math.}, 201(3):845--908, 2015.

\bibitem{MR1266075}
Z.~Rudnick and P.~Sarnak.
\newblock The behaviour of eigenstates of arithmetic hyperbolic manifolds.
\newblock {\em Comm. Math. Phys.}, 161(1):195--213, 1994.

\bibitem{MR2774090}
P.~Sarnak.
\newblock Recent progress on the quantum unique ergodicity conjecture.
\newblock {\em Bull. Amer. Math. Soc. (N.S.)}, 48(2):211--228, 2011.

\bibitem{MR3204183}
U.~Smilansky.
\newblock Discrete graphs---a paradigm model for quantum chaos.
\newblock In {\em Chaos}, volume~66 of {\em Prog. Math. Phys.}, pages 97--124.
  Birkh\"auser/Springer, Basel, 2013.

\bibitem{MR2930379}
T.~Tao and V.~Vu.
\newblock Random matrices: universal properties of eigenvectors.
\newblock {\em Random Matrices Theory Appl.}, 1(1):1150001, 27, 2012.

\bibitem{MR2999215}
L.V. Tran, V.H. Vu, and K.~Wang.
\newblock Sparse random graphs: eigenvalues and eigenvectors.
\newblock {\em Random Structures Algorithms}, 42(1):110--134, 2013.

\bibitem{MR1725006}
N.C. Wormald.
\newblock Models of random regular graphs.
\newblock In {\em Surveys in combinatorics, 1999 ({C}anterbury)}, volume 267 of
  {\em London Math. Soc. Lecture Note Ser.}, pages 239--298. Cambridge Univ.
  Press, Cambridge, 1999.

\bibitem{MR916129}
S.~Zelditch.
\newblock Uniform distribution of eigenfunctions on compact hyperbolic
  surfaces.
\newblock {\em Duke Math. J.}, 55(4):919--941, 1987.

\end{thebibliography}
\bibliographystyle{plain}

\end{document}